\newlength{\halfbls}\setlength{\halfbls}{.5\baselineskip}
\DeclareRobustCommand{\SkipTocEntry}[9]{}
   \newcommand{\CC}{\mathbb{C}}
  \newcommand{\HH}{\mathbb{H}}
 \newcommand{\PP}{\mathbb{P}}  \newcommand{\NN}{\mathbb{N}}
\newcommand{\QQ}{\mathbb{Q}} \newcommand{\RR}{\mathbb{R}}  
\newcommand{\ZZ}{\mathbb{Z}}
    \newcommand{\Aut}{{\rm Aut}}
\newcommand{\Cov}{{\rm Cov}}
\newcommand{\Hur}{{\rm Hur}}
\newcommand{\TR}{{\rm TR}}
\newcommand{\cHH}{{\mathcal H}}
 \newcommand{\cQQ}{{\mathcal Q}}
\newcommand{\cRR}{{\mathcal R}}
\newcommand{\bfe}{{\bf e}}
\newcommand{\bfu}{{\bf u}}
\newcommand{\bfw}{{\bf w}}
\newcommand{\ual}{{\boldsymbol{\alpha}}}
\newcommand{\ube}{{\boldsymbol{\beta}}}
\newcommand{\uga}{{\boldsymbol{\gamma}}}
\newcommand{\Hmu}{\Pi}
   \newcommand{\N}{{\rm N}} 
\newcommand{\Part}{{\mathcal P}}
\newcommand{\mm}{{\rm{\bf{m}}}}
\newcommand{\zz}{{\rm{\bf{z}}}}
    \newcommand{\ve}{{\varepsilon}}
\newcommand{\ol}{\overline}
\newtheorem{Defi}{Definition}[section]  
    \newtheorem{Prop}[Defi]{Proposition}
\newtheorem{Lemma}[Defi]{Lemma}    \newtheorem{Cor}[Defi]{Corollary}
\newtheorem{Thm}[Defi]{Theorem} 
\newtheorem*{Thm*}{Theorem}
\def\={\;=\;}  \def\+{\,+\,}       \def\h{\tfrac12}  
 \def\t#1{\tilde{#1}}
  \newcommand\C{\mathbb C}  \newcommand\Z{\mathbb Z}   
\renewcommand\N{\mathbb N}  \renewcommand\P{\mathbb P}   
           \def\l{\lambda}
          \def\2{\pi_2}
\def\G{\Gamma}           
\def\zz{\mathbf z}
\def\SL#1{{\rm SL}(2,#1)}       
\def\sm#1#2#3#4{\bigl(\smallmatrix#1&#2\\#3&#4\endsmallmatrix\bigr)}
\def\be{\begin{equation}}   \def\ee{\end{equation}}     \def\bes{\begin{equation*}}    \def\ees{\end{equation*}}
\def\ba{\be\begin{aligned}} \def\ea{\end{aligned}\ee}   \def\bas{\bes\begin{aligned}}  \def\eas{\end{aligned}\ees}
\def\bq#1{\bigl\langle#1\bigr\rangle_q}   \def\sbq#1{\langle#1\rangle_q}
\def\bG#1{\bigl\langle#1\bigr\rangle_{q,G}}
\newcounter{savedtocdepth}
\newcommand*{\SaveTocDepth}[1]{%
  \addtocontents{toc}{%
    \protect\setcounter{savedtocdepth}{\protect\value{tocdepth}}%
    \protect\setcounter{tocdepth}{#1}%
  }%
}
\definecolor{qqzzqq}{rgb}{0,0.6,0}
\definecolor{ccttqq}{rgb}{0.8,0.2,0}
\definecolor{ccqqcc}{rgb}{0.8,0,0.8}
\definecolor{ffcczz}{rgb}{1,0.8,0.6}
\definecolor{qqttff}{rgb}{0,0.2,1}
\definecolor{uququq}{rgb}{0.25,0.25,0.25}
\definecolor{ffqqqq}{rgb}{1,0,0}
\definecolor{gray}{rgb}{0.5,0.5,0.5}
\definecolor{cqcqcq}{rgb}{0.75,0.75,0.75}
\title[Counting Feynman-like graphs]
{Counting Feynman-like graphs:\\ Quasimodularity and Siegel-Veech weight}
\author{Elise Goujard}
\thanks{Research of the first author is supported by the Fondation Math\'ematique Jacques Hadamard.}
\address{
Laboratoire de Math{\'e}matiques d'Orsay,
Universit{\'e} Paris-Sud,
F-91405 Orsay Cedex, France\\
}
\email{elise.goujard@gmail.com}
\author{Martin M\"oller}
\thanks{Research  of  the second author is partially supported  
by the DFG-project MO 1884/1-1}
\address{
Institut f\"ur Mathematik, Goethe--Universit\"at Frankfurt,
Robert-Mayer-Str. 6--8,
60325 Frankfurt am Main, Germany\\
}
\email{moeller@math.uni-frankfurt.de}
\begin{document}
\bibliographystyle{halpha}

\begin{abstract}
We prove the quasimodularity of generating functions for counting torus covers, with and without Siegel-Veech weight. Our proof is based on analyzing decompositions of flat surfaces into horizontal cylinders. The quasimodularity arise as contour integral of quasi-elliptic functions. It provides an alternative proof of the 
quasimodularity results of Bloch-Okounkov, Eskin-Okounkov and 
Chen-M\"oller-Zagier, and generalizes the results of 
B\"ohm-Bring\-mann-Buchholz-Markwig for simple ramification covers.

\end{abstract}
\maketitle

\tableofcontents

\noindent
\SaveTocDepth{1}

\section{Introduction}

The generating series counting the number of torus coverings first attracted 
attention with Dijkgraaf's work (\cite{Dijk}) on mirror symmetry for elliptic 
curves. It was shown rigorously by Kaneko-Zagier (\cite{KanZag}) that these 
functions are quasimodular forms. This statement was generalized
by Eskin-Okounkov (\cite{eo}) from simple branch points to arbitrary branching
profile.
\par
In this paper we show that the quasimodularity property of counting
functions generalizes in two ways. We first analyze to which extent
the quasimodularity holds when counting the contributions of each 
underlying global graph separately. The precise statement requires 
a correspondence theorem between covers and decorated graphs. In the case 
of simple branching, the global graphs are trivalent and sometimes 
referred to as Feynman graphs. In this case our correspondence theorem 
boils down to the correspondence theorem for tropical Hurwitz numbers 
of torus covers proved in~\cite{BBBM}, as we explain in 
Section~\ref{sect:tropical}.
\par
The second generalization counts coverings with a Siegel-Veech weight, 
motivated by Siegel-Veech constants for flat surfaces. Here again,
our method provides a different approach and a refinement of the 
quasimodularity shown in \cite{CMZ}, by counting the contributions 
of each (``Feynman'') graph separately.
\par
We give some motivation for why we care about quasimodularity statements.
Obviously, knowing the first few coefficients of a quasimodular form
determines the whole series and thus provides a computational approach
to the counting problems. Second, the asymptotic behaviour of the 
coefficients of a
quasimodular form is well-understood (\cite[Section~9]{CMZ}). Those
coefficient asymptotics are important e.g.\ to compute the Masur-Veech
volumes of moduli spaces of flat surfaces. Despite some recent advances
(\cite{aez}, \cite{goujard}, \cite{CMZ}) many refined questions, 
concerning e.g.\ 
large genus asymptotics, spin structure distinction and Masur-Veech volumes 
of quadratic differential spaces in general, are still wide open. 
We plan to apply the techniques presented here to these
cases in a sequel to this paper.
\par
\medskip
A covering $p:X \to E$ of the square torus provides $X$ with a flat 
metric~$\omega = p^* \omega_E$. The flat surface $(X,\omega)$ is 
swept out by horizontal cylinders. We obtain the {\em global graph
of the covering} by letting the vertices be the branch points of~$p$
and the edges these horizontal cylinders.
Our correspondence theorem  shows roughly that decorating
the graph with widths and heights at the edges and with local data
(triple Hurwitz numbers) at the vertices defines a bijection with
torus covers, 
see Proposition~\ref{prop:corrTCGraph} for the precise statement.
In the case of simple branching, our global graphs are the
tropical covers of e.g.\ \cite{BBBM}.
\par
For the counting problems the following special case of
Theorem~\ref{thm:coeff0QM} is the core of the quasimodularity statements. 
\par
\begin{Thm} \label{thm:introPint}
Let $P$ be a product of derivatives $\wp^{(m)}(z_i-z_j)$
 of the Weierstrass $\wp$-functions. Then the
constant term\footnote{We refer to Section~\ref{sec:intEF} for 
the conventions on the ``heights'' $\ve_i$ of integration paths.}
 with respect to the variables  $\zeta_j = e^{2\pi i z_j}$
$$[\zeta_n^0, \ldots,\zeta_1^0]\, P \= \frac{1}{(2\pi i)^n}\,
\oint_{0 +i \ve_n}^{1 +i \ve_n}  \dots \oint_{0 +i \ve_1}^{1 +i \ve_1}  P(z_1, \dots, z_n;\tau)dz_1\dots dz_n
$$
is a quasimodular form. More precisely,  if $P$ consists of 
$\ell$~factors, where the $k$-th factor involves the $m_k$-th derivative, 
then the quasimodular form has mixed weight less or equal to $\sum_{k=1}^\ell (2+m_k)$. 
\end{Thm}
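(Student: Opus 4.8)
The plan is to reduce the statement to the one-variable case by iterated contour integration, exploiting the quasi-ellipticity of the integrand in each variable, and then invoke the known structure of the constant-term / residue operation on Weierstrass functions and Eisenstein series. Concretely, I would proceed as follows.

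First, recall the basic analytic input: each factor $\wp^{(m_k)}(z_i - z_j)$ is, as a function of $z_i$ (resp.\ $z_j$) an elliptic function of weight $2+m_k$, meromorphic with a single pole (of order $2+m_k$) at $z_i = z_j$ modulo the lattice $\ZZ + \ZZ\tau$. Thus the product $P$ is, in each variable $z_j$ separately, an elliptic function whose poles lie only along the diagonals $z_j = z_i$. The contour integral $\oint_{0+i\ve_j}^{1+i\ve_j}$ picks out the constant Fourier coefficient $[\zeta_j^0]$; by periodicity in $z_j \mapsto z_j + 1$ this integral is the difference of the two ``vertical'' segments only through the poles crossed, so moving the height $\ve_j$ across a diagonal changes the value by a sum of residues. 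The ``height conventions'' of Section~\ref{sec:intEF} fix an ordering $\ve_n > \dots > \ve_1$ (or the reverse) precisely so that the iterated integral is well-defined and the residues that appear are controlled.

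The key step is an induction on the number of factors $\ell$ (or equivalently on $n$). Integrate out the innermost variable $z_1$: by the residue theorem applied to the period rectangle, $[\zeta_1^0]P$ equals the average of $P$ over the rectangle minus a sum of residues of $P$ at the diagonals $z_1 = z_i$. The residue of $\wp^{(m)}(z_1 - z_i)$ (times the rest of $P$) at $z_1 = z_i$ is, by the Laurent expansion of $\wp^{(m)}$, a finite $\CC$-linear combination of derivatives in $z_i$ of the remaining factors together with the Eisenstein-series coefficients $G_{2k}(\tau)$ coming from the regular part of $\wp$ near $0$. Crucially, each such residue term is again (up to the explicit quasimodular factors $G_{2k}$) a product of $\wp^{(m')}(z_a - z_b)$'s and their derivatives in $n-1$ variables, with total weight bounded by the weight lost: removing a factor of weight $2+m_1$ and producing a residue contributes derivatives that raise the weight of a surviving factor by at most $m_1$, while the $G_{2k}$ prefactor carries the balance, so the mixed-weight bound $\sum_k (2+m_k)$ is preserved. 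The ``average over the rectangle'' term is the integral of an elliptic function over a full period parallelogram; this is handled by the same mechanism since after integrating $z_1$ one is left with a constant (in $z_1$) which feeds into the induction. The base case $\ell = 0$ is the empty product, constant $1$, a quasimodular form of weight $0$.

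The main obstacle is bookkeeping the weight bound through the residue recursion: one must check that differentiating a surviving factor $\wp^{(m')}(z_a-z_b)$ to order $j$ (raising its weight to $2+m'+j$) together with the Eisenstein prefactor $G_{2k}$ of weight $2k$ never exceeds the budget $\sum_k (2+m_k)$, accounting for the fact that derivatives $\partial_{z_i}$ landing on different factors must be distributed via the Leibniz rule. The clean way to organize this is to track, for each monomial appearing after all integrations, the multiset of ``remaining derivative orders'' and the accumulated Eisenstein weight, and to observe that each residue step replaces a factor of weight $w$ by terms of total weight exactly $w$ (the $\wp$-Laurent expansion $\wp(z) = z^{-2} + \sum_{k\ge 1}(2k+1)G_{2k+2}z^{2k}$ is weight-homogeneous if one assigns $z$ weight $-1$). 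Granting weight-homogeneity of the Laurent tails and the fact that $\partial_\tau$ vs $\partial_z$ do not interfere (only $\partial_z$ is applied here), the weight bound drops out. Finally, that the resulting $\QQ[G_2, G_4, G_6]$-combination is genuinely quasimodular — not merely a weight-graded polynomial in Eisenstein series — follows because $G_2, G_4, G_6$ generate the ring of quasimodular forms, which is the content we are allowed to cite; the constant term $[\zeta_j^0]$ of an elliptic function expanded in $q$ and $\zeta_j$ produces exactly such $q$-series with rational (indeed, arithmetically explicit) coefficients.
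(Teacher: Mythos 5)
There is a genuine gap: your induction hypothesis --- that after integrating out one variable the result is again a $\QQ[G_2,G_4,G_6]$-linear combination of products of $\wp^{(m')}(z_a-z_b)$ in the remaining variables --- is false, and this failure is precisely the difficulty the paper's proof is built to overcome. When $P$ is decomposed, as an elliptic function of $z_n$, into standard pieces, the poles of order $\geq 2$ are matched by derivatives of the propagator (whose constant terms vanish by~\eqref{eq:Pint}), but the residual simple poles require the Weierstra\ss{} $\zeta$-function: the residues at the various $z_n=z_j$ need not vanish individually (only their sum does), so combinations such as $Z_{ijn}=Z(z_n-z_i)+Z(z_j-z_n)+Z(z_i-z_j)$ occur with nonzero coefficients even when $P$ is a pure product of $\wp$-derivatives --- see the explicit decomposition of $P(z_4-z_1)^2P(z_4-z_3)$ in Section~\ref{sec:Ex1111Pcomp}, which contains a $Z_{134}$-term. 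Taking $[\zeta_n^0]$ of such a term leaves $Z(z_i-z_j)$, which is not elliptic (it satisfies $Z(z+\tau)=Z(z)+1$), so the next integration is performed on a function outside your inductive class, and powers of $Z$ accumulate. The paper copes with this by enlarging the class to the quasi-elliptic quasimodular forms $\cQQ_n$, with additive generators $Z^e(z_n-z_j)P^{(m)}(z_n-z_j)$ (Proposition~\ref{prop:Naddbasis}), and by a separate parity argument computing $[\zeta^0]Z^e$ (Proposition~\ref{prop:Zpowint}).

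Two further problems. First, your opening step, that ``$[\zeta_1^0]P$ equals the average of $P$ over the rectangle minus a sum of residues,'' is not a correct use of the residue theorem: residues only control how the constant term jumps as the path height crosses a pole; they do not compute the constant term itself, which requires the explicit Fourier expansions~\eqref{eq:FourierLZP} (e.g.\ $[\zeta^0]P^{(m)}=0$ but $[\zeta^0]Z=\tfrac12$). Second, your weight bookkeeping, in which each residue step ``replaces a factor of weight $w$ by terms of total weight exactly $w$,'' would prove that the constant term is of \emph{pure} weight $\sum_k(2+m_k)$; this is false --- Section~\ref{sec:Ex1111Pcomp} exhibits a product of $\wp$'s whose constant term has components in weights $10$ and $12$ --- and the loss of purity arises exactly at the step $[\zeta^0]Z^e$, which is quasimodular only of mixed weight $\leq e$.
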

\par
Note that in general these constant terms are not of pure weight\footnote{Purity of the weight of the quasimodular form is claimed in 
Theorem~3.2 of \cite{BBBM}, but it relies on Proposition~3.3, which
has a gap.} 
as we show in the example in Section~\ref{sec:Ex1111Pcomp}, even if all $m_k=0$.
The statement of the theorem above involves only the elliptic Weierstra\ss\
$\wp$-function. Nevertheless our proof 
requires Theorem~\ref{thm:coeff0QM} about the quasimodularity 
of constants terms for quasi-elliptic functions in full generality,
since taking the coefficient $[\zeta_1^0]$ of an elliptic function may 
no longer be elliptic.
\par
\medskip
We let $N^\circ(\Hmu) = \sum N^\circ_d(\Hmu) q^d$ be the generating series
of torus covers with branching profile $\Hmu$. Our first geometric application
is an independent proof of the following result of Kaneko-Zagier
and Eskin-Okounkov, based on Theorem~\ref{thm:introPint}.
\par
\begin{Thm}  \label{intro:count} (= Corollary~\ref{cor:noname})
For any ramification profile~$\Hmu$ the counting 
function $N^\circ(\Hmu)$ for connected torus covers of profile~$\Hmu$ 
is a quasimodular form of mixed weight less or equal to
$|\Hmu| + \ell(\Hmu)$.
\end{Thm}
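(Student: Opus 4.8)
The plan is to count connected torus covers by their decomposition into horizontal cylinders, to translate this via the correspondence theorem into a sum over decorated global graphs, and then to recognise the contribution of each graph as a constant term of a product of Weierstrass functions, so that Theorem~\ref{thm:introPint} applies directly.

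First I would use the cylinder decomposition: a connected cover $p\colon X\to E$ of the square torus with branching profile~$\Hmu$ carries the flat structure $\omega=p^\ast\omega_E$ and is swept out by its maximal horizontal cylinders; the associated connected global graph~$\Gamma$ has the branch points of~$p$ as vertices and these cylinders as edges. By the correspondence theorem, Proposition~\ref{prop:corrTCGraph}, the datum of such a cover (weighted by $1/|\Aut|$ as usual) is the same as the datum of a connected graph~$\Gamma$, a width $w_e\in\NN$ and a height $h_e\in\NN$ on each edge, a distribution of the parts of~$\Hmu$ over the vertices, together with, at each vertex~$v$, a local triple Hurwitz number $H_v$ recording how the cylinder ends incident to~$v$ are glued through the branch point. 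Since the degree equals $d=\sum_e w_e h_e$ and, for a fixed profile~$\Hmu$, only finitely many graphs occur, this yields a decomposition
\[
N^\circ(\Hmu)\;=\;\sum_{\Gamma}\frac{1}{|\Aut\Gamma|}\ \sum_{(w_e)\in\NN^{E(\Gamma)}}\ \Big(\prod_{e}w_e\,\frac{q^{\,w_e}}{1-q^{\,w_e}}\Big)\ \prod_{v\in V(\Gamma)}H_v\big((w_e)_{e\ni v}\big),
\]
where the sum over the heights $h_e$ has already been carried out, the factor $w_e$ per edge accounts for the cylinder twist, and each $H_v$ is a polynomial (or quasi-polynomial) in the incident widths, vanishing whenever the widths are incompatible.

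Next I would perform the remaining sums over widths and over the distribution of~$\Hmu$ by introducing a formal phase $\zeta_i=e^{2\pi i z_i}$ at every vertex. Using the Fourier--$q$-expansions of $\wp$ and its derivatives, whose coefficients are, up to elementary terms, of the form $\sum_{w\ge1}w^{m+1}(\zeta^{w}\pm\zeta^{-w})q^{wn}$, one rewrites the contribution of an edge joining $z_i$ and $z_j$ --- namely the factor $w_e\,q^{w_e}/(1-q^{w_e})$ multiplied by the width-monomials inherited from the two adjacent local Hurwitz polynomials --- as a $\ZZ$-linear combination of the functions $\wp^{(m_e)}(z_i-z_j;\tau)$, up to terms of strictly smaller weight, where the order $m_e\ge0$ is dictated by the total width-degree carried along the edge. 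Summing over the ways of distributing the parts of~$\Hmu$ among the vertices then amounts exactly to extracting the constant term in all the variables~$\zeta_i$, i.e.\ to the iterated contour integral of Theorem~\ref{thm:introPint} (with the integration paths placed at suitable heights $\ve_i$ to separate the poles). Thus the contribution of each graph~$\Gamma$ equals $\frac1{|\Aut\Gamma|}$ times a constant term $[\zeta_n^0,\dots,\zeta_1^0]\prod_{e}\wp^{(m_e)}(z_i-z_j;\tau)$ of precisely the form treated there, plus terms of strictly smaller weight to which the same reasoning applies.

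By Theorem~\ref{thm:introPint} each such graph contribution is a quasimodular form of mixed weight at most $\sum_{e\in E(\Gamma)}(2+m_e)$. It then remains to bound this quantity by $|\Hmu|+\ell(\Hmu)$; this is a purely combinatorial check, using the handshake identity $2\,\#E(\Gamma)=\sum_v\operatorname{val}(v)$, the Riemann--Hurwitz relation at each vertex relating $\operatorname{val}(v)$ to $|\mu^{(v)}|$ and $\ell(\mu^{(v)})$, and the bound on the degree of the local Hurwitz polynomials $H_v$, which controls $\sum_e m_e$. Finally, a finite sum of quasimodular forms of mixed weight $\le|\Hmu|+\ell(\Hmu)$ is again quasimodular of mixed weight $\le|\Hmu|+\ell(\Hmu)$, and connectedness is automatic since we summed only over connected graphs~$\Gamma$; this proves the theorem.

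The main obstacle is the step identifying the graph contribution with the constant term of Theorem~\ref{thm:introPint}: one must check that the combinatorial sum of local Hurwitz numbers against the width and height weights reassembles \emph{exactly} into the contour integral of a product of Weierstrass derivatives, with the correct reading-off of the orders $m_e$. This requires the precise form of the correspondence theorem together with sharp control of the leading terms of the local Hurwitz polynomials. The weight bookkeeping in the last step, while elementary, is the secondary delicate point and has to be carried out with care, the more so since the failure of purity of the weight (Section~\ref{sec:Ex1111Pcomp}) shows that no finer statement is available.
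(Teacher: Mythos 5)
Your overall architecture (cylinder decomposition $\to$ correspondence theorem $\to$ graph sums of local Hurwitz numbers $\to$ constant terms of products of $\wp$-derivatives $\to$ Theorem~\ref{thm:introPint}) is the right skeleton, but there is a genuine gap at the step you flag as the ``main obstacle'', and it is not merely a matter of bookkeeping. The local triple Hurwitz numbers $A'(\bfw_v^-,\bfw_v^+,\mu_v)$ appearing at the vertices are only \emph{piecewise} polynomial in the widths, with jumps across the walls where partial sums of the $w_e$ coincide, and they need not be even. Consequently the contribution of an individual graph with these vertex weights is in general \emph{not} expressible as a constant term of a product of $P^{(m)}(z_i-z_j)$, and indeed is in general not quasimodular at all: for the profile $\Hmu=((3))$ the one-loop graph contributes $\tfrac16 S_3-\tfrac12 S_2+\tfrac13 S_1$, which contains the non-quasimodular series $G_3$ (Section~\ref{sec:H2stratum}). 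The paper repairs this by replacing the characters $f_\mu$ by completed cycles: by Theorem~\ref{thm:KO} one reduces, via multilinearity of the brackets (Theorem~\ref{thm:bracketgraphsum}), to graph sums whose vertex weights are the completed numbers $\ol{A}'(\bfw_v^-,\bfw_v^+,(\ell))$, and the Shadrin--Spitz--Zvonkine wall-crossing argument (Theorem~\ref{thm:SSZ}) shows these are \emph{globally even} polynomials of degree $\ell+1-\ell(\bfw_v^-)-\ell(\bfw_v^+)$. Only then does Proposition~\ref{prop:S1} apply and the graph sum becomes $[\zeta_n^0,\dots,\zeta_1^0]\,P_{\Gamma,\mm}$ with even $m_e$, after which Theorem~\ref{thm:coeff0QM} gives the weight bound $\sum_e(m_e+2)\le {\rm wt}(\Hmu)$. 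Without this detour through completed cycles your identification of the graph contribution with a contour integral fails.

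A second, smaller but real, inaccuracy: Proposition~\ref{prop:corrTCGraph} is stated and proved only for covers \emph{without unramified components}, not for connected covers, and the paper explicitly warns that the correspondence does not descend to the connected level because a connected cover may have disconnected local surfaces sitting over a connected global graph. So connectedness is \emph{not} ``automatic since we summed only over connected graphs''; one must first prove quasimodularity of $N'(\Hmu)$ (equivalently of $q$-brackets, via $N'=N/N()$) and then recover $N^\circ(\Hmu)$ by the inclusion--exclusion over sub-profiles described after the first proposition of Section~\ref{sec:CovEll}. Since that inversion expresses $N^\circ(\Hmu)$ as a polynomial in the $N'(\Hmu_A)$ with ${\rm wt}$ additive over the parts, the weight bound survives, but this step has to be said.
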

\par
The generating series for torus covers $N^\circ(\Hmu)= \sum_\Gamma 
N^\circ(\Gamma, \Hmu)$ can be decomposed as
according to the associated global graph~$\Gamma$. 
In general, the individual contributions
$N^\circ(\Gamma, \Hmu)$ are not quasimodular forms. Already
genus two surfaces and $\Hmu$ consisting of a $3$-cycle provides an example, 
see Section~\ref{sec:H2stratum}. Our method of proof gives a refinement
of the quasimodularity statement for the case of simple branch points.
Along with the interpretation in terms of tropical covers we
show in Section~\ref{sect:tropical}:
\par
\begin{Thm}[= Corollary~\ref{cor:QMindiv}]
In the case $\Hmu=((2),\dots, (2))$,  for any trivalent graph~$\Gamma$ 
the contribution $N'(\Hmu,\Gamma)$ of 
the graph $\Gamma$ to the total counting is a quasimodular form 
of mixed weight at most $|\Hmu| + \ell(\Hmu)$.
\end{Thm}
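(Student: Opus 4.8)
The plan is to realize $N'(\Hmu,\Gamma)$ as a contour integral of the type covered by Theorem~\ref{thm:introPint}, applied to a product of Weierstrass $\wp$-functions naturally attached to the graph $\Gamma$, and then invoke that theorem to conclude quasimodularity together with the weight bound. First I would recall from the correspondence theorem (Proposition~\ref{prop:corrTCGraph}) that in the case of simple branching $\Hmu=((2),\dots,(2))$ the global graph $\Gamma$ is trivalent, the local (triple Hurwitz) data at each vertex is trivial, and a torus cover with graph $\Gamma$ is determined by a choice of widths and heights on the edges satisfying the compatibility (flow/balancing) conditions coming from the vertices. Summing over all heights compatible with a fixed assignment of widths produces, for each edge $e=\{i,j\}$ of $\Gamma$, a geometric-type series in $q$ whose generating function in the variables $\zeta_i=e^{2\pi i z_i}$ is (up to normalization) a Weierstrass $\wp$-function $\wp(z_i-z_j;\tau)$ — this is exactly the standard cylinder-contribution computation, the one that underlies Theorem~\ref{thm:introPint}. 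Thus $N'(\Hmu,\Gamma)$ becomes the constant term $[\zeta_n^0,\dots,\zeta_1^0]$ of the product $P_\Gamma=\prod_{e=\{i,j\}\in E(\Gamma)}\wp(z_i-z_j;\tau)$ over the $n=|V(\Gamma)|$ vertex variables, with the integration heights $\ve_i$ dictated by an acyclic orientation / ordering of the vertices as in Section~\ref{sec:intEF}.

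Next I would carry out the bookkeeping needed to match the combinatorial sum to the contour integral precisely: one must check that the ``constant term in the $\zeta_i$'' operation, i.e. integrating each $z_i$ over a horizontal segment from $0+i\ve_i$ to $1+i\ve_i$, picks out exactly the set of height/width configurations counted by $N'(\Hmu,\Gamma)$, including the correct multiplicities from automorphisms of $\Gamma$ and from the labeling of branch points. The ordering of the $\ve_i$ must be chosen so that the residues extracted correspond to admissible (positive) heights on every edge; here the trivalent structure guarantees that such an ordering exists (any acyclic orientation will do), which is the combinatorial content of the tropical correspondence of \cite{BBBM} as explained in Section~\ref{sect:tropical}. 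Once the identification $N'(\Hmu,\Gamma)=[\zeta_n^0,\dots,\zeta_1^0]\,P_\Gamma$ is established, Theorem~\ref{thm:introPint} applies verbatim with all $m_k=0$: each of the $\ell=|E(\Gamma)|$ factors contributes weight at most $2$, so the mixed weight is at most $2|E(\Gamma)|$. Finally I would translate this bound into the stated form $|\Hmu|+\ell(\Hmu)$ using Euler characteristic / Riemann–Hurwitz for the cover: for simple branching $|\Hmu|+\ell(\Hmu)=2\,\#\{\text{branch points}\}=2|V(\Gamma)|$, and since $\Gamma$ is trivalent and connected one has $3|V(\Gamma)|=2|E(\Gamma)|$ at each vertex-count, giving $|E(\Gamma)|=\tfrac32|V(\Gamma)|\le\,$— wait, this needs the genus; more carefully, $|E(\Gamma)|-|V(\Gamma)|=h^1(\Gamma)\ge 0$ so $2|E(\Gamma)|\ge 2|V(\Gamma)|=|\Hmu|+\ell(\Hmu)$ goes the wrong way, so one instead uses that the relevant count of marked points forces $|E(\Gamma)|\le |V(\Gamma)|+(\text{number of loops})$, and the weight contributed by a loop or by the genus is absorbed because a loop edge $e=\{i,i\}$ contributes $\wp(0)$-type (Eisenstein) terms of weight exactly $2$ rather than more; collecting these one gets mixed weight at most $2|V(\Gamma)| = |\Hmu|+\ell(\Hmu)$, as claimed.

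The main obstacle I anticipate is precisely the last matching of the weight bound: Theorem~\ref{thm:introPint} gives $\sum_k(2+m_k)=2|E(\Gamma)|$, which for graphs with positive first Betti number exceeds $2|V(\Gamma)|=|\Hmu|+\ell(\Hmu)$, so a naive application is not enough. The fix will require a more careful analysis of how much the contour integration actually lowers the weight when the graph has cycles — essentially, integrating out a variable around a loop produces a constant term that is one ``level'' lower in the quasimodular filtration than the crude factor-count predicts, because the pole structure of $\wp$ along a cycle of edges telescopes. Making this precise — showing that each independent cycle of $\Gamma$ drops the mixed weight by at least $2$ — is the technical heart, and I would handle it by an induction on $h^1(\Gamma)$: contract a non-separating edge, relate the integral for $\Gamma$ to that for $\Gamma/e$ via a residue computation (the residue of $\wp(z_i-z_j)$ at $z_i=z_j$ vanishes, so only the constant-term part survives, identifying variables $z_i=z_j$), and apply the inductive hypothesis to $\Gamma/e$, which has one fewer edge and one fewer vertex, hence the same $|V|$-count relationship. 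This reduces everything to the tree case, where $|E(\Gamma)|=|V(\Gamma)|-1 < |V(\Gamma)|$ and the bound from Theorem~\ref{thm:introPint} is already good enough. The remaining routine checks — automorphism factors, the precise correspondence dictionary of Section~\ref{sect:tropical}, and the compatibility of the $\ve_i$-ordering with the edge orientations — I would state and defer to the cited sections.
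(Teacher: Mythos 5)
Your first paragraph is the right mechanism and is essentially what the paper does: for $\Hmu=((2),\dots,(2))$ the vertex weights $A'(\bfw_v^-,\bfw_v^+,(2))$ are constant (because $f_2=P_2/2$ is already a completed cycle, so Theorem~\ref{thm:SSZ} gives a polynomial of degree $2+1-3=0$ at each trivalent vertex), and the graph sum becomes the constant term of $\prod_{e=\{i,j\}}P(z_i-z_j)$, which is Proposition~\ref{prop:S1} combined with Theorem~\ref{thm:coeff0QM} (equivalently Theorem~\ref{thm:introPint}); loops are split off separately and contribute Eisenstein factors of weight $2$.

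However, the second half of your argument rests on an arithmetic error that then leads you to invent, and rely on, a false lemma. For $n$ branch points of profile $(2)$ one has $|\Hmu|=2n$ and $\ell(\Hmu)=n$, so $|\Hmu|+\ell(\Hmu)=3n$, \emph{not} $2n=2|V(\Gamma)|$ as you write. Since $\Gamma$ is trivalent, $2|E(\Gamma)|=3|V(\Gamma)|=3n=|\Hmu|+\ell(\Hmu)$, so the bound $\sum_k(2+m_k)=2|E(\Gamma)|$ from Theorem~\ref{thm:introPint} is \emph{exactly} the claimed bound and there is no discrepancy to repair. The ``technical heart'' you propose --- that each independent cycle of $\Gamma$ drops the mixed weight by at least $2$, proved by contracting edges and inducting on $h^1(\Gamma)$ --- is not only unnecessary but false: the type-$1$ graph for $\cHH(1^4)$ in Section~\ref{sec:examples} has $|V|=4$, $|E|=6$, $h^1(\Gamma)=3$, and its contribution is a quasimodular form of mixed weight $12$ and $10$ with a nonzero weight-$12$ part, i.e.\ the weight attains $2|E(\Gamma)|$ and is certainly not $\leq 2|E(\Gamma)|-2h^1(\Gamma)=6$. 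So you should delete the entire contraction/induction scheme and simply fix the computation of $|\Hmu|+\ell(\Hmu)$; with that correction your argument closes and coincides with the paper's proof.
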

\par
\medskip
Siegel-Veech constants measure the asymptotic number of immersed
cylinders in a flat surface of bounded length of the waist curve.
They are important characteristic quantities of the dynamics
of billiards and flat surfaces, see Section~\ref{sec:reltoSV} for 
a brief summary and \cite{eskinmasur}, \cite{emz}, \cite[Section~1]{CMZ} 
for more details. The Siegel-Veech constants for a general
flat surface in a given stratum can be computed by determining
the asymptotics of Siegel-Veech constants for spaces of torus
covers. This in turn requires counting torus covers with
a combinatorial constant, the Siegel-Veech weight (depending on an
integer parameter~$p \geq -1$), that we define
in Section~\ref{sec:SVgeneral}. In analogy with the simple counting
problem we consequently define the generating series 
$c^\circ_{p}(\Hmu)$ of Siegel-Veech weighted coverings. Showing that a such
series is a quasimodular form is important because of the good
control of the coefficient asymptotics of quasimodular 
forms (see \cite[Section~9]{CMZ}).
Counting Siegel-Veech weighted graphs gives a new proof of the
following theorem (see \cite[Theorem~6.4]{CMZ}) and the refinement 
graph by graph in the trivalent case that we state in 
Corollary~\ref{cor:QMindiv}. 
\par
\begin{Thm} \label{intro:SV}[= Corollary~\ref{Cor:SVmain}]
For any ramification profile $\Hmu$ and any odd integer $p \geq -1$
the generating series $c^\circ_{p}(\Hmu)$ for counting connected
covers with $p$-Siegel-Veech weight is a quasimodular form 
of mixed weight at most $|\Hmu| + \ell(\Hmu)+p+1$.
\end{Thm}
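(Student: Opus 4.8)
The plan is to reduce the Siegel-Veech weighted counting problem to the already-available quasimodularity of contour integrals of quasi-elliptic functions, namely Theorem~\ref{thm:coeff0QM} (and its special case Theorem~\ref{thm:introPint}). First I would set up the combinatorial framework: using the correspondence theorem (Proposition~\ref{prop:corrTCGraph}) I would write $c^\circ_p(\Hmu) = \sum_\Gamma c^\circ_p(\Gamma,\Hmu)$, the sum over global graphs $\Gamma$, where the contribution of a fixed graph is a sum over edge decorations (widths $w_e$, heights $h_e$) of the product of the local triple Hurwitz numbers at the vertices, times the $q$-power $q^{\sum_e w_e h_e}$, times the combinatorial Siegel-Veech factor. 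The $p$-Siegel-Veech weight, by its definition in Section~\ref{sec:SVgeneral}, is (a normalization of) $\sum_{\text{maximal horizontal cylinders } C} w_C^{\,p}$, i.e.\ a sum of $p$-th powers of widths of cylinders weighted by the heights carried along them; the key structural point is that this weight is a \emph{polynomial} in the edge widths that respects the graph decomposition, so it does not destroy the product structure over vertices.

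Next I would perform the summation over the heights $h_e$ first. Summing $q^{\sum_e w_e h_e}$ over positive integer heights, subject to the cycle-space constraints of the graph, produces, after the standard substitution $\zeta_j = e^{2\pi i z_j}$ attached to the vertices of $\Gamma$, a product over edges of geometric-type series that assemble into a product of Weierstrass $\wp$-functions and their derivatives $\wp^{(m)}(z_i - z_j)$ in the vertex variables — this is exactly the mechanism already used in the unweighted case to prove Theorem~\ref{intro:count}. The Siegel-Veech weight $w_C^{\,p}$ contributes extra factors of widths; each factor of a width $w_e$ corresponds, under the height summation, to applying the differential operator $\tfrac{1}{2\pi i}\partial_{z}$ (up to the normalization of $q = e^{2\pi i \tau}$ and constants) to the relevant geometric series, hence raises the order of a $\wp$-derivative. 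Concretely, an overall factor $w_C^{\,p}$ with $p$ odd translates into raising derivative orders in the $z_i$ by a total of $p$ across the edges of the cylinder $C$; since $p$ is odd this is compatible with the parity of $\wp$-derivatives (odd-order derivatives of $\wp$ are odd functions), which is what makes the constant-term contour integral nonzero in a controlled way and explains the hypothesis ``$p$ odd''.

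Having rewritten each graph contribution $c^\circ_p(\Gamma,\Hmu)$ as a finite sum of terms of the form ``(combinatorial Hurwitz coefficient) $\times$ (constant term in $\zeta_n, \dots, \zeta_1$ of a product of $\wp^{(m_k)}(z_i-z_j)$)'', I would invoke Theorem~\ref{thm:introPint} (more precisely Theorem~\ref{thm:coeff0QM}, since after taking one $\zeta$-coefficient the integrand need no longer be elliptic): each such constant term is a quasimodular form of mixed weight at most $\sum_k(2+m_k)$. Summing over the finitely many graphs $\Gamma$ contributing to a given profile $\Hmu$ (finiteness because the degree, hence the number of cylinders and vertices, is bounded for each coefficient $N_d$), and over the finitely many ways the $p$ extra width-factors can be distributed among edges, yields that $c^\circ_p(\Hmu)$ is quasimodular. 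For the weight bound: in the unweighted case the factors contribute $\sum_k(2+m_k)$ which, tracking the correspondence between $\wp$-derivative orders and the ramification data, sums to $|\Hmu| + \ell(\Hmu)$ exactly as in Theorem~\ref{intro:count}; the $p$ extra differentiations distributed over the cylinders of a single immersed cylinder class raise the total derivative order by $p$, contributing an extra $p$ to the weight, and the Siegel-Veech normalization (the division by the length, the $\tfrac{1}{2}$ in the waist-curve count, etc.) accounts for the remaining $+1$, giving mixed weight at most $|\Hmu| + \ell(\Hmu) + p + 1$. The passage from the ``primed'' (not-necessarily-connected, or level-1, graph-by-graph) count to the connected count $c^\circ_p$ is the usual exponential-formula / inclusion-exclusion argument, which preserves quasimodularity and does not increase the weight.

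The main obstacle I expect is bookkeeping the Siegel-Veech weight correctly through the height-summation step: one must check that $\sum_C w_C^{\,p}$, when expanded and pushed through the geometric series for the heights, really does land in the class of \emph{finite} $\mathbb{Z}$-linear combinations of products $\prod_k \wp^{(m_k)}(z_i - z_j)$ rather than producing, say, derivatives in $\tau$ or non-elliptic correction terms that fall outside the scope of Theorem~\ref{thm:introPint} — and, relatedly, verifying that the derivative-order total is exactly $p$ (not merely $\le p$ or $\ge p$) so that the stated weight bound is sharp in the way the $+p+1$ suggests. A secondary subtlety is ensuring the $p$ odd hypothesis is genuinely used (and sufficient): one needs that after attaching the Siegel-Veech differentiations the integrand still has the right quasi-elliptic structure for Theorem~\ref{thm:coeff0QM} to apply, and the parity of $p$ is what guarantees the relevant pieces are odd-order (hence odd) derivatives of $\wp$, so that the $\zeta$-constant terms behave as in the unweighted analysis. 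Everything else — finiteness of the graph sum, the connected/disconnected passage, and assembling the weight bound — is routine given the correspondence theorem and the quasimodularity of constant terms established earlier.
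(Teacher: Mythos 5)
There is a genuine gap, and it sits exactly at the point you flag as ``the main obstacle I expect'': the Siegel--Veech weight does \emph{not} stay inside the class of finite linear combinations of products $\prod_k \wp^{(m_k)}(z_i-z_j)$, so Theorem~\ref{thm:introPint} is not enough. The weight attached to an edge in the graph sum is $h_e w_e^{p}$ (see Proposition~\ref{prop:svgraphsum}), i.e.\ it carries the \emph{height} $h_e$ as a linear factor, and your proposed mechanism --- converting the weight into extra $\partial_z$-derivatives of the propagator --- only ever produces additional powers of the width $w_e$, never a bare factor of $h_e$. (For $p=-1$, the geometrically most relevant case, ``raising the derivative order by $p$'' is not even meaningful.) To realize $\sum_{w,h} h\,w^{m}q^{hw}$ as a Fourier coefficient one is forced either to apply $D_q$ to the propagator, giving $D_qP^{(m-2)}$ when $m\ge 2$, or, in the boundary case $m=0$, to use the function $L$ of \eqref{eq:FourierLZP}, which equals $-\tfrac12 Z^2+\tfrac12 P-G_2+\tfrac1{12}$ and is only \emph{quasi}-elliptic. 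This is precisely why the paper proves quasimodularity of constant terms for the full ring $\cQQ_n$ of quasi-elliptic quasimodular forms (Theorem~\ref{thm:coeff0QM}, Proposition~\ref{prop:Qring}) and formulates a separate Siegel--Veech graph-sum theorem (Theorem~\ref{thm:SSVQM}, with the loop case handled by $D_qS_{m-2}$ and $E_2$ in Lemma~\ref{lem:S0SV}). Your argument, as written, neither produces the factor $h_e$ nor has the tools to integrate the resulting non-elliptic integrand.

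Two smaller but real inaccuracies: the role of the hypothesis ``$p$ odd'' is not that the relevant pieces become odd-order (odd) derivatives of $\wp$; it is the opposite. Since the completed-cycle Hurwitz polynomials are even, the exponent on the distinguished edge becomes $p+1+(\text{even})$, and $p$ odd is exactly what makes all exponents even, so that the propagator factors $P^{(m_i)}$ are even functions --- the parity hypothesis needed for the contour-integral identification of the graph sum. Second, the ``$+1$'' in the weight bound does not come from any normalization of the waist-curve count; it is simply that the distinguished edge contributes weight $(p+1)$ more than in the unweighted count (a factor $h_ew_e^{p+1}$ corresponds to $D_q$ of a lower derivative of $P$, landing in weight $m+2$ with $m$ shifted up by $p+1$). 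Finally, the reduction from $f_\mu$ to completed cycles in the weighted setting is not automatic: one needs the identity expressing $c_p'(\Hmu)$ as $\bq{T_p\,f_{\mu_1}\cdots f_{\mu_n}}-\bq{T_p}\bq{f_{\mu_1}\cdots f_{\mu_n}}$ and multilinearity to transport Proposition~\ref{prop:svgraphsum} to the polynomial (completed-cycle) version; your sketch skips this step entirely.
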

\par
We conclude with an outline of the proof of Theorem~\ref{intro:count} and
Theorem~\ref{intro:SV}. Using the correspondence theorem 
Proposition~\ref{prop:corrTCGraph} 
our problem is converted into counting decorated graphs whose vertex
labels are triple Hurwitz numbers. These are piecewise polynomials
in the input data, i.e.\ the edge labels of the graph. If these were 
globally polynomials (as they are in the trivalent case), the graph
sums can be interpreted as the constant coefficient of a polynomial
in the Weierstra\ss\ $\wp$-function and its derivatives, see 
Proposition~\ref{prop:S1}. The polynomiality can be restored using
{\em completed cycles~$p_k$} instead of the weighted symmetric group 
characters~$f_k$ in the Burnside formula for counting coverings.
Using the notion of $q$-bracket and the fact that both $f_k$
and $p_k$ generate the algebra of shifted symmetric functions,
the arguments of Section~\ref{sec:DHN} allow to come back to the
true counting problem while maintaining quasimodularity.
\par
\subsection*{Acknowledgements} 
We are very grateful to Alex Eskin for sharing with us the manuscript of an
old project with Andrei Okounkov that was at the origin of the notion of
completed cycles (see the reference to [12] in \cite{OkPand}). We also 
thank Don Zagier for many fruitful 
conversations on quasimodular forms and Dmitry Zvonkine and Kathrin 
Bringmann for useful suggestions and comments. We moreover thank the referee whose comments helped to improve the exposition of the paper.
\par
This research was conducted at the {\em Max-Planck-Institut
f\"ur Mathematik, Bonn}, whose hospitality we gratefully acknowledge.
\par
\subsection*{Notation}
For a partition $\lambda = (\lambda_1 \geq \lambda_2 \geq \cdot)$
we let $|\lambda| = \sum_{i \geq 0} \lambda_i$ be the number that
$\lambda$ is a partition of, i.e., $\lambda \vdash  |\lambda|$. We denote by 
$\ell(\lambda) = \max(\{i: \lambda_i \geq 0\})$ the {\em length} of the
partition. We also need the {\em weight} ${\rm wt}(\lambda) = |\lambda| 
+ \ell(\lambda)$ of a partition. We adopt the corresponding notation
for tuples, i.e.\ if $\bfw = (w_1,\ldots,w_n)$ then $|\bfw| = \sum w_i$
and $n = \ell(\bfw)$.

\section{Counting Covers of elliptic curves by global graphs} 
\label{sec:CovElliptic}

In this section we recall  basic facts about enumeration of covers, 
both for torus coverings and coverings of the projective line with
three marked points. The aim of this section is the correspondence
theorem Proposition~\ref{prop:corrTCGraph} that gives a bijection between torus
coverings and decorated graphs. This proposition holds on the level
of covers {\em without unramified components only}, but (for general
branching profile) neither on the level of connected coverings nor on
the level of all coverings. This fact requires us to set up quite
a bit of notation before giving the statement.

\subsection{Covers of elliptic curves and their Hurwitz tuples} 
\label{sec:CovEll}

Here we recall basic facts about enumeration of covers using tuples of
elements in the symmetric group. Our aim here is to explain the passage
between the number of connected and non-connected coverings and
to express these numbers in terms of characters on the symmetric group.
We focus on torus coverings in this section.
\par
Let $\Hmu = (\mu^{(1)}, \cdots, \mu^{(n)})$ consist of partitions $\mu^{(i)} = 
(\mu^{(i)}_{1}, \mu^{(i)}_{2}, \cdots )$ such that each entry $\mu^{(i)}_{j}$ is 
a non-negative integer and for later use we define~$g$ by 
$\sum_{i,j}(\mu^{(i)}_{j} -1) = 2g-2$. We call such a tuple $\Hmu$ 
a {\em ramification profile}. 
\par
A covering $p: X \to E$ of the torus~$E$ has ramification profile $\Hmu$, if the
covering has $n$ numbered branch points and over the $i$-th branch point the sheets 
coming together form 
the partition $\mu^{(i)}$ (completed by singletons, if $|\mu^{(i)}| < \deg(p)$). Let 
$$\rho: \pi_1(E \setminus \{P_1,\ldots,P_n\}) \to S_d$$
be the monodromy representation in the symmetric group of $d$ elements associated
with a covering~$p$ and some base point~$P$, that we suppress in notation.
We use the convention that loops (and elements of the symmetric group) are composed from right to left. 
The elements $(\alpha, \beta, \gamma_1, \cdots, \gamma_n)$ as in 
Figure~\ref{fig:pointpos} generate the fundamental group  
$\pi_1(E\setminus \{P_1,\ldots,P_n\})$ with the relation 
\be\label{eq:FRel}
\beta^{-1}\alpha^{-1}\beta\alpha = \gamma_n \cdots \gamma_1.
\ee
\begin{figure}[h]
\begin{centering}
\begin{tikzpicture}
\tikzset{
    >=latex',
    firstarrow/.style={
           ->,
           shorten >=2pt,},
    secondarrow/.style={
           postaction={decorate},
           decoration={markings,mark=at position .65 with
           {\arrow[line width=.7pt]{>}}}}, 
    thirdarrow/.style={
           postaction={decorate},
           decoration={markings,mark=at position .90 with
           {\arrow[line width=.7pt]{>}}}} 
}    

\draw (0,0) node(1){} -- (0,4) node(2){} -- (4,4) node(3){} -- (4,0) node(4){} -- cycle;
\draw (4.5,0) node(5){} -- (4.5,4) node(6){} -- (8.5,4) node(7){} -- (8.5,0) node(8){} -- cycle;
\draw (0,0.3) node(9){} -- (4,0.3) node(10){};
\draw (0.3,0) node(11){} -- (0.3,4) node(12){};
\draw (4.5,0.3) node(13){} -- (8.5,0.3) node(14){};
\draw (4.8,0) node(15){} -- (4.8,4) node(16){};

\draw (9) -- (10) [secondarrow];
\draw (11) -- (12) [secondarrow];
\draw (13) -- (14) [secondarrow];
\draw (15) -- (16) [secondarrow];

\draw (4.8,0.3) -- (5.6,3) [firstarrow];
\draw (4.8,0.3) -- (6.35,2.9) [firstarrow];
\draw (4.8,0.3) -- (7.3,2.1) [firstarrow];

\fill (1.1,3) circle (1.7pt)
      (1.85,2.9) circle (1.7pt)
      (2.7,2.1) circle (1.7pt)
      (5.6,3) circle (1.7pt)
      (6.35,2.9) circle (1.7pt)
      (7.3,2.1) circle (1.7pt);

\draw (0.3,0.3) .. controls (.77,2.32) and (.81,2.49) .. (.84,2.65) [thirdarrow];
\draw (0.3,0.3) .. controls (.82,2.15) and (.9,2.53) .. (1.1,2.63); 
\draw plot [smooth, tension=1] coordinates { (.84,2.65) (.9,3.15) (1.28,3.25) (1.32,2.87) (1.1,2.63)};

\draw (0.3,0.3) .. controls (1.3,2.32) and (1.49,2.49) .. (1.53,2.6) [thirdarrow];
\draw (0.3,0.3) .. controls (1.3,2.15) and (1.63,2.53) .. (1.65,2.5); 
\draw plot [smooth, tension=1] coordinates { (1.53,2.6) (1.65,3.05) (2.1,3.15) (2.06,2.7) (1.65,2.5)};

\draw (0.3,0.3) .. controls (2,1.52) and (2.19,1.69) .. (2.24,1.83) [thirdarrow];
\draw (0.3,0.3) .. controls (2,1.35) and (2.33,1.73) .. (2.50,1.7); 
\draw plot [smooth, tension=1] coordinates { (2.24,1.83) (2.45,2.25) (2.9,2.35) (2.9,1.9) (2.50,1.7)};

\tikzstyle{every node}=[font=\scriptsize] 
\node (0) at (-.15,-.15) {$~$};
\node (P) at (0.15,0.42) {$P$};
\node (P) at (4.65,0.42) {$P$};
\node (P1) at (1.2,3.5) {$P_1$}; 
\node (P2) at (2.2,3.3) {$P_2$}; 
\node (P3) at (3.15,2.4) {$P_3$};
\node (P1) at (5.7,3.5) {$P_1$}; 
\node (P2) at (6.7,3.3) {$P_2$}; 
\node (P3) at (7.65,2.4) {$P_3$}; 
\node (b) at (0.15,2.75) {$\beta$};
\node (b) at (4.65,2.75) {$\beta$};
\node (a) at (2.8,0.42) {$\alpha$};
\node (a) at (7.3,0.42) {$\alpha$};
\node (g1) at (.65,2.5) {$\gamma_1$};
\node (g2) at (1.23,2.4) {$\gamma_2$};
\node (gn) at (1.93,1.72) {$\gamma_n$};
\node (d1) at (5.3,2.5) {$\delta_1$};
\node (d2) at (5.85,2.4) {$\delta_2$};
\node (dn) at (6.47,1.72) {$\delta_n$};

\fill (2.5,3.01) circle (.7pt)
      (2.65,2.87) circle (.7pt)
      (2.8,2.72) circle (.7pt)   
      (7,3.01) circle (.7pt)
      (7.15,2.87) circle (.7pt)
      (7.3,2.72) circle (.7pt);
\end{tikzpicture} 
\end{centering}
\caption{Standard presentation of $\pi_1(E\setminus \{P_1,\ldots,P_n\})$ 
}
\label{fig:pointpos}
\end{figure}
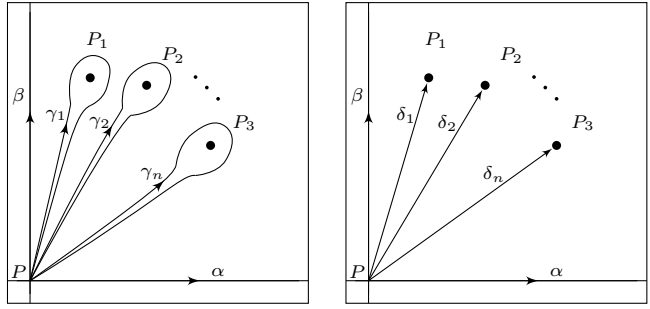
Given such a homomorphism $\rho$, we let $\ual = \rho(\alpha)$, $\ube = \rho(\beta)$
and $\uga_i = \rho(\gamma_i)$ and call the tuple
\be \label{eq:HT}
h \=(\ual, \ube, \uga_1, \cdots, \uga_n) \in (S_d)^{n+2}
\ee
the {\em Hurwitz tuple} corresponding to $\rho$ and the choice of generators.
Our main goal is to count coverings, or rather the corresponding Hurwitz tuples, and so we let
\be
\Hur_d(\Hmu) \= \{ H \=(\ual, \ube, \uga_1, \cdots, \uga_n) \in (S_d)^{n+2} \,\,
\text{of profile $\Hmu$} \} \,,
\ee
where we say that $H$ has profile~$\Hmu$ if the partitions $[\uga_i]$ 
associated with conjugacy class of $\uga_i$ are equal to $\mu^{(i)}$ for $i=1,\ldots,n$. 
Here we use the general convention to call two partitions of different sizes $d_1 \leq d_2$ equal
if they differ by $d_2 - d_1$ parts of length one.
\par
So far we have made no connectedness assumption, but we will ultimately be interested
in counting connected coverings, hence transitive monodromy representations. We indicated this 
subset by an upper circle. As important technical intermediate notion we need covers
without unramified components, indicated by a prime, so we let
\bas
\Hur^\circ_d(\Hmu) &\= \{ H \in \Hur_d(\Hmu)\,:\, \langle H \rangle \,\,\text{acts transitively on
$\{1,\ldots,d\}$} \}  \\
\Hur'_d(\Hmu) &\= \{ H \in \Hur_d(\Hmu)\,:\, \text{ $\langle  \uga_1, \cdots, \uga_n \rangle$
acts non-trivially on every $H$-orbit} \}  
\eas
\par
The corresponding countings of covers (as usual with weight $1/\Aut(p)$) differ from the
cardinalities of these sets of Hurwitz tuples by the simultaneous conjugation of the 
Hurwitz tuple, hence by a factor of $d!$. Consequently, we let
\be \label{eq:NfromCov}
N_d(\Hmu) \= \frac{|\Hur^0_d(\Hmu)|}{d!}, \quad
N'_d(\Hmu) \= \frac{|\Hur'_d(\Hmu)|}{d!}, \quad
N^\circ_d(\Hmu) \= \frac{|\Hur^\circ_d(\Hmu)|}{d!}\,,
\ee
and package these data into the generating series 
\be \label{eq:genserN}
N(\Hmu) \= \sum_{d=0}^\infty N_d(\Hmu) q^d, \quad N'(\Hmu) \= \sum_{d=0}^\infty 
N_d'(\Hmu) q^d, \quad N^0(\Hmu) \= \sum_{d=0}^\infty N^0_d(\Hmu) q^d \,.
\ee
\par
From $|\Hur_d(\Hmu)| \= \sum_{j=0}^d \binom{d}{j} |\Hur'_j(\Hmu)|\,| \Hur_{d-j}()| $
one deduces that
\begin{equation} \label{eq:NNN}
N'(\Hmu) \= N(\Hmu)/N()\,.
\end{equation}
\par
In order to state the passage from connected counting to counting without
unramified components we need to define the set of ramification points and
its partitions. For $i \in \{1,\ldots,n\}$ let $\mu_j^{(i)}$ for $j \in J=J(i)$
be the parts of $\mu^{(i)}$ of length greater than one and let 
$$\cRR(\Hmu) \= \{(i,j), \, i \in \{1,\ldots,n\}\,\,\,\text{and}\,\,\, j \in J(i) \}$$
be the index set of ramification points of the profile~$\Hmu$.
We let $\P(\cRR)$ be the set of partitions of the set $\cRR = \cRR(\Hmu)$
that are finer than the partition by different first index. For any part~$A$
of such a partition we let $\Hmu_A$ be the profile consisting of the 
partitions $\mu_A^{(i)} = \{\mu_j^{(i)}, \,(i,j) \in A\}$ grouped together 
according to~$A$. We omit those~$i$ for which there is no~$j$ with $(i,j) \in A$, 
so that $\Hmu_A$ is a profile with~$n$ or less branch points.
\par
\begin{Prop} The generating function for counting covers  without
unramified components can be expressed in terms of counting functions
for connected covers as
$$ N'(\Hmu) \= \sum_{\alpha \in \P(\cRR)} \prod_{A \in \alpha} N^\circ(\Hmu_A) $$
\end{Prop}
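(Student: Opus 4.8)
The plan is to start from a cover $p: X \to E$ without unramified components, corresponding to a Hurwitz tuple $H \in \Hur'_d(\Hmu)$, and decompose it according to the orbit structure of $\langle H \rangle$ on $\{1,\ldots,d\}$. Each orbit $O$ carries a restricted Hurwitz tuple $H|_O$, which is transitive on $O$ by construction, hence lies in $\Hur^\circ_{|O|}(\Hmu_{A})$ for a suitable subprofile. The key point is that, since $H$ has no unramified components, the group $\langle \uga_1,\ldots,\uga_n\rangle$ acts non-trivially on $O$, so at least one $\uga_i$ moves a point of $O$; this means the set $A = A(O)$ of ramification indices $(i,j)$ that ``live on $O$'' is non-empty. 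Reading off these index sets over all orbits produces a partition $\alpha \in \P(\cRR(\Hmu))$ refining the partition by first index (the latter because a single branch point $P_i$ can distribute its ramification among several orbits, but each individual cycle of $\uga_i$ lies in exactly one orbit). Conversely, given a partition $\alpha$ and, for each part $A$, a transitive Hurwitz tuple of profile $\Hmu_A$, one reassembles a Hurwitz tuple of profile $\Hmu$ with no unramified components by taking the disjoint union of the orbits.

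Concretely, I would phrase this as a bijection of sets, keeping track of labels. Fix the partition $\alpha$ and the sizes $d_A = |O|$ with $\sum_A d_A = d$. Choosing which elements of $\{1,\ldots,d\}$ go into each orbit contributes the multinomial coefficient $\binom{d}{(d_A)_{A}}$, and once the underlying sets are fixed, the data on each orbit is an element of $\Hur^\circ_{d_A}(\Hmu_A)$. This gives
$$
|\Hur'_d(\Hmu)| \= \sum_{\alpha \in \P(\cRR)} \,\sum_{\substack{(d_A)_{A \in \alpha} \\ \sum d_A = d}} \binom{d}{(d_A)_A} \prod_{A \in \alpha} |\Hur^\circ_{d_A}(\Hmu_A)|\,.
$$
Dividing by $d!$ turns the multinomial into $\prod_A 1/d_A!$, so $N'_d(\Hmu) = \sum_\alpha \sum_{\sum d_A = d}\prod_A N^\circ_{d_A}(\Hmu_A)$; summing against $q^d$ and recognizing the Cauchy product yields exactly $N'(\Hmu) = \sum_{\alpha \in \P(\cRR)} \prod_{A \in \alpha} N^\circ(\Hmu_A)$, since $N^\circ(\Hmu_A) = \sum_{d_A} N^\circ_{d_A}(\Hmu_A) q^{d_A}$.

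The main obstacle is the bookkeeping of the branch-point labels and of the partition refinement condition, rather than anything deep. One has to check carefully that the map $O \mapsto A(O)$ really lands in $\P(\cRR)$ and is finer than the first-index partition: the subtlety is that the parts of $\mu^{(i)}$ of length one are deliberately excluded from $\cRR$, so an orbit on which $\uga_i$ acts only through fixed points contributes nothing to the $i$-th block of $\alpha$, consistent with the fact that $\Hmu_A$ omits such $i$. Conversely, when reassembling, a given branch point index $i$ may appear in several parts $A$, and one must verify that gluing the corresponding local pictures produces a $\uga_i$ whose cycle type is the full $\mu^{(i)}$ (completed by fixed points) — this is immediate because cycle types add under disjoint union. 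Finally one should note that the ``no unramified component'' condition is exactly what forces every $A \in \alpha$ to be non-empty, which is what makes $\alpha$ a genuine partition of $\cRR$ with no empty parts, matching the indexing set $\P(\cRR)$ on the right-hand side.
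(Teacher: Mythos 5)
Your proposal is correct and follows essentially the same route as the paper: decompose a Hurwitz tuple without unramified components into its $\langle H\rangle$-orbits, read off the induced partition $\alpha\in\P(\cRR)$, count the sheet labelings with the multinomial coefficient, and divide by $d!$ to turn the resulting identity of Hurwitz-tuple cardinalities into the Cauchy-product identity of generating series. The paper states this more tersely; your additional checks (non-emptiness of each part from the no-unramified-component condition, refinement of the first-index partition, additivity of cycle types under disjoint union) are exactly the details the paper leaves implicit.
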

\par
\begin{proof} Any covering~$p$ without unramified components induces a
partition $\alpha \in \P(\cRR)$ of the ramification points according to
its connected components. We label the sheets of the covering and
obtain the identity of the sets of Hurwitz numbers
$$ |\Hur_d'(\Hmu)| \= \sum_{\alpha \in \P(\cRR)} \sum_{(d_A)_{A \in \alpha}}
 \binom{d}{(d_A)_{A \in \alpha}} \prod_{A \in \alpha} |\Hur_{d_A}^\circ(\Hmu_A)|\,,$$
from which the claim follows immediately.
\end{proof}
\par
Since the summand $N^\circ(\Hmu)$ appears on the right side, 
classical inclusion-exclusion allows to invert this formula and
to write $N^\circ(\Hmu)$ as a linear combination of products of $N'(\Hmu_A)$
for subsets~$A$ of~$R$.
\par
\smallskip
With the aim of connecting counting problems to the representation theory
of the symmetric group, we recall the classical Burnside Lemma  (see e.g.\ 
\cite[Theorem~A.1.10]{LanZvon}) that the number of Hurwitz tuples with ramification
profile $\Hmu$ is given by
\be \label{eq:Burnside}
 \Hur_d(\Hmu) \= d! \sum_{\lambda \in \Part(d)} \prod_{i=1}^n f_{\mu^{(i)}}(\lambda), 
\ee
where a conjugacy class $\sigma$ is completed with singletons
to form a partition of $|\lambda|$ and where
\be \label{eq:defssf}
f_{\sigma}(\lambda) \= z_{\sigma} \chi^\lambda(\sigma)/\dim \chi^\lambda\,. 
\ee
Here $z_\sigma$ denotes the size of the conjugacy class of $\sigma$ and 
$\dim \chi^\lambda$ is the dimension of representation $\lambda$. We
also write $f_k$ for the special case that $\sigma$ is a $k$-cycle.


\subsection{Covers of the projective line with three marked points} 
\label{sec:CovPP1}

Covers of the projective line $\pi: S \to \PP^1$  can, of course, also be described 
by their monodromy. The main point
here is to introduce some notation and to highlight the fact that
we consider one of the points ($z=1$) to have a fixed ramification 
profile given by a partition~$\mu$, whereas the ramification over the
other two branch points ($z=0$ and $z= \infty)$ are prescribed by two 
('input' and 'output')
tuples of variables $\bfw^- = (w_1^-,\ldots,w^-_{n^-})$ and 
$\bfw^+ =  (w_1^+,\ldots,w^+_{n^+})$. 

 We conclude again, with the passage
between the number of connected and non-connected coverings and
expressions in terms of characters of the symmetric group.
\par
The use of the terminology double and triple Hurwitz numbers is not completely consistent in the litterature. Most classically, {\em double Hurwitz numbers} count coverings with prescribed behaviour over two points and besides only simple branching. In e.g. \cite{SSZ} (that we will use later), this notion is generalized allowing instead of simple branching several cycles of fixed length $r$. We call these {\em generalized double Hurwitz numbers}. We also need {\em triple Hurwitz numbers}  that count covers with three prescribed ramification points. In the literature simple branch points might be allowed besides, but we will not need this case and do not consider it. We will frequently use the special case of triple Hurwitz numbers where the ramification profile over one of the points is a cycle. These are obviously (special cases of) generalized double Hurwitz numbers.
\par
Our general notation convention is that $\Cov$ denotes a set
of coverings, $\TR$ ('triple ramification') is the set of tuples describing the monodromy 
of a covering and~$A$ denotes the number of coverings, always 
with weight $1/\Aut(\pi)$. We use here the upper indices $\circ$ and prime
as in the previous section, to denote connected covers and covers without
unramified components, respectively.
\par
We need to highlight one more detail, the numbering of preimages of
the branch points. Suppose that $\deg(\pi) = d$. For a partition~$\mu$ 
and a point $x \in \PP^1$ we write $\pi^{-1}(x)  =  [\mu]$ if the cycles 
in $\pi^{-1}(x)$ agree with the partition $\mu$, completed by ones to form 
a partition of~$d$.
We say that~$x$ has {\em unnumbered profile}~$\mu$ in this case.
If $\bfw$ is a tuple of integers with $\sum w_i =d$, we may consider it 
as a partition $[\bfw]$ and write $\pi^{-1}(x)  =  [\bfw]$ to specify
an unnumbered profile. More frequently we will  write that covering
$\pi$ has the property $\pi^{-1}(x)  =  \bfw$ and we will say that~$x$ has 
{\em numbered profile}~$\bfw$ over~$x$, if the covering comes with
a labeling $\sigma_x$ of $\pi^{-1}(x)$ such that at the $i$-th point 
$w_i$~sheets come together. 
\par
We will consider most of the time the profile over $z=0$ and $z=\infty$
to be numbered and over $z=1$ to be unnumbered. If all of these points
have unnumbered profiles, we add the subscript~$un$. 
\par
Consequently, we have explained the conventions for our notations
\ba 
\Cov_{un }(\bfw^-, \bfw^+,\mu) &\= \bigl\{\pi: S \to \PP^1, \, \deg(\pi) = 
\sum w_i^+ = \sum w_i^-\,, \\
& \phantom{\= \bigl\{\pi:} \pi^{-1}(1) = [\mu], \,\,\pi^{-1}(0) = [\bfw^-],\,\,\
\pi^{-1}(0) = [\bfw^-] \bigr\}
\ea
and
\ba
\Cov(\bfw^-, \bfw^+,\mu) &\= \bigl\{(\pi: S \to \PP^1,\sigma_0,\sigma_\infty)\, :  \, \deg(\pi) = 
\sum w_i^+ = \sum w_i^-\,, \\
& \phantom{\= \bigl\{\pi:} \pi^{-1}(1) = [\mu], \,\,\pi^{-1}(0) = \bfw^-,\,\,\
\pi^{-1}(0) = \bfw^- \bigr\}
\ea
for unnumbered and numbered covering. The same notation convention is used for 
\ba 
\TR_{un}(\bfw^-, \bfw^+,\mu) & \=  \{ T=(\ual,\ube,\uga) \in S_d^3; \quad 
[\ual] = [\bfw^-], \,\, [\ube] = [\bfw^+],\,\,  [\uga] = \mu \} \\
\TR(\bfw^-, \bfw^+,\mu) & \=  \{ (T=(\ual,\ube,\uga), \sigma_0, \sigma_\infty); \quad [\uga] = \mu,\,\,
[\ual] = \bfw^-, \,\, [\ube] = \bfw^+\,\,  \} 
\ea
for Hurwitz tuples. Moreover, we define
\ba \label{eq:defDHN}
A_{un}(\bfw^-, \bfw^+,\mu) &\= \sum_{\pi \in \Cov_{un}(\bfw^-, \bfw^+,\mu)} \frac{1}{\Aut(\pi)}
\= \frac{1}{d!}\, |\TR_{un}(\bfw^-, \bfw^+,\mu)|  \\
 A(\bfw^-, \bfw^+,\mu) &\= \sum_{\pi \in \Cov(\bfw^-, \bfw^+,\mu)} \frac{1}{\Aut(\pi)}
\= \frac{1}{d!}\, |\TR(\bfw^-, \bfw^+,\mu)| 
\ea
for the weighted number of Hurwitz tuples if $d=\sum w_i^+ = \sum w_i^-$, 
and we let $A(\bfw^-, \bfw^+,\mu) =0$ if this condition does not hold.
\par
As a consequence of the Burnside Lemma we can again write those cardinalities 
in terms of characters of the symmetric group as
\ba
A_{un}(\bfw^-,\bfw^+,\mu) &\= \frac{z_{\bfw^-} z_{\bfw^+}}{d!^2} \sum_{|\lambda| = d} 
\chi_{\bfw^-}^\lambda \chi_{\bfw^+}^\lambda  f_\mu(\lambda) \\
&\= 
({\prod_i w^-_i\prod_i w^+_i\, \prod_j r^-_j \prod_j r^+_j })^{-1}
\sum_{|\lambda| = d} \chi_{\bfw^-}^\lambda
\chi_{\bfw^+}^\lambda  f_\mu(\lambda) 
\ea 
if $d = \sum_i w_i^+ =  \sum_j w_j^-$, and zero otherwise, where $r_j^\pm$ are the multiplicities of the parts of $\bfw^\pm$. Correspondingly, 
\be 
A(\bfw^-,\bfw^+,\mu) \= \frac{1}{\prod_i w^-_i\prod_i w^+_i }
\sum_{|\lambda| = d} \chi_{\bfw^-}^\lambda \chi_{\bfw^+}^\lambda  f_\mu(\lambda)
\ee
if $d = \sum_i w_i^+ =  \sum_j w_j^-$, and zero otherwise. This formula
is the point of departure for the counting problems.
\par
\medskip
As in the case of covers of elliptic curves we conclude this section with
a discussion on the passage between connected and non-connected version. 
\par
\begin{Lemma}The double Hurwitz number with no ramification point over $z=1$ is
\be \label{eq:Aemptyset}
A(\bfw^-, \bfw^+, \emptyset)\=\frac{1}{\prod_i w_i^+}\delta_{\bfw^+}^{\bfw^-}\,.
\ee
\end{Lemma}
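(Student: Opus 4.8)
The plan is to read off the identity from the character-theoretic formula for the numbered double Hurwitz number $A(\bfw^-,\bfw^+,\mu)$ established just above, by specializing $\mu=\emptyset$ and invoking the orthogonality relations for the irreducible characters of $S_d$. Geometrically, the empty profile over $z=1$ means the covering is unramified there, so each connected component is one of the totally ramified cyclic covers $z\mapsto z^k$; this picture will serve as a sanity check, but the character computation is shorter.

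First I would compute $f_\emptyset$. Since the conjugacy class of the identity of $S_d$ is a singleton, $z_\emptyset = 1$, and $\chi^\lambda$ evaluated at the identity equals $\dim\chi^\lambda$, so $f_\emptyset(\lambda) = z_\emptyset\,\chi^\lambda(\emptyset)/\dim\chi^\lambda = 1$ for every $\lambda$. Plugging this into the formula $A(\bfw^-,\bfw^+,\mu) = \bigl(\prod_i w_i^-\prod_i w_i^+\bigr)^{-1}\sum_{|\lambda|=d}\chi_{\bfw^-}^\lambda\chi_{\bfw^+}^\lambda f_\mu(\lambda)$ (valid when $d=\sum_i w_i^- = \sum_i w_i^+$, and $A=0$ otherwise) gives $A(\bfw^-,\bfw^+,\emptyset) = \bigl(\prod_i w_i^-\prod_i w_i^+\bigr)^{-1}\sum_{|\lambda|=d}\chi_{\bfw^-}^\lambda\chi_{\bfw^+}^\lambda$.

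Next I would apply the second (column) orthogonality relation for the character table of $S_d$: the sum $\sum_{|\lambda|=d}\chi_{\bfw^-}^\lambda\chi_{\bfw^+}^\lambda$ vanishes unless $\bfw^-$ and $\bfw^+$ define the same partition of $d$, in which case it equals the order of the centralizer of a permutation of that cycle type, namely $\prod_j j^{r_j}r_j! = \bigl(\prod_i w_i^-\bigr)\bigl(\prod_j r_j!\bigr)$, where $r_j$ is the number of parts of $\bfw^-$ equal to $j$. Since $\prod_i w_i^- = \prod_i w_i^+$ once the two partitions agree, this collapses to $A(\bfw^-,\bfw^+,\emptyset) = \bigl(\prod_i w_i^+\bigr)^{-1}\prod_j r_j!$, which is precisely $\bigl(\prod_i w_i^+\bigr)^{-1}\delta^{\bfw^-}_{\bfw^+}$ once one unwinds $\delta^{\bfw^-}_{\bfw^+}$ as the number of bijections $\phi$ of the two index sets with $w_i^- = w^+_{\phi(i)}$ for all $i$ (there are $\prod_j r_j!$ of these when $[\bfw^-]=[\bfw^+]$ and none otherwise).

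The computation is essentially immediate, so the only point to watch ``--- and the part where I would take the most care ---'' is the bookkeeping of the symmetry factor $\prod_j r_j!$: it appears once through the centralizer order on the representation side and must be matched exactly by the combinatorial content of $\delta^{\bfw^-}_{\bfw^+}$, i.e.\ by the numberings $\sigma_0,\sigma_\infty$ of the fibres over $z=0$ and $z=\infty$. As an independent check I would recount geometrically: a cover of $\PP^1$ unramified off $\{0,\infty\}$ is a disjoint union of the $z\mapsto z^k$ with automorphism group $\prod_j\bigl(\ZZ/j\wr S_{r_j}\bigr)$ of order $\prod_j j^{r_j}r_j!$; counting the admissible labelings $\sigma_0,\sigma_\infty$ and dividing by this order yields the same constant.
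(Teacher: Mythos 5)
Your argument is exactly the paper's (the printed proof is just the one-line remark that this is "a straightforward application of the second orthogonality relation for characters"), and you have filled in the details correctly: $f_\emptyset\equiv 1$, column orthogonality produces the centralizer order $\prod_j j^{r_j}r_j!=\bigl(\prod_i w_i^-\bigr)\prod_j r_j!$, and the leftover $\prod_j r_j!$ is precisely what the symbol $\delta_{\bfw^+}^{\bfw^-}$ must absorb in the numbered-profile convention. Your explicit unwinding of that symbol, together with the geometric cross-check via the cyclic covers $z\mapsto z^k$, is a welcome clarification rather than a deviation.
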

\par
\begin{proof}
This is a straightforward application of the second orthogonality relation for 
characters. 
\end{proof}
\par
\medskip
Since a general disconnected cover can be decomposed as a disjoint union
of a cover without unramified components and a collection of unramified
covers (i.e.\ of cylinders), we obtain the following lemma.
\par
\begin{Lemma} The triple Hurwitz numbers can be written in terms of 
triple Hurwitz numbers without unramified components for subsets of
the ramification profile as 
\be \label{eq:AAprime}
A(\bfw^-, \bfw^+, \mu)=\sum_{\substack{\bfu^+\subset\bfw^+, \; \bfu^-\subset\bfw^-\\|\mu|\leq|\bfu^-|=|\bfu^+|}}A'(\bfu^-, \bfu^+,\mu)\,A(\bfw^-\setminus \bfu^-, \bfw^+\setminus\bfu^+, \emptyset)\,,
\ee
where we have set $A(\emptyset,\emptyset,\emptyset) = 1$.
\end{Lemma}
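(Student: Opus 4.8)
The plan is to decompose an arbitrary (numbered) covering $\pi \in \Cov(\bfw^-,\bfw^+,\mu)$ over $\PP^1$ into its connected components, separating the components that carry ramification over $z=1$ from those that do not. Since $\mu$ has all its parts sitting over the single point $z=1$, every connected component either meets the fibre over $1$ nontrivially or is unramified over all three of $0,1,\infty$; in the latter case such a component is a disjoint union of cylinders, i.e.\ a covering counted by $A(\,\cdot\,,\cdot\,,\emptyset)$. First I would write any Hurwitz tuple $T \in \TR(\bfw^-,\bfw^+,\mu)$ as a disjoint union $T = T' \sqcup T''$ on a partition $\{1,\ldots,d\} = D' \sqcup D''$ of the sheets, where $T'$ acts on $D'$ with every $\langle\uga\rangle$-orbit nontrivial (so $T'$ has no unramified component, contributing $A'$) and $T''$ acts on $D''$ with $\uga|_{D''} = \mathrm{id}$ (the unramified part, contributing $A(\,\cdot\,,\cdot\,,\emptyset)$). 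The labellings $\sigma_0,\sigma_\infty$ restrict to labellings on each part.

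Next I would bookkeep the combinatorics. Choosing $T$ amounts to choosing: the sub-tuples $\bfu^- \subset \bfw^-$ and $\bfu^+ \subset \bfw^+$ of parts (with multiplicity) that belong to $D'$; the subset $D' \subset \{1,\ldots,d\}$ of that prescribed size; and then a tuple $T' \in \TR'(\bfu^-,\bfu^+,\mu)$ on $D'$ together with an unramified tuple $T''$ on $D''$ of profiles $\bfw^-\setminus\bfu^-$, $\bfw^+\setminus\bfu^+$ over $0,\infty$. Because $\mu$ is concentrated over $z=1$ and $T'$ must support all of it while being nonempty as soon as $\mu\neq\emptyset$, the constraint $|\mu| \le |\bfu^-| = |\bfu^+|$ appears exactly as in the displayed formula. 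The factor $\binom{d}{|\bfu^-|}$ for choosing $D'$ cancels against the normalising $1/d!$ once one writes $1/d! = \tfrac{1}{|D'|!\,|D''|!}\binom{d}{|D'|}^{-1}\cdots$; more precisely, $\frac{1}{d!}|\TR(\cdots)| = \sum \frac{1}{|D'|!}|\TR'(\bfu^-,\bfu^+,\mu)|\cdot\frac{1}{|D''|!}|\TR(\bfw^-\setminus\bfu^-,\bfw^+\setminus\bfu^+,\emptyset)|$, which is precisely $\sum A'(\bfu^-,\bfu^+,\mu)\,A(\bfw^-\setminus\bfu^-,\bfw^+\setminus\bfu^+,\emptyset)$ after using \eqref{eq:defDHN}. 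The convention $A(\emptyset,\emptyset,\emptyset)=1$ handles the degenerate term where $D''=\emptyset$.

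The main subtlety, and the step I would be most careful about, is the interaction between the labellings $\sigma_0,\sigma_\infty$ and the sub-selection of parts: a ``subset $\bfu^\pm \subset \bfw^\pm$'' must be interpreted as a sub-multiset of parts together with the induced labels, and one has to check that summing over all such labelled sub-multisets, each weighted by $1/\Aut$, reproduces exactly the sheet-counting with the binomial coefficient — i.e.\ that the passage from ``choose which labelled points of the fibre over $0$ lie in $D'$'' to ``choose $\bfu^- \subset \bfw^-$'' is a bijection compatible with the automorphism weights. This is where the numbered vs.\ unnumbered distinction set up earlier in the section does the work: with numbered profiles over $0$ and $\infty$ there are no extra symmetry factors $r_j^\pm$ to track, so the decomposition is clean. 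I would then simply assemble the identity and note that the $\emptyset$ factor is given explicitly by \eqref{eq:Aemptyset}, which in particular forces $\bfw^-\setminus\bfu^- = \bfw^+\setminus\bfu^+$ as multisets in every nonzero term.
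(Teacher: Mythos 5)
Your argument is correct and is essentially the paper's own: the paper justifies this lemma only by the one-sentence observation that a disconnected cover splits as a disjoint union of its part without unramified components and a collection of cylinders, and you have simply filled in the sheet-partition bookkeeping (the same $\binom{d}{|D'|}$ versus $1/d!$ cancellation used for the torus-cover analogue earlier in the section), together with the correct remark that the numbered profiles over $0$ and $\infty$ make the sub-selection of parts free of extra symmetry factors. One small wording slip: a component on which $\uga$ acts trivially is unramified over $z=1$ but is in general still totally ramified over $0$ and $\infty$ (it is $z\mapsto z^w$), so it is not ``unramified over all three of $0,1,\infty$'' --- the condition you actually use in the decomposition, namely $\uga|_{D''}=\mathrm{id}$, is the right one.
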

\par
Consequently, we can apply inclusion-exclusion (or M\"obius inversion)
to this formula and write triple Hurwitz numbers without unramified components
in terms of (non-connected) Hurwitz numbers. That is, there exists
a M\"obius function $M(\bfu^+,\bfu^-,\bfw^+,\bfw^-)$ such that
\be\label{eq:AprimeA}
A'(\bfw^-, \bfw^+, \mu)=\sum_{\substack{\bfu^+\subset\bfw^+, \; \bfu^-\subset\bfw^-\\|\mu|\leq|\bfu^-|=|\bfu^+|}} M(\bfu^+,\bfu^-,\bfw^+,\bfw^-)\,
A(\bfu^-, \bfu^+,\mu)\,.
\ee 

\subsection{Global graphs and cylinder decompositions} \label{sec:globalgraph}

We now suppose moreover that the base of the covering~$p$ is the square torus 
$E = \CC/(\ZZ+i\ZZ)$ and the $p$ is a cover without unramified components. 
We fix the holomorphic one-form $\omega_E$ on~$E$ with 
period lattice $\Lambda = \ZZ+i\ZZ$ and provide~$X$ with the flat structure 
$\omega =  p^* \omega_E$. To such a situation we will associate a graph with 
decorations as follows. The horizontal foliation of $\omega$ is completely 
periodic. We select from each homotopy class of horizontal cylinders~$c$ 
one representative, the core  curve $\gamma_c$. We let $X^0 = \setminus \cup_{c} 
\gamma_c$ be the complement of all the core curves.
For merely counting covers the precise location of the 
branch points is irrelevant. For concreteness, we use in the sequel the {\em branch 
point normalization} that the $i$-th branch point has fixed coordinates 
$z_i = x_i + \sqrt{-1} \ve_i$ with $0\leq \ve_1 < \ve_2 < \cdots < \ve_n <1$ and any $x_i\in[0,1)$. 
\par
The {\em global graph~$\Gamma$} associated with the flat 
surface $(X,\omega = p^* \omega_E)$ of ramification profile~$\Hmu$ is the 
graph~$\Gamma$ with $n=|\Hmu|$ vertices, labeled by $1,\ldots,n=|\Hmu|$, 
see Figure~\ref{cap:coverGlobal} below.
\begin{figure}[h]
\begin{minipage}[t]{0.40\linewidth}
\begin{tikzpicture}[scale = 1, every node/.style={transform shape}, line cap=round,line join=round,>=triangle 45,x=1.0cm,y=1.0cm]
\draw [color=cqcqcq,dash pattern=on 2pt off 2pt, xstep=1.0cm,ystep=1.0cm] (-1.25,-4.68) grid (3.81,4.88);
\clip(-1.25,-4.68) rectangle (3.81,4.88);
\draw (-1,0)-- (-0.5,2.3);
\draw (-0.5,2.3)-- (-0.7,4.5);
\draw (-0.7,4.5)-- (0.3,4.5);
\draw (0.3,4.5)-- (0.5,2.3);
\draw (0.5,2.3)-- (1.3,3.5);
\draw (1.3,3.5)-- (2.3,3.5);
\draw (2.3,3.5)-- (1.5,2.3);
\draw (1.5,2.3)-- (1,0);
\draw (1,0)-- (2,1);
\draw (2,1)-- (3,1);
\draw (3,1)-- (2,0);
\draw (2,0)-- (3.3,-1.5);
\draw (3.3,-1.5)-- (2.3,-1.5);
\draw (2.3,-1.5)-- (1.3,-1.5);
\draw (1.3,-1.5)-- (0,0);
\draw (0,0)-- (-1,0);
\draw [->] (0.5,-1.26) -- (0.5,-2.4);
\draw (-0.23,4.86) node[anchor=north west] {$a$};
\draw (1.71,-1.54) node[anchor=north west] {$a$};
\draw (1.73,3.91) node[anchor=north west] {$b$};
\draw (2.75,-1.54) node[anchor=north west] {$b$};
\draw (2.41,1.4) node[anchor=north west] {$c$};
\draw (-0.6,-0.05) node[anchor=north west] {$c$};
\draw (0,-4)-- (0,-3);
\draw (1,-3)-- (1,-4);
\draw (0,-4)-- (1,-4);
\draw (0,-3)-- (1,-3);
\draw (1.45,-0.59) node[anchor=north west] {$C_1$};
\draw (1.7,0.73) node[anchor=north west] {$C_2$};
\draw (0.16,1.47) node[anchor=north west] {$C_3$};
\draw (-0.28,3.6) node[anchor=north west] {$C_4$};
\draw (1,3.17) node[anchor=north west] {$C_5$};
\begin{scriptsize}
\draw [color=black] (-1,0)-- ++(-2.0pt,-2.0pt) -- ++(4.0pt,4.0pt) ++(-4.0pt,0) -- ++(4.0pt,-4.0pt);
\draw [color=black] (0,0)-- ++(-2.0pt,-2.0pt) -- ++(4.0pt,4.0pt) ++(-4.0pt,0) -- ++(4.0pt,-4.0pt);
\draw [color=black] (1,0)-- ++(-2.0pt,-2.0pt) -- ++(4.0pt,4.0pt) ++(-4.0pt,0) -- ++(4.0pt,-4.0pt);
\draw [color=black] (2,0)-- ++(-2.0pt,-2.0pt) -- ++(4.0pt,4.0pt) ++(-4.0pt,0) -- ++(4.0pt,-4.0pt);
\draw [color=black] (2,1)-- ++(-2.0pt,-2.0pt) -- ++(4.0pt,4.0pt) ++(-4.0pt,0) -- ++(4.0pt,-4.0pt);
\draw [color=black] (3,1)-- ++(-2.0pt,-2.0pt) -- ++(4.0pt,4.0pt) ++(-4.0pt,0) -- ++(4.0pt,-4.0pt);
\draw [color=black] (-0.5,2.3) circle (2.0pt);
\draw [color=black] (1.5,2.3) circle (2.0pt);
\draw [color=black] (0.5,2.3) circle (2.0pt);
\fill [color=black] (-0.7,4.5) circle (2.0pt);
\fill [color=black] (0.3,4.5) circle (2.0pt);
\fill [color=black] (1.3,3.5) circle (2.0pt);
\fill [color=black] (2.3,3.5) circle (2.0pt);
\fill [color=black] (1.3,-1.5) circle (2.0pt);
\fill [color=black] (3.3,-1.5) circle (2.0pt);
\fill [color=black] (2.3,-1.5) circle (2.0pt);
\draw [color=black] (0,-4)-- ++(-2.0pt,-2.0pt) -- ++(4.0pt,4.0pt) ++(-4.0pt,0) -- ++(4.0pt,-4.0pt);
\draw [color=black] (0.57,-3.74) circle (2.0pt);
\fill [color=black] (0.31,-3.46) circle (2.0pt);
\end{scriptsize}
\end{tikzpicture}
\end{minipage}
\begin{minipage}[t]{0.40\linewidth}\vbox{
\begin{tikzpicture}[scale=0.75, line cap=round,line join=round,>=triangle 45,x=1.0cm,y=1.0cm]
\clip(-3.15,-2.62) rectangle (2.36,4.78);
\draw (0,-1)-- (0,1);
\draw [shift={(0.75,2)}] plot[domain=2.21:4.07,variable=\t]({1*1.25*cos(\t r)+0*1.25*sin(\t r)},{0*1.25*cos(\t r)+1*1.25*sin(\t r)});
\draw [shift={(-0.75,2)}] plot[domain=-0.93:0.93,variable=\t]({1*1.25*cos(\t r)+0*1.25*sin(\t r)},{0*1.25*cos(\t r)+1*1.25*sin(\t r)});
\draw(0.63,-1) circle (0.63cm);
\draw [shift={(-1,-1)}] plot[domain=-3.14:0,variable=\t]({1*1*cos(\t r)+0*1*sin(\t r)},{0*1*cos(\t r)+1*1*sin(\t r)});
\draw [shift={(-1,3)}] plot[domain=0:3.14,variable=\t]({1*1*cos(\t r)+0*1*sin(\t r)},{0*1*cos(\t r)+1*1*sin(\t r)});
\draw (-2,3)-- (-2,-1);
\draw [->] (0,0) -- (0,0.25);
\draw [->] (-0.5,1.91) -- (-0.5,1.98);
\draw [->] (0.49,1.89) -- (0.5,1.96);
\draw [->] (-2,1) -- (-2,0.84);
\draw (0.25,0.25) node[anchor=north west] {$C_3$};
\draw (1.52,-0.82) node[anchor=north west] {$C_2$};
\draw (-1.79,0.91) node[anchor=north west] {$C_1$};
\draw (-1.4,2.21) node[anchor=north west] {$C_4$};
\draw (0.7,2.19) node[anchor=north west] {$C_5$};
\begin{scriptsize}
\draw [color=black] (0,-1)-- ++(-3.0pt,-3.0pt) -- ++(6.0pt,6.0pt) ++(-6.0pt,0) -- ++(6.0pt,-6.0pt);
\draw [color=black] (0,1) circle (3.0pt);
\fill [color=black] (0,3) circle (3.0pt);
\end{scriptsize}
\end{tikzpicture}

\begin{tikzpicture}[scale=0.35,line cap=round,line join=round,>=triangle 45,x=1.0cm,y=1.0cm]
\clip(-11.87,-6.09) rectangle (3.11,7.02);
\draw (-7.86,3.8)-- (-8.12,5.06);
\draw [dash pattern=on 5pt off 5pt] (-8.12,5.06)-- (-4.12,5.06);
\draw (-4.12,5.06)-- (-3.86,3.8);
\draw (-3.86,3.8)-- (-3.52,5.06);
\draw [dash pattern=on 5pt off 5pt] (-3.52,5.06)-- (-1.52,5.06);
\draw (-1.52,5.06)-- (-1.86,3.8);
\draw (-1.86,3.8)-- (-1.52,2.06);
\draw [dash pattern=on 5pt off 5pt] (-1.52,2.06)-- (-5.52,2.06);
\draw (-5.52,2.06)-- (-5.86,3.8);
\draw (-5.86,3.8)-- (-6.12,2.06);
\draw (-7.86,3.8)-- (-8.12,2.06);
\draw [dash pattern=on 5pt off 5pt] (-8.12,2.06)-- (-6.12,2.06);
\draw (-10.88,-2.09)-- (-10.88,-3.63);
\draw [dash pattern=on 5pt off 5pt] (-10.88,-3.63)-- (-6.88,-3.63);
\draw (-6.88,-3.63)-- (-6.88,-2.09);
\draw (-6.88,-2.09)-- (-6.58,-0.63);
\draw [dash pattern=on 5pt off 5pt] (-6.58,-0.63)-- (-8.58,-0.63);
\draw (-8.58,-0.63)-- (-8.88,-2.09);
\draw (-8.88,-2.09)-- (-9.18,-0.63);
\draw [dash pattern=on 5pt off 5pt] (-9.18,-0.63)-- (-11.18,-0.63);
\draw (-11.18,-0.63)-- (-10.88,-2.09);
\draw (-2.88,-2.09)-- (-3.18,-3.63);
\draw [dash pattern=on 5pt off 5pt] (-3.18,-3.63)-- (-1.18,-3.63);
\draw (-1.18,-3.63)-- (-0.88,-2.09);
\draw (-0.88,-2.09)-- (-0.58,-3.63);
\draw [dash pattern=on 5pt off 5pt] (-0.58,-3.63)-- (1.42,-3.63);
\draw (1.42,-3.63)-- (1.12,-2.09);
\draw (1.12,-2.09)-- (1.12,-0.63);
\draw [dash pattern=on 5pt off 5pt] (1.12,-0.63)-- (-2.88,-0.63);
\draw (-2.88,-0.63)-- (-2.88,-2.09);
\draw (-5.34,1.72) node[anchor=north west] {$S_1$};
\draw (-9.28,-4.18) node[anchor=north west] {$S_2$};
\draw (-1.31,-4.1) node[anchor=north west] {$S_3$};
\begin{scriptsize}
\draw [color=black] (-7.86,3.8)-- ++(-5.0pt,-5.0pt) -- ++(10.0pt,10.0pt) ++(-10.0pt,0) -- ++(10.0pt,-10.0pt);
\draw [color=black] (-5.86,3.8)-- ++(-5.0pt,-5.0pt) -- ++(10.0pt,10.0pt) ++(-10.0pt,0) -- ++(10.0pt,-10.0pt);
\draw [color=black] (-3.86,3.8)-- ++(-5.0pt,-5.0pt) -- ++(10.0pt,10.0pt) ++(-10.0pt,0) -- ++(10.0pt,-10.0pt);
\draw [color=black] (-1.86,3.8)-- ++(-5.0pt,-5.0pt) -- ++(10.0pt,10.0pt) ++(-10.0pt,0) -- ++(10.0pt,-10.0pt);
\draw [color=black] (-10.88,-2.09) circle (5.0pt);
\draw [color=black] (-8.88,-2.09) circle (5.0pt);
\draw [color=black] (-6.88,-2.09) circle (5.0pt);
\fill [color=black] (-2.88,-2.09) circle (5.0pt);
\fill [color=black] (-0.88,-2.09) circle (5.0pt);
\fill [color=black] (1.12,-2.09) circle (5.0pt);
\end{scriptsize}
\end{tikzpicture}
}
\end{minipage}
\caption{A torus cover, its global graph, and the local surfaces} \label{cap:coverGlobal}
\end{figure}
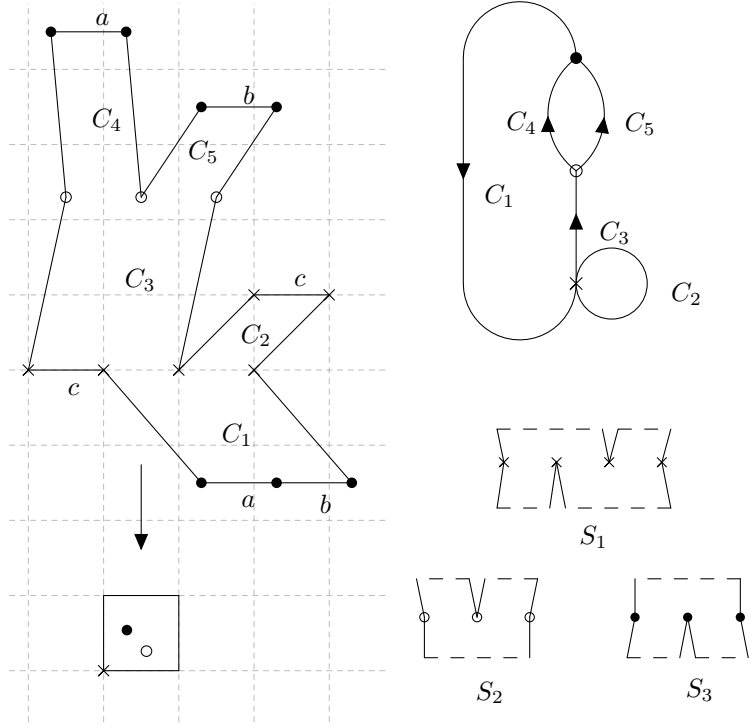
The edges $E(\Gamma)$ of $\Gamma$ are in bijection with the core curves.
An edge~$e$ connects the vertices~$i$ and $j$, if the connected components
of $X^0$ adjacent to the core curve $\gamma(e)$ contain ramification points
lying over the $i$-th and $j$-th branch point in~$E$. Note that this is
well-defined by the branch point normalization, which rules out that 
the $p$-images of two ramification points have the same height. The
case $i=j$, i.e.\ self-edges, is of course possible.
\par
To give an alternative definition, if each $\mu^{(i)}$ in the profile~$\Hmu$ is
a $d$-cycle, the global graph is just the dual graph of the stable curve of the 
curve obtained by degenerating the surface~$X$ in the horizontal direction, 
i.e.\ by applying ${\rm diag}(e^t, e^{-t})$. In the general case, the global
graph is the quotient graph of this dual graph, obtained by identifying 
the vertices whose corresponding branch point have the same number
(in $1,\ldots,n$).
\par
We provide~$\Gamma$ with an orientation as follows and write $G \in \Gamma$ 
for the oriented graph. Fix a oriented closed loop on~$E$ (e.g.\ a vertical 
straight line), intersecting the horizontal straight line once. The preimages 
of this loop are paths in~$X$, each crossing precisely one core 
curve~$\gamma_c$, and we orient the corresponding edge of~$\Gamma$ in the 
direction of this loop. Self-edges are not given any orientation. Note that 
this orientation is well-defined by~$p$ up to flipping all arrows of~$G$.
\par
We call the union of  connected components of $X^0$ that carry the same
label the {\em local surfaces} of $(X,\omega)$. We label these local
surfaces also by an integer in $\{1,\ldots,n\}$ according to the ramification 
point they carry. This labeling is well-defined, since $p$ is a cover without
unramified components.
\par
To reconstruct a torus covering flat surface from a global graph, we need 
two extra data that encode the geometry of the cylinders and the geometry
of the local surfaces, respectively. 
\par
Each cylinder (corresponding to an edge~$e$) has an integral positive 
width $w_e$ and a real positive height~$h_e$. The heights~$h_e$ are not 
arbitrary, but related to the position of the branch points. For an 
edge~$e \in E(\Gamma)$ we denote by $i^+(e)$ (resp.\  $i^-(e)$) 
the label of the terminal (resp.\ initial) vertex of the edge~$e$.  
It is obvious from the construction that the tuple of heights 
$(h_e)_{e \in E(\Gamma)}$ belongs to the {\em height space} 
\be \label{eq:heightspace}
\widetilde{\NN}^{E(G)} \= \{ (h_e)_{e \in E(\Gamma)} \,:\, h_e - \Delta(e)  \in \NN \}\,, 
\ee
where $\Delta(e) =  \ve_{i^+(e)} - \ve_{i^-(e)}$ if $i^+(e) \geq i^-(e)$ and
$\Delta(e) =  1 + \ve_{i^+(e)} - \ve_{i^-(e)}$ otherwise.
\par
The last piece of local information for a cylinder is the {\em twist} 
$t_e \in \ZZ \cap [0,w_e-1]$. The twist depends on the choice 
of a ramification point $P^-(e)$ and $P^+(e)$ in each of the two components 
adjacent to the cylinders and it is defined as the integer part
of the real part $\lfloor  \Re(\int_s \omega) \rfloor$ of the
integral along the unique straight line joining $P^-(e)$ to $P^+(e)$
such that $t_e \in [0,w_e-1]$. The exact values of the twist will hardly 
matter in the sequel. It is important to retain simply that there 
are $w_e$ possibilities for the twist in a given cylinder. 
\par
We now encode the local geometry on the complement of the core curves. The 
restriction of the cover~$p$ to any local surface is metrically a cover of 
an infinite cylinder, branched over one point only. Said differently, for 
any vertex $v \in V(\Gamma)$ the (possibly disconnected) local surface 
$S = S_v$ can be described as a covering $\pi: S \to \PP^1$, ramified 
over $z=0$, $z=1$ and $z=\infty$ only. The restriction of~$\omega$ to~$S$ 
is the pullback of the infinite 
metric cylinder $dz/z$. The ramification profile of~$\pi$ consists 
\begin{itemize}
\item over $z=0$ of the widths $w_e$ of the incoming edges at~$v$, 
\item over $z=\infty$ of the widths $w_e$ of the outgoing edges at~$v$, and 
\item over $z=1$ of the subset of the branching profile~$\mu^{(i)}$, where~$i$
is the label of~$v$.
\end{itemize}
\par
\begin{Prop} \label{prop:corrTCGraph}
There is a bijective correspondence between 
\begin{itemize}
\item[i)] flat surfaces $(X,\omega)$ with covering $p: X \to E$ of 
degree~$d$ of the square torus~$E$ without unramified components and
with $\omega = \pi^* \omega_E$, and 
\item[ii)] isomorphism classes of 
tuples $(G, (w_e,h_e, t_e)_{e \in E(G)}, (\pi_v)_{v\in V(G)})$ consisting of 
\begin{itemize}
\item[$\bullet$] a global graph~$\Gamma$ with marked vertices and
without isolated vertices together with an orientation $G \in \Gamma$,
\item[$\bullet$] a collection of real numbers $(w_e,h_e,t_e)_{e \in E(G)}$ 
representing the width, height and twist of the cylinder corresponding to~$e$. 
The widths $w_e$ are integers, the tuple of heights $(h_e)_{e \in E(G)} \in 
\widetilde{\NN}^{E(G)}$ is in the height space, $t_e \in \ZZ \cap [0,w_e-1]$ 
and these numbers satisfy 
\be \label{eq:whd}
\sum_{e \in E(G)} w_eh_e \= d\,,
\ee
\item[$\bullet$] and a collection of $\PP^1$-coverings $(\pi_v)_{v\in V(G)}$ 
without unramified components, with $\pi_v \in \Cov'(\bfw_v^-,\bfw_v^+,\mu_v)$
 where $\bfw_v^-$ is the tuple of widths at the incoming edges at $v$, $\bfw_v^+$ is 
the tuple of widths at the outgoing edges at $v$, and $\mu_v$ is the ramification
profile given by the labels at the vertex~$v$.
\end{itemize}
up to the action of the group $\Aut(\Gamma)$ of automorphisms of the labeled graph~$\Gamma$.
\end{itemize}
\end{Prop}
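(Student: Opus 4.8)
The plan is to write down the two maps between (i) and (ii) explicitly, to check the constraints listed in (ii), and finally to verify that the maps are mutually inverse and compatible with isomorphisms. Most of this just makes precise the cut-and-paste picture sketched above, so I will only highlight the steps that need care.

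First I would construct the map from (i) to (ii). Given $p: X\to E$ without unramified components and $\omega = p^*\omega_E$, the oriented global graph $G\in\Gamma$, its marked vertices and the local surfaces $S_v$ are already defined; here the hypothesis that $p$ has no unramified component is what guarantees that every vertex is non-isolated and that the labelling of the components of $X^0$ by branch points is well defined. For an edge $e$ representing a maximal horizontal cylinder $C_e$, I would record its width $w_e$ (the $\omega$-circumference of $C_e$, an integer because $E$ has horizontal circumference one and $\omega = p^*\omega_E$, and equal to the length of the corresponding cycle in the monodromy around the horizontal circle of $E$), its height $h_e$, and its twist $t_e$, fixing once and for all the reference points $P^\pm(e)$ (say the lowest-labelled preimage of $0$, resp.\ $\infty$, in the adjacent local surface). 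Then I would check the constraints: $h_e - \Delta(e)\in\NN$ because $C_e$ stretches between the singular levels at heights $\ve_{i^-(e)}$ and $\ve_{i^+(e)}$, so $(h_e)_e\in\widetilde{\NN}^{E(G)}$; summing cylinder areas gives $\sum_e w_eh_e = \vol(X,\omega) = d$, which is \eqref{eq:whd}; and compactifying the restriction of $p$ to $S_v$ produces $\pi_v\in\Cov'(\bfw_v^-,\bfw_v^+,\mu_v)$, where the profile over $z=1$ is $\mu^{(i)}$ by definition of the ramification profile and the numberings $\sigma_0,\sigma_\infty$ come from the bijection of $\pi_v^{-1}(0)$ and $\pi_v^{-1}(\infty)$ with the incoming and outgoing edges at $v$. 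I would also spell out here why each $\pi_v$ has no unramified component: this is really a statement about which sheets near the height $\ve_i$ count as part of the local surface $S_v$ and which are absorbed into the cylinders, and it is the one place where the hypothesis on $p$ is genuinely needed.

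Next I would construct the inverse map, by gluing: realise each $\pi_v$ as the flat surface $(S_v,\pi_v^*(dz/z))$ mapping to the infinite metric cylinder, take for each oriented edge $e$ a flat cylinder $C_e$ of circumference $w_e$ and height $h_e$, and glue its two ends---after a rotation by the twist $t_e$---to the boundary circles over $z=\infty$ and $z=0$ of the adjacent local surfaces that are singled out by $\sigma_\infty$ and $\sigma_0$. I would then check that the glued object is a closed translation surface with periodic horizontal foliation, that its holonomy lies in $\ZZ+i\ZZ$ (the $\Delta(e)$ telescope around any vertical loop and the twists are integral), so that the horizontal quotient is the standard torus $E$ and one obtains a degree-$d$ cover $p: X\to E$ with $\omega=p^*\omega_E$, branched over the points $z_i$ with the prescribed profiles thanks to \eqref{eq:whd} and the height-space condition, and that it has no unramified component because each component of each $S_v$ is ramified over $z=1$ hence over some $P_i$.

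Finally I would verify that the two maps are mutually inverse---cutting the glued surface along its core curves recovers exactly the local surfaces, cylinders and gluing data one started from, and conversely---and that equivalences correspond: an isomorphism of flat surfaces over $E$ preserves the horizontal cylinder decomposition and hence induces an isomorphism of the decorated graph, while the only residual ambiguity is the action of $\Aut(\Gamma)$ (relabelling parallel edges and self-edges and correspondingly permuting the boundary components of the $S_v$) together with the simultaneous flip of all orientations, which is absorbed into ``isomorphism classes of tuples'' via the involution $0\leftrightarrow\infty$ on each $\PP^1$. I expect the main obstacle to be exactly this last bookkeeping---checking that the twists and the numberings $\sigma_0,\sigma_\infty$ transform under $\Aut(\Gamma)$ in such a way that the correspondence descends to a genuine bijection on the quotient without over- or under-counting---together with pinning down the dividing line between ``local surface'' and ``cylinder'' in the presence of trivial parts of the $\mu^{(i)}$, which is what makes the $\Cov'$ condition come out correctly.
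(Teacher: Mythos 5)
Your outline follows essentially the same cut-along-core-curves / glue-back strategy as the paper, and the forward and inverse constructions you describe are the right ones. Two remarks on the points you flag as delicate. First, the paper resolves the bookkeeping you defer not by making canonical choices (your ``lowest-labelled preimage'' convention for the reference points $P^\pm(e)$) but by \emph{enriching both sides} with the same auxiliary data --- a full numbering of the edges extending the lexicographic partial order coming from the vertex labels, and a choice of one adjacent singularity on each side of each cylinder --- establishing a bijection at that enriched level, and then checking that the fibers of the two forgetful maps have equal cardinality: $|\Aut(\Gamma)|$ for the edge numberings and $\prod_{v}\ell(\bfw_v^-)\ell(\bfw_v^+)$ for the singularity choices. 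This sidesteps having to verify how your canonical twists and numberings $\sigma_0,\sigma_\infty$ transform under $\Aut(\Gamma)$, which is precisely the step you acknowledge not having done; if you keep your canonical choices you must still carry out that verification. The $\Cov'$ condition, on the other hand, is essentially definitional: each component of a local surface is a component of $X^0$, hence retracts onto a singular horizontal leaf and so carries a ramification point over $z=1$ (an unramified component of some $S_v$ would force an entire torus component of $X$, excluded by hypothesis); the trivial parts of $\mu^{(i)}$ live on cylinders crossing the height $\ve_i$ and never enter $S_v$, so there is no ambiguous dividing line.

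The one genuine error is your last identification: you propose to quotient side (ii) additionally by the simultaneous flip of all orientations via $0\leftrightarrow\infty$. The proposition quotients \emph{only} by $\Aut(\Gamma)$, which fixes the vertices and hence preserves the orientation of every essential edge, and no flip is needed because the orientation $G$ is pinned down canonically on side (i) by the fixed upward-pointing vertical loop on $E$ (the ``up to flipping all arrows'' ambiguity mentioned in the text reflects only the freedom in choosing that loop, which the proof fixes once and for all). Imposing the extra flip identification would conflate a cover with the genuinely different cover whose decorated graph is the orientation-reversed one, and the correspondence would no longer be a bijection as stated.
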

\par
Note that an automorphism of the  labeled graph~$\Gamma$ preserves the vertices, i.e.\ 
it simply permutes the sets of edges sharing the same endpoints.
\par 
\begin{proof} To each covering $p$ we can canonically associate the global graph~$\Gamma$
with vertex labels according to the branch point numbering and with widths $w_e$ as above. 
The rest of the correspondence is not canonical but depends on three auxiliary choices. 
First, we provide $\Gamma$ with the orientation~$G$ given by the upwards pointing
vertical direction. Second, we move the branch points so that they satisfy the branch 
point normalization.
Third, we choose for each edge of $\Gamma$ a pair of singularities in the local 
surfaces adjacent to the corresponding cylinder, one on each side. 
\par
We observe that these additional data can also obviously be recorded together with
the tuples listed in ii) so that it suffices to establish a bijection with
the additional data on both sides and to see that the cardinality of the forgetful
maps of the additional data are the same on both sides of the correspondence.
\par
For the second task we remark that from any point in the height space we get
back by reducing mod one the heights up to a common translation (mod one) 
by a real number. But the heights are only well-defined up to  this 
ambiguity of translation anyway. The number of choices of 
the singularities adjacent to each cylinder agrees on both sides of
the correspondence, in fact this number is $\prod_{v \in V(G)} \ell(\bfw_v^-)
\ell(\bfw^+_b)$.
\par
Note that the labeling of the vertices gives a natural partial numbering
of the edges, by proceeding lexicographically, i.e.\ first numbering all
edges between the vertex one and two etc.  The number of choices to complete
this to a full numbering of the edges is precisely $|\Aut(\Gamma)|$.
\par
The correspondence with the additional datum of an edge numbering  
has basically been given prior to
the statement of the proposition: We use the pair of zeros adjacent to each 
cylinder to single out a saddle connection up to 
Dehn twist along the core curve of the cylinder. There is a unique representative
in such a class that has holonomy $t_e + ih_e$ with $t_e \in \ZZ \cap [0,w_e-1]$.
Moreover, we use the numbering of the edges to order the tuple of incoming and
outgoing cylinders in each local surface, i.e.\ to make $\bfw^\pm$ an ordered
tuple rather than a set of integers. This numbering also defines
for every local surface an identification $\sigma_0$ (resp.\ $\sigma_\infty$) of the 
branch points over zero (resp. $\infty)$ with a element in the tuple $\bfw^-$ (resp.\ $\bfw^+$), 
so that local coverings maps $\pi_v$ are in $\Cov'(\bfw_v^-,\bfw_v^+,\mu_v)$ rather than
in the unordered version of this set of coverings. Note also that the prime is justified
here, since the local covering is without unramified components, by the definition
of components of the local surfaces as components of $X^0$.
\par
For the converse correspondence, it suffices to wield together the local surfaces
along a cylinder for each edge of~$G$, where the identifications $\sigma_\infty$ of $v^-(e)$
and $\sigma_0$ of $v^+(e)$ determine which branches of the local surfaces are glued 
together. The widths $w_e$ and heights~$h_e$ determine the shape of the cylinder and
the twist together with the choice of a reference point on each side determines
the way the cylinder is glued in. The action of an element in $\Aut(\Gamma)$ simply
changes the $\sigma_\infty$ and $\sigma_0$ by post-composition. Consequently, any two
tuples in the same  $\Aut(\Gamma)$-orbit give the same covering.
\end{proof}
\par
Similar statements do not hold neither on the level of connected covers of graphs
nor on the level of general graphs without major changes. The problem in the connected
case is that some local surfaces might be disconnected while assembling to a 
connected flat surface in general. The problem in the disconnected case is that 
a covering of a local surface with an unramified component can give rise to a 
flat surface that is also obtained by assembling only connected local surfaces. To
construct the corresponding  graph without unramified components the unramified 
local piece has to be piled on top of the appropriate cylinder and can so be 
gotten rid of.
\par
The above correspondence still gives a bijection if both sides are weighted
with their automorphism group.
\par
\begin{Prop} \label{prop:corrAuto}
In the correspondence of Proposition~\ref{prop:corrTCGraph}, an automorphism $\varphi$ 
of the cover~$\pi$ defines a collection of automorphisms $(\varphi_v)_{v \in V(G)}$ of
the local surfaces. 
\par
Conversely, each collection of automorphisms $(\varphi_v)_{v \in V(G)}$ of
the local surfaces defines an automorphism $\varphi$ of the covering~$\pi$.
\end{Prop}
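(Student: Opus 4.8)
First I would treat the forward direction using the explicit cut-and-paste picture underlying the proof of Proposition~\ref{prop:corrTCGraph}, in which $X$ is the union of the local surfaces $S_v$ glued to the cylinders $C_e$ along the boundary circles of the latter. Let $\varphi$ be an automorphism of the cover $p\colon X\to E$; since $p\circ\varphi=p$ one has $\varphi^*\omega=\omega$, so $\varphi$ preserves the horizontal foliation, permutes the core curves and hence carries $X^0$ to itself, and, mapping each fibre of~$p$ to itself, it preserves the vertex labels and therefore carries each $S_v$ to~$S_v$. (It may still permute cylinders having the same endpoints and width, but this only records an element of $\Aut(\Gamma)$ and does not affect which $S_v$ is which.) The restriction $\varphi_v:=\varphi|_{S_v}$ commutes with $p|_{S_v}$, hence with the induced $\PP^1$-covering $\pi_v$, so $\varphi_v\in\Aut(\pi_v)$; the assignment $\varphi\mapsto(\varphi_v)_{v\in V(G)}$ is plainly a group homomorphism. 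It is injective: on each cylinder $C_e$ the map $\varphi$ is already forced, being a deck transformation of the covering $C_e\to E$, i.e.\ an element of the cyclic group $\ZZ/w_e\ZZ$, and this element is pinned down by the rotation that $\varphi_v$ induces on the boundary circle of $C_e$ lying in $S_v$.

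Next I would run this backwards to get the converse. Given $(\varphi_v)_{v\in V(G)}$ with $\varphi_v\in\Aut(\pi_v)$, I set $\varphi:=\varphi_v$ on each $S_v$ and, on each cylinder $C_e$ with endpoints $v^{\pm}(e)$, let $\varphi$ be the element of $\ZZ/w_e\ZZ$ whose boundary rotations match those induced by $\varphi_{v^+(e)}$ and $\varphi_{v^-(e)}$. Provided these local maps fit together, they patch to a biholomorphism of $X$ commuting with $p$, with restriction $\varphi_v$ to $S_v$; so the two maps between $\Aut(p)$ and $\prod_{v}\Aut(\pi_v)$ are mutually inverse. One then checks that both maps are $\Aut(\Gamma)$-equivariant, so the correspondence descends to the isomorphism classes of tuples of Proposition~\ref{prop:corrTCGraph}.

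The hard part will be exactly the step of fitting the local maps together: one must verify that for every edge $e$ the two rotations prescribed on the boundary circles of $C_e$ agree modulo~$w_e$ --- and, when $\varphi_v$ permutes components or ends of $S_v$, that the induced permutations of half-edges cohere into a single permutation of $E(G)$ --- so that one deck transformation of $C_e\to E$ realises both. This is the point where one uses that $\varphi_v$ respects the labellings $\sigma_0$ and $\sigma_\infty$ and that the data $(w_e,h_e,t_e)$ are those coming from the genuine flat surface $(X,\omega)$; equivalently, it is precisely the compatibility already built into the notion of \emph{isomorphism class of tuples} in Proposition~\ref{prop:corrTCGraph}. Everything else --- holomorphy of the glued map and commutation with~$p$ --- is routine.
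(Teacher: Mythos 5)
Your forward direction is correct and follows the paper's own argument: an automorphism of $p$ commutes with $p$, hence preserves the flat structure and the fibres, permutes cylinders, carries each labelled local surface to itself, and restricts to deck transformations $\varphi_v$ of the $\pi_v$; the one point you gloss over (and the paper handles by introducing an auxiliary edge labelling) is that $\varphi$ may permute parallel edges, so the $\varphi_v$ preserve $\sigma_0,\sigma_\infty$ only after relabelling the edges by the induced element of $\Aut(\Gamma)$. Your injectivity remark (density of $X^0$) is a useful addition.

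The converse is where your proof has a genuine gap, and you have in fact isolated exactly the problematic step without closing it. The matching of the two boundary rotations of a cylinder $C_e$ is \emph{not} implied by ``$\varphi_v$ respects $\sigma_0$ and $\sigma_\infty$'' (that only fixes each end setwise, not the rotation amount), nor by the data coming from a genuine flat surface, and it is not ``built into the notion of isomorphism class of tuples'': the tuples of Proposition~\ref{prop:corrTCGraph} impose no relation whatsoever between the $\varphi_v$ at different vertices. Concretely, take two vertices joined by two edges of width $3$, with each local surface the connected degree-$3$ cover of $\PP^1$ totally ramified over $0,1,\infty$, so that $\Aut(\pi_{v_i})\cong\ZZ/3\ZZ$. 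A nontrivial deck transformation of a connected cover fixes no point off the ramification locus, hence rotates each end of $S_{v_1}$ by a nonzero multiple of $1/3$; pairing it with $\varphi_{v_2}=\mathrm{id}$ prescribes rotations $\neq 0$ and $=0$ on the two boundary circles of each connecting cylinder, and no single deck transformation of the cylinder restricts to both. What a general collection $(\varphi_v)$ actually yields is an isomorphism from the surface assembled with twists $(t_e)$ onto the one assembled with the shifted twists $t_e+k_e^+-k_e^-$, where $k_e^{\pm}$ are the induced boundary rotations; one gets an automorphism of $p$ exactly when all these shifts vanish. (The paper's own proof dismisses the converse as obvious, so the gap is inherited from the source; but the step you flag as ``the hard part'' is genuinely where an argument is missing --- in effect one needs the orbit--stabilizer relation for the action of $\prod_v\Aut(\pi_v)$ on the twist data, with covers as orbits and $\Aut(p)$ as stabilizers --- and the justification you propose for it does not work.)
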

\par
Again, the correspondence is not canonical but depends on the choice of an 
auxiliary edge labeling.
\par
\begin{proof} 
Any automorphism $\varphi$ of $\pi$ preserves the marked points in~$E$ and hence 
the vertices of $\Gamma$. Moreover, it maps cylinders to cylinders and thus induces an
automorphism $\varphi_\Gamma$ of the graph, preserving vertices.  Let $\ell: E(G) \to \{1,\ldots,|E(G)|\}$ 
be some auxiliary labeling of the edges of $\Gamma$. If we provide edges of the global graph 
of $\pi \circ \varphi$ with the labeling $\ell \circ \varphi_\Gamma$, then the restriction
of $\varphi$ to each local surface $S_v$ is an automorphism $\varphi_v$ that {\em preserves}
the labeling $\sigma_0$ and $\sigma_\infty$ of the preimages of zero and $\infty$. 
\par
The converse of this procedure obviously works as well. 
\end{proof}
\par
\section{Shifted symmetric polynomials and completed cycles} \label{sec:completed}

Let $f:\P\to\QQ$ be an arbitrary function on the set $\P$ of all partitions.
Motivated by the formula~\eqref{eq:NNN} computing the generating function of covers without
unramified components, we associate to~$f$ the formal power series
\be \label{defqbrac} 
  \sbq f \= \frac{\sum_{\l\in\P} f(\l)\,q^{|\l|}}{\sum_{\l\in\P} q^{|\l|}}\;\,\in\;\QQ[[q]]\,,
\ee
which we will call the {\em $q$-bracket}. In the previous section, the argument
was a product of functions $f_{\mu}(\cdot)$ introduced in~\eqref{eq:defssf} and we recall here
an algebra of functions on which $q$-brackets behave nicely, as well as two generating
sets for this algebrea.
\par
The {\em algebra of shifted symmetric polynomials} is defined as 
$\Lambda^* = \varprojlim \Lambda^*(n)$, where $\Lambda^*(n)$ is the algebra of 
symmetric polynomials in the $n$ variables \hbox{$\l_1-1$,\,}$\ldots,\l_n-n$. The 
projective limit is taken with respect to the homomorphisms setting the last variable 
equal to zero.
One of several ways to present a partition is to list the part lengths decreasingly, 
i.e.\ a partition is given by $\l = (\l_1, \l_2,\ldots)$, with $\l_1 \geq \l_2 \geq \cdots$ 
and $\sum_{i=1}^\infty \l_i = |\l|$. With this notation, the functions 
\be \label{eq:def2pk}  P_\ell(\l) \= \sum_{i=1}^\infty \left( (\l_i -i +\h)^\ell - (-i + \h)^\ell \right)  \quad \text{and}\quad
P_\mu \=\prod_i P_{\mu_i} \ee
obviously belong to the algebra symmetric polynomials. It is also convenient to
add constant terms to these function, corresponding to the regularization of the infinite sum, and
we let
\be \label{eq:smallpk}
p_\ell(\lambda) \= P_\ell(\lambda) \+ (1-2^{-\ell})\,\zeta(-\ell).
\ee
The name ``completed cycles'' refers to the functions $P_\ell/\ell$ for the cycles $(\ell)$: they ``complete'' the functions $f_\ell$ defined in~\eqref{eq:defssf}. 
\par
The following result summarizes the main properties of shifted symmetric polynomials we need.
It is a combination of a theorem of Okounkov and Olshanski (\cite{OkOls}) and a theorem
of Kerov and Olshanski.
\par
\begin{Thm}[\cite{KerOls}] \label{thm:KO}
The algebra $\Lambda^*$ is freely generated by all the $p_\ell$ (or equivalently, by the $P_\ell$) 
with~$\ell\ge1$.
The functions $f_\mu$ defined in~\eqref{eq:defssf} belong to $\Lambda^*$. More precisely, 
as $\mu$ ranges over all partitions, these  functions $f_\mu$ form a basis of~$\Lambda^*$.
\end{Thm}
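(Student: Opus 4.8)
The plan is to prove the three assertions in turn; since $p_\ell$ and $P_\ell$ differ only by a constant it suffices throughout to work with the $P_\ell$. \emph{Free generation} is obtained from a degree–filtration argument: filter $\Lambda^*$ by polynomial degree in the variables $\lambda_1,\lambda_2,\dots$. Passing to the associated graded annihilates the shift $\lambda_i\mapsto\lambda_i-i+\tfrac12$, so $\mathrm{gr}\,\Lambda^*$ is canonically identified with the ring $\Lambda$ of symmetric functions, and under this identification the class of $P_\ell$ is the power sum $p_\ell\in\Lambda$. Since the $p_\ell$ are algebraically independent and generate $\Lambda$, the standard leading–term argument applies to the algebra homomorphism $\QQ[x_1,x_2,\dots]\to\Lambda^*$, $x_\ell\mapsto P_\ell$: it is injective because the top–degree component of any kernel element would be a relation among the $p_\ell$ in $\Lambda$, and surjective by descending induction on degree, subtracting from a given $f\in\Lambda^*$ a polynomial $G(P_1,P_2,\dots)$ whose image in $\Lambda$ equals the top–degree component of $f$.

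\emph{Membership $f_\mu\in\Lambda^*$} is the substantive step and the main obstacle. Up to a $\lambda$–independent constant, $f_\mu(\lambda)$ equals the normalized central character $\mathrm{ch}_\mu(\lambda)=n(n-1)\cdots(n-|\mu|+1)\cdot\chi^\lambda(\mu\cup 1^{n-|\mu|})/\dim\chi^\lambda$ with $n=|\lambda|$, i.e.\ the scalar by which the class sum $A_{\mu,n}=\sum_{[\sigma]=\mu\cup 1^{n-|\mu|}}\sigma\in\QQ S_n$ acts on the irreducible module of type $\lambda$. I would prove shifted symmetry via Jucys--Murphy elements: the commuting elements $\xi_i=\sum_{j<i}(j\,i)\in\QQ S_n$ have the property that, for a symmetric polynomial $F$, the central element $F(\xi_1,\dots,\xi_n)$ acts on the module of type $\lambda$ by the scalar $F\big(c(\square):\square\in\lambda\big)$, where $c(\square)=j-i$ is the content of the cell $\square=(i,j)$; and a direct Bernoulli–polynomial computation of $\sum_{\square\in\lambda}c(\square)^k=\sum_i\sum_{j=1}^{\lambda_i}(j-i)^k$ shows that each content power sum is a shifted symmetric function of $\lambda$, indeed a linear combination of $P_{k+1},P_{k-1},\dots$. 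Combined with the fact that $A_{\mu,n}$ can be written as a polynomial in the $\xi_i$ whose coefficients \emph{stabilize} as $n\to\infty$ --- the Jucys/Farahat--Higman description of $Z(\QQ S_n)$, in the $n$–independent formulation via the algebra of partial permutations --- this yields that $f_\mu$ is a polynomial in content power sums, hence shifted symmetric, of degree at most $|\mu|+\ell(\mu)$ in the grading with $\deg P_\ell=\ell+1$. The hard part is precisely this stabilization: one must replace the individual groups $S_n$ by a single graded algebra mapping compatibly to all $Z(\QQ S_n)$, which is the input imported from \cite{KerOls}. Alternatively, following \cite{OkOls}, one realizes the shifted Schur function $s^*_\mu(\lambda)$ as a $\lambda$–independent multiple of $n(n-1)\cdots(n-|\mu|+1)\cdot\dim(\lambda/\mu)/\dim\chi^\lambda$ --- manifestly shifted symmetric from its determinantal formula --- and expands $\mathrm{ch}_\rho$ in the $s^*_\mu$ with $\lambda$–independent coefficients.

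\emph{Basis property.} Granting the previous step, I identify the top–degree part of $f_\mu$ in the same grading. Modulo terms of lower degree, $f_\mu$ is the ``disjoint product'' of the single–part normalized characters $f_{(\mu_i)}$ --- concatenating disjoint cycles is the leading term of multiplying class sums --- and $f_{(k)}$ has top–degree part a nonzero multiple of $P_k$ (visible from the Jucys--Murphy picture, or from Kerov's free cumulants: $\mathrm{ch}_{(k)}=R_{k+1}+\text{lower}$ and $R_{k+1}$ has leading power–sum term $\propto P_k$). Hence the top–degree part of $f_\mu$ is a nonzero multiple of $\prod_i P_{\mu_i}$. As $\mu$ ranges over all partitions these monomials are exactly the monomial basis of $\Lambda^*=\QQ[P_1,P_2,\dots]$, so the $f_\mu$ are related to it by an invertible triangular matrix (for any total order refining the degree) and therefore form a basis; one uses here the normalization of $f_\mu$ just described (with the full falling factorial $n(n-1)\cdots(n-|\mu|+1)$, including the trivial parts of $\mu$), so that the $f_\mu$ are pairwise distinct and their top–degree parts exhaust the monomial basis.
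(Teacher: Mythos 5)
The paper does not prove this statement at all: Theorem~\ref{thm:KO} is imported verbatim from Kerov--Olshanski and Okounkov--Olshanski, so there is no in-paper argument to compare against. Your sketch is a correct reconstruction of the standard proofs from those references. The free-generation argument via the degree filtration, with $\mathrm{gr}\,\Lambda^*\cong\Lambda$ and $P_\ell\mapsto p_\ell$, is exactly right, as is the triangularity argument for the basis property once one knows that the top-degree part of $f_\mu$ is a nonzero multiple of $\prod_i P_{\mu_i}$ (consistent with the conversion table $f_\ell=P_\ell/\ell+\cdots$ displayed in the paper, and with the weight bound $|\mu|+\ell(\mu)$). You also correctly isolate the genuinely nontrivial input --- that $f_\mu$ is shifted symmetric at all --- and both routes you propose are the ones in the literature: the Jucys--Murphy/content route, where the content power sums $\sum_{\square\in\lambda}c(\square)^k$ are shifted symmetric by a Bernoulli-polynomial telescoping and the class sums are stably polynomial in the $\xi_i$ (Farahat--Higman, or Ivanov--Kerov partial permutations), and the Okounkov--Olshanski route via shifted Schur functions. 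Two small points to keep in mind: the step ``modulo lower degree, $f_\mu$ is the disjoint product of the $f_{(\mu_i)}$'' itself relies on the same stabilization of structure constants, so it is not independent of the hard input; and your normalization discussion is the right one --- with the paper's $z_\sigma$ read as the number of permutations of $S_n$ supported on a chosen $|\mu|$-subset with cycle type $\mu$ (so $f_{(1)}=P_1=|\lambda|$), the constant relating $f_\mu$ to the falling-factorial character is indeed $\lambda$-independent even when $\mu$ has parts equal to one, which is needed for the $f_\mu$ indexed by \emph{all} partitions to form a basis.
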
 
\par
To convert from the $P_\ell$ to the $f_\mu$, note that $f_1 = P_1$ and $f_2 = P_2/2$, 
and more generally $f_\ell$ starts with $P_\ell/\ell$, for example
\ba
f_3 & \= \frac{1}{3} P_{3} -\frac{1}{2} P_{1}^2  + \frac{5}{12} P_{1}, \quad \quad 
f_4  \= \frac{1}{4}P_4 - P_1P_2 + \frac{11}{8}P_2    \\
f_5 & \= \frac{1}{5}P_5 - P_3P_1 -\frac{1}{2}P_2^2 + \frac{5}{6}P_1^3 - \frac{15}{4}P_1^2  
+ \frac{19}{6}P_3 + \frac{189}{80}P_1\,.    
\ea
We refer to \cite[Section~3.3]{OkPand} or \cite{lassalle} for the conversion 
formulas in general.
\par
At this stage we mention the following important result of Bloch-Okounkov
and refer to Section~\ref{sec:QMF} for the definition of quasimodular forms. Using Theorem~\ref{thm:KO}
one provides the algebra $\Lambda^*$ with a weight grading by assigning $p_\ell$ the weight $k=\ell+1$.
\par
\begin{Thm} [\cite{blochokounkov}]\label{thm:bo}
If $f$ is a shifted symmetric function of weight~$k$, then $\sbq{f}$ is a quasimodular form of weight $k$.
\end{Thm}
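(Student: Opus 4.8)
The plan is to reduce to one generating-function identity, evaluate that generating function in closed form, and then read off quasimodularity and the weight from the $\Theta$-expansion. By Theorem~\ref{thm:KO} the algebra $\Lambda^*$ is freely generated by the $p_\ell$ with ${\rm wt}(p_\ell)=\ell+1$, so a homogeneous element of weight~$k$ is a $\QQ$-linear combination of monomials $p_\mu=\prod_i p_{\mu_i}$ with $|\mu|+\ell(\mu)=k$. Since $f\mapsto\sbq f$ is $\QQ$-linear, it suffices to prove that $\sbq{p_\mu}$ is a quasimodular form of weight $|\mu|+\ell(\mu)$ for every partition~$\mu$.

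For a partition $\lambda$ set $\mathsf e(\lambda,z)=\sum_{i\ge1}e^{(\lambda_i-i+\h)z}$. Splitting off the $\lambda$-independent tail $\sum_{i\ge1}e^{(-i+\h)z}=(e^{z/2}-e^{-z/2})^{-1}$ and expanding, one checks directly --- this is precisely what the constants in~\eqref{eq:smallpk} are for --- that
\[
  \mathsf e(\lambda,z)\;=\;\frac1z\;+\;\sum_{m\ge1}p_m(\lambda)\,\frac{z^m}{m!}\,.
\]
Hence, setting
\[
  F_n(z_1,\dots,z_n)\;=\;\Bigl\langle\,\textstyle\prod_{j=1}^n\mathsf e(\lambda,z_j)\,\Bigr\rangle_q\,,
\]
the coefficient of $\prod_j z_j^{m_j}$ in $F_n$ (all $m_j\ge1$) is a nonzero scalar multiple of $\sbq{p_\mu}$ with $\mu=(m_1,\dots,m_n)$, so it is enough to understand $F_n$ as a function of $(z_1,\dots,z_n,\tau)$.

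The substantial input is the Bloch--Okounkov closed formula: $F_n$ is a finite sum, indexed by the permutations $\sigma\in S_n$, of products of ratios of the odd Jacobi theta function $\Theta_\tau$ and its $z$-derivatives, evaluated at $\ZZ$-linear combinations of the $z_j$, with the diagonal ($\sigma$-fixed) contributions suitably regularized. I would derive this through the infinite-wedge (boson--fermion) formalism: $\sum_\lambda q^{|\lambda|}\langle\lambda|\cdot|\lambda\rangle$ is a trace over fermionic Fock space of $q^{L_0}$ times a product of bilinears $\psi(z_j)\psi^{*}(z_j)$; Wick's theorem expands this trace over matchings into products of the thermal two-point function $\langle\psi(z)\psi^{*}(w)\rangle_q$, which is computed explicitly to be the genus-one propagator, a $\Theta_\tau$-ratio. (Equivalently one follows Bloch--Okounkov's direct summation over Maya diagrams.) I expect this closed-form evaluation to be the only genuine obstacle; the surrounding steps are formal.

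It remains to extract quasimodularity with the right weight. The Laurent expansion at the origin of any such $\Theta_\tau$-ratio lies in the ring $\QQ[E_2,E_4,E_6]$ of quasimodular forms; moreover the standard grading bookkeeping (for instance, the coefficient of $z^m$ in $\wp^{(k)}(z,\tau)$ is quasimodular of weight $m+k+2$) shows that the coefficient of $\prod_j z_j^{m_j}$ in $F_n$ is quasimodular of weight $\sum_j(m_j+1)$ --- which is why the grading on $\Lambda^*$ is normalized by ${\rm wt}(p_\ell)=\ell+1$. Combining with the previous paragraph, $\sbq{p_\mu}$ is quasimodular of pure weight $|\mu|+\ell(\mu)$ for every partition~$\mu$, hence $\sbq f$ is quasimodular of weight~$k$ whenever $f\in\Lambda^*$ is homogeneous of weight~$k$.
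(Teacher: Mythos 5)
Your proposal is correct in outline, but it is a genuinely different proof from the one in the paper: what you describe is essentially Bloch--Okounkov's original argument (the one the paper itself alludes to as the proof ``with an explicit formula using determinants of theta derivatives''), whereas the paper deliberately re-derives the theorem from its own machinery. Concretely, after the common first step (reduce via Theorem~\ref{thm:KO} to brackets of monomials in the $P_\ell$), the paper rewrites each bracket as a graph sum whose vertex weights are completed-cycle triple Hurwitz numbers (Theorem~\ref{thm:bracketgraphsum}), uses their even polynomiality (Theorem~\ref{thm:SSZ}) to reduce to the combinatorial sums $S(G,\mm)$, and identifies those with constant terms of products of propagators $P^{(m_i)}(z_i-z_j)$, which are quasimodular by Theorem~\ref{thm:coeff0QM}. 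You instead package the whole bracket into the $n$-point function $F_n$ and appeal to its closed evaluation as a permutation sum of theta ratios. Each route buys something: yours, once the closed formula is granted, gives the \emph{pure} weight $k$ asserted in the statement (the homogeneity of the theta expansion under $\deg z=-1$, $\deg G_{2k}=2k$ is exactly what the constants in~\eqref{eq:smallpk} arrange), whereas the paper's argument only controls mixed weight $\le k$ and its graph-by-graph pieces genuinely fail to be pure; conversely, the paper's roundabout route is chosen because it refines the count graph by graph and extends to Siegel--Veech weights, which the $n$-point function does not see. The one caveat on your side is that the closed formula for $F_n$ is not a formality --- it is the main theorem of \cite{blochokounkov}, and the Wick/infinite-wedge computation needs care both with the regularization of the coincident-point insertions $\psi(z)\psi^*(z)$ and with the cancellation of the poles of the individual permutation terms along $z_{\sigma(1)}+\cdots+z_{\sigma(j)}=0$ before Taylor coefficients can be extracted. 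You flag this correctly as the only real obstacle; the surrounding steps (the identity $\mathsf e(\lambda,z)=z^{-1}+\sum_{m\ge1}p_m(\lambda)z^m/m!$, the reduction to coefficients of $F_n$, and the weight bookkeeping) all check out.
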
 
\par
There exists a long list of quite different proofs of this theorem. Already~\cite{blochokounkov}
contains two proofs, one in the spirit of~\cite{KanZag} and one with an explicit formula using determinants
of theta derivatives. A proof based on vertex operators is given in \cite{milasII}. Zagier (\cite{ZagBO})
gave a very short proof, that also provided an efficient recursive method to 
compute $q$-brackets.
\par
In this paper we will not use the Bloch-Okounkov theorem, but rather give yet
another proof, by counting graphs with weights, at the end 
of Section~\ref{sec:GraphSumisQM}. This proof is not our main objective
and the proof is rather roundabout, but it shows that a lot of quasimodularity results
can be ultimately traced back to Theorem~\ref{thm:coeff0QM}. 

\section{Hurwitz numbers and graph sums} \label{sec:doubleHurwitz}

The first part of the section is purely expository and we recall some known (piece-wise) 
polynomiality properties of the cardinalities of the Hurwitz numbers
introduced in~\eqref{eq:defDHN} and its completed cycle variants. Then we combine the definition
of triple Hurwitz numbers with the composition of torus covers into global graphs and local
surfaces to obtain a formula for counting torus covers in terms of a graph count of triple 
Hurwitz numbers. 
\par
\subsection{Triple Hurwitz numbers with completed cycles} \label{sec:DHN}

In general, the generalized double Hurwitz numbers with completed cycles are only piecewise polynomials
in variables $w_i^-$ and $w_i^+$ on the chambers defined by the walls where a partial sum
of the $w_i^-$ agrees with a partial sum of the~$w_i^+$. Recall that the shifted symmetric
function $f_\ell$ satisfies $f_\ell = \tfrac{1}\ell P_\ell + \cdots$. The formal {\em triple 
Hurwitz numbers with completed cycles}
\be 
\ol{A}(\bfw^-,\bfw^+,\mu) \= \frac{1}{\prod_i w^-_i\prod_i w^+_i } 
\sum_{|\lambda| = d} \chi_{\bfw^-}^\lambda
\chi_{\bfw^+}^\lambda  \frac{P_\mu(\lambda)}{\prod \mu_i}
\ee
obtained by replacing $f_\mu$ by the completed cycles $P_\mu  / {\prod \mu_i}$ 
has
much better properties, e.g.\ it is a polynomial outside the walls if
$\mu = (\mu_1)$ is a partition consisting of a single cycle. To remove the jumps on the walls 
we introduce the {\em triple Hurwitz numbers with completed cycles and without unramified
components}
\be \label{eq:olAprimeolA}
\ol{A}'(\bfw^-, \bfw^+, \mu)=\sum_{\substack{\bfu^+\subset\bfw^+, \; \bfu^-\subset\bfw^-\\|\mu|\leq|\bfu^-|=|\bfu^+|}} 
M(\bfu^+,\bfu^-,\bfw^+,\bfw^-)\, \ol{A}(\bfu^-, \bfu^+,\mu)\,.
\ee
by applying the same inclusion-exclusion inversion to $\ol{A}$ as we did in~\eqref{eq:AprimeA} to~$A$. 
The main reason to introduce completed cycles here is the following polynomiality result when $\mu$ is a cycle and triple Hurwitz numbers are in fact 
generalized double Hurwitz numbers.
We learned about this through draft notes of Okounkov.  It can
be combined from results of Shadrin, Spitz and Zvonkine in \cite{SSZ}.
\par
\begin{Thm}\label{thm:SSZ} If $\mu_1 +1-\ell(\bfw^-)- \ell(\bfw^+)$ is even, then 
the triple Hurwitz numbers $\ol{A}'(\bfw^-,\bfw^+, (\mu_1))$ with completed cycles and 
without unramified components for the last argument $\mu = (\mu_1)$ 
being a partition consisting of a single part is an even polynomial
in the variables $w_i^-$ and $w_i^+$.
\par
If $\mu_1+1-\ell(\bfw^-)- \ell(\bfw^+)$ is odd, then $\ol{A}'(\bfw^-,\bfw^+, (\mu_1)) = 0$.
\end{Thm}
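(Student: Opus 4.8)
The plan is to deduce Theorem~\ref{thm:SSZ} from the results of Shadrin--Spitz--Zvonkine in~\cite{SSZ} by first passing from the ``no unramified components'' normalization $\ol A'$ back to the plain $\ol A$, and then recognizing $\ol A$ as (a close cousin of) the generalized double Hurwitz numbers with completed cycles studied in loc.\ cit. First I would record the character-theoretic formula for $\ol A(\bfw^-,\bfw^+,(\mu_1))$: by definition it is
\[
\ol A(\bfw^-,\bfw^+,(\mu_1)) \= \frac{1}{\prod_i w_i^- \prod_i w_i^+}\sum_{|\lambda|=d}\chi^\lambda_{\bfw^-}\chi^\lambda_{\bfw^+}\,\frac{P_{\mu_1}(\lambda)}{\mu_1}.
\]
The operator $P_{\mu_1}/\mu_1$ is exactly the ``completed $(\mu_1)$-cycle'' acting diagonally in the basis of irreducibles; this is precisely the setting of~\cite{SSZ}, where the disconnected generalized double Hurwitz number with one completed cycle insertion of length $\mu_1$ and arbitrary profiles $\bfw^\pm$ over $0$ and $\infty$ is shown to be piecewise polynomial in the $w_i^\pm$, polynomial away from the resonance walls $\sum_{i\in I}w_i^- = \sum_{j\in J}w_j^+$, and with a controlled parity. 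So the core input --- piecewise polynomiality and the parity/vanishing statement for $\ol A$ itself --- is quotable verbatim; what remains is purely combinatorial bookkeeping to upgrade ``piecewise polynomial'' to ``genuinely polynomial.''

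Next I would handle the passage through~\eqref{eq:olAprimeolA}. The Möbius-type inversion expresses $\ol A'(\bfw^-,\bfw^+,(\mu_1))$ as a finite $\ZZ$-linear combination of products $\ol A(\bfu^-,\bfu^+,(\mu_1))$ over sub-tuples $\bfu^\pm\subset\bfw^\pm$ (with the single-cycle profile $(\mu_1)$ necessarily staying on the nonempty factor since $|\mu_1|\le|\bfu^-|$). The point of this alternating sum --- exactly as for the corresponding statement that $\ol A'$ ``removes the jumps on the walls'' --- is that the wall-crossing discrepancies of the pieces $\ol A$ cancel in the combination, leaving a function that is a single polynomial across all chambers. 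I would make this precise by the standard wall-crossing argument: each $\ol A$ is a polynomial on each chamber, the chamber-to-chamber jump of $\ol A(\bfu^-,\bfu^+,(\mu_1))$ across a wall is itself expressed (again by~\cite{SSZ}, whose wall-crossing formulas are explicit) as a product of lower-complexity Hurwitz numbers, and the Möbius coefficients are arranged precisely so that the total jump of $\ol A'$ vanishes across every wall. A function that is piecewise polynomial with no jumps across any wall, on a decomposition of $\RR^{\ell(\bfw^-)+\ell(\bfw^+)}_{>0}$ into finitely many polyhedral chambers glued along hyperplanes, is a global polynomial --- here one uses that two polynomials agreeing on the open half-space on one side of a hyperplane agree identically, so one can propagate the single polynomial formula from one chamber to all of them.

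Finally, the parity and vanishing. Once $\ol A'(\bfw^-,\bfw^+,(\mu_1))$ is known to be a polynomial, its parity can be read off from the Riemann--Hurwitz count governing when $\Cov$ is nonempty: the genus of the source is determined by $\mu_1+1-\ell(\bfw^-)-\ell(\bfw^+)$ via the relation $\sum(\mu^{(i)}_j-1)=2g-2$ specialized to the single completed cycle, so the number is forced to vanish identically when this quantity is odd (no covering exists, and the completed-cycle correction does not resurrect it, as the completed cycle $P_{\mu_1}$ is supported on the same parity sector). When it is even, to get that the polynomial is an \emph{even} polynomial I would invoke the scaling symmetry: simultaneously negating all $w_i^\pm$ corresponds to the sign character / transpose symmetry $\lambda\mapsto\lambda'$ under which $\chi^\lambda_{\bfw^\pm}$ picks up $\sgn$ factors and $P_{\mu_1}(\lambda')=(-1)^{\mu_1+1}P_{\mu_1}(\lambda)$, and chasing the total sign through the formula shows $\ol A(-\bfw^-,-\bfw^+,(\mu_1))=\ol A(\bfw^-,\bfw^+,(\mu_1))$ in the relevant parity, whence the same for $\ol A'$. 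I expect the main obstacle to be the second step: carefully matching the exact form of the wall-crossing formulas in~\cite{SSZ} with the Möbius coefficients $M(\bfu^+,\bfu^-,\bfw^+,\bfw^-)$ so that all jumps provably cancel, rather than merely asserting that ``$\ol A'$ removes the jumps''; one should isolate a single wall, parametrize the sub-tuples that straddle versus avoid it, and verify the cancellation combinatorially.
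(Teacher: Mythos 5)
Your overall strategy --- quote \cite{SSZ} for chamber-by-chamber polynomiality and parity, then use the inclusion--exclusion defining $\ol{A}'$ together with wall crossing to upgrade to a global polynomial --- is the same as the paper's, but the two places where you defer the work are exactly where the content lies, and one of your proposed arguments is incorrect.

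First, the step you yourself flag as the main obstacle (matching the M\"obius coefficients $M(\bfu^+,\bfu^-,\bfw^+,\bfw^-)$ against the wall-crossing formula so that all jumps cancel) is where the paper has a one-line observation that makes the bookkeeping unnecessary: rewrite the wall-crossing formula of \cite[Theorem~6.6]{SSZ} directly in terms of $\ol{A}'$ (legitimate since $\ol{A}=\ol{A}'$ off the walls and the substitution is harmless on the wall itself by \eqref{eq:olAprimeolA}); then \emph{every} term of the jump contains a factor of the form $\ol{A}'(\cdot,\cdot,\emptyset)$, i.e.\ a count of covers with no ramification over $z=1$ yet without unramified components, which is identically zero. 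The jump therefore vanishes term by term. Your route of verifying the cancellation combinatorially might be made to work, but as written it is an unexecuted verification of the crux of the theorem.

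Second, the transpose-symmetry argument does not prove that the polynomial is \emph{even}. The identities $\chi^{\lambda'}_{\bfw^\pm}=\sgn\cdot\chi^\lambda_{\bfw^\pm}$ and $P_{\mu_1}(\lambda')=(-1)^{\mu_1+1}P_{\mu_1}(\lambda)$ relate the character sum defining $\ol{A}(\bfw^-,\bfw^+,(\mu_1))$ to \emph{itself} times $(-1)^{\mu_1+1-\ell(\bfw^-)-\ell(\bfw^+)}$. That cleanly forces the vanishing in the odd case, but it gives no information about the value of the interpolating polynomial at $(-\bfw^-,-\bfw^+)$, which lies outside the set of positive integer points where the counting function is defined; the asserted identity $\ol A(-\bfw^-,-\bfw^+,(\mu_1))=\ol A(\bfw^-,\bfw^+,(\mu_1))$ does not follow. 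The evenness must be imported from \cite[Theorem~6.4]{SSZ}, which gives that each chamber polynomial is homogeneous of degree $\mu_1+1-\ell(\bfw^-)-\ell(\bfw^+)$, hence consists only of monomials of the correct parity; this is what the paper does. (For the odd-parity vanishing of $\ol{A}'$ one also needs your ``same parity sector'' remark in a precise form, namely that $P_\mu$ of odd weight is a linear combination of $f_\mu$ of odd weight; the paper cites \cite[Formula~(0.22)]{OkPand} for this.)
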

\par
\begin{proof} The completed generalized double Hurwitz numbers are piecewise polynomial
functions for any number of ramification points besides the two prescribed ones, but the polynomiality is global for one ramification point, 
as we now explain in detail.
\par
Fix $m=\ell(\bfw^-)$ and $n=\ell(\bfw^+)$. We consider the vector space 
$V=\{(\bfw^-, \bfw^+): \; |\bfw^-|=|\bfw^+|\}$ and for $I\subset\{1, \dots, m\}$ 
and $J\subset\{1, \dots, n\}$ we define the hyperplane 
\[W_{I,J} \= \{(\bfw^-, \bfw^+)\in V; \; |\bfw_I^-|-|\bfw_J^+|=0\}.\]
The sets $W_{I,J}$ are the walls of a hyperplane arrangement. In the interior of 
the chambers, the connected and disconnected Hurwitz numbers obviously coincide. 
For a chamber $\mathfrak c$ of this arrangement, Theorem~6.4 of \cite{SSZ} shows that 
$\ol{A}(\bfw^-,\bfw^+,(\mu_1))_{|\mathfrak c}$ is a homogeneous polynomial of degree 
$\mu_1 + 1-\ell(\bfw^-)- \ell(\bfw^+)$. 
\par
The wall crossing formula (Theorem~6.6 of~\cite{SSZ}) of two adjacent 
chambers~$\mathfrak c_1$ and~$\mathfrak c_2$ of the wall~$W_{I,J}$ can be written 
in the case of the last argument $\mu = (\mu_1)$ being a partition consisting of 
a single part as
\bas
&\phantom{\=\, } \ol{A}'(\bfw^-,\bfw^+,(\mu_1))_{|\mathfrak c_1}
-\ol{A}'(\bfw^-,\bfw^+,(\mu_1))_{|\mathfrak c_2} \\
&\= \delta^2\biggl(\ol{A}'(\bfw^-_I,\bfw^+_J+\delta,(\mu_1))\, 
\ol{A}'(\bfw^-_{I^c}+\delta,\bfw^+_{J^c},\emptyset) \\ 
&\phantom{\=\, \delta^2\biggl( } \+ \ol{A}'(\bfw^-_{I^c}+\delta,\bfw^+_{J^c},(\mu_1))\,
\ol{A}'(\bfw^-_{I},\bfw^+_{J}+\delta,\emptyset)\biggr)\,,
\eas
where $\delta=|\bfw^-|-|\bfw^+|$. In fact, \cite{SSZ} state this formula in term of $\ol{A}$ instead of $\ol{A}'$, but for $\delta=0$ the formula holds trivially by \eqref{eq:olAprimeolA}, and outside the walls the covers have no unramified component. In this expression the terms with no ramification vanish
since $\ol{A}'$ denotes covers without unramified components.
This implies that the polynomials are the same in any two adjacent chambers 
and hence the expression is globally polynomial. 
\par
Moreover, Theorem~6.4 of \cite{SSZ} implies that the polynomial 
$\ol{A}'(\bfw^-,\bfw^+,(\mu_1))$
has the same parity as $\mu_1+1-\ell(\bfw^-)-\ell(\bfw^+)$. It remains to show that the 
polynomial
vanishes when this expression is odd. The triple Hurwitz number without 
completed cycles $A'(\bfw^-,\bfw^+,({\mu_1}))$ vanishes for 
$\mu_1 +1-\ell(\bfw^-)-\ell(\bfw^+)$ odd
by the Riemann-Hurwitz formula and the same statement holds for $f_{\mu_1}$ 
replaced by any $f_\mu$ with ${\rm wt}(\mu)$ odd. Since any $p_\mu$ of odd (resp.\ even) 
weight is a linear combination of $f_\mu$ of   odd (resp.\ even) weight (see \cite{OkPand}, 
Formula~(0.22) for the general statement) the claim follows.
\end{proof}
\par
Later we will need to allow more general functions on partitions, and hence we 
define for any function~$F$ on partitions
\be \label{eq:AwithF}
A(\bfw^-,\bfw^+,F) \= \frac{1}{\prod_i w^-_i\prod_i w^+_i } 
\sum_{|\lambda| = d} \chi_{\bfw^-}^\lambda
\chi_{\bfw^+}^\lambda  F(\lambda)
\ee
and we define $A'(\bfw^-,\bfw^+,F)$ in terms of $A(\bfw^-,\bfw^+,F)$ as in~\eqref{eq:olAprimeolA}.
In this notation we retrieve the previous definition of triple Hurwitz numbers as
\bes
A'(\bfw^-,\bfw^+,\mu) = A'(\bfw^-,\bfw^+,f_\mu) \quad \text{and} \quad
\ol{A}'(\bfw^-,\bfw^+,\mu) = A'\Bigl(\bfw^-,\bfw^+,\frac{P_\mu}
{\prod \mu_i}\Bigr)\,.
\ees
\par

\subsection{Graph sums with triple Hurwitz numbers} \label{sec:GrDHN}

The first goal here is to use Proposition~\ref{prop:corrTCGraph} and Proposition~\ref{prop:corrAuto}  
to write the generating 
series $N'(\Hmu)$ for counting torus covers without unramified components in terms of graph sums 
involving triple Hurwitz numbers without unramified components. First of all, we can
decompose~$N'(\Hmu)$ according to the contribution of the individual graphs, i.e.\ 
$$  N'(\Hmu) \= \frac{1}{|\Aut(\Gamma)|} \, \sum_{\Gamma} N'(\Gamma, \Hmu) \,,$$
where the sum is over all (not necessarily connected) labeled 
graphs~$\Gamma$ with $n = |\Hmu|$ vertices and where $\Aut(\Gamma)$ are the automorphisms
of the graph~$\Gamma$ that respect the vertex labeling. (Note that $\Gamma$ has neither
a labeling nor an orientation on the edges.)
\par
\begin{Prop} \label{prop:N'exp}
The contributions of individual labeled graphs to $N'(\Hmu)$ can be expressed
in terms of triple Hurwitz numbers as
\be \label{eq:NpGamma1} 
N'(\Gamma, \Hmu) \= \sum_{G \in \Gamma } {N}'(G,\Hmu)\,, 
\ee
where 
 \be \label{eq:NpGamma2}
{N}'(G,\Hmu) \= \!\!\sum_{h\in \widetilde{\NN}^{E(G)}, \atop w\in \Z_+^{E(G)}}\prod_{e\in E(G)}w_eq^{h_e w_e}
\prod_{v\in V(G)}A'(\bfw_v^-,\bfw_v^+,\mu_v)\,\, \delta(v)
\ee
and where \be \label{eq:NotDelta}
\delta(v) = \delta\bigl(\sum_{i\in e_+(v)} w_i^+ - \!\sum_{i\in e_-(v)} \!w_i^-\bigr).\ee
\end{Prop}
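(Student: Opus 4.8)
The plan is to unpack the bijection of Proposition~\ref{prop:corrTCGraph} one combinatorial layer at a time, turning the weighted count $N'(\Hmu)$ into the iterated sum claimed. First I would fix a labeled graph~$\Gamma$ and an orientation $G\in\Gamma$, and use Proposition~\ref{prop:corrTCGraph} together with Proposition~\ref{prop:corrAuto} to identify the flat surfaces $(X,\omega)$ whose global graph is~$\Gamma$ — weighted by $1/|\Aut(p)|$ — with tuples $(G,(w_e,h_e,t_e)_e,(\pi_v)_v)$ modulo $\Aut(\Gamma)$, weighted by $1/\prod_v|\Aut(\pi_v)|$ (the automorphism count splits over vertices by Proposition~\ref{prop:corrAuto}). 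Summing over the $\Aut(\Gamma)$-orbits and dividing by $|\Aut(\Gamma)|$ is exactly the content of $N'(\Hmu)=\frac{1}{|\Aut(\Gamma)|}\sum_\Gamma N'(\Gamma,\Hmu)$ and of the inner sum $N'(\Gamma,\Hmu)=\sum_{G\in\Gamma}N'(G,\Hmu)$, so after this identification it suffices to compute $N'(G,\Hmu)$ for a fixed oriented graph.

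Next I would carry out the bookkeeping of the remaining data. Given~$G$, the tuple of widths $(w_e)\in\ZZ_+^{E(G)}$ and the tuple of heights $(h_e)\in\widetilde{\NN}^{E(G)}$ range freely subject only to the degree constraint $\sum_e w_eh_e=d$; the twist $t_e$ contributes a factor of $w_e$ possibilities per edge, giving $\prod_e w_e$; and for each vertex~$v$ the local cover $\pi_v$ must lie in $\Cov'(\bfw_v^-,\bfw_v^+,\mu_v)$, contributing — after weighting by $1/|\Aut(\pi_v)|$ and summing — exactly $A'(\bfw_v^-,\bfw_v^+,\mu_v)$ by the definition \eqref{eq:defDHN} (primed version). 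The compatibility condition that the widths at incoming and outgoing edges of~$v$ have equal total, $\sum_{i\in e_+(v)}w_i^+=\sum_{i\in e_-(v)}w_i^-$, is forced by $\pi_v$ being a well-defined $\PP^1$-cover (equal degree over $0$ and $\infty$), and is recorded by the Kronecker delta $\delta(v)$ in \eqref{eq:NotDelta}; note also that $A'(\bfw_v^-,\bfw_v^+,\mu_v)$ already vanishes unless $|\bfw_v^-|=|\bfw_v^+|$, so inserting $\delta(v)$ is harmless and merely makes the constraint visible. Finally, summing $q^d=q^{\sum_e w_eh_e}=\prod_e q^{w_eh_e}$ over all~$d$ turns the generating series into the product $\prod_e w_e q^{w_eh_e}$, which after separating the edge sum and the vertex product yields exactly \eqref{eq:NpGamma2}.

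The main obstacle I anticipate is not any single estimate but rather the careful matching of automorphism weights on the two sides of the correspondence: one must check that the $1/|\Aut(p)|$ weighting of covers matches $\frac{1}{|\Aut(\Gamma)|}\cdot\frac{1}{\prod_v|\Aut(\pi_v)|}$ after the passage through Proposition~\ref{prop:corrTCGraph} and Proposition~\ref{prop:corrAuto}, including the subtle point that an automorphism of $\Gamma$ permuting parallel edges acts on the local covers by post-composition with $\sigma_0,\sigma_\infty$ and so is already accounted for by quotienting by $\Aut(\Gamma)$ rather than by the local automorphism groups. One should also verify that the auxiliary choices (branch point normalization, reference singularities on each side of each cylinder, edge labeling) contribute equal multiplicities on both sides — this is precisely the remark after the proof of Proposition~\ref{prop:corrTCGraph} that the forgetful maps of the extra data have matching fibers, of size $\prod_{v}\ell(\bfw_v^-)\ell(\bfw_v^+)$ for the reference singularities and $|\Aut(\Gamma)|$ for the edge labeling. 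Once these multiplicities are seen to cancel, formula \eqref{eq:NpGamma2} drops out, and \eqref{eq:NpGamma1} follows by summing the contributions of the finitely many orientations $G\in\Gamma$.
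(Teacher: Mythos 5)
Your proposal is correct and follows essentially the same route as the paper: the statement is deduced directly from the correspondence of Proposition~\ref{prop:corrTCGraph} (with automorphism weights matched via Proposition~\ref{prop:corrAuto}), the twist accounting for the factor $w_e$, the degree for $q^{h_e w_e}$, and the weighted local covers for $A'(\bfw_v^-,\bfw_v^+,\mu_v)$. The paper's own proof is just a terser version of this same argument, delegating the automorphism bookkeeping you spell out to the two cited propositions.
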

\par
The delta-function factor is
redundant in this expression by our definition of $A'(\bfw_v^-,\bfw_v^+,\mu_v)$, but keeping
it will be important once we pass from $A'$ to a polynomial expression.
\par
\begin{proof} This is a direct consequence of the correspondence 
in Proposition~\ref{prop:corrTCGraph}. The degree of the covering is encoded
in the $(w_e, h_e)$ by~\eqref{eq:whd} and the factor $w_e$ accounts 
for the number of possible twists (values of $t_e$) for any given edge~$e$.
\end{proof}
\par
The strategy to prove quasimodularity is to reduce to ``expressions as in~\eqref{eq:NpGamma1} 
and~\eqref{eq:NpGamma2}'' but with $\ol{A}'$ as argument, which is polynomial by 
Theorem~\ref{thm:SSZ}.  To formalize this, recall from the combination of~\eqref{eq:NNN}
and~\eqref{defqbrac} that the counting function $N'(\Hmu)$ is a $q$-bracket of a shifted
symmetric function. By Theorem~\ref{thm:KO} it hence suffices to treat $q$-brackets of 
products of the $p_\ell$.
\par
\begin{Thm} \label{thm:bracketgraphsum}
The $q$-bracket of any shifted symmetric function can be
expressed as a graph sum 
\be  
\bq{p_{\ell_1}\cdots p_{\ell_n}}  \= \sum_{\Gamma} \sum_{G \in \Gamma}\, \bG{p_{\ell_1}\cdots p_{\ell_n}} 
\ee
where the sum runs over graphs with $n$ labeled vertices and all orientations~$G$ of~$\Gamma$,
and where 
\be \label{eq:pasGraphsum}
\bG{p_{\ell_1}\cdots p_{\ell_n}} \= \!\!\sum_{h\in \widetilde{\NN}^{E(G)}, \atop w\in \Z_+^{E(G)}}
\prod_{e\in E(G)}w_eq^{h_e w_e}
\prod_{v\in V(G)}\ol{A}'(\bfw_v^-,\bfw_v^+,(\ell_{\# v}))\,\, \delta(v)\,.
\ee
Here $\# v$ denotes the label of the vertex~$v$.
\end{Thm}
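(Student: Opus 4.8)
The plan is to combine the two structural inputs already in hand: the correspondence theorem Proposition~\ref{prop:corrTCGraph} (together with Proposition~\ref{prop:corrAuto} to handle automorphisms) which converts counting torus covers into a graph sum over local surfaces, and the change of basis from the $f_\mu$ to the completed cycles $P_\ell$, i.e.\ from $A'$ to $\ol{A}'$. Concretely, I would first observe that by Theorem~\ref{thm:KO} the shifted symmetric functions $p_{\ell_1}\cdots p_{\ell_n}$ span $\Lambda^*$, so it suffices to establish the formula for products of completed cycles, one per vertex; this is why the statement is phrased for $\bq{p_{\ell_1}\cdots p_{\ell_n}}$ and why the vertex input in~\eqref{eq:pasGraphsum} is the single-part $(\ell_{\# v})$.

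Next I would unwind the left-hand side. Using~\eqref{defqbrac} and the Burnside formula~\eqref{eq:Burnside}, write $\bq{p_{\ell_1}\cdots p_{\ell_n}}$ as a normalized sum over partitions $\lambda$ of $\prod_i p_{\ell_i}(\lambda)$; since each $p_{\ell_i}$ differs from $f_{(\ell_i)}$ only by a regularizing constant and by lower completed cycles, the product $\prod_i p_{\ell_i}$ is a fixed linear combination of products $\prod_i f_{\mu^{(i)}}$. For each such term, $\bq{\prod_i f_{\mu^{(i)}}}$ equals $N'(\Hmu)$ for the profile $\Hmu = (\mu^{(1)},\dots,\mu^{(n)})$ by the combination of~\eqref{eq:NNN} and~\eqref{defqbrac}. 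Now apply Proposition~\ref{prop:N'exp}: $N'(\Hmu) = \sum_\Gamma \frac{1}{|\Aut(\Gamma)|}\sum_{G\in\Gamma} N'(G,\Hmu)$, with $N'(G,\Hmu)$ the graph sum~\eqref{eq:NpGamma2} whose vertex weights are the $A'(\bfw_v^-,\bfw_v^+,\mu_v)$. Running the linear combination back through the graph sum --- which is legitimate because the graph sum is linear in each vertex input and the set of admissible graphs depends only on $n$, not on the $\mu^{(i)}$ --- replaces, vertex by vertex, $A'(\bfw_v^-,\bfw_v^+,f_{\mu_v})$ by $A'(\bfw_v^-,\bfw_v^+,P_{(\ell_{\#v})}/\ell_{\#v}) = \ol{A}'(\bfw_v^-,\bfw_v^+,(\ell_{\#v}))$, using the notation $A'(\cdot,\cdot,F)$ for general $F$ from~\eqref{eq:AwithF}. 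This is exactly~\eqref{eq:pasGraphsum}, and summing over $\Gamma$ and $G$ and dividing by $|\Aut(\Gamma)|$ gives the claimed identity --- though one must be slightly careful about whether the $1/|\Aut(\Gamma)|$ is absorbed into the meaning of $\bG{\cdot}$ or left explicit; I would match the convention used in the statement, where the automorphism factor appears implicitly in the definition of the per-graph bracket.

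The main technical point --- and the step I expect to need the most care --- is the bookkeeping of the constant (regularization) terms distinguishing $p_\ell$ from $P_\ell$, and more importantly the fact that $p_{\ell_i}$ is not simply $f_{(\ell_i)}$ but a combination of $f_\mu$ with $\mu$ a partition of weight $\le \ell_i+1$. When one expands $\prod_i p_{\ell_i}$ into the $f$-basis, the resulting profiles $\Hmu$ have at the $i$-th vertex a partition $\mu^{(i)}$ that may consist of several parts, so one is momentarily outside the "single cycle per vertex" world; one must check that after re-collecting terms at the level of the vertex weights the sum over all these contributions reassembles exactly to $\ol{A}'(\bfw_v^-,\bfw_v^+,(\ell_{\#v}))$ by the very definition $\ol{A}'(\cdots) = A'(\cdots, P_{\mu}/\prod\mu_i)$ and the inclusion-exclusion~\eqref{eq:olAprimeolA} being compatible with~\eqref{eq:AprimeA}. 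The other point requiring attention is that $P_\mu/\prod\mu_i$ is genuinely a function on partitions (not a priori $f_\mu$ for some profile), so one needs the general-$F$ formalism of~\eqref{eq:AwithF}--\eqref{eq:olAprimeolA} to make the substitution legal; this is already set up in the excerpt, so it is a matter of invoking it consistently. Everything else --- linearity of the graph sum, stability of the index set of graphs, the bijective correspondence --- is formal once these two consistency checks are in place.
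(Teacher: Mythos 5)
Your proposal is correct and follows essentially the same route as the paper: the paper likewise reduces to the $f_\mu$-case of Proposition~\ref{prop:N'exp}, invokes the change of basis $p_\ell=\sum_\mu c_{\ell,\mu}f_\mu$ from Theorem~\ref{thm:KO}, and concludes by observing that both the $q$-bracket and the graph-sum bracket (defined for arbitrary vertex inputs via~\eqref{eq:AwithF}) are multilinear in the $n$ vertex arguments, so the identity for completed cycles is a linear combination of the identities for the $f_\mu$. The consistency points you flag (the $p_\ell$ versus $P_\ell$ regularization constants, compatibility of~\eqref{eq:olAprimeolA} with~\eqref{eq:AprimeA}, and the $\Aut(\Gamma)$ bookkeeping) are exactly the right ones to watch and are handled in the paper by the general-$F$ formalism you describe.
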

\par
\begin{proof} Our strategy is to reduce this to the cases covered by 
Proposition~\ref{prop:N'exp}. For this purpose, we define for any 
function~$F$ on partitions the auxiliary brackets
\be \label{eq:defauxbrack}
[F_1,\ldots,F_n] \=  \sum_{\Gamma} \, [F_1,\ldots,F_n]_\Gamma\,, 
\quad [F_1,\ldots,F_n]_\Gamma \= \sum_{G \in \Gamma } \, [F_1,\ldots,F_n]_G
\ee
where the sum is over all labeled graphs~$\Gamma$ with $n$~vertices and
over all the orientations, respectively, and where 
\be
[F_1,\ldots,F_n]_G \= \sum_{h\in \widetilde{\NN}^{E(G)}, \atop w\in \Z_+^{E(G)}}\prod_{i\in E(G)}w_iq^{h_iw_i}
\prod_{v\in V(G)} A'(w_v^-,w_v^+,F_{\#v})\,\, \delta(v)\,.
\ee
In this notation, we want to show that
\be  \label{eq:pasbracket}
\bq{p_{\ell_1}\cdots p_{\ell_n}} \= [p_{\ell_1},\ldots,p_{\ell_n}]\,. 
\ee
On the other hand, Proposition~\eqref{prop:N'exp} can be restated in this notation as 
\be \label{eq:fasbracket}
\bq{f_{\mu_1}\cdots f_{\mu_n}} \= [f_{\mu_1},\ldots,f_{\mu_n}]\,.
\ee
We can express by Theorem~\ref{thm:KO} each of these generators~$p_\ell$ 
of~$\Lambda^*$ as $p_\ell = \sum_\mu  c_{\ell,\mu} f_\mu$ for some coefficients~$c_{\ell,\mu}$.
Obviously, a $q$-bracket of a product~$n$ shifted symmetric functions is multilinear
in the $n$~arguments. On the other hand, the brackets introduced in~\eqref{eq:defauxbrack}
are multilinear as well, since the arguments of the bracket appear only linearly as
arguments of~$A'$. Consequently, equation~\eqref{eq:pasbracket} is a linear combination
of equations of the form~\eqref{eq:fasbracket}.
\end{proof}
\par

\section{Constant coefficients of 
quasi-elliptic functions} 
\label{sec:QMFProp}

\subsection{Quasimodular forms} \label{sec:QMF}
Kaneko and Zagier introduced the quasimodular forms in~\cite{KanZag} in 
connection with counting simply branched covers of the torus. 
\par
A {\em quasimodular form} for the cofinite Fuchsian group $\Gamma \subset 
\SL\RR$ of weight~$k$ is a function $f: \HH \to \CC$ that is holomorphic
on $\HH$ and the cusps of $\Gamma$ and such that there exists and integer~$p$ and
holomorphic functions $f_i: \HH \to \CC$ such that 
$$ (c\tau + d)^{-k} f\Bigl(\frac{a\tau +b }{c\tau +d}\Bigr) \=  
\sum_{i=0}^p f_i(\tau) \Bigl(\frac{c}{c\tau + d} \Bigr)^i\,,$$ 
for all $\sm abcd \in \Gamma$. 
\par
Note that this definition implies (using the identity matrix)
that $f_0 = f$. The smallest integer~$p$ with the above property
is called the {\em depth} of the quasimodular forms.  By definition, 
quasimodular forms of depth zero are simply modular forms.
The basic examples of quasimodular forms are the Eisenstein series defined by
\[G_{2k}(\tau) \= \frac{(2k-1)!}{2(2\pi i)^{2k}}\sum_{(m,n)\in\Z^2\setminus\{(0,0\}}
\frac{1}{(m+n\tau)^{2k}} \= -\frac{B_{2k}}{4k}+\sum_{n=1}^{\infty}\sigma_{2k-1}(n)q^n\,.\]
Here  $B_l$ is the Bernoulli number, $\sigma_l$ is the divisor sum function
and $q=e^{2\pi i\tau}$. For $k\geq 2$ these are modular forms, while for $k=2$ the
Eisenstein series 
\[G_2(\tau)=-\frac{1}{24}+\sum_{n=1}^{\infty}\sigma_1(n)q^n\] 
is a quasimodular form of weight~$2$ and depth~$1$ for $\SL\ZZ$. Note that 
the $q$-expansion makes sense as a definition for $G_{2k+1}$ but does not give 
a quasimodular form. We will encounter this power series in 
Section~\ref{sec:examples}.
\par
In terms of Eisenstein series, we recall a characterization of
quasimodular forms, that might serve alternatively a definition of that ring
for the special case of the modular group.
\par
\begin{Prop}[\cite{KanZag}] The ring of quasimodular forms for $\G= \SL\ZZ$ 
is equal to
$\C[E_2,E_4,E_6]$, the polynomial ring over $\C$ generated by the first 
three Eisenstein series. This ring is stable under the $q$-derivative
$D_q = q\tfrac{\partial}{\partial q}=\tfrac{1}{2\pi i}\tfrac{\partial}
{\partial \tau}$. More precisely, the $q$-derivative of a quasimodular 
form of weight~$k$ is a quasimodular form of weight $k+2$.
\end{Prop}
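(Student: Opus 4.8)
The plan is to establish the two assertions in turn. For $\gamma=\sm abcd\in\SL\ZZ$ write $j(\gamma,\tau)=c\tau+d$ and $X(\gamma,\tau)=c/(c\tau+d)$; taking the logarithmic $\tau$-derivative of the cocycle relation $j(\gamma_1\gamma_2,\tau)=j(\gamma_1,\gamma_2\tau)\,j(\gamma_2,\tau)$ produces the ``companion cocycle''
\be \label{KZeq:Xcoc}
X(\gamma_1\gamma_2,\tau)\=\frac{X(\gamma_1,\gamma_2\tau)}{j(\gamma_2,\tau)^2}\+X(\gamma_2,\tau)\,,
\ee
which is the one structural identity the whole argument rests on.

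\textbf{Identification of the ring.} The inclusion of $\C[E_2,E_4,E_6]$ in the algebra of quasimodular forms is immediate: $E_4,E_6$ are modular (depth~$0$); $E_2$, being a nonzero scalar multiple of the $G_2$ of the example above, is quasimodular of weight~$2$ and depth~$1$ with top companion a nonzero constant~$c_0$; and multiplying the two defining $X$-polynomials shows that a product of quasimodular forms is quasimodular with weights and depths adding. For the converse I would induct on the depth~$p$. The case $p=0$ is the classical structure theorem $M_*(\SL\ZZ)=\C[E_4,E_6]$, proved via the valence formula. For $p\ge1$, let $f$ be quasimodular of weight~$k$ and depth~$p$ with companions $f_0=f,\dots,f_p$. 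I would compute $f((\gamma_1\gamma_2)\tau)$ in two ways -- directly from the functional equation for $\gamma_1\gamma_2$, and as $f(\gamma_1(\gamma_2\tau))$ via the functional equation for~$\gamma_1$ -- insert \eqref{KZeq:Xcoc}, and compare coefficients of the powers of $X(\gamma_1,\gamma_2\tau)$ (which, for fixed $\gamma_2,\tau$, takes infinitely many values as $\gamma_1$ runs over $\SL\ZZ$, so the comparison is legitimate). This yields the transformation law of the companions
\be \label{KZeq:complaw}
f_i(\gamma_2\tau)\= j(\gamma_2,\tau)^{k-2i}\sum_{b\ge0}\binom{i+b}{i}\,f_{i+b}(\tau)\,X(\gamma_2,\tau)^b\qquad(0\le i\le p)\,.
\ee
For $i=p$ the sum collapses to its $b=0$ term, so $f_p$ is a modular form of weight $k-2p$, hence $f_p\in\C[E_4,E_6]$ by the case $p=0$. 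Since $E_2^{p}$ is quasimodular of weight~$2p$ and depth~$p$ with top companion $c_0^{p}\ne0$, the difference $f-c_0^{-p}\,f_p\,E_2^{p}$ has weight~$k$ and vanishing top companion, hence depth $\le p-1$; by the inductive hypothesis it lies in $\C[E_2,E_4,E_6]$, and therefore so does~$f$.

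\textbf{The $q$-derivative.} Here I would simply differentiate $f(\gamma\tau)=\sum_{i=0}^p f_i(\tau)\,c^i(c\tau+d)^{k-i}$ in~$\tau$, use $\tfrac{d}{d\tau}(\gamma\tau)=(c\tau+d)^{-2}$, and collect powers of $X(\gamma,\tau)$, obtaining
\be \label{KZeq:Dqfe}
(c\tau+d)^{-(k+2)}\,f'(\gamma\tau)\=\sum_{\ell=0}^{p+1}\bigl(f_\ell'(\tau)+(k-\ell+1)\,f_{\ell-1}(\tau)\bigr)\,X(\gamma,\tau)^\ell\qquad(f_{-1}=f_{p+1}=0)\,.
\ee
Hence $D_q f=\tfrac{1}{2\pi i}f'$ is quasimodular of weight $k+2$ and depth $\le p+1$, with companions $D_q f_\ell+\tfrac{k-\ell+1}{2\pi i}f_{\ell-1}$; these all have $q$-expansions with non-negative exponents because the $f_\ell$ do, so $D_q f$ is holomorphic at the cusp. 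Combined with the first part this gives the stability of $\C[E_2,E_4,E_6]$ under $D_q$ (alternatively one can verify the Ramanujan identities $D_q E_2=\tfrac{1}{12}(E_2^2-E_4)$, $D_q E_4=\tfrac{1}{3}(E_2E_4-E_6)$, $D_q E_6=\tfrac{1}{2}(E_2E_6-E_4^2)$ directly).

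The crux of the argument is \eqref{KZeq:complaw}: the fact that the top companion of a weight-$k$, depth-$p$ quasimodular form is a genuine modular form of weight $k-2p$. This is exactly what makes the induction on depth close, and once it is in hand everything else is the formal bookkeeping above together with the classical description of $M_*(\SL\ZZ)$. The one point that needs care is the holomorphy of the companions at the cusp: I take this as part of the definition of a quasimodular form (following \cite{KanZag}; since $\SL\ZZ$ has a single cusp it merely asks that each $f_i$ have a $q$-expansion $\sum_{n\ge0}a_nq^n$), so that the weakly modular function $f_p$ of weight $k-2p$ is actually an element of $\C[E_4,E_6]$.
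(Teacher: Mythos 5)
The paper does not prove this Proposition; it is quoted from \cite{KanZag} as background, so there is no in-paper argument to compare yours against. Your proof is the standard structure-theorem argument (the one in Kaneko--Zagier and in Zagier's exposition of quasimodular forms), and it is correct: the cocycle identity for $X(\gamma,\tau)=c/(c\tau+d)$, the resulting transformation law of the companions $f_i$ obtained by comparing coefficients of powers of $X(\gamma_1,\gamma_2\tau)$, the conclusion that the top companion $f_p$ is modular of weight $k-2p$, and the depth-reduction step $f\mapsto f-c_0^{-p}f_pE_2^{p}$ all check out, as does the computation of the functional equation of $f'$ giving weight $k+2$ (or, alternatively, the Ramanujan identities). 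You also correctly isolate the one genuinely delicate point, namely the holomorphy of the companions at the cusp, which is what entitles you to say $f_p\in\C[E_4,E_6]$ rather than merely that $f_p$ is weakly modular; treating this as part of the definition is consistent with how the paper (and \cite{KanZag}) use the notion, though one can also deduce it from the holomorphy of $f$ at the cusp by a growth estimate. In short: correct, complete, and the expected argument for a result the paper leaves to the literature.
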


\subsection{Coefficients of a two-variable Jacobi form}

The main player of this section is a function $F_\tau(u,v)$ in two 
``Jacobi'' variables $u$, $v$, that was used by Zagier (\cite{ZagPeriods})
in connection with periods of modular forms. For its definition we use 
the  genus~$1$ Jacobi theta function
\[\theta(u)\=\theta(u;\tau)=\sum_{n\in\Z}(-1)^n
q^{\frac{1}{2}\left(n+\frac{1}{2}\right)^2} e^{(n+\tfrac12)u}\,.\] 
We then let
\[F_\tau(u,v)\=\frac{\theta(u+v)\theta'(0)}{\theta(u)\theta(v)}\,\]
where we denote by prime the $u$-derivative $f'(u)=\tfrac{\partial f}{\partial u}$.
The main feature of~$f$ is that we know both its Fourier expansion, which
we will connect to the counting functions we are interested in, and
the Laurent expansion, which is the main tool to prove quasimodularity statements.
\par
\begin{Thm} \label{thm:propofF} (\cite[Theorem~3.1]{ZagPeriods})
The Fourier development of $F_\tau(u,v)$ is 
\be
F_\tau(u,v)\= \frac{1}{2}\left(\coth\frac{u}{2}+\coth\frac{v}{2}\right)
\,-\,2\sum_{n=1}^{\infty}\Bigl(\sum_{d|n}\sinh\left( du+\frac{n}{d}v\right)\Bigr)q^n\,.
\ee
Its Laurent series expansion is
\be 
F_\tau(u,v)\= \frac{1}{u} + \frac{1}{v} -2 \sum_{r,s=0}^\infty D_q^{\min(r,s)}
G_{|r-s|+1}(\tau) \frac{u^r}{r!}\frac{v^s}{s!}\,.
\ee
The function $F_\tau(u,v)$ has the elliptic transformation property
\be\label{eq:ellipticF}
F_\tau(u+2\pi i(n\tau +s), v + 2\pi i (m\tau + r)) \= q^{-mn}\zeta^{-m} \eta^{-n} 
F_\tau(u,v)\,,
\ee
for all $m,n,r,s \in \ZZ$, where $\zeta = e^u$ and $\eta = e^v$, 
and the modular transformation property
\be \label{eq:modF}
F_{\frac{a\tau +b}{c \tau + d}} \bigl(\frac{u}{c\tau +d}, \frac{v}{c\tau + d}\bigr)
\= (c\tau +d) e^{\frac{cuv/2\pi i}{c\tau +d}} F_\tau(u,v)
\ee
for all $\sm abcd \in \SL\ZZ$.
\end{Thm}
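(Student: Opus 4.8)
The plan is to establish Theorem~\ref{thm:propofF} in three independent pieces: the Fourier expansion, the Laurent expansion, and the transformation laws. Throughout I treat $F_\tau(u,v)=\theta(u+v)\theta'(0)/(\theta(u)\theta(v))$ and use the classical product/transformation properties of the Jacobi theta function $\theta(u;\tau)$. Recall that $\theta$ is (up to the usual exponential factors) a holomorphic function of $u$ with a simple zero exactly at $u\in 2\pi i(\Z\tau+\Z)$, that $\theta(-u)=-\theta(u)$ (so $\theta$ is odd, $\theta(0)=0$, $\theta'(0)\neq0$), and that under $u\mapsto u+2\pi i$ and $u\mapsto u+2\pi i\tau$ it transforms by the standard quasi-periodicity factors $-1$ and $-q^{-1/2}e^{-u}$ respectively.

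For the elliptic transformation property~\eqref{eq:ellipticF}: since $F_\tau$ is a ratio of theta values, I simply track the quasi-periodicity factors. Moving $u$ by $2\pi i(n\tau+s)$ multiplies $\theta(u+v)$ by the factor coming from $\theta$ at shift $2\pi i(n\tau+s)$, multiplies $\theta(u)$ by the same-shape factor, and leaves $\theta(v)$ and $\theta'(0)$ untouched; the $s$-part contributes $(-1)^n\cdot(-1)^n=1$ in numerator vs.\ denominator, and the $\tau$-part contributes $(-q^{n^2/2}e^{-n(u+v)})\big/(-q^{n^2/2}e^{-nu}) = e^{-nv}=\eta^{-n}$. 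Doing the symmetric computation for the $v$-shift gives the $\zeta^{-m}$ factor, and the cross term between the two shifts produces the $q^{-mn}$. This is a routine but careful bookkeeping of exponential factors; I would write out the $m=n=1$, $r=s=0$ case and then note the general case is the same computation. The modular transformation~\eqref{eq:modF} follows the same way from the modular transformation law of $\theta$ under $\SL_2(\Z)$: $\theta\bigl(\tfrac{u}{c\tau+d};\tfrac{a\tau+b}{c\tau+d}\bigr)$ equals $\theta(u;\tau)$ times an automorphy factor of the form (constant)$\cdot(c\tau+d)^{1/2}e^{cu^2/(4\pi i(c\tau+d))}$; substituting into the ratio, the $(c\tau+d)^{1/2}$ factors combine as $\tfrac{(c\tau+d)^{1/2}}{(c\tau+d)^{1/2}(c\tau+d)^{1/2}}\cdot(c\tau+d)$ — here the extra $(c\tau+d)$ comes from $\theta'(0)$, whose derivative in $u$ picks up one more power of $(c\tau+d)^{-1}\cdot(c\tau+d)$ — wait, more carefully, $\theta'(0)$ scales by $(c\tau+d)^{-1}\cdot$(the $\theta$ automorphy factor at $u=0$, which is $(c\tau+d)^{1/2}$ up to constant), so the net $(c\tau+d)$-power is $\tfrac{1/2 \;+\; 1/2 \;-\; 1/2 \;-\;1/2}{}$ shifted by the $-1$ from the derivative, giving $(c\tau+d)^{+1}$; the Gaussian factors combine as $e^{c(u+v)^2/(4\pi i(c\tau+d))}\cdot e^{0}\big/\bigl(e^{cu^2/(4\pi i(c\tau+d))}e^{cv^2/(4\pi i(c\tau+d))}\bigr)=e^{cuv/(2\pi i(c\tau+d))}$, as claimed.

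For the Fourier expansion, I would use the infinite product for $\theta$, namely $\theta(u;\tau)=q^{1/8}(e^{u/2}-e^{-u/2})\prod_{n\ge1}(1-q^n)(1-q^ne^u)(1-q^ne^{-u})$. Substituting the product into $F_\tau$, the prefactors $q^{1/8}$ and $\prod(1-q^n)$ cancel between numerator and denominator, and one is left with $\tfrac{e^{(u+v)/2}-e^{-(u+v)/2}}{(e^{u/2}-e^{-u/2})(e^{v/2}-e^{-v/2})}$ times a ratio of products in $q$. Expanding $\log$ of that $q$-ratio as a double sum over divisors and re-exponentiating gives the $\sum_n(\sum_{d\mid n}\sinh(du+\tfrac nd v))q^n$ term; the $q^0$ term is the rational function above, which a short computation rewrites as $\tfrac12(\coth\tfrac u2+\coth\tfrac v2)$. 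This is exactly the computation in~\cite[Theorem~3.1]{ZagPeriods}, and I would either reproduce it or cite it. For the Laurent expansion I would instead combine the known Laurent expansion of $\log\theta(u;\tau)$ in terms of the Eisenstein series $G_{2k}$ (i.e.\ $\tfrac{\theta'}{\theta}(u)=\tfrac1u-\sum_{k\ge1}\tfrac{2}{(2k-1)!}G_{2k}u^{2k-1}$ up to normalization) with the addition formula: writing $\log F_\tau(u,v)=\log\theta(u+v)-\log\theta(u)-\log\theta(v)+\log\theta'(0)$ and expanding each term, the poles $\tfrac1u+\tfrac1v$ survive and the regular part assembles into the stated double series $-2\sum_{r,s\ge0}D_q^{\min(r,s)}G_{|r-s|+1}\tfrac{u^r}{r!}\tfrac{v^s}{s!}$. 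The bookkeeping that the coefficient of $u^rv^s$ is precisely $D_q^{\min(r,s)}G_{|r-s|+1}$ — using $D_q G_k$ raising weight by $2$ and the Ramanujan-type relations — is the one genuinely delicate combinatorial identity; I regard \textbf{matching the Laurent coefficients to $D_q^{\min(r,s)}G_{|r-s|+1}$} as the main obstacle, and in practice I would follow Zagier's argument~\cite{ZagPeriods} rather than re-derive it, since the theorem is quoted from there. The three pieces are logically independent, so I would present them as three short lemmas feeding into the theorem.
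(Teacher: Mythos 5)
The paper does not prove this statement at all: it is quoted verbatim as Theorem~3.1 of \cite{ZagPeriods}, so the ``paper's own proof'' is just the citation. Your reconstruction is consistent with that, and the parts you actually carry out are correct. The elliptic law \eqref{eq:ellipticF} does follow from the quasi-periodicity factors $\theta(w+2\pi i)=-\theta(w)$ and $\theta(w+2\pi i\tau)=-q^{-1/2}e^{-w}\theta(w)$; iterating gives $\theta(w+2\pi i k\tau)=(-1)^kq^{-k^2/2}e^{-kw}\theta(w)$, and the ratio of the factors for $u+v$, $u$, $v$ is exactly $q^{-mn}\zeta^{-m}\eta^{-n}$. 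Likewise your exponent count for \eqref{eq:modF} is right once one notes $\theta'(0;\gamma\tau)=\epsilon(c\tau+d)^{3/2}\theta'(0;\tau)$, so the net power is $\tfrac12+\tfrac32-\tfrac12-\tfrac12=1$ and the Gaussian factors combine to $e^{cuv/2\pi i(c\tau+d)}$.

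The one step that would not survive being written out is your route to the Fourier expansion: taking the logarithm of the $q$-part of the triple-product quotient gives a double sum over $n,k$, but \emph{re-exponentiating} turns that sum back into a product, not into the linear expression $-2\sum_n\bigl(\sum_{d\mid n}\sinh(du+\tfrac ndv)\bigr)q^n$. The actual derivation (Zagier's, and the standard one) expands the theta quotient directly as a bilateral geometric series in $\zeta=e^u$, $\eta=e^v$ on the region $|q|<|\zeta|,|\eta|<1$, collecting terms $\zeta^a\eta^b q^{ab}$; the divisor sum then appears from grouping $ab=n$. Since you explicitly defer both the Fourier and the Laurent identities to \cite{ZagPeriods} --- which is also all the paper does --- this imprecision does not invalidate the proposal, but the ``log and re-exponentiate'' phrasing should be replaced by the direct series expansion if you intend the argument to be self-contained.
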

\par
In the sequel, we will use three functions derived from coefficients of~$F_\tau(u,v)$, 
namely
\be
Z(z) \= -[v^0]F_\tau(u,v), \quad P(z) \ = Z'(z), \quad 
L(z)  \= -[v^1]F_\tau(u,v)+\frac{1}{12}
\ee
where  $u=2\pi i z$. The first one, $Z(z) = -\tfrac{\theta'(2\pi iz)}{2\pi i
\theta(2\pi i z)} = 
-\zeta(z)/2\pi i+2G_22\pi iz$ is the classical Weierstra\ss\ $\zeta$-function up to normalization and an additive term, 
the second is $P(z) = \tfrac{1}{(2\pi i)^2}\wp(z) + 2G_2$ is the
Weierstra\ss\ $\wp$-function up to normalization and an additive term. The
last one has no classical name but it is the function that makes the
extension to Siegel-Veech weighted counting work. A direct consequence
of Theorem~\ref{thm:propofF} are the Fourier developments in the domain 
$|q|<|\zeta|<1$
\ba\label{eq:FourierLZP} 
Z(z) & \=\frac{1}{2}+\sum_{k\geq 1}(\zeta^k+\sum_{n\geq 1}q^{nk}
(\zeta^k-\zeta^{-k}))\\
P(z) &\=\sum_{k\geq 1}(k\zeta^k+\sum_{n\geq 1}kq^{nk}(\zeta^k+\zeta^{-k})) \\
L(z) &\=\sum_{k\geq 1, n\geq 1} nq^{nk}(\zeta^k+\zeta^{-k})
\ea
and the Laurent series developments 
\ba\label{eq:LaurentLZP}
Z(z) & \=-\frac{1}{u}+2\sum_{k=0}^{\infty}\frac{G_{2k+2}}{(2k+1)!}u^{2k+1}\\
P(z) & \= \frac{1}{u^2}+2\sum_{k=0}^\infty \frac{G_{2k+2}}{(2k)!}  u^{2k} \\
L(z) & \=2G_2+\frac{1}{12}+2\sum_{k=1}^{\infty}\frac{D_qG_{2k}}{(2k)!}u^{2k}
\ea
of our special functions.
\par
We refer to the shift $P$ of the Weierstrass-$\wp$-function 
as the {\em propagator}. (This terminology is used in e.g.\ \cite{Dijk}, 
\cite{BBBM}. It goes back to \cite{BCOV} and the function~$P$ is hence
also called BCOV-propagator in e.g.~\cite{LiFeynman}.)

\subsection{Quasimodular forms as constant coefficients of  
quasi-elliptic functions}  \label{sec:intEF}

We proceed with our main criterion for quasimodularity, 
involving the constant coefficients of products of the functions $Z$, $P$ and $L$
introduced above, and its derivatives. We start with a general remark
on the domains where the expansions are valid.
Suppose that the meromorphic function $f(z_1, z_2,\dots, z_n; \tau)$ 
is periodic under $z_j \mapsto z_j + 1$ for each~$j$ and under 
$\tau \mapsto \tau +1$. We can then write $f(z_1, z_2,\dots, z_o;\tau) 
= \ol{f}(\zeta_1, \ldots, \zeta_n,q)$ where $\zeta_j = e^{2\pi i z_j}$ as above.
For any permutation $\pi \in S_n$ we fix the domain
\be\label{eq:order}
 \Omega_{\pi} \= |q\zeta_{\pi(i+1)}|<|\zeta_{\pi(i)}|<|\zeta_{\pi(i+1)}|<1 \quad 
\text{for all~$i=1,\ldots,n-1$}\,.
\ee
On such a domain the {\em constant term  with respect to all the $\zeta_i$}
is well-defined. It can be expressed as integral
$$[\zeta_n^0, \ldots,\zeta_1^0]_{\pi} \, \ol{f} \= \frac{1}{(2\pi i)^n}\,
\oint_{\gamma_n}\dots \oint_{\gamma_1}f(z_1, \dots, z_n;\tau)dz_1\dots dz_n
$$
along  the integration paths 
\[\gamma_j: [0,1]\to\C, \quad  t\mapsto iy_j+t\,,\]
where  $0\leq y_{\pi(1)}< y_{\pi(2)}<\dots y_{\pi(n)}< 1$. 
We call these our {\em standard integration paths} for the permutation~$\pi$.
If the domain $\Omega_\pi$ is clear from the context we also write
$[\zeta^0]$ or $[\zeta_n^0, \ldots,\zeta_1^0]$ as shorthand for the 
coefficient extraction $[\zeta_n^0, \ldots,\zeta_1^0]_\pi$.
\par
\medskip
Our aim here is to show Theorem~\ref{thm:coeff0QM} that for a large class of 
functions that are quasimodular and quasi-elliptic in $(z_1,\ldots,z_2)$ 
(in a sense made precise below) the constant term with respect to the 
$\zeta_i$ is a quasimodular form.
\par
\medskip
We start with a preliminary definition of a ring of multi-variable Jacobi 
forms. For $n \geq 0$ we let $\mathcal{J}_n^{(k)}$
be the vector space of meromorphic functions~$f$ on 
$\CC^n \times \HH$ in the variables $(z_1,\ldots,z_n; \tau)$ that 
\begin{enumerate}
\item[i)] have poles on $\CC^n$ at most at the 
$(\ZZ + \tau\ZZ)$-translates of the diagonals $z_i - z_j$,
\item[ii)]  are elliptic with respect to the lattice $\ZZ + \tau \ZZ$ in 
the variables $z_i$ for $i=1,\ldots,n$, and
\item[iii)]  are quasi-modular of weight~$k$ for $\SL\ZZ$, i.e.\
$f$~is holomorphic in $\tau$ on $\HH \cup \infty$ and 
there exists some $p \geq 0$ (called {\em depth}) and functions 
$f_i(z_1,\ldots,z_n; \tau)$ that are holomorphic in~$\tau$ and meromorphic in
the~$z_i$ such that
$$ (c\tau + d)^{-k} f\Bigl(\frac{z_1}{c\tau +d},\ldots, \frac{z_n}{c\tau +d}; 
\frac{a\tau +b }{c\tau +d}\Bigr) \=  
\sum_{i=0}^p f_i(z_1,\ldots,z_n; \tau) \Bigl(\frac{c}{c\tau + d} \Bigr)^i\,,$$ 
for all $\sm abcd \in \SL\ZZ$.
\end{enumerate}
\par
Again, taking the identity matrix in this definition implies that $f_0=f$. 
\par
\begin{Prop} \label{prop:Jring}
The direct sum
$$ \mathcal{J}_n \= \oplus_{k \geq 0} \mathcal{J}_n^{(k)}$$
is a graded ring. The derivatives $\partial/\partial z_i$ map 
$\mathcal{J}_n^{(k)}$ to $\mathcal{J}_n^{(k+1)}$ for all $i=1,\ldots,n$
and the derivative $D_q = q\tfrac{\partial}{\partial q}$ maps
$\mathcal{J}_n^{(k)}$ to $\mathcal{J}_n^{(k+2)}$.
\par
The $m$-th derivative of the propagator ${P}^{(m)}(z_i-z_j)$ 
lies in the graded piece $\mathcal{J}_n^{(m+2)}$ of weight $m+2$.
Moreover, $Z_{ijn} = Z(z_n-z_i) + Z(z_j-z_n)  + Z(z_i-z_j)$
lies in the graded piece $\mathcal{J}_n^{(1)}$ of weight one.
\end{Prop}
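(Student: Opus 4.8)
\textbf{Proof plan for Proposition~\ref{prop:Jring}.}
The plan is to verify the three defining properties i)--iii) of $\mathcal{J}_n^{(k)}$ are stable under the operations claimed, and then to check the two concrete membership assertions by inspecting the known expansions of $Z$ and $P$.

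First I would establish that $\mathcal{J}_n$ is a graded ring. Closure under addition within a fixed weight is immediate from the linearity of all three conditions. For the product $f \cdot g$ with $f \in \mathcal{J}_n^{(k)}$, $g \in \mathcal{J}_n^{(k')}$: property i) holds because the pole locus of a product is contained in the union of the pole loci, which is again the $(\ZZ+\tau\ZZ)$-translates of the diagonals; property ii) is clear since a product of elliptic functions is elliptic; for property iii) one multiplies the two quasimodular transformation laws and collects the powers of $c/(c\tau+d)$, obtaining a finite sum of the required shape with depth at most $p+p'$ and weight $k+k'$. Next I would treat the derivatives. For $\partial/\partial z_i$: differentiating under $z_j \mapsto z_j+1$ and $z_j \mapsto z_j+\tau$ preserves ellipticity (the derivative of a periodic function is periodic), the pole locus only worsens in order, not in location, so i) and ii) survive; applying $\partial/\partial z_i$ to the transformation law in iii) and using that $z_i \mapsto z_i/(c\tau+d)$ contributes a factor $1/(c\tau+d)$ by the chain rule raises the weight by one while keeping the depth bounded. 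For $D_q = q\,\partial/\partial q = \tfrac{1}{2\pi i}\partial/\partial\tau$: this is the standard fact that $D_q$ raises quasimodular weight by two and depth by one; one differentiates the identity in iii) with respect to $\tau$, and the extra terms coming from the $\tau$-dependence of the arguments $z_i/(c\tau+d)$ and of the factor $(c\tau+d)^{-k}$ reorganize into the required sum of powers of $c/(c\tau+d)$ — this is the one computational point but it is routine and identical to the classical one-variable case (cf.\ the discussion after Proposition~\ref{sec:QMF}). Ellipticity in the $z_i$ is preserved by $D_q$ because differentiating a function elliptic for the lattice $\ZZ+\tau\ZZ$ with respect to $\tau$ produces a function still elliptic for that lattice (the quasi-periods of $\wp$ etc.\ are accounted for precisely by the $P$-shifts built into our normalization).

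For the concrete statements: $P(z)$ itself lies in $\mathcal{J}_n^{(2)}$ — ellipticity is the classical ellipticity of $\wp$ (the additive shift $2G_2$ does not disturb it), the only poles are at lattice points, i.e.\ at the relevant diagonal when we substitute $z = z_i - z_j$, and the modular transformation law of weight $2$ with depth $1$ follows from~\eqref{eq:modF} after taking the $[v^0]\,\partial_u$-coefficient, the Gaussian factor $e^{cuv/(2\pi i(c\tau+d))}$ producing exactly the depth-one correction term. Then $P^{(m)}(z_i-z_j) = (\partial/\partial z_i)^m P(z_i-z_j)$ lies in $\mathcal{J}_n^{(m+2)}$ by the derivative statement already proved. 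For $Z_{ijn}$: each summand $Z(z_a-z_b)$ is quasi-elliptic but not elliptic — under $z \mapsto z+\tau$ it picks up an additive constant (this is the quasi-periodicity of the Weierstra\ss{} $\zeta$-function, visible from the Laurent expansion~\eqref{eq:LaurentLZP} together with~\eqref{eq:ellipticF}). The point of the particular combination $Z(z_n-z_i) + Z(z_j-z_n) + Z(z_i-z_j)$ is that it is a cocycle: the arguments sum to zero, so the additive quasi-period contributions cancel telescopically under each of the lattice translations $z_i \mapsto z_i+\tau$ (and trivially under $z_i \mapsto z_i+1$), making $Z_{ijn}$ genuinely elliptic in every variable. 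Its poles sit on the diagonals, and its weight-one depth-zero (in fact it is honestly modular of weight one) transformation behavior follows from the $[v^0]$-coefficient of~\eqref{eq:modF}, where the Gaussian prefactor contributes nothing at order $v^0$.

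The main obstacle I expect is the verification of ellipticity for $Z_{ijn}$: one must track the exact additive constant picked up by $Z$ under $z \mapsto z + \tau$ and confirm that the three shifts in $Z_{ijn}$ cancel it. Concretely, from~\eqref{eq:ellipticF} applied with the appropriate $(m,n,r,s)$ one reads off $Z(z+\tau) = Z(z) + (\text{const})$ and $Z(z+1) = Z(z)$, and then the cocycle identity for the index triple $\{(n,i),(j,n),(i,j)\}$ — whose ``boundary'' sums to zero — kills the constant. Everything else is a mechanical transcription of the classical one-variable quasimodularity bookkeeping into the present multi-variable setting, using Theorem~\ref{thm:propofF} as the source of all the needed expansions and transformation laws.
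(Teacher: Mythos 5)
Your overall route is the paper's: check that conditions i)--iii) are stable under products and derivatives, then verify the two memberships from Theorem~\ref{thm:propofF}; in particular the cocycle cancellation you use for the ellipticity of $Z_{ijn}$ is exactly the paper's mechanism. But one step is wrong as stated: the claim that in \eqref{eq:modF} ``the Gaussian prefactor contributes nothing at order $v^0$.'' Because $F_\tau(u,v)$ has a simple pole $1/v$ in $v$, the $v$-linear term of $e^{\frac{cuv/2\pi i}{c\tau+d}}$ pairs with that pole and contributes $cu/(2\pi i(c\tau+d))$ to the $[v^0]$-coefficient. This is precisely why each individual $Z(z_i-z_j)$ is quasimodular of weight one and \emph{depth one}, with $Z_1(z_i-z_j)=z_i-z_j$ (up to sign), and not modular. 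The depth-zero statement for $Z_{ijn}$ is then a consequence of the identity $(z_n-z_i)+(z_j-z_n)+(z_i-z_j)=0$, which cancels the three depth-one components; this is the paper's argument, and it is the same cancellation you already invoke for ellipticity, so the fix is immediate --- but note that your two treatments of the Gaussian are mutually inconsistent: for $P$ you correctly say it produces the depth-one correction, while for $Z$ you say it vanishes, which cannot both hold since $P=Z'$.

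A second caveat concerns the assertion that ``differentiating a function elliptic for the lattice $\ZZ+\tau\ZZ$ with respect to $\tau$ produces a function still elliptic for that lattice.'' This is false in general: since the lattice moves with $\tau$, differentiating the identity $f(z+\tau;\tau)=f(z;\tau)$ gives $(\partial_\tau f)(z+\tau)=(\partial_\tau f)(z)-f'(z+\tau)$, and concretely one checks from \eqref{eq:FourierLZP} that $(D_qP)(z+\tau)-(D_qP)(z)=-P'(z)$, so $D_qP$ is only quasi-elliptic of order one. The paper is equally terse on this point, and in its applications $D_q$ is only ever applied inside the larger ring $\cQQ_n$ of Proposition~\ref{prop:Qring}, where quasi-ellipticity of arbitrary order is allowed and the statement is unproblematic; but your proposed justification of the $D_q$-part of Proposition~\ref{prop:Jring} does not go through as written, and the parenthetical appeal to the ``$P$-shifts built into our normalization'' does not repair it.
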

\par
\begin{proof}
The first two statements obviously follow from the definition and 
differentiation of the quasimodular transformation property. For the 
third statement all but the quasimodularity are well-known (and follow also
from Theorem~\ref{thm:propofF}). The quasimodular transformation property~iii) 
follows from~\eqref{eq:modF} using  $P_1(z_1,\ldots,z_n;\tau) = 1$ and $P_0 = P$.
For the last statement, the ellipticity of $Z_{ijn}$ follows from the properties
$$Z(z+1) \=Z(z)  \quad \text{and} \quad  Z(z+\tau) \=Z(z)+1$$
of the individual summands. Equation~\eqref{eq:modF} again implies 
that $Z(z_i-z_j)$ is quasimodular (in the sense of iii)) 
of weight one and depth one with $Z_i(z_i-z_j) = z_i-z_j$. This 
in turn implies that $Z_{ijn}$ is even quasimodular of weight one and depth zero, 
i.e.\ modular.
\end{proof}
\par
\begin{Prop} \label{prop:Jaddbasis}
For $n=0$ and $n=1$ the ring $\mathcal{J}_n$ consists
only of the quasimodular forms in~$\tau$.
\par
For $n \geq 2$ the ring $\mathcal{J}_n$ is generated as a 
$\mathcal{J}_{n-1}$-module
by the derivatives of the $P$-function  $P^{(m)}_j = P^{(m)}(z_n-z_j)$ for 
all $m \geq 0$ and all $j=1,\ldots,n-1$ and by the linear combinations
$Z_{ijn} = Z(z_n-z_i) + Z(z_j-z_n)  + Z(z_i-z_j)$ for all $1 \leq i < j \leq n-1$. 
\par
More precisely, if~$f \in \mathcal{J}_n^{(k)}$ then we can write
$$ f = \sum a_{m,j} P^{(m)}_j + \sum b_{i,j} Z_{ijn} + c$$
with $a_{m,j} \in \mathcal{J}_{n-1}^{(k-m-2)}$,  $b_{i,j} \in \mathcal{J}_{n-1}^{(k-1)}$ 
and $c \in \mathcal{J}_{n-1}^{(k)}$.
\end{Prop}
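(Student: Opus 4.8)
The plan is to prove this by induction on $n$, treating the ring $\mathcal{J}_n$ as a module over $\mathcal{J}_{n-1}$ via an analysis of the poles and residues in the last variable $z_n$. Given $f \in \mathcal{J}_n^{(k)}$, I regard it as an elliptic function of $z_n$ alone (with the other $z_j$ and $\tau$ as parameters), whose only possible poles in a fundamental domain are at $z_n \equiv z_j \pmod{\ZZ + \tau\ZZ}$ for $j = 1,\ldots,n-1$. The strategy is the standard one: subtract off a combination of $P^{(m)}(z_n - z_j)$ to kill all poles of order $\geq 2$, and then handle the simple poles using the $Z$-functions, whose non-ellipticity must cancel because the residues of an elliptic function sum to zero.

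First I would expand $f$ near each diagonal $z_n = z_j$: write the principal part as $\sum_{m \geq 1} \frac{a_{m,j}^{(0)}}{(z_n - z_j)^{m+1}} + \frac{r_j}{z_n - z_j} + O(1)$, where the coefficients $a_{m,j}^{(0)}$ and the residues $r_j$ are functions of $z_1,\ldots,z_{n-1};\tau$. Since $P^{(m)}(z)$ has Laurent expansion $\frac{m!(-1)^m}{z^{m+2}} + O(1)$ (from the expansion in~\eqref{eq:LaurentLZP}), a suitable rescaling $a_{m,j} = (-1)^m a_{m,j}^{(0)}/m!$ makes $\sum_{m \geq 1} a_{m,j} P^{(m-1)}(z_n - z_j)$ match all higher-order principal parts. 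Subtracting these off, the remaining function $\tilde f = f - \sum a_{m,j} P^{(m-1)}_j$ has at worst simple poles along the diagonals, with residue $r_j$ at $z_n = z_j$. The sum $\sum_j r_j$ vanishes because $\tilde f$ is elliptic in $z_n$ (sum of residues in a fundamental parallelogram is zero). Now $Z(z_n - z_j)$ has a simple pole of residue $-1$ at $z_n = z_j$ (from~\eqref{eq:LaurentLZP}, $Z(z) = -1/u + \cdots$ with $u = 2\pi i z$, so residue $-1/(2\pi i)$ in $z$ — I will need to track this normalization carefully), but $Z$ alone is not elliptic: $Z(z+\tau) = Z(z) + 1$. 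However, using $\sum_j r_j = 0$, any combination $\sum_j r_j Z(z_n - z_j)$ — adjusted to the correct normalization — is genuinely elliptic in $z_n$. Rewriting $-r_j$ appropriately as a combination over pairs $(i,j)$ with $i<j$ (using $Z(z_n - z_i) + Z(z_j - z_n) + Z(z_i - z_j) = Z_{ijn}$ and the relation $\sum r_j = 0$) expresses the simple-pole part as $\sum_{i<j} b_{i,j} Z_{ijn}$ with $b_{i,j} \in \mathcal{J}_{n-1}$, leaving $c = \tilde f - \sum b_{i,j} Z_{ijn}$ holomorphic in $z_n$, hence (being elliptic and holomorphic) independent of $z_n$, so $c \in \mathcal{J}_{n-1}$.

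The remaining point is to verify the claimed weights: that $a_{m,j} \in \mathcal{J}_{n-1}^{(k-m-2)}$ (indexing so the term is $a_{m,j} P^{(m)}_j$ with $P^{(m)}_j \in \mathcal{J}_n^{(m+2)}$), $b_{i,j} \in \mathcal{J}_{n-1}^{(k-1)}$, $c \in \mathcal{J}_{n-1}^{(k)}$. This follows from the quasimodular transformation law~iii): substituting the transformation of $f$ and of $P^{(m)}(z_n - z_j)$ (which by Proposition~\ref{prop:Jring} and~\eqref{eq:modF} has weight $m+2$) and comparing principal parts near the diagonal shows the Laurent coefficients transform with the appropriately shifted weight; and since each coefficient is obtained as a residue/contour integral of $f$ against explicit functions, it lies in the space of meromorphic functions with the right pole structure — i.e.\ in $\mathcal{J}_{n-1}$ of the stated weight. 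One must also check holomorphy in $\tau$ at the cusp, which is inherited because taking Laurent coefficients commutes with $q$-expansion.

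The main obstacle will be bookkeeping the simple-pole step: showing that after the $\sum r_j = 0$ constraint is used, the residual simple-pole terms can genuinely be organized into the symmetric combinations $Z_{ijn}$ with coefficients that are themselves in $\mathcal{J}_{n-1}$ — in particular that these coefficients are elliptic in $z_1,\ldots,z_{n-1}$ and quasimodular of the right weight, rather than merely meromorphic. The residues $r_j$ inherit ellipticity in the other variables from $f$, but one must confirm that passing to the $Z_{ijn}$ basis does not destroy this, which amounts to a linear-algebra identity among the residues combined with the modularity of $Z_{ijn}$ established in Proposition~\ref{prop:Jring}. Everything else — matching higher-order poles, the vanishing of a holomorphic elliptic function of one variable, and the weight count — is routine given the explicit Laurent expansions in~\eqref{eq:LaurentLZP} and the transformation formula~\eqref{eq:modF}.
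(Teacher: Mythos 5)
Your proposal is correct and follows essentially the same route as the paper: subtract $\mathcal{J}_{n-1}$-multiples of $P^{(m)}(z_n-z_j)$ to remove the poles of order $\geq 2$, use the residue theorem in $z_n$ to get $\sum_j r_j = 0$, reorganize the simple-pole part into the elliptic combinations $Z_{ijn}$ via that constraint, and conclude with the fact that a pole-free elliptic function of $z_n$ is constant, with the weight claims following from the quasimodular transformation law applied to the Laurent coefficients. The only cosmetic difference is that the paper peels off the leading pole term one order at a time by induction, whereas you match the entire principal part at once; the normalization constants you flag are exactly the ones the paper computes.
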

\par
\begin{proof} For the first statement we simply note that an elliptic function
without poles is constant. For the second statement we argue by induction
on the pole orders and we may assume that $f$ is homogeneous of weight~$k$. 
Suppose that $f$ has a pole of order~$m$ exactly at $z_n-z_j$. Then the limit
$$f^{[j]}(z_1,\ldots,z_{n-1}) = \lim_{\ve \to 0} \ve^m f(z_1, \ldots,z_j,\ldots,z_{n-1},z_j+\ve)$$
exists and is non-zero. We claim that $f^{[j]} \in \mathcal{J}_{n-1}^{(k-m)}$. 
Conditions i) and ii)
are obvious from the definition. If $f_i$ denotes a component of~$f$ in the
modular transformation~iii), we define $f_i^{[j]}(z_1,\ldots,z_{n-1})$ in the same way
as above for $f=f_0$. This transformation implies
\ba 
& \phantom{\=}  (c\tau + d)^{-k+m} \Bigl(\frac{\ve}{c\tau+d}\Bigr)^m 
f\Bigl(\frac{z_1}{c\tau +d},\ldots,\frac{z_{n-1}}{c\tau +d}, \frac{z_j + \ve}{c\tau +d}; 
\frac{a\tau +b }{c\tau +d}\Bigr) \\ 
&\=   
\ve^m \sum_{i=0}^d f_i(z_1,\ldots,z_{n-1},z_j + \ve; \tau) \Bigl(\frac{c}{c\tau + d} \Bigr)^i\,,
\ea 
and taking the limit $\ve \to 0$ gives the quasimodularity of~$f^{[j]}$.
\par
If $m \geq 2$ then replacing $f$ by $f - f^{[j]} P^{(m-2)}_j (-2\pi i)^m/(m-1)!$ 
decreases the pole order
along $z_n=z_j$ and does not increase the pole order along any of the divisors
$z_n=z_i$ for $i \geq 2$. 
\par
Inductively we may thus suppose that $f$ has at most simple poles along all
the divisors $z_n-z_j$. The residue theorem (for~$f$ considered as function 
in~$z_n$) implies that $\sum_{j=1}^{n-1} f^{[j]} = 0$. Consequently,
$$ g \= f - \sum_{j=2}^{n-1} \,\biggl(\sum_{i=1}^{j-1} f^{[i]} \biggr) \,Z_{j-1,j,n} $$
is still elliptic and has no poles (considered as function in $z_n$). 
This implies that $g \in \mathcal{J}_{n-1}^{(k)}$ and completes 
the inductive argument.
\end{proof}
\par
One is tempted to deduce from this that if $f \in \mathcal{J}_n^{{k}}$ then for 
any permutation~$\pi$ the constant term on $\Omega_\pi$ is a quasimodular 
form of pure weight~$k$. This, however {\em is not true!} In general, 
those constant terms are of mixed weight $\leq k$, as we will see
in the example in Section~\ref{sec:Ex1111Pcomp}. Even if one is only
interested in computing the constant terms of elements in the ring $\mathcal{J}_n^{(k)}$, 
the fact that 
\be \label{eq:Pint}
[\zeta^0_n]P^{(m)}(z_n-z_j) \= 0 \quad (m \geq 0)
\ee
but 
$$[\zeta^0_n]Z_{ijn} \= Z(z_i-z_j)$$
is no longer elliptic forces the consideration of the following more general 
case of quasi-elliptic functions.
\par
\medskip
Let $\Delta = \Delta_\tau$ be the operator on meromorphic functions defined by
$$ \Delta(f)(z) \= f(z+\tau) - f(z)\,.$$
A meromorphic function~$f$ is called {\em quasi-elliptic} if $f(z+1) = f(z)$
and if there exists some positive integer~$e$ such that $\Delta^e(f)$ is elliptic.
The minimal such~$e$ is called the {\em order (of quasi-ellipticity)} of~$f$.
\par
We say that a meromorphic function $f: \CC^n \times \HH \to \CC$ 
is quasi-elliptic, if it is quasi-elliptic in each of the first~$n$ variables.
For such a function we write $\bfe = (e_1,\ldots,e_n)$ for the tuple of
orders of quasi-ellipticity in the $n$~variables. Consequently, 
a quasi-elliptic function of order $(0,\ldots,0)$ is simply an
elliptic function.
\par
We write $\Delta_i$ for the operator $\Delta$ acting on the $i$-th variable.
Note that these operators~$\Delta_i$ commute.
We now state the appropriate generalization of the definition 
of $\mathcal{J}_n^{(k)}$ above.
\par 
\begin{Defi}
We define for $n \geq 0$, $k \geq 0$ and $\bfe \geq 0$
the vector space $\cQQ_{n,\bfe}^{(k)}$ of {\em quasi-elliptic quasimodular forms}
to be the space of meromorphic functions~$f$ on 
$\CC^n \times \HH$ in the variables $(z_1,\ldots,z_n; \tau)$ that 
\begin{enumerate}
\item[i)] have poles on $\CC^n$ at most at the $\ZZ + \tau\ZZ$-
translates of the diagonals $z_i - z_j$,
\item[ii)] are quasi-elliptic of order $\bfe$, and
\item[iii)]  are quasimodular of weight~$k$ for $\SL\ZZ$, i.e.\
$f$~is holomorphic in $\tau$ on $\HH \cup \infty$ and 
there exists some $p \geq 0$ (called {\em depth}) and functions 
$f_i(z_1,\ldots,z_n; \tau)$ that are holomorphic in~$\tau$ and meromorphic in
the~$z_i$ such that
$$ (c\tau + d)^{-k} f\Bigl(\frac{z_1}{c\tau +d},\ldots, \frac{z_n}{c\tau +d}; 
\frac{a\tau +b }{c\tau +d}\Bigr) \=  
\sum_{i=0}^p f_i(z_1,\ldots,z_n; \tau) \Bigl(\frac{c}{c\tau + d} \Bigr)^i\,,$$ 
for all $\sm abcd \in \SL\ZZ$.
\end{enumerate}
\end{Defi} 
\par
We remark that, contrary to the case of (usual) quasimodular forms, 
the functions~$f_i$ do not need to belong to any of the 
spaces~$\cQQ_{n,\bfe}^{(k)}$.
\footnote{
For example, already for $Z(z_i-z_j)$ we have seen $Z_1 = z_i-z_j$, 
which is not $1$-periodic. Our definition that imposes $1$-periodicity 
(and thus breaks the symmetry in the role of the  $\ZZ + \tau\ZZ$-lattice)
is the reason for this. A more conceptual definition would be to
consider functions that are annihilated by some power of both
shift operators~$\Delta_\tau$ and~$\Delta_1$ with respect to every variable.
E.g.\ the function $z_i-z_j$ has this property. Since we do not aim 
e.g.\ for an operation of ${\mathfrak sl}(2)$ on our ring of quasi-elliptic 
quasimodular forms, we will not discuss this generalization here.}
\par
\begin{Prop} \label{prop:Qring}
The direct sum
$$ \cQQ_n \= \bigoplus_{k \geq 0} \cQQ_n^{(k)}, \quad \text{where} 
\quad \cQQ_n^{(k)} \= \bigoplus_{\bfe \geq 0}\, \cQQ_{n,\bfe}^{(k)} \,,$$
is a graded ring. The derivatives $\partial/\partial z_i$ map 
$\cQQ_n^{(k)}$ to $\cQQ_n^{(k+1)}$ for all $i=1,\ldots,n$.
and the derivative $D_q = q\tfrac{\partial}{\partial q}$ maps
$\cQQ_n^{(k)}$ to $\cQQ_n^{(k+2)}$.
\par
For all $i,j \in \{1,\ldots,n\}$ the functions
\be \label{eq:LZ2}
L(z_i-z_j) \= -\frac{1}{2}Z^2(z_i-z_j) + 
\frac{1}{2}P(z_i-z_j)-G_2+\frac{1}{12}
\ee
belong to $\cQQ_n^{(0)} \oplus \cQQ_n^{(2)}$.
\end{Prop}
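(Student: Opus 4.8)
The plan is to treat this as the quasi-elliptic counterpart of Proposition~\ref{prop:Jring}. Conditions~i) (the pole locus) and~iii) (quasimodularity, together with the asserted weight shifts) are handled exactly as there: the pole locus $\{z_i-z_j\in\ZZ+\tau\ZZ\}$ is visibly stable under products and under each $\partial/\partial z_i$, and also under $D_q=\tfrac1{2\pi i}\partial_\tau$, since differentiating in $\tau$ can only raise pole orders along this same family of $(\ZZ+\tau\ZZ)$-translated diagonals; holomorphy in $\tau$ on $\HH\cup\infty$ is preserved throughout. Additivity of the quasimodular weight under products, and the shifts $k\mapsto k+1$ under $\partial/\partial z_i$ and $k\mapsto k+2$ under $D_q$, follow from differentiating the transformation law in~iii) --- in $z_i$, respectively in $\tau$; the latter is the classical Ramanujan-type computation, which in the multivariable setting additionally produces terms $\sum_j z_j\,\partial f_{i-1}/\partial z_j$ in the new components $f_i$ because the arguments $z_\ell/(c\tau+d)$ themselves depend on $\tau$. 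The genuinely new ingredient is the bookkeeping of the quasi-ellipticity order $\bfe$, which I take up next.

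\emph{Order under products and under $\partial/\partial z_i$.} Writing $T_\tau$ for the shift $f(z)\mapsto f(z+\tau)$ in the relevant variable, so $\Delta_i=T_\tau-1$, I would use the discrete Leibniz identity $\Delta_i^N(fg)=\sum_{k=0}^{N}\binom{N}{k}(-1)^{N-k}(T_\tau^k f)(T_\tau^k g)$. If $f$ has order $\le e_i$ and $g$ order $\le e_i'$ in $z_i$, then $T_\tau^k f=\sum_{j=0}^{e_i}\binom{k}{j}\Delta_i^j f$ and likewise for $g$, so $(T_\tau^k f)(T_\tau^k g)$ is, as a function of $k$, a polynomial of degree $\le e_i+e_i'$ with $\tau$-periodic (hence elliptic) coefficients; its $N$-th forward difference in $k$ vanishes once $N>e_i+e_i'$. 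Thus $\cQQ_{n,\bfe}^{(k)}\cdot\cQQ_{n,\bfe'}^{(k')}\subseteq\cQQ_{n,\bfe+\bfe'}^{(k+k')}$, which together with the $\tau$-part gives the graded ring. Under $\partial/\partial z_i$ the order is even unchanged, since $\partial/\partial z_i$ commutes with every $\Delta_j$: $\Delta_j^{e_j+1}(\partial f/\partial z_i)=\partial(\Delta_j^{e_j+1}f)/\partial z_i=0$.

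\emph{Order under $D_q$ --- the crux.} Here $\partial_\tau$ does \emph{not} commute with $\Delta_i$, because the shift defining $\Delta_i$ is by $\tau$ itself. A chain-rule computation gives the operator identity $[\partial_\tau,\Delta_i]=(\Delta_i+1)\,\partial/\partial z_i$, and hence by induction $[\partial_\tau,\Delta_i^m]=m\,\Delta_i^{m-1}(\Delta_i+1)\,\partial/\partial z_i$. Applying this with $m=e_i+1$ to an $f$ of order $\le e_i$ in $z_i$, and using $\Delta_i^{e_i+1}f=0=\Delta_i^{e_i+1}(\partial f/\partial z_i)$, gives $\Delta_i^{e_i+1}\partial_\tau f=-(e_i+1)\,\Delta_i^{e_i}(\partial f/\partial z_i)$; since $\partial f/\partial z_i$ still has order $\le e_i$ in $z_i$, the right-hand side is elliptic, so $\Delta_i^{e_i+2}\partial_\tau f=0$. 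Hence $D_q$ raises the order in each variable by at most $1$, and in particular maps $\cQQ_n^{(k)}$ into $\cQQ_n^{(k+2)}$. This commutator bookkeeping is the only real --- though mild --- obstacle; everything else is of the type already handled for $\mathcal{J}_n$.

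\emph{The statement about $L$.} By Proposition~\ref{prop:Jring}, $P(z_i-z_j)\in\mathcal{J}_n^{(2)}\subseteq\cQQ_n^{(2)}$ (elliptic functions are quasi-elliptic of order $0$). The function $Z(z_i-z_j)$ is periodic under $z_i\mapsto z_i+1$ and $z_j\mapsto z_j+1$, has only simple poles along $z_i-z_j\in\ZZ+\tau\ZZ$, satisfies $\Delta_i Z(z_i-z_j)=1=-\Delta_j Z(z_i-z_j)$ and so is quasi-elliptic of order $1$, and is quasimodular of weight $1$ by~\eqref{eq:modF} (as recorded in the proof of Proposition~\ref{prop:Jaddbasis}); thus $Z(z_i-z_j)\in\cQQ_n^{(1)}$, and therefore $Z(z_i-z_j)^2\in\cQQ_n^{(2)}$ by the graded ring structure just proved. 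Since $G_2\in\cQQ_n^{(2)}$ and the constant $\tfrac1{12}\in\cQQ_n^{(0)}$, the right-hand side of~\eqref{eq:LZ2} lies in $\cQQ_n^{(2)}\oplus\cQQ_n^{(0)}$. It remains to verify that this right-hand side equals $L(z_i-z_j)$: it is $1$-periodic in $z_i-z_j$, and one checks that $\Delta_\tau$ sends both it and $L(z_i-z_j)$ to $-Z(z_i-z_j)-\tfrac12$ --- immediately for the former, and for $L$ from~\eqref{eq:ellipticF} --- so the difference is elliptic; by~\eqref{eq:LaurentLZP} the double poles $\pm\tfrac1{2u^2}$ at $z_i=z_j$ cancel, so this difference is holomorphic at the diagonal, hence holomorphic everywhere and constant, and it vanishes since the $z^0$-coefficients of both sides equal $2G_2+\tfrac1{12}$.
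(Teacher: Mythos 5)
Your proposal is correct. For the ring structure and for the identity \eqref{eq:LZ2} it follows essentially the paper's route: closure under products is the discrete Leibniz rule for $\Delta_i$ (the paper phrases it as ``$\Delta_i^m(fg)$ is a sum of products of $\Delta_i^{m_1}(f)$ and $\Delta_i^{m_2}(g)$ with $m_1+m_2=m$''; your Newton-series, polynomial-in-$k$ packaging is equivalent), and the formula for $L$ is verified in both cases by showing the difference of the two sides is elliptic, pole-free, hence a constant that vanishes --- the paper checks ellipticity of $L+\tfrac12 Z^2$ via \eqref{eq:ellipticF} and compares Laurent expansions at $z=0$, you compare $\Delta_\tau$-images and $z^0$-coefficients; same argument. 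The one place where you genuinely add something is the behaviour of the quasi-ellipticity order under $D_q$: the paper disposes of this with ``the property of the derivatives can be checked as in the case of $\mathcal{J}_n$'', whereas you observe that $\partial_\tau$ does \emph{not} commute with $\Delta_i$, prove the commutator identity $[\partial_\tau,\Delta_i]=(\Delta_i+1)\,\partial/\partial z_i$, and deduce that $D_q$ raises each entry of $\bfe$ by at most one. This is a real refinement, not just pedantry: $D_q$ of an elliptic function need not be elliptic (e.g.\ $\Delta_\tau(\partial_\tau P)=-\partial_zP\neq 0$), so the order genuinely can jump, and the statement holds only because $\cQQ_n^{(k)}$ is the sum over all $\bfe$ --- a point your argument makes explicit and the paper's does not. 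One harmless slip: in the product argument the coefficients of your polynomial in $k$ are built from the $\Delta_i^jf$ with $j<e_i$, which are not elliptic in general; all the forward-difference argument needs is that they are independent of $k$, so the conclusion is unaffected.
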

\par
\begin{proof} For the proof that $\cQQ_n$ is closed under multiplication
we remark that $\Delta_i^m (fg)$ is a sum of products of $\Delta_i^{m_1} (f)$
and $\Delta_i^{m_2} (g)$ with $m_1 + m_2 = m$, evaluated at arguments translated
by multiples of $\tau$. Consequently, if $f \in \cQQ_{n,\bfe_1}^{(k_1)}$ and 
$g \in \cQQ_{n,\bfe_2}^{(k_2)}$, then $fg \in  \cQQ_{n,\bfe_1 + \bfe_2}^{(k_1 + k_2)}$. 
The property of the derivatives can be checked as in the case of 
$\mathcal{J}_n$.
\par
For the last identity we use the elliptic transformation 
law~\eqref{eq:ellipticF}
to deduce that $L+\frac{Z^2}{2}$ is elliptic.
The first terms of the Laurent series of this function are
\[L(z)+\frac{Z^2(z)}{2} \= \frac{1}{2u^2}+\frac{1}{12}+O(z^2)\,.\]
Consequently, $L+\frac{Z^2}{2}-\left(\frac{1}{2}P-G_2+\frac{1}{12}\right)$ is an 
elliptic function with no poles, i.e.\ it is constant with respect to~$z$. 
Moreover this constant is the value at $z=0$, which is 0. 
\end{proof}
\par
\begin{Prop} \label{prop:Naddbasis}
The vector space $\cQQ_n$ is (additively) generated as $\cQQ_{n-1}$-module  
by the functions $Z^e(z_n-z_j) P^{(m)}(z_n-z_j)$ for $j=1,\ldots,n-1$ 
and for all $e \geq 0$ and $m \geq 0$.
\par
More precisely, if~$f \in \cQQ_n^{(k)}$ then we can write
$$ f(z_1,\ldots,z_n) \= \sum_{e,m,j} a_{e,m,j} Z^e(z_n-z_j) P^{(m)} (z_n-z_j)  + 
\sum_{e,j} b_{e,j} Z^e(z_n-z_j) + c$$
with $a_{e,m,j} \in \cQQ_{n-1}^{(k-e-m-2)}$, $b_{e,j} \in \cQQ_{n-1}^{(k-e)}$
and $c \in \cQQ_{n-1}^{(k)}$.\end{Prop}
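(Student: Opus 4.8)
\textit{Plan.} The idea is to imitate the proof of Proposition~\ref{prop:Jaddbasis}: regard $f\in\cQQ_n^{(k)}$ as a meromorphic function of the last variable $z_n$ (with $z_1,\dots,z_{n-1},\tau$ as parameters), whose poles lie only along $z_n\equiv z_j\pmod{\ZZ+\tau\ZZ}$ for $j=1,\dots,n-1$, and strip off its principal parts along these diagonals one pole order at a time. The one genuinely new point, compared with the elliptic case, is that along the way one neither can nor needs to stay elliptic in $z_n$: simple poles get removed by the merely quasi-elliptic functions $Z(z_n-z_j)$ (which lie in $\cQQ_n^{(1)}$: they are $1$-periodic, $\Delta_n Z(z_n-z_j)=1$ is elliptic, and by~\eqref{eq:modF} they are quasimodular of weight~$1$), and the remaining holomorphic piece will be forced to be independent of $z_n$ by a quasi-ellipticity argument in place of the maximum principle.

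\textit{Laurent coefficients stay in $\cQQ_{n-1}$.} Expanding $f=\sum_{l\ge -m}a^{(j)}_l(z_1,\dots,z_{n-1})(z_n-z_j)^l$ near $z_n=z_j$, I would show $a^{(j)}_l\in\cQQ_{n-1}^{(k+l)}$. Conditions (i) and (iii) are inherited exactly as in Proposition~\ref{prop:Jaddbasis}, the factor $(c\tau+d)^{-l}$ coming from the modular transformation of $(z_n-z_j)^l$. Condition (ii) is the delicate one: quasi-ellipticity of $a^{(j)}_l$ in $z_i$ for $i\ne j$ is clear, since $z_i\mapsto z_i+\tau$ fixes the locus $z_n=z_j$ and commutes with extracting the Laurent coefficient, but in $z_j$ itself a small argument is needed. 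Change variables to $u=z_n-z_j$ and $v=z_j$: the shift $v\mapsto v+\tau$ at fixed $u$ is the simultaneous shift $(z_j,z_n)\mapsto(z_j+\tau,z_n+\tau)$, i.e.\ the operator $(\mathrm{id}+\Delta_j)(\mathrm{id}+\Delta_n)=\mathrm{id}+\Delta_j+\Delta_n+\Delta_j\Delta_n$. As $f$ is quasi-elliptic in $z_j$ and in $z_n$, the commuting operators $\Delta_j,\Delta_n$ act nilpotently on $f$, hence so does $\Delta_j+\Delta_n+\Delta_j\Delta_n$; thus $f$ is quasi-elliptic in $v$ at fixed $u$, and this property descends to the Laurent coefficient at $u=0$. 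So $a^{(j)}_l\in\cQQ_{n-1}^{(k+l)}$; in particular $r_j:=a^{(j)}_{-1}\in\cQQ_{n-1}^{(k-1)}$.

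\textit{Reduction and conclusion.} While $f$ has a pole of some order $m\ge2$ at $z_n=z_j$, subtract a scalar multiple of $a^{(j)}_{-m}\,P^{(m-2)}(z_n-z_j)$: by~\eqref{eq:LaurentLZP} the function $P^{(m-2)}(z)$ has a pole of exact order $m$ at $z=0$ and no lower principal part, so this cancels the order-$m$ term along $z_n=z_j$ without worsening other poles, stays in $\cQQ_n^{(k)}$ (it is $P^{(m-2)}(z_n-z_j)$, elliptic in $z_n$ of weight $m$ by Proposition~\ref{prop:Jring}, times a coefficient in $\cQQ_{n-1}^{(k-m)}$), and leaves the order of quasi-ellipticity in $z_n$ unchanged. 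After finitely many such steps $f$ has at most simple poles in $z_n$; then, using $\operatorname{Res}_{z=0}Z(z)=-1/2\pi i$ from~\eqref{eq:LaurentLZP}, subtract $-2\pi i\,r_j\,Z(z_n-z_j)$ for each $j$ to kill all residues. The remainder $c$ is now holomorphic and $1$-periodic in $z_n$ and quasi-elliptic in $z_n$ of finite order, and any such function is independent of $z_n$: via $w=e^{2\pi i z_n}$ it descends to a holomorphic function on $\CC^{\times}$ with Laurent expansion $\sum_k\hat c_k w^k$, where $\Delta_n$ scales the $k$-th term by $q^k-1$, so $\tau$-periodicity of $\Delta_n^e c$ forces $(q^k-1)^{e+1}\hat c_k=0$, whence $\hat c_k=0$ for $k\ne0$ because $|q|<1$. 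Inheriting the remaining conditions gives $c\in\cQQ_{n-1}^{(k)}$, the last term of the formula, while the $P^{(m-2)}$- and $Z$-terms subtracted are the asserted generators (with $a_{e,m,j}=0$ for $e\ge1$ and $b_{e,j}=0$ for $e\ne1$, still an instance of the stated decomposition).

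The step I expect to be the real obstacle is the middle one — verifying that the Laurent and residue coefficients of $f$ along $z_n=z_j$ again satisfy the axioms of $\cQQ_{n-1}$, and in particular are quasi-elliptic in the variable $z_j$ sitting in the pole location; the observation that $\Delta_j$ and $\Delta_n$ commute and act nilpotently on $f$, so that $f$ is quasi-elliptic along the diagonal $z_n=z_j$, is the key point that makes this go through.
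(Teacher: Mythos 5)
Your reduction has a genuine gap, and it sits exactly where you pass from ``poles removed at $z_n=z_j$'' to ``the remainder is holomorphic in $z_n$.'' For a function that is only \emph{quasi}-elliptic in $z_n$, the principal parts at the translates $z_n=z_j+l\tau$ differ for different $l\in\ZZ$, so cancelling the principal part at $z_n=z_j$ with the \emph{elliptic} functions $P^{(m-2)}(z_n-z_j)$ (and with $Z(z_n-z_j)$, whose residue is the \emph{same} at every lattice translate) does not cancel the poles at $z_n=z_j+l\tau$ for $l\neq 0$. Concretely, take $f=Z^2(z_n-z_1)\in\cQQ_n^{(2)}$: writing $u=2\pi i(z_n-z_1)$, its principal part at $z_n=z_1$ is $1/u^2$, but since $Z(z+\tau)=Z(z)+1$ its principal part at $z_n=z_1+l\tau$ is $1/u^2-2l/u$, with residue growing linearly in $l$. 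No expression $\sum_m a_mP^{(m)}(z_n-z_1)+b\,Z(z_n-z_1)+c$ with $a_m,b,c$ independent of $z_n$ can reproduce these varying principal parts, so your claimed normal form with $a_{e,m,j}=0$ for $e\geq1$ and $b_{e,j}=0$ for $e\neq1$ is already false for this generator. Consequently your remainder $c$ still has poles at $z_n=z_j+l\tau$, it does not descend to a holomorphic function of $w=e^{2\pi iz_n}$ on $\CC^{\times}$, and the Fourier-coefficient argument that forces $c$ to be constant in $z_n$ collapses. This is precisely why the proposition must include the generators $Z^e(z_n-z_j)P^{(m)}(z_n-z_j)$ and $Z^e(z_n-z_j)$ for all $e\geq0$.

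The paper closes this gap by inducting on the order $e$ of quasi-ellipticity in $z_n$ rather than only on pole orders: for $e=0$ the function is honestly elliptic in $z_n$ and your pole-stripping (the argument of Proposition~\ref{prop:Jaddbasis}) applies; for $e>0$ one applies the induction hypothesis to $\Delta_n(f)$, which has strictly smaller order, and then constructs a $\Delta_n$-antiderivative $\tilde f$ from the functions $Z^{s+1}(z_n-z_j)P^{(m)}(z_n-z_j)$ via the identity $\Delta_n\bigl(Z^{s+1}P^{(m)}\bigr)=P^{(m)}\sum_{t=0}^{s}\binom{s+1}{t}Z^t$, so that $f-\tilde f$ has smaller order and the induction closes. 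Your observation that the Laurent coefficients of $f$ along $z_n=z_j$ are again quasi-elliptic in $z_j$ (via the commuting nilpotent operators $\Delta_j$ and $\Delta_n$) is correct and is a useful ingredient for the base case $e=0$, but it does not by itself handle $e>0$.
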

\par
\begin{proof}
For every~$n$ we argue inductively on the order $e = \min_{j \geq 0}
\{\Delta_n^j(f)\,\, \text{elliptic}\}$ of quasi-ellipticity with respect to 
the last variable. 
Suppose, without loss of generality, that $f \in \cQQ_n^{(k)}$ is homogeneous
of weight~$k$. If $e=0$ then we proceed as in the proof of Proposition~\ref{prop:Jaddbasis}
 and subtract $\cQQ_{n-1}$-multiples of $P^{(m)}(z_n -z_j)$ for appropriate $m$ and 
$j \in \{1,\ldots,n-1\}$ and finally $\cQQ_{n-1}$-multiples of $Z(z_n -z_j) - Z(z_n-z_k)$ 
so that the resulting function is elliptic and without poles in the variable~$z_n$, 
hence constant in~$z_n$.
\par
Now consider the case $e>0$. Using the induction hypothesis we can write
$$ \Delta_n(f)(z_1,\ldots,z_n) \= \sum_{s=0}^{e-1} \sum_{j,m} a_{s,j,m} 
Z^{s}(z_n-z_j) P^{(m)}(z_n-z_j)  + b_{s,j} Z^s(z_n-z_j)\,.$$
Now set 
$$ \tilde{f}(z_1,\ldots,z_n) \= \sum_{s=0}^{e-1}  \frac{1}{s+1}  \sum_{j,m}
a_{s,j,m} Z^{s+1}(z_n-z_j) P^{(m)}(z_n-z_j) + b_{s,j} Z^s(z_n-z_j)\,. $$
Then, since 
$$ \Delta_n(Z^{s+1}(z_n)P^{(m)}(z_n)) \= P^{(m)}(z_n) \,\sum_{t=0}^{s} \binom{s+1}{t} Z^t(z_n) $$
we conclude that
\bas 
\Delta_n(f-\tilde{f})(z_1,\ldots,z_n) &\=  \sum_{s=0}^{e-1} \,\frac{1}{s+1}\, 
\Bigl( \sum_{j,m}  a_{s,j,m} 
P^{(m)}(z_n-z_j)\, + b_{s,j} \Bigr)  \\
&\phantom{\=  \sum_{s=0}^{e-1} \frac{1}{s+1} \Bigl(} 
\cdot \,\Bigl(\,
\sum_{t=0}^{s-1} \binom{s+1}{t} Z^t(z_n-z_j) \Bigr)\\
\eas
has order $\leq e-2$ with respect to~$z_n$. Consequently,  $f-\tilde{f}$ has 
order $\leq e-1$ and another application of
the induction hypothesis implies the claim.
\end{proof}
\par
Using this additive basis we can now prove the main result, that
contains Theorem~\ref{thm:introPint} as special case.
\par
\begin{Thm} \label{thm:coeff0QM}
For any permutation $\pi$ the constant term with respect to the domain 
$\Omega_\pi$ of a function in $\cQQ_{n}^{(k)}$ is a quasimodular form
of mixed weight $\leq k$.
\end{Thm}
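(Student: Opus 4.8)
The natural strategy is induction on the number~$n$ of elliptic variables. The case $n=0$ is trivial, since $\cQQ_0^{(k)}$ consists of the ordinary quasimodular forms of weight~$k$ and the empty constant term is the identity. For the inductive step, fix $f \in \cQQ_n^{(k)}$ and a permutation~$\pi$, and let $z_N$ be the variable whose integration path $\gamma_N$ sits at the bottom (or the top) of the chain of heights defining $\Omega_\pi$. The point of choosing an \emph{extreme} member of the chain is that, after integrating out~$z_N$, the remaining paths $\gamma_j$ ($j \ne N$) still form a chain of the required type, so the residual domain is of the form $\Omega_{\pi'}$ for the induced permutation $\pi'$ on the remaining indices. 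Hence it suffices to show
$$ F \;:=\; [\zeta_N^0]_\pi \, f \;\in\; \bigoplus_{k' \le k} \cQQ_{n-1}^{(k')}\,, $$
after which the induction hypothesis, applied to each homogeneous component of~$F$ together with $[\zeta^0]_\pi f = [\zeta^0]_{\pi'} F$, finishes the proof with mixed weight $\le k$.

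To analyse~$F$ I would invoke Proposition~\ref{prop:Naddbasis}, applied with $z_N$ in the role of the distinguished variable (legitimate by the symmetry of $\cQQ_n$ in its elliptic arguments), to write
$$ f \= \sum_{e,m,j} a_{e,m,j}\, Z^e(z_N-z_j)\,P^{(m)}(z_N-z_j) \;+\; \sum_{e,j} b_{e,j}\, Z^e(z_N-z_j) \;+\; c $$
with $a_{e,m,j} \in \cQQ_{n-1}^{(k-e-m-2)}$, $b_{e,j} \in \cQQ_{n-1}^{(k-e)}$ and $c \in \cQQ_{n-1}^{(k)}$, none of which involves~$z_N$. Since $[\zeta_N^0]_\pi$ acts only on the $z_N$-dependent factors, and each such factor depends on $z_N$ only through $w = z_N - z_j$, its Fourier expansion in $\zeta_N/\zeta_j$ (in whichever annulus is singled out by $\Omega_\pi$ and by the positions of $N$ and $j$) has $\zeta_N^0$-coefficient a function of~$\tau$ alone. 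The arithmetic content is then the assertion that this $\tau$-function is a quasimodular form of weight $\le e+m+2$, and likewise that $[\zeta_N^0]_\pi Z^e(z_N-z_j)$ is a quasimodular form of weight $\le e$. This comes out of the explicit developments~\eqref{eq:FourierLZP}--\eqref{eq:LaurentLZP}: in the basic annulus $[\zeta^0]P^{(m)}=0$ by~\eqref{eq:Pint} and $[\zeta^0]Z=\tfrac12$, while the $\zeta^0$-coefficient of a monomial $Z^eP^{(m)}$ is an explicit polynomial in the $G_{2\ell}$ and $D_qG_{2\ell}$ of weight $\le e+m+2$ (already $[\zeta^0]Z^2 = \tfrac16 - 2G_2$ shows why the weight is only bounded, not pure); passing to a neighbouring annulus only adds residues of the same shape, by the quasi-periodicities $Z(z+\tau)=Z(z)+1$ and $P^{(m)}(z+\tau)=P^{(m)}(z)$, with the same weight bound. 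Because $\cQQ_{n-1}$ is a graded ring (Proposition~\ref{prop:Qring}) and multiplying by a weight-$w$ quasimodular form raises the grading by~$w$, each term $a_{e,m,j}\cdot[\zeta_N^0]_\pi(Z^eP^{(m)})$ lands in $\bigoplus_{k'\le k}\cQQ_{n-1}^{(k')}$, and similarly for the $b_{e,j}$-terms and for~$c$; summing gives $F \in \bigoplus_{k'\le k}\cQQ_{n-1}^{(k')}$, as needed.

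The step I expect to require the most care is the bookkeeping of domains. Because $Z$ is only quasi-elliptic, not elliptic, the constant term $[\zeta_N^0]$ genuinely depends on the annulus in which one expands — this is precisely why the statement must be attached to a fixed domain $\Omega_\pi$ — so one has to keep track, for every factor $Z^e(z_N-z_j)P^{(m)}(z_N-z_j)$, of which annulus $|q|^{a+1} < |\zeta_N/\zeta_j| < |q|^a$ is relevant, and to record (via the extreme-variable choice) that the residual domain is again one of the $\Omega_{\pi'}$. The saving grace is that the weight bound $e+m+2$ is uniform across all these annuli, so the estimate $\le k$ survives. The second thing to verify is that the $\zeta_N^0$-coefficients of the building blocks really are quasimodular forms of the asserted weight; this is a routine but not entirely trivial manipulation of the $q$-expansions~\eqref{eq:FourierLZP} and the Laurent expansions~\eqref{eq:LaurentLZP}, of exactly the type underlying the other quasimodularity results in the paper.
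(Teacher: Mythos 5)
Your overall architecture coincides with the paper's: decompose $f$ via Proposition~\ref{prop:Naddbasis} with the extreme variable as the distinguished one, extract the constant term in that variable (which leaves the coefficients $a_{e,m,j}$, $b_{e,j}$, $c$ in $\cQQ_{n-1}$ untouched and reduces everything to the constant terms of the generators $Z^e(z)P^{(m)}(z)$ and $Z^e(z)$), note that the residual paths again form an admissible chain, and iterate. That reduction is correctly set up, and your remark that the quasi-ellipticity of $Z$ forces the statement to be tied to a fixed domain $\Omega_\pi$ is on point.

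The gap is in the step you label ``routine but not entirely trivial'': the assertion that $[\zeta^0]\,Z^e(z)P^{(m)}(z)$ and $[\zeta^0]\,Z^e(z)$ are quasimodular of mixed weight $\le e+m+2$ resp.\ $\le e$ is exactly where the content of the theorem sits, and it does not ``come out of'' the expansions \eqref{eq:FourierLZP}--\eqref{eq:LaurentLZP} by inspection. Extracting $[\zeta^0]$ from the $e$-fold product of the Fourier series of $Z$ produces a lattice sum over tuples with $\sum \ve_i k_i = 0$ which a priori has no reason to be quasimodular --- compare $S_2 = \sum_{w,h} w^2 q^{hw} = G_3$ in Section~\ref{sec:H2stratum}, a sum of exactly this shape that is \emph{not} quasimodular and only cancels in a total sum. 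The paper needs two genuine ideas at this point: (a) for the generators containing a $P$-derivative, integration by parts via $[Z^e(z)P^{(m-1)}(z)]' = eZ^{e-1}(z)P(z)P^{(m-1)}(z) + Z^e(z)P^{(m)}(z)$ (whose contour integral vanishes), combined with re-expanding $P(z)P^{(m-1)}(z)$ in the basis of Proposition~\ref{prop:Jaddbasis} (no $Z_{ijn}$-terms occur, by the residue theorem), so as to lower $e$ to zero where \eqref{eq:Pint} applies; and (b) for the pure powers $Z^e$, the argument of Proposition~\ref{prop:Zpowint}: oddness of $Z$ gives $[\zeta^0]Z^\ell = -\tfrac12 {\rm Res}_0\, Z^\ell$ for $\ell$ odd, an involution on the Fourier expansion gives $[\zeta^0](Z-\tfrac12)^\ell = 0$ for $\ell$ odd, and the even powers follow from the resulting triangular linear system. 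Without (a) and (b), or a substitute for them, the quasimodularity of the constant terms of your building blocks is unsupported, and the proof is incomplete at its crucial step.
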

\par
\begin{proof} By relabeling the variables of~$f \in \cQQ_{n}^{(k)}$ we may 
assume that $\pi$ is the trivial permutation. We may thus write~$f$
as in Proposition~\ref{prop:Naddbasis} and integrate with respect to~$z_n$ 
first. We are thus reduced to showing that the $z_n$-integrals of the additive 
generators $Z^e(z_n-z_j) P^{(m)} (z_n-z_j)$ and $Z^e(z_n-z_j)$ are 
quasimodular forms of mixed weight less or equal to $e+2+m$. For the
generators including a $P$-derivative we use the integration by parts 
$[Z^e(z)]' = e Z^{e-1}(z)P(z)$ and 
$$[Z^e(z)P^{(m-1)}(z)]' = eZ^{e-1}(z)P(z)P^{(m-1)}(z) + Z^e(z)P^{(m)}(z)$$ 
to reduce the order~$e$ until we can apply~\eqref{eq:Pint}. This involves 
rewriting $P(z)P^{(m-1)}(z)$ as a linear combination of $P^{(j)}(z)$. To do this, 
we use again Proposition~\ref{prop:Jaddbasis} and note that the terms $Z_{ijn}$ 
will not occur in this case, since the expression $P(z)P^{(m-1)}(z)$
has a unique pole at $z=0$ modulo $\ZZ + \tau \ZZ$, and hence zero
residue there. The
integrals of the remaining generators are dealt with in the next
proposition.
\end{proof}
\par
In all the steps so far the weight of the quasimodular form had
been preserved. We isolate the next step since this is the reason
for mixed weight.
\par
\begin{Prop} \label{prop:Zpowint}
The constant coefficient $[\zeta^0] Z^e(z)$ is
a quasimodular form of mixed weight less or equal to $e$.
\end{Prop}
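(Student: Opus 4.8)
The plan is to argue by induction on $e$; the cases $e=0,1$ are immediate, since $[\zeta^0]Z^0$ and $[\zeta^0]Z$ are the constants $\tfrac1{2\pi i}$ and $\tfrac1{4\pi i}$, read off from the Fourier expansion \eqref{eq:FourierLZP}. I would use throughout that $Z$ is $1$-periodic, that $[\zeta^0]g'=0$ for every $1$-periodic $g$ (the integral of $g'$ over one period is $g(z_0+1)-g(z_0)=0$), and the Laurent expansion \eqref{eq:LaurentLZP}, which in particular shows $Z(-z)=-Z(z)$.

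First I would dispose of the odd exponents by a reflection. Since $Z(z+\tau)=Z(z)+1$ and $Z$ is odd and $1$-periodic, one has $Z(\tau-z)=1-Z(z)$; moreover $z\mapsto\tau-z$ carries a standard integration path in the strip $0<\Im z<\Im\tau$ (at height $y$) to another such path (at height $\Im\tau-y$), hence leaves $[\zeta^0]$ unchanged. Therefore $[\zeta^0]Z^e=[\zeta^0](1-Z)^e=\sum_{j=0}^e(-1)^j\binom ej[\zeta^0]Z^j$, that is,
$$\bigl(1-(-1)^e\bigr)\,[\zeta^0]Z^e\;=\;\sum_{j=0}^{e-1}(-1)^j\binom ej\,[\zeta^0]Z^j\,.$$
For $e$ odd this determines $[\zeta^0]Z^e$ as a rational combination of the $[\zeta^0]Z^j$ with $j<e$, which by induction are quasimodular of mixed weight $\le j<e$; hence $[\zeta^0]Z^e$ is quasimodular of mixed weight $\le e-1\le e$. (Equivalently, combining $z\mapsto -z$ with the passage between the domains $|q|<|\zeta|<1$ and $1<|\zeta|<|q|^{-1}$, which differ by the residue at $\zeta=1$, gives $[\zeta^0]Z^e=\pm\tfrac12\operatorname{Res}_{z=0}Z^e$ directly, and by \eqref{eq:LaurentLZP} this residue is a polynomial in the $G_{2k}$ of pure weight $e-1$.)

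The even exponents carry the real content: for $e$ even the reflection gives only a relation among the lower $[\zeta^0]Z^j$, and the ring relations at hand, such as $Z^2=P-2L-2G_2+\tfrac16$ from \eqref{eq:LZ2}, only reproduce information of weight $\le2$. Here I would use the modular behaviour of $Z$: by (the proof of) Proposition~\ref{prop:Jring}, $Z$ is quasimodular of weight $1$ and depth $1$ with $Z_1(z)=z$, so $Z\bigl(\tfrac z{c\tau+d};\tfrac{a\tau+b}{c\tau+d}\bigr)=(c\tau+d)\,Z(z;\tau)+cz$ for $\sm abcd\in\SL\ZZ$. Writing $J_e(\tau)=[\zeta^0]Z(z;\tau)^e=\tfrac1{2\pi i}\int_\gamma Z^e\,dz$ and substituting $z\mapsto(c\tau+d)z$ expresses $J_e\bigl(\tfrac{a\tau+b}{c\tau+d}\bigr)$ as
$$\sum_{l=0}^e\binom el\,c^l\,(c\tau+d)^{e-l-1}\cdot\frac1{2\pi i}\int_{\Gamma}w^l\,Z(w;\tau)^{e-l}\,dw\,,$$
where $\Gamma=(c\tau+d)\gamma$ is a tilted segment. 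I would then deform $\Gamma$ back to a standard horizontal path: using the $1$-periodicity of $Z$ one removes the tilt step by step, crossing finitely many lattice translates of $z=0$ and picking up, at a lattice point $p=a+b\tau$, the residue of $w^lZ(w;\tau)^{e-l}$ there; expanding $w^l=\sum_j\binom lj p^{l-j}(w-p)^j$ and $Z(w;\tau)=b+Z(w-p;\tau)$ with the Laurent expansion \eqref{eq:LaurentLZP} shows this residue is a polynomial in $p=a+b\tau$ and in $b$ whose coefficients are polynomials in the $G_{2k}$, and it is exactly the linear dependence of $p$ on $\tau$ that produces extra powers of $\tau$, hence the mixed weight. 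Feeding this back, together with the inductive hypothesis on the $J_{e'}$ with $e'<e$, and keeping track of the powers of $(c\tau+d)$ and $c$, one concludes that $J_e$ is quasimodular of mixed weight $\le e$.

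The step I expect to be the real obstacle is this contour-straightening: identifying precisely which lattice translates of $z=0$ are crossed when $\Gamma$ is deformed to a horizontal segment, and checking that the residue contributions, which involve the non-periodic factor $w^l$ and the lattice point $p=a+b\tau$, together with the leftover horizontal integrals, assemble into quasimodular forms of the claimed (generally mixed) weight. This is also the only place in the argument where the weight drops strictly below $e$, i.e.\ where ``mixed weight'' is genuinely forced.
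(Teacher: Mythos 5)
Your treatment of the odd exponents is sound: the parenthetical evaluation $[\zeta^0]Z^e=-\tfrac12{\rm Res}_0\,Z^e$ for $e$ odd is exactly the paper's first step, and your reflection identity $[\zeta^0]Z^e=[\zeta^0](1-Z)^e$ (valid because $Z(\tau-z)=1-Z(z)$ and the constant term is independent of the height of the path inside the strip $0<\Im z<\Im\tau$, where $Z^e$ is holomorphic and $1$-periodic) is a clean substitute for the paper's key lemma that $[\zeta^0](Z-\tfrac12)^\ell=0$ for $\ell$ odd, which the paper proves instead by an involution on the Fourier expansion. (Minor slip: with the paper's conventions the base values are $[\zeta^0]1=1$ and $[\zeta^0]Z=\tfrac12$, the constant Fourier coefficient, not $\tfrac1{2\pi i}$ and $\tfrac1{4\pi i}$.)

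The genuine gap is the even case, which you yourself flag as carrying ``the real content''. The modular-transformation route is only a sketch: after the substitution you are left with integrals $\int_\Gamma w^l\,Z(w;\tau)^{e-l}\,dw$ over a tilted segment whose integrand is not $1$-periodic, so the straightening is an Eichler-integral type computation with boundary and residue corrections that you do not carry out; those tilted-path integrals are not among the $J_{e'}$, so the induction you invoke does not close; and even granting a transformation identity you would still need to check it has the precise shape required by the definition of quasimodularity (holomorphic coefficients $f_i$, control at $q=0$, splitting into pieces of weight $\le e$). Since your odd case draws on the lower even cases through the induction, the proof as written is incomplete. The irony is that you already hold all the needed pieces: apply your reflection identity at the even exponent $e+2$, namely $0=\sum_{j=0}^{e+1}(-1)^j\binom{e+2}{j}[\zeta^0]Z^j$. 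The top even unknown is $[\zeta^0]Z^{e}$, with nonzero coefficient $\binom{e+2}{2}$; the odd terms $j\le e+1$ are known residues, pure of weight $j-1\le e$; and the even terms $j\le e-2$ are covered by induction. This triangular system determines $[\zeta^0]Z^e$ for $e$ even as a quasimodular form of mixed weight $\le e$, with no modular transformation needed --- and it is essentially how the paper concludes, its two inputs being the odd residues and the vanishing of $[\zeta^0](Z-\tfrac12)^\ell$ for $\ell$ odd, the latter being equivalent to your reflection symmetry.
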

\par
\begin{proof} Since $Z(z)$ is an odd function of~$z$, we obtain 
for $\ell$~odd 
\ba  
\int_0^1 Z^\ell(z+i\ve)dz &\= \int_{-1/2}^{1/2} Z^\ell(z+i\ve)dz
\=-\int_{-1/2}^{1/2} Z^\ell(z-i\ve)dz  \\
&\= -\frac12\, {\rm Res}_0 Z^\ell
\ea
and these residues can be read off from the Laurent series
development~\eqref{eq:LaurentLZP} raised to the $\ell$-th power.
\par
On the other hand, we claim that $[\zeta^0] (Z-1/2)^\ell = 0$
for  $\ell$~odd. To see this, we expand the product of the Fourier
expansions~\eqref{eq:FourierLZP} to obtain
\ba \label{eq:Z12ell}
(Z-1/2)^\ell &\= \sum_{k_1, \ldots, k_\ell \geq 1 
\atop \ve_1, \ldots, \ve_n \in \{\pm 1\}} 
(-1)^{\sum \ve_i} \, \prod_{i=1}^\ell \frac{Q_i(k_i,\ve_i)}{1-q^{k_i}}\,,
\ea  
where $Q_i(k_i,\ve_i) =1$ for $\ve=1$ and  $Q_i(k_i,\ve_i) =q^{k_i} $
for $\ve = -1$.  The constant term $[\zeta^0] (Z-1/2)^\ell$ is equal to the 
sum over all $k_i$ and $\ve_i$ with $\sum k_i \ve_i =0$ of the right
hand side of~\eqref{eq:Z12ell}. This set admits and involution
by swapping the signs of all the~$\ve_i$. This involutions changes
the sign of the prefactor $(-1)^{\sum \ve_i}$ since~$\ell$ is odd. 
We claim that the product is unchanged by the involution. The denominator
obviously does not change. The numerator is $q$ raised to the 
power $\sum_{i: \ve_i = +1} k_i$ in one case while it is $q$ raised to 
the power $\sum_{i: \ve_i = -1} k_i$
in the other case. The two sums are equal by the defining condition
of the constant term.
\par
Knowing the constant terms of $(Z-1/2)^\ell$ and $Z^\ell$
for $\ell$ odd, we can solve the (triangular) system of linear equations
to determine the constant coefficients of $Z^\ell$ for $\ell$~even.
\end{proof}
\par
The first few values of these constant coefficients of $Z^\ell$ are
\bas \label{bla}
[\zeta^0] Z\,\, &= \frac12 &  [\zeta^0] Z^2 &= -2G_2 + \frac16 \\
[\zeta^0] Z^3 &= -3G_2 &  [\zeta^0] Z^4 &= 8G_2^2 - \frac13G_4 -2G_2 
- \frac1{30}\\
[\zeta^0] Z^5 &= 20G_2^2 - \frac56 G_4 &  [\zeta^0] Z^6 &=
 \frac{-1}{60}G_6 + 4G_4G_2 -40G_2^3 + 20G_2^2  - \frac{5}{6}G_4 + G_2
+ \frac1{42}
\eas
%
%
%
%
%

\subsection{An example of mixed weight} 
\label{sec:Ex1111Pcomp}

Here we illustrate that the proof of Theorem~\ref{thm:coeff0QM} 
provides an effective algorithm by computing (with respect to the 
standard order~\eqref{eq:order}) the expression
\begin{flalign}
&\phantom{\=} [\zeta^0]P(z_1-z_2)^2P(z_1-z_4)P(z_2-z_3)P(z_3-z_4)^2 \label{eq:H14domain1}
\\
&=\, 4q^2 + 224q^3 + 3088q^4 + 21888q^5 + 105136q^6 + 388288q^7 + 1197280q^8
+O(q^9) \nonumber \\
&= -256G_2^6 + \frac{640}{3}G_4G_2^4 +  \frac{112}{9}G_6G_2^3 
-\frac{400}{9}G_4^2 G_2^2   - \frac{140}{9}G_6G_4G_2 + \frac{2000}{81}G_4^3  
+ \frac{49}{108}G_6^2 \nonumber\\
&\phantom{\=}  \+\ \left(-\frac{256}{3}G_4G_2^3-\frac{16}{5}G_6G_2^2+\frac{320}{21}G_4^2G_2
+ \frac{28}{9}G_6G_4\right)\,, \nonumber
\end{flalign}
which is a quasimodular form of mixed weight 10 and 12.
\par
To prove this formula we first treat the terms depending on~$z_4$ 
and write 
\bas P(z_4-z_1)^2P(z_4-z_3)&=\frac{1}{6}P(z_1-z_3)P''(z_4-z_1) 
+ \frac{1}{2}P'(z_3-z_1) P'(z_4-z_1)\\
& + \Bigl(\frac12 P''(z_1-z_3)  + 4G_2 P(z_1-z_3) \Bigr)P(z_4-z_1) \\ 
& + P(z_1-z_3)^2P(z_4-z_3)\\
& + \Bigl(4G_2 P'(z_3-z_1)- \frac16 P'''(z_1 - z_3)\Bigr) \,Z_{134} \\
& + \frac{1}{3}G_2 P''(z_1-z_3) - 2G_2P(z_1-z_3)^2 - 8G_2^2P(z_1-z_3) \\
& + 8G_2^3 - \frac{10}{3}G_4G_2
\eas
in the additive basis given in Proposition~\ref{prop:Naddbasis}.
This allows to integrate with respect to $z_4$ and we obtain
\bas \label{eq}
[\zeta_4^0]P(z_4-z_1)^2P(z_4-z_3)&\= \bigl(4G_2P'(z_3-z_1) - \frac{1}{6}P'''(z_3-z_1)\bigr)
Z(z_1-z_3) \\
& \phantom{\=}\+ \frac{4}{3}G_2 P''(z_3-z_1) + 8G_2^3 -\frac{10}{3}G_4G_2\,. \eas
The integral $[\zeta_2^0]P(z_1-z_2)P(z_2-z_3)^2$ is the same as the previous one, 
replacing $z_4$ by $z_2$. The product of these two integrals contains 
the term $$T = P'''(z_1-z_3)P'(z_1-z_3)Z(z_1-z_3)^2$$ 
and several terms that can be treated similarly as~$T$, or that can be
computed by integration by parts as in the proof of Theorem~\ref{thm:coeff0QM} 
and that finally yield a contribution that is pure of weight~12.
To integrate~$T$ with respect to $z_1$ we decompose $T/Z(z_1-z_3)^2$ in the additive basis
\[P'''(z)P'(z)=-12G_6P(z) + 24G_6G_2 -\frac{800}{7}G_4^2 - 8P''(z)G_4 + \frac{1}{105}P^{(6)}(z)\]
given by Proposition~\ref{prop:Jaddbasis}. The $Z^2$-multiples of the terms containing~$P$ 
or its derivatives can be computed using integration by parts and contribute purely to 
weight~$12$. 
Finally, the constant (in $z$) multiples of~$Z^2$ are integrated using 
Proposition~\ref{prop:Zpowint} and cause the contribution of mixed weight.

\section{Quasimodularity of graph sums} 
\label{sec:GraphSumisQM}
Motivated by Theorem~\ref{thm:bracketgraphsum} and~\ref{thm:SSZ} we
consider here the graph sums
\be \label{eq:SGammam} {S}(\Gamma,\mm)=\sum_{G \in \Gamma}{S}(G,\mm)\ee
over all orientations $G$ of $\Gamma$, where 
\be \label{eq:SGm} {S}(G,\mm)=\sum_{h\in\widetilde\NN^{E(G)}, w\in \Z_+^{E(G)}}\prod_{i\in E(G)}w_i^{m_i+1}q^{h_iw_i}\prod_{v\in V(G)}\delta(v)\,. \ee
Here $\widetilde\NN^{E(G)}$ is the height space introduced in~\eqref{eq:heightspace}
and the Dirac-symbol  $\delta(v)$ was introduced in~\eqref{eq:NotDelta}.
The goal of this section is to show the quasimodularity of these graph sums.
\par
\begin{Thm} \label{thm:QMforS}
If $\mm = (m_1,\ldots,m_{|E(\Gamma)|})$ is a tuple of even integers, 
then the graph sums 
$S(\Gamma, \mm)$ are quasimodular forms of mixed 
weight at most $k(\mm):= \sum_{i} (m_i +2)$.
\end{Thm}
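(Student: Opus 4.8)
The strategy is to interpret $S(G,\mm)$ as a constant coefficient of an explicit element of the ring $\cQQ_n$ of quasi-elliptic quasimodular forms, and then invoke Theorem~\ref{thm:coeff0QM}. First I would recognize that the building block $\sum_{h,w} w^{m+1} q^{hw}$, summed over an edge with the height constraint $h - \Delta(e) \in \NN$, is (up to the shift encoded by $\Delta(e)$) essentially a Fourier coefficient of a derivative of the propagator: comparing with the Fourier developments~\eqref{eq:FourierLZP}, the generating series $\sum_{k\geq 1} k^{m+1}\sum_{n\geq 1} q^{nk}(\zeta^k + (-1)^m\zeta^{-k})$ matches $P^{(m)}$ (for $m$ even) up to the polar term and a constant, while the variable $\zeta = \zeta_i/\zeta_j = e^{2\pi i(z_i - z_j)}$ records precisely which two vertices the edge connects.

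**Key steps.** (1) Assign to each vertex $v$ of $\Gamma$ a variable $z_v$ and to the oriented graph $G$ the function $f_G(z_1,\dots,z_n;\tau) = \prod_{e\in E(G)} P^{(m_e)}(z_{i^+(e)} - z_{i^-(e)})$, which lies in $\cQQ_n^{(k(\mm))}$ by Proposition~\ref{prop:Qring} (each $P^{(m)}$-factor contributes weight $m+2$; since we only use $P$ and its derivatives, not the $Z$'s, the relevant $\bfe$ is $0$, i.e.\ $f_G$ is genuinely elliptic). (2) Show that the constant coefficient $[\zeta_n^0,\dots,\zeta_1^0]$ of $f_G$, taken in an appropriate domain $\Omega_\pi$ compatible with the heights $\ve_i$ of the branch-point normalization, reproduces $S(G,\mm)$ up to elementary corrections: expanding each $P^{(m_e)}$ in its Fourier series and extracting the total constant term imposes, at each vertex $v$, exactly the relation $\sum_{i\in e_+(v)} k_i = \sum_{i\in e_-(v)} k_i$, which is the Dirac constraint $\delta(v)$, while the $q$-powers assemble into $\prod_e q^{h_e w_e}$ with $w_e = k_e$ and the $\ve_i$-dependent cross terms in the domain account for the shift $\Delta(e)$ in the height space $\widetilde\NN^{E(G)}$. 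The polar term $1/u^2$ of $P$ and the additive constant $2G_2$ must be handled separately — the polar term integrates to zero against the standard paths by~\eqref{eq:Pint}, and the constant shifts only contribute lower-weight quasimodular pieces, consistent with the ``mixed weight at most'' claim. (3) Sum over all orientations $G$ of $\Gamma$ to get $S(\Gamma,\mm)$, and conclude by Theorem~\ref{thm:coeff0QM} that each summand, hence the sum, is a quasimodular form of mixed weight at most $\sum_e(m_e + 2) = k(\mm)$.

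**Main obstacle.** The bookkeeping in step (2) is where the real work lies: one must check carefully that the height-space shift $\Delta(e) = \ve_{i^+(e)} - \ve_{i^-(e)}$ (or this plus $1$) matches the choice of domain $\Omega_\pi$ dictated by the ordering $0 \le \ve_1 < \cdots < \ve_n < 1$, so that the contour integral in Theorem~\ref{thm:coeff0QM} picks up $h_e \in \NN + \Delta(e)$ rather than $h_e \in \NN$; getting the inequalities in~\eqref{eq:order} to line up with the edge orientations on $G$ is the delicate point, and one should probably argue that the answer is independent of the auxiliary permutation used (as is implicit in Theorem~\ref{thm:coeff0QM}). A secondary subtlety is tracking precisely which constant terms ($2G_2$ from the Laurent expansion of $P$, the $\zeta(-\ell)$-type regularizations) get absorbed and verifying they only lower the weight rather than raise it — this is exactly the phenomenon of mixed weight already exhibited in Section~\ref{sec:Ex1111Pcomp}, so no purity can be expected, but the upper bound $k(\mm)$ is preserved because every such correction replaces a factor of positive weight by one of strictly smaller weight.
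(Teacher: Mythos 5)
Your overall strategy is the same as the paper's: realize the graph sum as a constant coefficient of a product of derivatives of the propagator attached to the edges, and then invoke Theorem~\ref{thm:coeff0QM}. However, there is a concrete gap: your function $f_G=\prod_{e}P^{(m_e)}(z_{i^+(e)}-z_{i^-(e)})$ is simply not defined when $\Gamma$ has a self-edge, since a loop at a vertex $v$ would contribute the factor $P^{(m_e)}(z_v-z_v)=P^{(m_e)}(0)$, which is a pole. The theorem is stated (and needed) for arbitrary graphs, and loops do occur in the applications (e.g.\ the global graphs for $\cHH(2)$ in Section~\ref{sec:examples}). The paper deals with this by first factoring the graph sum into the loop contribution and the contribution of the reduced graph (Lemma~\ref{lem:SinS0S1}); the loop contribution is an explicit product of Eisenstein series $G_{m_i+2}$ minus their constant terms (Lemma~\ref{lem:S0}), and only the reduced, loop-free graph is fed into the contour-integral machinery. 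Your proposal needs this preliminary reduction.

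A second problem is the orientation bookkeeping in your steps (2)--(3). Since each $m_e$ is even, $P^{(m_e)}$ is an even function, so $f_G$ does not depend on the orientation $G$; moreover, each factor's Fourier expansion contains both $\zeta^{w}$ and $\zeta^{-w}$ terms, and the choice of sign made when extracting the constant term is exactly what produces an orientation of that edge. Hence a single constant coefficient $[\zeta_n^0,\dots,\zeta_1^0]\,f_G$ already equals the sum over \emph{all} orientations, i.e.\ $S(\Gamma_1,\mm_1)$ (this is Proposition~\ref{prop:S1}); computing it once per orientation and then summing, as you propose, would overcount by a factor $2^{|E|}$ (modulo loops). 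Two smaller inaccuracies: on the domain of coefficient extraction one works with the Fourier expansion of $P^{(m)}$ directly, so there is no separate ``polar term'' or ``constant $2G_2$'' to correct for (you are conflating the Fourier and Laurent expansions, which serve different purposes here); and the constant term is \emph{not} independent of the permutation $\pi$ --- the two labelings in Section~\ref{sec:examples} giving the distinct forms $A$ and $B$ show exactly this --- one must use the permutation matching the vertex labeling, after first normalizing the height space as in Lemma~\ref{lem:heightspace}.
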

\par
The proof consists of splitting the sum into the contribution from loops
and the rest, and then to apply the coefficient extraction results from 
Section~\ref{sec:QMF}.
\par
The combination of this result with the graph sum theorem and the polynomiality
of generalized double Hurwitz numbers with completed cycles immediately 
gives the first quasimodularity result we are aiming for.
\par
\begin{Cor} \label{cor:noname}
For any ramification profile~$\Hmu$ the counting 
function $N^\circ(\Hmu)$ for connected torus covers of profile~$\Hmu$ 
is a quasimodular form of mixed weight less or equal to
${\rm wt}(\Hmu) = |\Hmu| + \ell(\Hmu)$.
\end{Cor}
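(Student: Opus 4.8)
The plan is to reduce the connected count $N^\circ(\Hmu)$ to the count $N'(\Hmu)$ of covers without unramified components, to identify $N'(\Hmu)$ with a $q$-bracket, and then to chain together the graph-sum theorem (Theorem~\ref{thm:bracketgraphsum}), the polynomiality of completed generalized double Hurwitz numbers (Theorem~\ref{thm:SSZ}), and the quasimodularity of graph sums (Theorem~\ref{thm:QMforS}). First I would invert the decomposition $N'(\Hmu)=\sum_{\alpha\in\P(\cRR)}\prod_{A\in\alpha}N^\circ(\Hmu_A)$ by M\"obius inversion on the refinement poset of partitions of the ramification locus $\cRR=\cRR(\Hmu)$; this writes $N^\circ(\Hmu)$ as a finite $\ZZ$-linear combination of products $\prod_A N'(\Hmu_A)$. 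Since each $\Hmu_A$ collects a subset of the ramification points of $\Hmu$, the weight is superadditive, $\sum_{A\in\alpha}{\rm wt}(\Hmu_A)\le{\rm wt}(\Hmu)$, and, as products of quasimodular forms are quasimodular with weights adding and $\QQ$-linear combinations do not raise the mixed weight, it suffices to prove that each $N'(\Hmu')$ is a quasimodular form of mixed weight $\le{\rm wt}(\Hmu')$.

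Next I would use Burnside's formula~\eqref{eq:Burnside} together with~\eqref{eq:NNN} and the definition~\eqref{defqbrac} of the $q$-bracket to get $N'(\Hmu)=\bq{f_{\mu^{(1)}}\cdots f_{\mu^{(n)}}}$. By Theorem~\ref{thm:KO} each $f_{\mu^{(i)}}$ lies in $\Lambda^*$, and from the conversion formulas (so $f_\ell=\tfrac1\ell P_\ell+\cdots$, and in general the top part of $f_\mu$ is $\tfrac1{\prod\mu_i}P_\mu$) it is a polynomial in the generators $p_\ell$ of mixed weight $\le|\mu^{(i)}|+\ell(\mu^{(i)})={\rm wt}(\mu^{(i)})$ for the grading assigning $p_\ell$ weight $\ell+1$. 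Expanding the product, $f_{\mu^{(1)}}\cdots f_{\mu^{(n)}}$ becomes a $\QQ$-linear combination of monomials $p_{\ell_1}\cdots p_{\ell_m}$, each of weight $\sum_j(\ell_j+1)\le{\rm wt}(\Hmu)$, and by linearity of the $q$-bracket it remains to show that each $\bq{p_{\ell_1}\cdots p_{\ell_m}}$ is a quasimodular form of mixed weight $\le\sum_j(\ell_j+1)$.

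For this, Theorem~\ref{thm:bracketgraphsum} gives $\bq{p_{\ell_1}\cdots p_{\ell_m}}=\sum_\Gamma\sum_{G\in\Gamma}\bG{p_{\ell_1}\cdots p_{\ell_m}}$. In~\eqref{eq:pasGraphsum} the vertex factor $\ol{A}'(\bfw_v^-,\bfw_v^+,(\ell_{\#v}))$ vanishes by Theorem~\ref{thm:SSZ} unless $\deg(v)\le\ell_{\#v}+1$, so $2|E(G)|=\sum_v\deg(v)\le\sum_v(\ell_{\#v}+1)$ and only finitely many $G$ contribute. On the surviving graphs, Theorem~\ref{thm:SSZ} makes each $\ol{A}'(\bfw_v^-,\bfw_v^+,(\ell_{\#v}))$ an even polynomial in the edge widths at $v$, homogeneous of degree $\ell_{\#v}+1-\deg(v)$; substituting these polynomials into~\eqref{eq:pasGraphsum} rewrites $\bG{p_{\ell_1}\cdots p_{\ell_m}}$ as a finite $\QQ$-linear combination of graph sums $S(G,\mm)$ of the shape~\eqref{eq:SGm}, where each exponent $m_e$ is even (it is a sum of even $w_e$-powers, one contributed by each endpoint of $e$) and $\sum_e(m_e+2)=\sum_e m_e+2|E(G)|=\sum_v(\ell_{\#v}+1-\deg(v))+2|E(G)|=\sum_v(\ell_{\#v}+1)$. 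Theorem~\ref{thm:QMforS}, which applies since the $m_e$ are even, then shows $S(G,\mm)$ is quasimodular of mixed weight $\le\sum_e(m_e+2)=\sum_v(\ell_{\#v}+1)$; summing over $G$ and $\Gamma$ gives the desired bound for $\bq{p_{\ell_1}\cdots p_{\ell_m}}$, hence for $N'(\Hmu)$ and, via the first step, for $N^\circ(\Hmu)$.

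All the substantial ingredients are already in place, so the remaining work is assembly; the one step requiring real care is the parity and weight bookkeeping in the third paragraph — checking that the exponents produced by the even polynomials $\ol{A}'$ are themselves even, so that Theorem~\ref{thm:QMforS} is applicable, and that the weight bound telescopes precisely to $\sum_v(\ell_{\#v}+1)$, the weight of $p_{\ell_1}\cdots p_{\ell_m}$, so that no weight is gained against ${\rm wt}(\Hmu)$.
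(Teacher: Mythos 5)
Your proposal is correct and follows essentially the same route as the paper's proof: reduce $N^\circ$ to $N'$ by inclusion--exclusion, identify $N'(\Hmu)$ as a $q$-bracket and reduce to monomials in the $p_\ell$ via Theorem~\ref{thm:KO}, then combine Theorem~\ref{thm:bracketgraphsum}, the even polynomiality of Theorem~\ref{thm:SSZ}, and Theorem~\ref{thm:QMforS}, with the same weight count ${\rm wt}(\Hmu)-2|E(\Gamma)|+2|E(\Gamma)|$. Your version is in fact slightly more explicit than the paper's (notably on the $N'\to N^\circ$ step and on the parity of the exponents $m_e$ needed for Theorem~\ref{thm:QMforS}), but there is no substantive difference in approach.
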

\par
\begin{proof} As remarked before Theorem~\ref{thm:bracketgraphsum}, 
thanks to Theorem~\ref{thm:KO} we only need to show the quasimodularity of 
$q$-brackets of a product of $p_{\ell_i}$, $i=1\dots n$, i.e.\ of the expressions
appearing in~\eqref{eq:pasGraphsum} that are sums over graphs with $n$ labeled vertices. By Theorem~\ref{thm:SSZ} the expressions 
$\ol{A}'_v = \ol{A}'(\bfw_v^-,\bfw_v^+,(\ell_{\# v}))$ appearing 
on the right hand side of~\eqref{eq:pasGraphsum} are either zero or 
they are polynomials of even degree equal to $\ell_{\# v} + 1 -\ell(\bfw_v^-)
-\ell(\bfw_v^+)$. Consequently, the product over all vertices 
of these polynomials $\ol{A}_v'$ has degree ${\rm wt}(\Hmu) - 2|E(\Gamma)|$.
We can now apply Theorem~\ref{thm:QMforS}.
\end{proof}
\par
\smallskip
The rest of this section consists of the proof of Theorem~\ref{thm:QMforS}.
As a first technical step we show that $S(\Gamma,\mm)$, which a priori
depends on the heights $y_i$ involved in the definition of the height space, 
is in fact independent of this choice and can be computed using the limit
value $y_i=0$ for all~$i$.
\par
\begin{Lemma}\label{lem:heightspace} Replacing the height space  
$\widetilde\NN^{E(G)}$  by 
\[\NN_{E(G)}=\{(n_1,\dots, n_{|E(G)|}), \; n_i\in\{0\}\cup\Z_+\mbox{ if }v_+(i)>v_-(i), \; n_i\in\Z_+\mbox{ otherwise}\}\]
does not change the total sum $S(G,\mm)$. 
\end{Lemma}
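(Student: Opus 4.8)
The plan is to fix an orientation $G$ of $\Gamma$ and to carry out the summation over the heights explicitly, separately for each tuple of widths $w=(w_e)_{e\in E(G)}$. Because the factors $\delta(v)$ force $w$ to satisfy, at every vertex, the balance condition that the widths of the incoming and of the outgoing edges at $v$ sum to the same value, I would restrict to such balanced $w$ from the outset. For a non-loop edge $e$ one has $\Delta(e)\in(0,1)$ (the $\ve_i$ are distinct and lie in $[0,1)$), so writing $h_e=n_e+\Delta(e)$ with $n_e\in\NN$ turns the inner sum into a product of geometric series,
$$\sum_{(h_e)\in\widetilde\NN^{E(G)}}\prod_{e}q^{h_ew_e}\=\prod_e\frac{q^{\Delta(e)w_e}}{1-q^{w_e}}\=\frac{q^{\sum_e\Delta(e)w_e}}{\prod_e(1-q^{w_e})}\,.$$
A loop edge $e$ has $\Delta(e)=0$, and since cylinder heights are positive its height runs over $\ZZ_+$ in both height spaces, so the loop factors agree trivially and may be set aside.

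The key point is to evaluate the exponent $\sum_e\Delta(e)w_e$. By the definition of $\Delta(e)$ in~\eqref{eq:heightspace} this sum equals $\sum_{e:\,v_+(e)<v_-(e)}w_e+\sum_e\bigl(\ve_{v_+(e)}-\ve_{v_-(e)}\bigr)w_e$, and regrouping the second sum according to the endpoints of the edges rewrites it as $\sum_{v\in V(G)}\ve_v\bigl(\sum_{e:\,v_+(e)=v}w_e-\sum_{e:\,v_-(e)=v}w_e\bigr)$. Each parenthesis is, up to sign, the argument of $\delta(v)$, hence vanishes on the support of the sum. Therefore $\sum_e\Delta(e)w_e=\sum_{e:\,v_+(e)<v_-(e)}w_e$, an integer that does not involve the heights $y_i$ at all. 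This already shows that $S(G,\mm)$ does not depend on the choice of the $y_i$; moreover, since $\sum_e h_ew_e=\sum_e n_ew_e+\sum_{e:\,v_+(e)<v_-(e)}w_e\in\NN$ on the support, it shows that $S(G,\mm)$ is a well-defined element of $\QQ[[q]]$, so that the interchange of summations used above is legitimate.

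It then remains to compare with the sum over $\NN_{E(G)}$. There the height of $e$ runs over $\NN$ when $v_+(e)>v_-(e)$ and over $\ZZ_+$ otherwise, so for fixed balanced $w$ the height sum equals $\prod_{v_+(e)>v_-(e)}(1-q^{w_e})^{-1}\cdot\prod_{v_+(e)<v_-(e)}q^{w_e}(1-q^{w_e})^{-1}$, which is exactly the expression $q^{\sum_{v_+(e)<v_-(e)}w_e}\big/\prod_e(1-q^{w_e})$ obtained above. Multiplying by $\prod_e w_e^{m_e+1}$ and summing over all balanced $w$ gives $S(G,\mm)$ in either normalization, and summing over $G\in\Gamma$ then yields the same statement for $S(\Gamma,\mm)$. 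The only non-formal step here is the telescoping identity for $\sum_e\Delta(e)w_e$, which is also what forces the total $q$-degree to be a non-negative integer; the loop edges require only the elementary remark that their heights are positive in either normalization, consistent with the loop/non-loop splitting used in the proof of Theorem~\ref{thm:QMforS}.
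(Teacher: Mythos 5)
Your proof is correct and rests on the same key identity as the paper's: the $\ve$-dependent part of the exponent, $\sum_e(\ve_{i^+(e)}-\ve_{i^-(e)})w_e$, regroups over vertices into the arguments of the $\delta(v)$ and hence vanishes on the support of the sum. The paper packages this as a linear change of variables $h'_e=h_e-\delta_e$ rather than summing the geometric series explicitly, but the argument is essentially identical.
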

\par
\begin{proof}We apply the linear change of variables
$h'_e=h_e-\delta_e$ with $\delta_e=\varepsilon_{i^+(e)}-\varepsilon_{i^-(e)}$, 
that maps $\widetilde\NN^{E(G)}$ onto $\NN_{E(G)}$. Each summand for fixed 
$(w_1,\dots,  w_{|E(G)|})$ is then multiplied by
\bes q^{-\sum_i\delta_iw_i}\prod_{v\in V(G)}\delta(v) \= 
q^{-\frac{1}{2}\sum_v \varepsilon_v\left(\sum_{i\in e_-(v)} w_i - \sum_{i\in e_+(v)} w_i\right)}\prod_{v\in V(G)}\delta(v) \=1\,.
\ees
This implies the claim.
\end{proof}

\subsection{The reduced graph}

We call an edge of a graph $\Gamma$ (or an oriented graph $G$) {\em neutral},
if it is a loop (starting and ending at the same vertex). The other edges 
are called {\em essential}. We denote by $E(G)=E_0(G)\cup E_1(G)$ the splitting 
of the set of edges of $G$ into neutral and essential edges. Furthermore, let
$G_1$ denote the oriented graph obtained from $G$ by removing the neutral edges.
Similarly, $\Gamma_1$ is the graph obtained from $\Gamma$ by removing the neutral 
edges. A graph with no neutral edges we be called {\em reduced}.
\par
\begin{Lemma}\label{lem:SinS0S1} The graph sums $S(\Gamma,\mm)$ can be factored as
\[ S(\Gamma,\mm) \= S_0(\Gamma,\mm_0)S(\Gamma_1,\mm_1)\]
where \[S_0(\Gamma,\mm_0)\ =\sum_{h\in\NN_{E_0(G)}, w\in \Z_+^{E_0(G)}}\prod_{i\in E_0(G)}w_i^{m_i+1}q^{h_iw_i},\] for any orientation $G$ of $\Gamma$, and where 
$\mm_0=(m_i)_{i\in E_0(G)}$, $\mm_1=(m_i)_{i\in E_1(G)}$.
\end{Lemma}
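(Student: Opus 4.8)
The plan is to observe that the sum defining $S(G,\mm)$ factors over the two types of edges because the heights $h_i$ range independently over $\NN_{E_0(G)} \times \NN_{E_1(G)}$ (by Lemma~\ref{lem:heightspace} we may use this product height space rather than $\widetilde\NN^{E(G)}$), the widths $w_i$ range independently over $\Z_+^{E_0(G)} \times \Z_+^{E_1(G)}$, the weight factor $\prod_i w_i^{m_i+1}q^{h_iw_i}$ splits multiplicatively, and the vertex constraints $\delta(v)$ are unaffected by the presence of neutral edges. The key point is that a neutral edge (loop) at a vertex $v$ contributes the same width $w_i$ to both $\sum_{i\in e_+(v)} w_i$ and $\sum_{i\in e_-(v)} w_i$, so it cancels inside every $\delta(v)$; hence the function $\prod_{v\in V(G)}\delta(v)$ depends only on the widths of the essential edges and equals the corresponding product $\prod_{v\in V(G_1)}\delta(v)$ for the reduced graph.

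Concretely, I would fix an orientation $G$ of $\Gamma$ and split $E(G) = E_0(G) \sqcup E_1(G)$. Writing each summand of~\eqref{eq:SGm} as
\[
\Bigl(\prod_{i\in E_0(G)} w_i^{m_i+1} q^{h_i w_i}\Bigr)\Bigl(\prod_{i\in E_1(G)} w_i^{m_i+1} q^{h_i w_i}\Bigr)\prod_{v\in V(G)}\delta(v),
\]
and using that $\prod_{v\in V(G)}\delta(v) = \prod_{v\in V(G_1)}\delta(v)$ is independent of the neutral-edge widths, the sum over $(h_i,w_i)_{i\in E_0(G)} \in \NN_{E_0(G)}\times\Z_+^{E_0(G)}$ and over $(h_i,w_i)_{i\in E_1(G)} \in \NN_{E_1(G)}\times\Z_+^{E_1(G)}$ separates into a product. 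The first factor is exactly $S_0(\Gamma,\mm_0)$ (note a neutral edge always has $v_+(i)=v_-(i)$, so the relevant height range is $\{0\}\cup\Z_+$, matching the definition of $S_0$, and the constraint is vacuous for loops), and the second is $S(G_1,\mm_1)$. Finally I would sum over all orientations $G$ of $\Gamma$: since $S_0(\Gamma,\mm_0)$ is visibly independent of the orientation $G$ (loops carry no orientation data in $S_0$, only widths and heights), and since the orientations of $\Gamma$ are in bijection with the orientations of $\Gamma_1$ (the neutral edges receive no orientation), summing over $G\in\Gamma$ gives $S_0(\Gamma,\mm_0)\sum_{G_1\in\Gamma_1}S(G_1,\mm_1) = S_0(\Gamma,\mm_0)\,S(\Gamma_1,\mm_1)$, which is the claimed identity.

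The only subtle point — and the step I would write out carefully — is the bookkeeping of orientations: one must check that the map $G \mapsto G_1$ induces a bijection between orientations of $\Gamma$ and orientations of $\Gamma_1$ compatible with the factorization, which is immediate once one recalls from Section~\ref{sec:globalgraph} that self-edges are never assigned an orientation. Everything else is a routine Fubini-type interchange of summation, valid as formal power series in $q$ since for each fixed total degree only finitely many terms contribute. I do not expect any real obstacle here; the lemma is essentially a reorganization of the defining sum, with Lemma~\ref{lem:heightspace} supplying the needed product structure on the height space.
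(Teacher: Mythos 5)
Your proposal is correct and follows essentially the same route as the paper: split the summation over neutral versus essential edges, observe that loop widths cancel inside every $\delta(v)$ so the loop factor is orientation-independent, and pull it out of the sum over orientations (which agree for $\Gamma$ and $\Gamma_1$ since self-edges carry no orientation). One small slip: for a loop one has $v_+(i)=v_-(i)$, which falls into the ``otherwise'' case of the definition of $\NN_{E(G)}$, so the loop heights range over $\Z_+$ rather than $\{0\}\cup\Z_+$ — this does not affect the factorization argument.
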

\par
\begin{proof} This follows from decomposing for every $G \in \Gamma$ the
summation in the definition of $S(G,\mm)$ into the 
sum over $h\in\NN_{E_0(G)}$ and $w\in \Z_+^{E_0(G)}$
and the remaining variables. Since the $w_i$ for $i\in E_0(G)$  drop out of 
the functions~$\delta$, each $S(G,\mm)$ splits off a factor
\[S_0(G, \mm_0) \= 
\sum_{h\in\NN_{E_0(G)}, w\in \Z_+^{E_0(G)}}\prod_{i\in E_0(G)}w_i^{m_i+1}q^{h_iw_i}\,\]
which does not depend of the orientation $G$ of $\Gamma$.
\end{proof}
\par
\begin{Lemma}\label{lem:S0}
If $\mm = (m_1,\ldots,m_{|E(G)|})$ is a tuple of even integers, the neutral 
contribution 
$S_0(\Gamma,\mm_0)$ is a quasimodular form of mixed weight $k(\mm_0)$.
\end{Lemma}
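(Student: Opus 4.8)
The plan is to factor $S_0(\Gamma,\mm_0)$ over the neutral edges and to recognize each factor as a one–variable Eisenstein series. Since every neutral edge is a loop, in the sum defining $S_0(\Gamma,\mm_0)$ the vertex balances play no role (they were already split off in Lemma~\ref{lem:SinS0S1}), and by the description of $\NN_{E_0(G)}$ the pair $(w_i,h_i)$ attached to a loop~$i$ ranges over $\Z_+\times\Z_+$ independently of the other edges. Hence
\[
S_0(\Gamma,\mm_0)\=\prod_{i\in E_0(G)}\Bigl(\sum_{w\geq1}\sum_{h\geq1}w^{m_i+1}q^{hw}\Bigr)\,,
\]
so it suffices to understand a single factor.

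First I would evaluate a single-loop contribution by summing the geometric series in~$h$ and regrouping by $N=hw$:
\[
\sum_{w\geq1}\sum_{h\geq1}w^{m+1}q^{hw}\=\sum_{w\geq1}w^{m+1}\frac{q^w}{1-q^w}\=\sum_{N\geq1}\sigma_{m+1}(N)\,q^N\,,
\]
the rearrangement being legitimate for $|q|<1$ since $\sigma_{m+1}(N)$ grows only polynomially in~$N$. Here the parity hypothesis enters: as $m$ is even, $m+1=2k-1$ is odd with $2k=m+2$, and comparison with the $q$-expansion of $G_{2k}$ recalled in Section~\ref{sec:QMF} identifies this $q$-series with $G_{m+2}(\tau)+\tfrac{B_{m+2}}{2(m+2)}$. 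For $m\geq2$ this is a modular form of weight $m+2$ plus a (nonzero) constant, and for $m=0$ it is the depth-one form $G_2$ plus a constant; in all cases it is a quasimodular form of mixed weight $m+2$, with top component $G_{m+2}$.

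Finally I would multiply over all neutral edges: since the ring of quasimodular forms is graded (indeed $\C[E_2,E_4,E_6]$), a product of forms of mixed weights $m_i+2$ is a quasimodular form of mixed weight $\sum_{i\in E_0(G)}(m_i+2)=k(\mm_0)$, with top component $\prod_{i\in E_0(G)}G_{m_i+2}$ (nonzero, as the ring is a domain). This is the assertion. There is no genuine obstacle here; the only point requiring care is the use of the evenness of the~$m_i$: for~$m$ odd the single-loop series would be $\sum_{N}\sigma_{m+1}(N)q^N$ with $m+1$ even, which is precisely the ``odd-index'' power series $G_{m+2}$ that, as noted in Section~\ref{sec:QMF}, is \emph{not} a quasimodular form — so the hypothesis on the parity of $\mm$ is exactly what makes the factors quasimodular.
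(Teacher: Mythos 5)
Your proof is correct and follows essentially the same route as the paper: factor $S_0(\Gamma,\mm_0)$ over the loops and identify each factor $S_m=\sum_{w,h\geq 1}w^{m+1}q^{hw}=\sum_N\sigma_{m+1}(N)q^N$ as $G_{m+2}$ minus its constant term, which is quasimodular precisely because $m$ is even. The paper states this in one line; your version just spells out the resummation and the closure of the graded ring of quasimodular forms under products.
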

\begin{proof} The neutral contribution $S_0(\Gamma,\mm_0)$ is the product of $S_{m_i}$
over all neutral edges, where $S_m=\sum_{w,h=1}^\infty w^{m+1}q^{hw}$
is the $q$-expansion of the Eisenstein series~$G_{m+2}$ without its constant term
for $m$~even.
\end{proof}

The problem is now reduced to computing $S(\Gamma,\mm)$ for all reduced graphs $\Gamma$.

\subsection{Contour integrals}

The idea of the proof is to write  the delta functions appearing  in the graph sums
as contour integral of some suitably chosen powers of $\zeta=e^{2\pi i z}$ since 
by the residue theorem
\be \label{eq:resthm}
\int_\gamma \zeta^w dz=\int_{\gamma'}\zeta^{w-1}\frac{d\zeta}{2\pi i}=\delta_{0,w}
\ee
where $\gamma(t)=t+iy$ with $0\leq t\leq 1$ is our standard integration path
of height $\Im y \in (0,\Im \tau)$  and $\gamma'=\exp(2\pi i\gamma)$. Recall that
we denote the derivative with respect to $u = 2\pi i z$ by primes. By~\eqref{eq:FourierLZP} 
the Fourier developments of the derivatives of~$P$ are
\[P^{(m)}(z)= \frac{1}{(2\pi i)^m} \frac{\partial^m}{\partial z^m} P(z)=
\sum_{w=1}^{\infty}\sum_{h=1}^{\infty}w^{m+1} q^{hw}(\zeta^w+(-1)^m\zeta^{-w})+ 
\sum_{w=1}^{\infty}w^{m+1} \zeta^w.\] 
Next, we expand the path integrals of products of derivatives of~$P$. In the following 
lemmas $\gamma_j$ will be the standard path at height~$y_j$ and $y_j<y_{j+1}$.
\par
\begin{Lemma}\label{le:int2}
Let $I\subset\{2, \dots, n\}$ and assume that all the $m_i$ are even. Then on the
domain defined by $|\zeta_i|>|\zeta_1|>|q\zeta_i|$ for all $i\in I$ we have the expansion
\bas \oint_{\gamma_1}\zeta_1^{k_1}\prod_{i\in I}P^{(m_i)}(z_1-z_i)dz_1 &\=\sum_{w_i=1}^{\infty}
\prod_{i\in I} w_i^{m_i+1}\cdot \sum_{I=J\cup K} 
\delta\bigl(k_1+\sum_{j\in J}w_j-\sum_{k\in K} w_k\bigr)\\
&\phantom{\=} \cdot\,\prod_{j\in J} \Bigl( \sum_{h_j=0}^\infty q^{h_jw_j}\Bigr) \cdot \prod_{k \in K} 
\Bigl( \sum_{h_k=1}^\infty q^{h_kw_k}\Bigr) \frac{\prod_{K}\zeta_k^{w_k}}{\prod_J \zeta_j^{w_j}}
\eas
for every $k_1 \in \ZZ$.
\end{Lemma}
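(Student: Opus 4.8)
The plan is to expand each factor $P^{(m_i)}(z_1-z_i)$ into its Fourier series (the one displayed just before the statement) and to carry out the contour integral term by term using the residue identity~\eqref{eq:resthm}. The key point is to keep track of which sign of the exponent of $\zeta_1$ appears, since this is governed by the position of the integration path relative to the others, i.e.\ by the domain of convergence.

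\medskip

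First I would recall that the Fourier development gives, for each $i\in I$,
\[
P^{(m_i)}(z_1-z_i) \= \sum_{w_i=1}^{\infty} w_i^{m_i+1}\Bigl(\zeta_1^{w_i}\zeta_i^{-w_i}\sum_{h_i\geq 0} q^{h_iw_i} \;+\; \zeta_1^{-w_i}\zeta_i^{w_i}\sum_{h_i\geq 1} q^{h_iw_i}\Bigr)\,,
\]
where I have used that $m_i$ is even so that $(-1)^{m_i}=1$, and I have absorbed the ``half'' term $\sum_{w_i} w_i^{m_i+1}\zeta_1^{w_i}\zeta_i^{-w_i}$ (coming from $1/(1-q^{w_i})$ having a $1$ at $h_i=0$) into the first sum by letting the inner sum over $h_i$ start at $0$. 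The domain hypothesis $|\zeta_i|>|\zeta_1|>|q\zeta_i|$ is exactly what makes this expansion converge: the term with $\zeta_1^{+w_i}$ needs $|\zeta_1\zeta_i^{-1}|<1$, and the term with $\zeta_1^{-w_i}$ coupled with $q^{h_i w_i}$, $h_i\geq 1$, needs $|q\zeta_i\zeta_1^{-1}|<1$. Multiplying out the $|I|$ factors, each choice of sign for the exponent of $\zeta_1$ in the $i$-th factor corresponds to placing $i$ either in a set $J$ (positive side, where the geometric series in $q^{h_iw_i}$ starts at $h_i=0$) or in a set $K$ (negative side, where it starts at $h_i=1$). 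This produces the decomposition $I = J\cup K$ in the statement, and the monomial in the remaining variables is $\zeta_1^{\sum_{j\in J}w_j-\sum_{k\in K}w_k}\cdot\prod_K\zeta_k^{w_k}/\prod_J\zeta_j^{w_j}$.

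\medskip

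Next I would multiply by the prefactor $\zeta_1^{k_1}$ and integrate over $\gamma_1$. By~\eqref{eq:resthm} the integral $\oint_{\gamma_1}\zeta_1^{k_1+\sum_J w_j-\sum_K w_k}\,dz_1$ equals $\delta\bigl(k_1+\sum_{j\in J}w_j-\sum_{k\in K}w_k\bigr)$, which yields precisely the Kronecker-delta factor in the claimed formula. Interchanging the (absolutely convergent, on the given domain) sum with the integral is justified by uniform convergence on the compact path $\gamma_1$, since $|\zeta_1|$ is bounded away from $0$ and from $|\zeta_i|$ and from $|q\zeta_i|$ along $\gamma_1$ by the domain hypothesis; collecting the surviving $h_j,h_k$-sums gives the two products $\prod_{j\in J}\sum_{h_j\geq 0}q^{h_jw_j}$ and $\prod_{k\in K}\sum_{h_k\geq 1}q^{h_kw_k}$, and the $w_i^{m_i+1}$ factors combine into $\prod_{i\in I}w_i^{m_i+1}$. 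This is exactly the right-hand side of the lemma.

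\medskip

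The main obstacle is the bookkeeping of the domains of convergence: one must check that the single domain $|\zeta_i|>|\zeta_1|>|q\zeta_i|$ for all $i\in I$ simultaneously validates all $2^{|I|}$ sign choices in the expansion, and that the standard path $\gamma_1$ (at height $y_1<y_i$ for all $i\in I$) lies in this domain — this is where the hypothesis that $\gamma_1$ is the \emph{lowest} path enters. Everything else is a routine geometric-series manipulation and an application of~\eqref{eq:resthm}.
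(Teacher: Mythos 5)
Your proof is correct and follows essentially the same route as the paper's: expand each factor $P^{(m_i)}(z_1-z_i)$ via the Fourier development of $P^{(m)}$ (using evenness of $m_i$ to merge the two exponential terms into the $J$/$K$ dichotomy), multiply out, and apply the residue identity~\eqref{eq:resthm} so that only the terms with $k_1+\sum_{J}w_j-\sum_{K}w_k=0$ survive. You supply more detail than the paper (convergence on the stated domain and the interchange of sum and integral), but the underlying argument is identical.
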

\par
\begin{proof} Since $P$ is even by definition, so are its derivatives of even order.
The lemma follows by expanding the integrand according to the preceding formula
for~$P^{(m)}$ and by~\eqref{eq:resthm} only the term with 
$k_1+\sum_{j\in J}w_j-\sum_{k\in K} w_k = 0$ survives.
%
\end{proof}
\par
We can use this observation to write the graph sum in terms of derivatives of~$P$.
For this purpose, we introduce the following shorthand notations. Let
$\zz = (z_1,\ldots,z_n)$. For a reduced graph~$\Gamma_1$  with $n$ vertices 
and $N_1$ edges, for $(y_1, \dots, y_n)$ fixed as before, 
and $\mm_1 =(m_1, \dots, m_{N_1})$ an $N_1$-tuple of even integers, we define
\be \label{eq:defPGamma}
P_{\Gamma_1, \mm_1}(\zz)\=\prod_{i\in E(\Gamma_1)}
P^{(m_i)}(z_{v_1(i)}-z_{v_2(i)})\,,
\ee
where $v_1(i)$ and $v_2(i)$ are the two ends of the edge $i$. Note that by our 
parity assumption the function $P^{(m_i)}$ is even and so the expression is independent
of the labeling of the ends of edges.
%
\par
\begin{Prop} \label{prop:S1} For a tuple of even integers $\mm_1$ we can express 
the graph sum as
\ba \label{eq:SasContInt}
{S}(\Gamma_1,\mm_1)\= [\zeta_n^0, \dots, \zeta_1^0]\,
P_{\Gamma_1, \mm_1}(\zz)\,,
\ea
where the coefficient extraction is for the expansion on the domain 
 $|q\zeta_{i+1}|<|\zeta_i|<|\zeta_{i+1}|<1$ for all~$i$.
\end{Prop}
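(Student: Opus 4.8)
The plan is to unwind the definition of the graph sum $S(\Gamma_1,\mm_1)$ and recognize it as an iterated contour integral of the product $P_{\Gamma_1,\mm_1}(\zz)$. Recall from Lemma~\ref{lem:heightspace} that we may compute $S(\Gamma_1,\mm_1)$ using the height space $\NN_{E(G)}$ in place of $\widetilde{\NN}^{E(G)}$. Thus for a fixed orientation $G$ of the reduced graph $\Gamma_1$ we have
\[ S(G,\mm_1) \= \sum_{h\in\NN_{E(G)},\, w\in\Z_+^{E(G)}} \prod_{i\in E(G)} w_i^{m_i+1} q^{h_iw_i} \prod_{v\in V(G)}\delta(v)\,. \]
First I would use the residue-theorem identity~\eqref{eq:resthm} to write each Dirac symbol $\delta(v) = \delta\bigl(\sum_{i\in e_+(v)}w_i - \sum_{i\in e_-(v)}w_i\bigr)$ as a contour integral $\oint_{\gamma_v}\zeta_v^{k_v}\,dz_v$ over the standard path at height~$y_v$, where $k_v = \sum_{i\in e_+(v)}w_i - \sum_{i\in e_-(v)}w_i$ reads off exactly the signed incidences of the edges at~$v$. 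Interchanging the (absolutely convergent, once we fix the domain $\Omega_\pi$) sum over $w$ and $h$ with the $n$ integrals, the integrand becomes a product over edges, and we need to check that on the prescribed domain the product over edges of the appropriate Fourier expansions of $P^{(m_i)}$ matches the sum over $w_i$, $h_i$ and over the splittings $J\cup K$ of incident-edge orientations.

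The second step is precisely this matching, and it is carried out by iterating Lemma~\ref{le:int2}. Integrating first over $z_1$: the factors of $P_{\Gamma_1,\mm_1}(\zz)$ involving $z_1$ are $\prod_{i\ni 1}P^{(m_i)}(z_1-z_{v(i)})$; Lemma~\ref{le:int2} (with $k_1=0$, since there is no extra monomial in $\zeta_1$) shows that $\oint_{\gamma_1}\prod_{i\ni 1}P^{(m_i)}(z_1-z_{v(i)})\,dz_1$ equals a sum over the edge-weights $w_i$ incident to vertex~$1$ and over the splitting $I=J\cup K$ (which encodes which incident edges point ``toward'' larger-index vertices, i.e.\ the orientation), carrying the factor $\delta(k_1+\sum_J w_j - \sum_K w_k)$ together with the correct $q$-series $\prod_J\sum_{h\ge 0}q^{hw}\prod_K\sum_{h\ge 1}q^{hw}$ and a leftover monomial $\prod_K\zeta_k^{w_k}/\prod_J\zeta_j^{w_j}$ in the remaining variables $\zeta_2,\dots,\zeta_n$. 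The key bookkeeping point is that the leftover monomial in $\zeta_j$ ($j\ge 2$) contributes the ``incoming'' power to the vertex~$j$, so that when we next integrate over $z_2$ the monomial $\zeta_2^{k_2}$ fed into Lemma~\ref{le:int2} is precisely the one recording the edges between vertices $1$ and $2$; inductively this produces all the $\delta(v)$'s and all the $q$-series factors, and after the last integration no monomials remain. The distinction between the $h\ge 0$ sum for $J$-edges and the $h\ge 1$ sum for $K$-edges reproduces exactly the definition of the modified height space $\NN_{E(G)}$ (zero heights allowed only when the edge points in the direction of increasing vertex index), and summing over all orientations $G\in\Gamma_1$ on the graph side corresponds to summing over all choices of the splittings $J\cup K$ on the integral side; the powers $w_i^{m_i+1}$ come from the $w_i^{m_i+1}$ in the Fourier expansion of $P^{(m_i)}$.

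I expect the main obstacle to be the careful verification that the domain conditions line up across the iteration: Lemma~\ref{le:int2} is stated on the domain $|\zeta_i|>|\zeta_1|>|q\zeta_i|$ for the incident variables, and one must confirm that the global domain $\Omega_\pi$ given by $|q\zeta_{i+1}|<|\zeta_i|<|\zeta_{i+1}|<1$ restricts, at the $j$-th integration step, to exactly the domain in which Lemma~\ref{le:int2} applies for the factors involving $z_j$ — this is what guarantees that the geometric series in $q$ are expanded consistently (and in particular that the ``$h\ge 0$ vs.\ $h\ge 1$'' dichotomy comes out on the correct side of each edge). Once this is in place, combining the per-orientation identities and summing over $G\in\Gamma_1$ gives
\[ S(\Gamma_1,\mm_1) \= \sum_{G\in\Gamma_1} S(G,\mm_1) \= [\zeta_n^0,\dots,\zeta_1^0]\, P_{\Gamma_1,\mm_1}(\zz)\,, \]
with the coefficient extraction taken on $\Omega_\pi$ for $\pi$ the identity, which is the claim.
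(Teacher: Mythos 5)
Your proof is correct and follows essentially the same route as the paper: after reducing to the height space $\NN_{E(G)}$ via Lemma~\ref{lem:heightspace}, you iterate Lemma~\ref{le:int2} with $k_1=0$, track the leftover monomials $\prod_K\zeta_{v_o(k)}^{w_k}/\prod_J\zeta_{v_o(j)}^{w_j}$ through successive integrations, and identify the splittings $J\sqcup K$ with orientations of $\Gamma_1$ and the $h\geq 0$ versus $h\geq 1$ dichotomy with the modified height space. The domain bookkeeping you flag as the main obstacle is exactly the point the paper handles by the ordering $|q\zeta_{i+1}|<|\zeta_i|<|\zeta_{i+1}|<1$, so no new idea is missing.
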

\par
\begin{proof} By the general observation in Section~\ref{sec:intEF}, coefficient 
extraction is the same operation as the computation of the path integrals along the standard
paths. We compute the right hand side inductively and show that the final expressing
coincides with the graph sum. We denote by $E_1$ the set of the (labels of the) edges 
adjacent to the vertex~$v_1$. For two subsets of edge labels we define the shorthand
notation
$$ \delta(J,K)\= \delta\left(\sum_{j\in J} w_j-\sum_{k \in K} w_k\right)\,.$$
Using that~$\Gamma_1$ has no loops and the parity of~$P$
we rewrite the integrand as
\[P_{\Gamma_1, \mm_1}(\zz) \= \prod_{j\in E_1} P^{(m_j)}(z_{1}-z_{v_{other}(j)})
 \cdot \prod_{i\in E\setminus E_1} P^{(m_j)}(z_{v_1(j)}-z_{v_2(j)})\] 
where $v_{\textrm{other}}(j)=v_{o}(j)$ is the second extremity of the edge~$j$ 
for each edge label~$j$ adjacent to~$v_1$. We now apply Lemma~\ref{le:int2} with
$k_1=0$ and obtain
\bas 
\oint_{\gamma_1} P_{\Gamma_1, \mm_1}(\zz) \=   \prod_{i\in E\setminus E_1} P^{(m_i)}(z_{v_1(i)}-z_{v_2(i)})\cdot\mathop{ \sum_{w_i=1}^\infty}_{i\in E_1}\left(\prod_{i \in E_1} w_i^{m_i+1}\right)\\
\cdot \sum_{E_1=J_1\sqcup K_1}\delta(J_1,K_1) \prod_{j\in J_1}\Bigl(\sum_{h_j=1}^\infty 
q^{h_jw_j}\Bigr)\prod_{k\in K_1}\Bigl(\sum_{h_k=0}^\infty q^{h_kw_k}\Bigr)
\frac{\prod_{K_1}\zeta_{v_o(k)}^{w_k}}{\prod_{J_1} \zeta_{v_o(j)}^{w_j}}
\eas
\par
Consider the new graph $\Gamma^{(1)}$ obtained by removing the edges in~$E_1$ 
from~$\Gamma_1$. We denote by~$E_2$ the set of edges adjacent to~$v_2$ in this 
new graph, 
and for each partition $J_1\sqcup K_1$ of $E_1$ we iterate the process by 
integrating \[\prod_{i\in E\setminus E_1} P^{(m_i)}(z_{v_1(i)}-z_{v_-(i)})
\cdot \frac{\prod_{K_1}\zeta_{v_o(k)}^{w_k}}{\prod_{J_1} \zeta_{v_o(j)}^{w_j}}\] along $\gamma_2$, 
using again Lemma~\ref{le:int2}. We then consider the graph $\Gamma^{(2)}$ obtained
by removing the edges in $E_2$ from $\Gamma^{(1)}$ and so forth. At the end of 
this procedure, we obtain
\bas
& \phantom{\= \cdot} [\zeta_n^0, \dots, \zeta_1^0]\, P_{\Gamma_1, \mm_1}(\zz) \\
& \= \sum_{h_i,w_i}\prod_i w_i^{m_i+1}\cdot\sum_{E_1=J_1\sqcup K_1}
\delta(J_1,K_1) \cdot \prod_{j\in J_1} \sum_{h_j=1}^\infty q^{h_jw_j}\cdot
\prod_{k\in K_1}\sum_{h_k=0}^\infty q^{h_kw_k}\\
&\phantom{\=} \cdot \sum_{E_2=J_2\sqcup K_2}\delta(J_2\sqcup K_1^2, K_2\sqcup J_1^2)\cdot
 \prod_{j\in J_2}\sum_{h_j=1}^\infty q^{h_jw_j} \cdot \prod_{k\in K_2}
\sum_{h_k=0}^\infty q^{h_kw_k}\\
& \phantom{\=} \dots \!\!\!\!\!\sum_{E_n=J_n\sqcup K_n} \!\!\!\! \!
\delta(J_n\sqcup K_{n-1}^{n}\!\sqcup \dots,\, K_n\sqcup J_{n-1}^n \!\sqcup\dots)
\cdot \prod_{j\in J_n} \sum_{h_j=1}^\infty q^{h_jw_j}\cdot \prod_{k\in K_n}
\sum_{h_k=0}^\infty q^{h_kw_k}\!\!\!\!\!\!\!\!\!\!\!\! ,\phantom{q^{h_kw_k}\;} \eas
where $K_i^j$ denotes the subset of $K_i$ formed by the edges adjacent to $v_j$. 
We recognize the definition of $ S(\Gamma_1, m)$, since partitioning $E$ into $E_1, E_2, \dots E_n$ as above gives an orientation $G_1$ of $\Gamma_1$, and non 
realizable orientations have coefficient~$0$. 
Note that the last integration with respect to the variable $z_n$ has no effect
since the powers of~$\zeta_n$ cancel out thanks to the delta functions.
\end{proof}

\begin{proof}[Proof of Theorem~\ref{thm:QMforS}]
In view of the factorization of the quantity we are interested in the 
loop contribution and the reduced contribution in Lemma~\ref{lem:SinS0S1}.
The loop contribution is quasimodular by Lemma~\ref{lem:S0}, so 
it suffices to show that the right hand side of~\eqref{eq:SasContInt} is 
a quasimodular form of mixed weight $\leq k(\mm_1)$. This follows from
Theorem~\ref{thm:coeff0QM}, since $P_{\Gamma_1,\mm_1} \in 
\mathcal{J}_n^{(k(\mm_1))} \subset \cQQ_n^{(k(\mm_1))}$ by 
Proposition~\ref{prop:Qring}.
\end{proof}

\begin{proof}[Proof of Theorem~\ref{thm:bo}]
Let $f$ be a shifted symmetric function of weight $k$. By Theorem~\ref{thm:KO}, $f$ is a linear combination of products of $P_l$, each product being of weight smaller or equal to $k$. Let $P_{\ell_1}\dots P_{\ell_n}$ be such a product. We claim that $\bq{P_{\ell_1}\dots P_{\ell_n}}$ is quasimodular of weight smaller than $k$. By Theorem~\ref{thm:bracketgraphsum}, such a term decomposes as a graph sum \eqref{eq:pasGraphsum}, where the completed Hurwitz numbers $\overline A'$ that appear in the graph sum are some even polynomials in the $w_i$ (Theorem~\ref{thm:SSZ}). The product $\prod_{v\in G} \overline A'(\bfw_v^-, \bfw_v^+,(\ell_{\# v}))$ is an even polynomial of degree $\leq k-2|E(G)|$. Considering each monomial of degree $\mm=(m_1,\dots, m_{|E(G)|})$, we get the graph sum $S(G, \mm)$ in \eqref{eq:SGm}, which is quasimodular of weight $|\mm|+2|E(G)|$ by Theorem~\ref{thm:QMforS}.  
By linearity we obtain the quasimodularity of weight $\leq k$ of each 
$\bG{P_{\ell_1}\dots P_{\ell_n}}$, hence the quasimodularity each $\bq{P_{\ell_1}\dots P_{\ell_n}}$, 
and finally the quasimodularity of the $\sbq{f}$.
\end{proof}

\section{Siegel-Veech constants} 
\label{sec:SVgeneral}

The study of area Siegel-Veech constants for flat surfaces, briefly recalled
in Section~\ref{sec:reltoSV}, lead in \cite{CMZ} to counting problems for graphs
sums with a Siegel-Veech weight that we introduce in the sequel. 
We show that the Siegel-Veech weighted graph sums 
admit a decomposition into graph sums of triple Hurwitz numbers similar
to the unweighted case (Proposition~\ref{prop:N'exp}). The main result
(Theorem~\ref{thm:SSVQM}) in this section is that these graph sums fit into the 
scope of our quasimodularity machinery of Section~\ref{sec:QMFProp}. We 
use this to give another proof of the quasimodularity of the Siegel-Veech 
weighted generating series observed in \cite{CMZ}. 
\par
Let $\lambda = (\lambda_1 \geq \lambda_2 \geq \cdots \geq \lambda_k)$ be 
a partition. For $p \in \ZZ$ we define the {\em $p$-th Siegel-Veech weight} 
of~$\lambda$ to be
\be \label{eq:def_pSV}
S_p(\lambda) \= \sum_{j=1}^{k} \lambda_j^p\,.
\ee
Let $\alpha^{(j)}$ denote the first element of the Hurwitz tuple~$h_j$, defined in \eqref{eq:HT}.
We define
\begin{equation} \label{eq:cpsimple}
c^*_{p}(d,\Hmu) \=   \sum_{j=1}^{|\Cov^*_d(\Hmu)|} S_p(\alpha^{(j)})\,,
\quad \text{and} \quad c_p^*(\Hmu) = \sum_{d \geq 0} c_{p}^*(d,\Hmu) q^d\,,
\end{equation}
where for $* \in \{', 0, \emptyset\}$ we packaged the Siegel-Veech weighted 
Hurwitz numbers into a  generating series. These series admit the
following graph sum decomposition.
\par
\begin{Prop} \label{prop:svgraphsum}
The generating series $c_{p}'(\Hmu)$ can be expressed in terms of
graph sums of triple Hurwitz numbers  as
\bes 
c_{p}'(\Hmu) \=  \sum_{\Gamma} \frac1{|\Aut(\Gamma)|}c_p'(\Hmu,\Gamma)\, ,
\ees
where 
\bes
c_p'(\Hmu,\Gamma)=\sum_{G \in \Gamma}
\!\sum_{h\in \widetilde{\NN}^{E(G)}, \atop w\in \Z_+^{E(G)}}
\Bigl( \sum_{e \in E(G)} h_e w_e^p \Bigr)
\prod_{e\in E(G)} w_e 
q^{h_e w_e} \!\!\prod_{v\in V(G)}A'(\bfw_v^-,\bfw_v^+,\mu_v)\,\, \delta(v)\,.
\ees
\end{Prop}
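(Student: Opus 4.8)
The plan is to run the proof of Proposition~\ref{prop:N'exp} again, this time carrying the Siegel--Veech weight through the correspondence of Proposition~\ref{prop:corrTCGraph}. Starting from~\eqref{eq:cpsimple}, I would write
$$c_p'(\Hmu) \= \sum_{d\ge 0}\frac{q^d}{d!}\sum_{H\in\Hur'_d(\Hmu)} S_p(\ual),\qquad H=(\ual,\ube,\uga_1,\dots,\uga_n)$$
(the weight $S_p(\ual)$ being a class function, this agrees with the sum over covers weighted by $1/\Aut$), and split the inner sum according to the global graph of the associated cover, giving $c_p'(\Hmu)=\sum_\Gamma \frac1{|\Aut(\Gamma)|}c_p'(\Hmu,\Gamma)$. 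By Proposition~\ref{prop:corrTCGraph} a cover without unramified components whose global graph is $\Gamma$ is encoded by a tuple $(G,(w_e,h_e,t_e)_{e\in E(G)},(\pi_v)_{v\in V(G)})$, so everything comes down to expressing $S_p(\ual)$ in terms of these data.

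The new ingredient is a geometric lemma: for $p\colon X\to E$ the permutation $\ual=\rho(\alpha)$ is the monodromy around a horizontal loop in $E$, hence its cycles are in bijection with the connected components of the preimage of that loop, i.e.\ with the horizontal circles of $(X,\omega)$ at a fixed generic height. Such a circle lying in the cylinder of the edge $e$ wraps $w_e$ times, and the number of these circles contained in that cylinder is exactly its integral height; thus the cycle type of $\ual$ is $\bigsqcup_{e\in E(G)}(w_e)^{h_e}$, which in particular recovers $\sum_e w_e h_e=d$ (compare~\eqref{eq:whd}), and
$$S_p(\ual) \= \sum_{e\in E(G)} h_e\, w_e^{\,p}.$$
Here $h_e$ is to be read as the integral height; relative to a branch-point normalization with $\ve_i\neq 0$ the height space $\widetilde{\NN}^{E(G)}$ records instead $h_e$ shifted by $\Delta(e)$, so the identity above becomes the literal one only after the substitution used in the proof of Lemma~\ref{lem:heightspace}, which is harmless since it leaves the total sum unchanged. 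What matters is that this weight is a function of $(w_e,h_e)$ alone — it involves neither the twists $t_e$ nor the local covers $\pi_v$.

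Because of this, the weight factors out of the two summations already present in Proposition~\ref{prop:N'exp}: the sum over the $w_e$ choices of twist $t_e$ produces the factor $\prod_e w_e$, and the sum over local $\PP^1$-covers $\pi_v\in\Cov'(\bfw_v^-,\bfw_v^+,\mu_v)$ produces $\prod_{v}A'(\bfw_v^-,\bfw_v^+,\mu_v)\,\delta(v)$. Hence the computation of Proposition~\ref{prop:N'exp} goes through unchanged except that each summand acquires the extra factor $\sum_{e}h_e w_e^{\,p}$, and summing over orientations $G\in\Gamma$ gives precisely the stated formula for $c_p'(\Hmu,\Gamma)$. The only genuinely new work — and the step I expect to be the main obstacle — is the geometric identification of the cycle type of $\ual$ with the tuple of cylinder heights, together with the fractional-versus-integral height bookkeeping; but this is exactly the point already isolated in Lemma~\ref{lem:heightspace}, and the combinatorial backbone of the argument is identical to that of Proposition~\ref{prop:N'exp}.
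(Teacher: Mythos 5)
Your skeleton is the same as the paper's: run the proof of Proposition~\ref{prop:N'exp} again through the correspondence of Proposition~\ref{prop:corrTCGraph}, the only new point being why the Siegel--Veech weight contributes the factor $\sum_e h_e w_e^p$. The paper disposes of that point by invoking the Siegel--Veech transform argument of \cite[Theorem~3.1]{CMZ}, whereas you replace the citation by a direct monodromy computation identifying the cycles of $\ual$ with the horizontal circles lying over the base loop. That computation is correct, but note what it actually gives: $S_p(\ual)=\sum_e n_e w_e^p$, where $n_e$ is the number of such circles inside the cylinder of the edge $e$, i.e.\ the \emph{integral} height $n_e=h_e-\delta_e$ with $\delta_e=\ve_{i^+(e)}-\ve_{i^-(e)}$, not the real height $h_e\in\widetilde{\NN}^{E(G)}$ that appears in the statement.

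The gap is in your final step, where you pass from $n_e$ to $h_e$ by ``the substitution used in the proof of Lemma~\ref{lem:heightspace}, which is harmless''. It is not harmless here: that substitution leaves $q^{h\cdot w}$ unchanged (because $\sum_e\delta_e w_e=0$ under the vertex delta constraints), but it changes the Siegel--Veech prefactor by $\sum_e\delta_e w_e^p$, which does not vanish for $p\neq 1$ since $w\mapsto w^p$ is not a flow (a vertex with incoming widths $1,2$ and outgoing width $3$ gives $1+2^p\neq 3^p$). So for a fixed orientation $G$ the sum over $\widetilde{\NN}^{E(G)}$ with weight $\sum_e h_e w_e^p$ and the honest count with weight $\sum_e n_e w_e^p$ genuinely differ; Lemma~\ref{lem:heightspace} covers only weights that depend on $h$ through $q^{h\cdot w}$. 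The discrepancy disappears only after summing over orientations, by pairing $G$ with $-G$ exactly as in Lemma~\ref{lem:SVheightspace} (extended to include the local factors, using $A'(\bfw^-,\bfw^+,\mu)=A'(\bfw^+,\bfw^-,\mu)$), or alternatively by averaging the height of the base loop over $[0,1)$ --- which is legitimate because the total $\sum_H S_p(\ual)$ is independent of that height, and which replaces the integer count $n_e$ by $h_e$ cover by cover; this averaging is in essence the Siegel--Veech transform step the paper imports from \cite{CMZ}. As written, your appeal to Lemma~\ref{lem:heightspace} does not justify the fractional-height formula, and this is precisely the delicate point of the proposition.
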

\par
As a corollary of this and Theorem~\ref{thm:SSVQM} below, we obtain an
independent proof of the following quasimodularity result
(see also \cite[Theorem 6.4]{CMZ}) without relying on the 
combinatorial machinery of $q$-brackets involving $\widetilde{T_p}$
(cf.\ \cite[Section~14 and~15]{CMZ}).
\par
\begin{Cor} \label{Cor:SVmain}
For any ramification profile $\Hmu$ and any odd $p \geq -1$
the generating series $c_p'(\Hmu)$ for counting covers without unramified
components and with $p$-Siegel-Veech weight as well as the generating 
series $c_p^\circ(\Hmu)$ for connected counting with $p$-Siegel-Veech weight
are quasi-modular forms of mixed weight $ \leq {\rm wt}(\Hmu) +p+1$.
\end{Cor}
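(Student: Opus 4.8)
The plan is to carry over to the Siegel-Veech weighted count the argument that proves Corollary~\ref{cor:noname}, with Proposition~\ref{prop:svgraphsum} in the role of Proposition~\ref{prop:N'exp} and Theorem~\ref{thm:SSVQM} (stated below) in the role of Theorem~\ref{thm:QMforS}. The observation that makes this possible is that in the graph sum of Proposition~\ref{prop:svgraphsum} the Siegel-Veech factor $\sum_{e\in E(G)}h_e w_e^p$ depends only on the edge variables $(h_e,w_e)$ and not on the vertex data, and is thus inert under every manipulation of the vertex labels one performs in the unweighted case.

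First I would reduce $c_p'(\Hmu)$ to graph sums with polynomial vertex weights. By Proposition~\ref{prop:svgraphsum}, $c_p'(\Hmu)=\sum_\Gamma|\Aut(\Gamma)|^{-1}\,c_p'(\Hmu,\Gamma)$, and $c_p'(\Hmu,\Gamma)$ is multilinear in the vertex functions $\mu_v\mapsto A'(\bfw_v^-,\bfw_v^+,\mu_v)$. Writing each $f_{\mu^{(i)}}$ as a polynomial in the generators $p_\ell$ of $\Lambda^*$ (Theorem~\ref{thm:KO}) and using orthogonality of characters to split a vertex carrying a product $\prod_j p_{\ell_j}$ into a chain of vertices each carrying a single $p_{\ell_j}$---exactly as in the reduction behind Theorem~\ref{thm:bracketgraphsum}---one writes $c_p'(\Hmu)$ as a finite $\QQ$-linear combination of Siegel-Veech weighted graph sums whose vertex weights are all $\ol{A}'(\bfw_v^-,\bfw_v^+,(\ell))$ for single cycles $(\ell)$. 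By Theorem~\ref{thm:SSZ} these are even polynomials in the widths (or vanish), so expanding the product of vertex polynomials over a graph $\Gamma$ into monomials $\prod_e w_e^{m_e}$ with $\mm$ a tuple of even integers exhibits $c_p'(\Hmu)$ as a $\QQ$-linear combination of the Siegel-Veech weighted graph sums treated in Theorem~\ref{thm:SSVQM}. That theorem reduces, just as Theorem~\ref{thm:QMforS} does via Proposition~\ref{prop:S1}, to a contour integral of a quasi-elliptic function: distributing the weight over the edges, a distinguished essential edge $e_0$ turns the factor $P^{(m_{e_0})}$ in the integrand into the derivative $L^{(m_{e_0}+p+1)}$, which by Proposition~\ref{prop:Qring} lies in $\cQQ_n^{(m_{e_0}+p+1)}\oplus\cQQ_n^{(m_{e_0}+p+3)}$, while a distinguished loop produces a $q$-derivative of an Eisenstein series of weight $m_{e_0}+p+3$; Theorem~\ref{thm:coeff0QM} then gives a quasimodular form of mixed weight at most $k(\mm)+p+1$. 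Here $p$ odd is essential: it keeps $m_{e_0}+p+1$ even, so that $L^{(m_{e_0}+p+1)}$ is even and the coefficient-extraction lemmas (e.g.\ Lemma~\ref{le:int2}) still apply; the endpoint $p=-1$ is included. For the weight count, as in Corollary~\ref{cor:noname} the product of the vertex polynomials has degree at most ${\rm wt}(\Hmu)-2|E(\Gamma)|$, hence $k(\mm)\le{\rm wt}(\Hmu)$, and the one Siegel-Veech factor adds the remaining $p+1$, giving mixed weight $\le{\rm wt}(\Hmu)+p+1$ for $c_p'(\Hmu)$.

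It then remains to pass from $c_p'$ to $c_p^\circ$. Since the Siegel-Veech weight of a cover equals the sum of those of its connected components (the cycle type of the monodromy $\alpha$ on a disjoint union being the union of the cycle types), the inclusion-exclusion of Section~\ref{sec:CovEll} relating counts without unramified components to connected counts has a Siegel-Veech weighted analogue in which exactly one factor of each product term carries the weight; M\"obius inversion then writes $c_p^\circ(\Hmu)$ as a $\ZZ$-linear combination of products of one factor $c_p'(\Hmu')$ and several factors $N'(\Hmu'')$, ranging over sub-profiles whose ramification data partition those of $\Hmu$. These are quasimodular by the previous step and by Corollary~\ref{cor:noname}, and the bound ${\rm wt}(\Hmu)+p+1$ follows from the additivity of ${\rm wt}$ over such partitions already used in Corollary~\ref{cor:noname}. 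I expect the main obstacle to be the first reduction, i.e.\ verifying that the replacement of the piecewise-polynomial vertex weights $f_\mu$ by the genuinely polynomial completed single cycles---together with the vertex-splitting it requires---goes through unchanged once the Siegel-Veech factor $\sum_e h_e w_e^p$ is present, and that after this splitting the factor still has the edge-wise shape required by Theorem~\ref{thm:SSVQM}; the analytic content is then carried entirely by Theorems~\ref{thm:SSVQM} and~\ref{thm:coeff0QM}.
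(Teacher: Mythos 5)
Your overall architecture matches the paper's (Proposition~\ref{prop:svgraphsum}, evenness from Theorem~\ref{thm:SSZ}, quasimodularity from Theorem~\ref{thm:SSVQM}, and the weight bookkeeping $k(\mm)\le{\rm wt}(\Hmu)$ plus $p+1$ from the Siegel--Veech factor are all as in the paper), and your flagged ``main obstacle'' is exactly where the gap lies. The step you describe as ``using orthogonality of characters to split a vertex carrying a product $\prod_j p_{\ell_j}$ into a chain of vertices each carrying a single $p_{\ell_j}$, exactly as in the reduction behind Theorem~\ref{thm:bracketgraphsum}'' is not justified, and it also misdescribes that reduction: the proof of Theorem~\ref{thm:bracketgraphsum} never splits vertices. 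It only uses multilinearity in the $n$ slots together with the fact that the $q$-bracket side depends solely on the \emph{product} of the slot entries, so that the identity for products of $p_\ell$'s follows from the identity for $f_\mu$'s. Redistributing factors over vertices changes the number of branch points, hence the set of horizontal cylinders, and the new cylinders \emph{do} contribute to the Siegel--Veech weight $\sum_e h_e w_e^p$; so the weight is not ``inert'' under this manipulation, and a direct cut-a-vertex argument via the second orthogonality relation does not preserve the weighted count. Nor can you avoid splitting: keeping $n$ vertices and expanding $f_{\mu^{(i)}}$ into products of $P_\ell$'s produces vertex weights $A'(\cdot,\cdot,P_\nu)$ with $\ell(\nu)>1$, which are only piecewise polynomial (see the $P_1^2$ line of the table in Section~\ref{sec:H2stratum}), so Theorem~\ref{thm:SSVQM} does not apply to them.

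What the paper supplies to bridge this is the hook-length-moment function $T_p$ and the identity~\eqref{eq:cprimep}, $c_p'(\Hmu)=\bq{T_p\,f_{\mu_1}\cdots f_{\mu_n}}-\bq{T_p}\bq{f_{\mu_1}\cdots f_{\mu_n}}$, imported from Propositions~6.2 and~6.3 of \cite{CMZ} (via the Siegel--Veech transform). This recasts the Siegel--Veech weighted count as an expression that is manifestly multilinear in the $n$ insertions and depends only on their product; combining it with Proposition~\ref{prop:svgraphsum} for the $f_\mu$-case and invoking multilinearity of both sides then yields~\eqref{eq:Tpdiffex}, i.e.\ the Siegel--Veech weighted graph sums with completed \emph{single}-cycle weights $\ol{A}'(\bfw_v^-,\bfw_v^+,(\ell_{\#v}))$, to which Theorem~\ref{thm:SSVQM} applies (with $p$ odd ensuring the parity of the shifted exponents). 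Without $T_p$ or an equivalent partition-sum expression for $c_p'$, your reduction to completed single cycles has no justification, so the proposal as written has a genuine gap at its central step; your treatment of $c_p^\circ$ via additivity of the weight over components and inclusion-exclusion is fine.
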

\par

\subsection{Relation to area Siegel-Veech constants} \label{sec:reltoSV}

The generating functions $c_{-1}'(\Hmu)$ admit a nice geometric interpretation in 
terms of Siegel-Veech constants, that are responsible for counting closed geodesics 
on flat surfaces. For a flat surface~$X$ we define the counting function
\[N_{\textrm{area}}(T,L) \= \sum_{Z\subset X\;\textrm{ cylinder}, \atop w(Z)\geq L}
\frac{\mbox{Area}(Z)}{\mbox{Area}(X)}.\]
counting the cylinders filled by closed geodesics on $X$, weighted by their area.
This function is well known to have a quadratic asymptotic (see e.g. \cite{eskinmasur}),
and the number 
\[c_{\textrm{area}}(X) \ =\lim\limits_{L\to\infty}\frac{N_{\textrm{area}}(T,L)}{\pi L^2}\] 
is called the (area) Siegel-Veech constant associated to~$X$. This constants 
are interesting both for generic flat surfaces of a given singularity type and
for torus covers. It is shown in \cite[Theorem 4]{ekz} and \cite[Theorem 3.1]{CMZ} 
that the Siegel-Veech constant for a torus cover of degree $d$ and ramification $\Hmu$ 
is  
\[c_{\textrm{area}}(d,\Hmu) \= \frac{3}{\pi^2}\frac{c_{-1}^0(d,\Hmu)}{N_d^0(\Hmu)}\,.\]
Thus the series $c_{-1}^*(\Hmu)$ can be interpreted as generating functions for 
the ``Siegel-Veech numerators'' of Hurwitz spaces. Knowing them to be quasimodular
forms, and thus knowing the asymptotic behaviour of both $c_{-1}^0(d,\Hmu)$ 
and $N_d^0(\Hmu)$ as $d \to \infty$ allows to compute the area Siegel-Veech constant
of a generic surface with a given singularity type.

\subsection{Siegel-Veech weighted graph sums}

In view of the Proposition~\ref{prop:svgraphsum} we consider now some variants 
of the graphs sums discussed Section~\ref{sec:GraphSumisQM}, and show 
that they are quasimodular too. We define again the 
Siegel-Veech weighted graph sums
\be \label{eq:SSVGammam} {S}^{SV}(\Gamma,\mm)=\sum_{G \in \Gamma}{S}^{SV}(G,\mm)\ee
over all orientations $G$ of $\Gamma$, where now 
\be \label{eq:SSVGm} {S}^{SV}(G,\mm)=\sum_{h\in\widetilde\NN^{E(G)}, w\in \Z_+^{E(G)}}
\Bigl(\sum_{i \in E(G)} \frac{h_i}{w_i}\Bigr)
\prod_{i\in E(G)}w_i^{m_i+1}q^{h_iw_i}\prod_{v\in V(G)}\delta(v)\ee
\begin{Thm}\label{thm:SSVQM}
If $\mm=(m_1, \dots , m_{|E(\Gamma)|})$ is a tuple of even integers, then 
the graph sums $S^{SV}(\Gamma, \mm)$ are quasimodular forms of weight 
at most $k(\mm)=\sum_i(m_i+2)$.
\end{Thm}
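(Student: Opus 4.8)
The plan is to reduce $S^{SV}(\Gamma,\mm)$ to the unweighted graph sums of Section~\ref{sec:GraphSumisQM} by using the function $L$ as the ``Siegel--Veech partner'' of the propagator $P$. First I would expand the weight edge by edge:
\[
S^{SV}(\Gamma,\mm) \= \sum_{j\in E(\Gamma)} S^{SV}_j(\Gamma,\mm), \qquad
S^{SV}_j(G,\mm) \= \!\!\sum_{h\in\widetilde\NN^{E(G)},\, w\in\Z_+^{E(G)}}\!\! \frac{h_j}{w_j}\prod_{i\in E(G)}w_i^{m_i+1}q^{h_iw_i}\prod_{v\in V(G)}\delta(v),
\]
and $S^{SV}_j(\Gamma,\mm)=\sum_{G\in\Gamma}S^{SV}_j(G,\mm)$; it suffices to show each $S^{SV}_j(\Gamma,\mm)$ is quasimodular of mixed weight $\leq k(\mm)$. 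The factor carried by edge~$j$ is $\frac{h_j}{w_j}w_j^{m_j+1}q^{h_jw_j}=h_j w_j^{m_j}q^{h_jw_j}$, which annihilates the $h_j=0$ contribution, so on edge~$j$ one effectively sums over $h_j\geq1$ with an extra factor $h_j$ and one fewer power of $w_j$.

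If $j$ is a neutral edge, I would factor $S^{SV}_j(\Gamma,\mm)$ exactly as in Lemma~\ref{lem:SinS0S1}: the neutral contribution becomes $\widetilde S_{m_j}\cdot\prod_{i\in E_0(G),\, i\neq j}S_{m_i}$ with $\widetilde S_m=\sum_{w,h\geq1}h\,w^{m}q^{hw}$, while the essential part is still $S(\Gamma_1,\mm_1)$. Using $\sum_{h\geq1}hq^{hw}=q\tfrac{d}{dq}\tfrac{q^w}{1-q^w}$ one gets $\widetilde S_m=D_q\bigl(\sum_{w,h\geq1}w^{m-1}q^{hw}\bigr)$, which for even $m\geq2$ is $D_q$ of the $q$-expansion of $G_m$ and hence quasimodular of weight $m+2$, while $\widetilde S_0=\sum_n\sigma_1(n)q^n$ is quasimodular of weight $2=m+2$. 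Together with Lemma~\ref{lem:S0} and Theorem~\ref{thm:QMforS} this settles the neutral case, the total mixed weight being $k(\mm_0)+k(\mm_1)=k(\mm)$.

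If $j$ is an essential edge, I would split off the neutral factor $S_0(\Gamma,\mm_0)$ (quasimodular of weight $k(\mm_0)$ by Lemma~\ref{lem:S0}) and treat the reduced graph sum $S^{SV}_j(\Gamma_1,\mm_1)$ by rerunning the proof of Proposition~\ref{prop:S1} with the single modification that the factor $P^{(m_j)}(z_{v_1(j)}-z_{v_2(j)})$ is replaced by $L^{(m_j)}(z_{v_1(j)}-z_{v_2(j)})$, where $L^{(m)}(z)=\frac1{(2\pi i)^m}\frac{\partial^m}{\partial z^m}L(z)$. By~\eqref{eq:FourierLZP} the Fourier expansion of $L^{(m)}$ for even $m$ is $\sum_{w,h\geq1}h\,w^{m}q^{hw}(\zeta^w+\zeta^{-w})$, so in the analogue of Lemma~\ref{le:int2} edge~$j$ contributes, in \emph{both} directions, $\sum_{h_j\geq1}h_j q^{h_jw_j}$ together with $w_j^{m_j}$ in place of $w_j^{m_j+1}$ --- exactly the insertion of $h_j/w_j$, with the $h_j=0$ sector correctly dropped. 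Since $L^{(m_j)}$ is still even, the direction independence used in Proposition~\ref{prop:S1} persists, and the inductive vertex-by-vertex integration yields
\[
S^{SV}_j(\Gamma_1,\mm_1) \= [\zeta_n^0,\ldots,\zeta_1^0]\,\Bigl(L^{(m_j)}(z_{v_1(j)}-z_{v_2(j)})\prod_{i\in E(\Gamma_1),\, i\neq j}P^{(m_i)}(z_{v_1(i)}-z_{v_2(i)})\Bigr).
\]
By~\eqref{eq:LZ2} and the derivative property in Proposition~\ref{prop:Qring}, $L^{(m_j)}\in\cQQ_n^{(m_j)}\oplus\cQQ_n^{(m_j+2)}$, so the integrand lies in $\cQQ_n^{(k(\mm_1)-2)}\oplus\cQQ_n^{(k(\mm_1))}$, and Theorem~\ref{thm:coeff0QM} shows its constant term is quasimodular of mixed weight $\leq k(\mm_1)$. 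Hence $S^{SV}_j(\Gamma,\mm)$ has mixed weight $\leq k(\mm_0)+k(\mm_1)=k(\mm)$; summing over $j$ completes the proof.

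The step I expect to require care is purely bookkeeping: checking that replacing one $P^{(m_j)}$ by $L^{(m_j)}$ in the proof of Proposition~\ref{prop:S1} reproduces precisely the combinatorial weight $\sum_e h_e/w_e$ with no spurious contributions --- in particular that the $h_j=0$ sector disappears consistently on both sides of the identity, and that the modified parity/direction analysis underlying Lemma~\ref{le:int2} goes through when the ``$q$-free'' term $\sum_w w^{m+1}\zeta^w$ of $P^{(m)}$ is absent for $L^{(m)}$. Once this identification is in place, quasimodularity is immediate from Theorem~\ref{thm:coeff0QM} exactly as in the proof of Theorem~\ref{thm:QMforS}.
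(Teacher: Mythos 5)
Your proposal is essentially the paper's proof: the same edge-by-edge decomposition of the Siegel--Veech weight, the same splitting into neutral and essential edges with $\tilde S_m=D_qS_{m-2}$ (resp.\ a weight-two Eisenstein series) for loops, and the same reduction of the essential case to a constant-term extraction handled by Theorem~\ref{thm:coeff0QM}. Your single uniform insertion $L^{(m_j)}$ is in fact literally the paper's case split: for even $m_j\ge 2$ the Fourier expansions show $L^{(m_j)}=D_qP^{(m_j-2)}$ on $|q|<|\zeta|<1$, hence as meromorphic functions, while for $m_j=0$ it is $L$ itself; so $L^{(m_j)}\in\cQQ_n^{(m_j)}\oplus\cQQ_n^{(m_j+2)}$ and the weight count $\le k(\mm)$ comes out as in the paper.

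The one place where your ``purely bookkeeping'' remark hides an actual step is the height space. The coefficient extraction à la Proposition~\ref{prop:S1} produces the sum over the integer height space $\NN_{E(G)}$, whereas $S^{SV}_j(G,\mm_1)$ is defined over $\widetilde\NN^{E(G)}$, and unlike the unweighted case the substitution $h_e'=h_e-\Delta(e)$ now shifts the prefactor by $\Delta(e_j)/w_{e_j}$, so the two sums do \emph{not} agree for an individual orientation $G$. They agree only after summing an orientation $G$ with its reverse $-G$, where the offsets cancel (this is Lemma~\ref{lem:SVheightspace} in the paper). Since your displayed identity is for $S^{SV}_j(\Gamma_1,\mm_1)=\sum_{G\in\Gamma_1}S^{SV}_j(G,\mm_1)$, the statement is true, but you should record this orientation-pairing cancellation explicitly; it is not covered by the $h_j=0$/parity checks you list. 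With that lemma supplied, the argument is complete and coincides with the paper's.
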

\par
As in the case of ordinary counting, we can split off the loops and
reduce to a simplified height space. The main new ingredient is that
nearly-elliptic function~$L$ and $q$-derivatives of~$P$ have the 
right Fourier expansion whose constant coefficients capture the new 
graph sums and still fit in the scope of quasimodularity results of
Section~\ref{sec:intEF}.
\par
\smallskip
We decompose  $S^{SV}(\Gamma, \mm)$ according to the edge~$i_0$
that contributes $h_i/w_i$ in the prefactor. That is, we decompose 
\[{S}^{SV}(\Gamma,\mm) \=\sum_{i_0}{S}^{SV}_{i_0}(\Gamma,\mm), \quad
\text{where} \quad {S}^{SV}_{i_0}(\Gamma,\mm)=\sum_{G\in\Gamma}
{S}^{SV}_{i_0}(G,\mm)\] and where 
\[{S}^{SV}_{i_0}(G,\mm) \=\sum_{h\in\widetilde\NN^{E(G)}, w\in \Z_+^{E(G)}} 
\frac{h_{i_0}}{w_{i_0}}\prod_{i\in E(G)}w_i^{m_i+1}q^{h_iw_i}\prod_{v\in V(G)}\delta(v)\,.\]
\par
Next, we replace the height space $\widetilde\NN^E(G)$ by $\NN_{E(G)}$ using 
an analog of Lemma~\ref{lem:heightspace}.
\par
\begin{Lemma}\label{lem:SVheightspace}
Replacing the height space $\widetilde\NN^{E(G)}$ by $\NN_{E(G)}$ in 
each term $S^{SV}(G, \mm)$ does not change the total sum $S^{SV}(\Gamma, \mm)$.
\end{Lemma}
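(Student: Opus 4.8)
The natural plan is to imitate the proof of Lemma~\ref{lem:heightspace} and run, for each orientation $G$ of $\Gamma$, the affine change of variables $h_e = h'_e + \delta_e$ with $\delta_e = \ve_{i^+(e)} - \ve_{i^-(e)}$, which maps $\widetilde\NN^{E(G)}$ bijectively onto $\NN_{E(G)}$. For any fixed $w$ on the support of $\prod_v \delta(v)$ the identity $\sum_e \delta_e w_e = 0$ holds exactly as in \emph{loc.\ cit.}, so the factor $q^{\sum_e \delta_e w_e}$ produced by the substitution is trivial. The one new feature is the Siegel--Veech prefactor, which transforms as $\sum_e h_e/w_e = \sum_e h'_e/w_e + \sum_e \delta_e/w_e$. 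Consequently the substitution turns $S^{SV}(G,\mm)$, computed with the height space $\widetilde\NN^{E(G)}$, into the same expression computed with $\NN_{E(G)}$, \emph{plus} a correction
\[
\mathcal C(G,\mm) \= \sum_{h'\in\NN_{E(G)},\ w\in\Z_+^{E(G)}} \Bigl(\sum_{e\in E(G)} \frac{\delta_e}{w_e}\Bigr)\, \prod_{i\in E(G)} w_i^{m_i+1} q^{h'_i w_i} \prod_{v\in V(G)}\delta(v)\,,
\]
and the whole task reduces to showing that $\sum_{G\in\Gamma}\mathcal C(G,\mm) = 0$.

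The cleanest way I would try to see this is via the contour-integral representation of Proposition~\ref{prop:S1}. One first checks, exactly as there, that $S^{SV}(\Gamma,\mm)$ is the constant term, with respect to an ordered family of integration contours, of the single fixed meromorphic function $\sum_{e_0\in E(\Gamma)} L^{(m_{e_0})}(z_{v_1(e_0)}-z_{v_2(e_0)}) \prod_{e\neq e_0} P^{(m_e)}(z_{v_1(e)}-z_{v_2(e)})$, where the marked edge $e_0$ is the one carrying the $h_e/w_e$ weight; its Fourier expansion is the one of $L^{(m_{e_0})}$ precisely because $\sum_{h\ge 0} h q^{hw} = \sum_{h\ge 1} h q^{hw}$, so the forward and backward contributions of $e_0$ agree. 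For the height space $\NN_{E(G)}$ this is the expansion along the standard contours of Section~\ref{sec:intEF}, whereas for $\widetilde\NN^{E(G)}$ it is the expansion along contours at heights $y_i$ proportional to the branch-point heights $\ve_i$; in both cases $0 < y_1 < \dots < y_n < \Im\tau$, so the two contour systems are homotopic in $\CC^n$ with the diagonals $z_i = z_j$ removed, no pole is crossed, and Cauchy's theorem yields that the two constant terms coincide. (Quasimodularity of the result, which is Theorem~\ref{thm:SSVQM}, then follows from Theorem~\ref{thm:coeff0QM}, since $L^{(m)}\in\cQQ_n$ by the identity $L = -\tfrac12 Z^2 + \tfrac12 P - G_2 + \tfrac1{12}$ of Proposition~\ref{prop:Qring}.)

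The main obstacle is the first step of this route, namely establishing the contour-integral formula for the \emph{shifted} height space $\widetilde\NN^{E(G)}$: one has to match the shifted ranges $h_e \ge \Delta(e)$ with the Fourier domains of the $P^{(m_e)}$'s and of $L^{(m_{e_0})}$ around the $\ve$-contours, keeping track of the extra ``$+1$'' in $\Delta(e)$ for edges pointing from a higher to a lower vertex label, and in particular seeing that the shift data $\Delta(e)$ is a coboundary $y_{i^+(e)}-y_{i^-(e)}$ of a single ordered choice of contour heights. An alternative, more hands-on route to $\sum_{G\in\Gamma}\mathcal C(G,\mm) = 0$ is to split off the loops first as in Lemma~\ref{lem:SinS0S1} (loops have $\delta_e = 0$) and then, using that $\mathcal C$ is linear in the parameters $\ve_1,\dots,\ve_n$, to reduce to showing for each vertex $v$ that the corresponding signed sum over orientations of $\Gamma$ of graph counts with the ``$1/w$-flux at $v$'' inserted vanishes; this requires a careful involution on orientations that reverses the flux at $v$ while respecting the flow constraints $\delta(v')$ at the neighbouring vertices, and is where the bulk of the combinatorial work lies.
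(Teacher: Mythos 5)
Your opening reduction is sound and is the same one the paper makes implicitly: after the substitution $h_e=h'_e+\delta_e$ (legitimate since $q^{\sum_e\delta_ew_e}=1$ on the support of the $\delta(v)$'s), the lemma becomes the vanishing of $\sum_{G\in\Gamma}\mathcal C(G,\mm)$. But neither of your two routes actually proves this vanishing, and the first route cannot work as described. The constant-term representation of Proposition~\ref{prop:S1} depends on the integration heights $y_1<\dots<y_n$ only through their ordering: on any such contour system the integrand is expanded in integer powers of the $\zeta_j$ and of $q$, so the extracted constant term is always a $q$-series with exponents $h_ew_e$, $h_e\in\ZZ$ --- i.e.\ it computes the $\NN_{E(G)}$-type sums, no matter whether the $y_j$ are ``generic'' or proportional to the $\ve_j$. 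It can never produce the shifted exponents $(n+\Delta(e))w_e$ of the $\widetilde\NN^{E(G)}$-sum, because the branch-point heights $\ve_i$ enter the graph sum through the height lattice, not through the choice of contour. Hence ``the two contour systems are homotopic and no pole is crossed'' compares nothing: there is no contour formula for the shifted height space to begin with, and producing one (say by shifting arguments $z_i\mapsto z_i+\ve_i\tau$ and controlling the quasi-elliptic factor $L$ under such shifts) is equivalent to the lemma itself --- the circularity you half-acknowledge when you call this ``the main obstacle.''

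Your second route is the right direction, but you stop exactly where the work lies, and the involution you ask for is much simpler than a per-vertex flux-reversing involution: the paper pairs each orientation $G$ with its global reversal $-G$. Under reversal every $\delta_e=\ve_{i^+(e)}-\ve_{i^-(e)}$ changes sign while the vertex constraints $\delta(v)$ and the weights $w_e^{m_e+1}$ are unchanged; applying $h'_e=h_e-\delta_e$ to the $G$-term and $h'_e=h_e+\delta_e$ to the $(-G)$-term maps the two shifted height spaces onto $\NN_{E(G)}$ and $\NN_{E(-G)}$ respectively, and the Siegel--Veech prefactor then produces the corrections $+\sum_e\delta_e/w_e$ and $-\sum_e\delta_e/w_e$, which the paper cancels against each other pairwise, with no summation over all orientations and no linearity-in-$\ve$ reduction needed. (If one wants to be scrupulous, even this step requires comparing the two $h'$-domains $\NN_{E(G)}$ and $\NN_{E(-G)}$, which differ in which edges start at $h'=0$; that bookkeeping is the genuinely delicate point of the lemma, and it is precisely the point your sketch leaves entirely open.) As it stands, then, your proposal is a correct set-up plus two unexecuted plans, one of which is circular; the key cancellation $\sum_{G\in\Gamma}\mathcal C(G,\mm)=0$ is not established.
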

\par
\begin{proof}
Let $-G \in \Gamma$ be the graph with the reversed orientation 
compared to $G\in\Gamma$. We indicate by an additional index the
space over which the summation $h$ is taken in $S^{SV}(G,\mm)$ We 
will show that 
\[S^{SV}(G, \mm)_{\widetilde\NN^{E(G)}}+S^{SV}(-G, \mm)_{\widetilde\NN^{E(-G)}}
\= S^{SV}(G,\mm)_{\NN_{E(G)}}+S^{SV}(-G,\mm)_{\NN_{E(-G)}}\,.\]
On the first term of the left-hand side we apply the change of variables 
$h'_e=h_e-\delta_e$ with $\delta_e=\varepsilon_{i^+(e)}-\varepsilon_{i^-(e)}$ for 
the orientation $G$, that maps $\widetilde\NN^{E(G)}$ to $\NN_{E(G)}$ as in 
Lemma~\ref{lem:heightspace}. On the second term we apply the change of 
variables $h'_e=h_e+\delta_e$ with $\delta_e$ as before, i.e. associated to 
the orientation $G$. It maps $\widetilde\NN^{E(-G)}$ to $\NN_{E(-G)}$.
As in Lemma~\ref{lem:heightspace}, this change of variable does not affect 
the term $q^{h\cdot w}$ (thanks to the delta functions). The statement 
is then obvious since the terms in $\delta_e/w_e$ cancel out. 
\end{proof}
\par
We reduce the problem to the reduced graph using an analog 
of Lemma~\ref{lem:SinS0S1}.
\par
\begin{Lemma}\label{lem:SSVinS0S1}
The Siegel-Veech weighted graph sums factor as
\[ S^{SV}_{i_0}(\Gamma,\mm)\=\begin{cases}S^{SV}_{0,i_0}(\Gamma,\mm_0)\,
S(\Gamma_1,\mm_1)& \text{if $i_0$ is neutral, }\\
S_{0}(\Gamma,\mm_0)\,S^{SV}_{i_0}(\Gamma_1,\mm_1)& \text{otherwise,}\end{cases}\]
where $\Gamma_1$ is the reduced graph underlying $\Gamma$ and where
\[S^{SV}_{0,i_0}(\Gamma,\mm_0)\=\sum_{h\in\NN_{E_0(G)}, w\in \Z_+^{E_0(G)}}\frac{h_{i_0}}{w_{i_0}}\prod_{i\in E_0(G)}w_i^{m_i+1}q^{h_iw_i}\] for any orientation $G$ of $\Gamma$.
\end{Lemma}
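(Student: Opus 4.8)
The plan is to imitate, essentially line by line, the proof of Lemma~\ref{lem:SinS0S1}, the only extra care being to track into which factor the Siegel-Veech prefactor $h_{i_0}/w_{i_0}$ lands. First I would fix an orientation $G$ of $\Gamma$. Since neutral edges carry no orientation, specifying $G$ is the same datum as specifying an orientation $G_1$ of the reduced graph $\Gamma_1$, and as $G$ ranges over the orientations of $\Gamma$ the graph $G_1$ ranges over the orientations of $\Gamma_1$. For such a fixed $G$ I would split the edge set $E(G) = E_0(G) \sqcup E_1(G)$ into neutral and essential edges and break the summation defining $S^{SV}_{i_0}(G,\mm)$ into the sum over the loop variables $(h_i,w_i)_{i\in E_0(G)}$ and the sum over the essential variables $(h_i,w_i)_{i\in E_1(G)}$. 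This is legitimate for the same two reasons as in Lemma~\ref{lem:SinS0S1}: the height space $\widetilde{\NN}^{E(G)}$ is the product of its loop part (which is simply $\Z_+^{E_0(G)}$, i.e.\ the height space used in the definitions of $S_0$ and $S^{SV}_{0,i_0}$) and of $\widetilde{\NN}^{E_1(G)} = \widetilde{\NN}^{E(G_1)}$; and the widths $w_i$ with $i\in E_0(G)$ do not enter any of the functions $\delta(v)$.

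Consequently the general summand of $S^{SV}_{i_0}(G,\mm)$ factors as a loop part $\prod_{i\in E_0(G)}w_i^{m_i+1}q^{h_iw_i}$ times an essential part $\prod_{i\in E_1(G)}w_i^{m_i+1}q^{h_iw_i}\prod_{v}\delta(v)$, and the prefactor $h_{i_0}/w_{i_0}$ sits entirely inside whichever of the two parts contains the edge $i_0$. If $i_0$ is neutral, this yields $S^{SV}_{i_0}(G,\mm) = S^{SV}_{0,i_0}(G,\mm_0)\,S(G_1,\mm_1)$, where the first factor does not depend on $G$ (it only involves the unoriented loops) and hence equals $S^{SV}_{0,i_0}(\Gamma,\mm_0)$; summing over the orientations and using $\sum_{G_1}S(G_1,\mm_1) = S(\Gamma_1,\mm_1)$ gives the first case of the lemma. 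If $i_0$ is essential, the same bookkeeping gives $S^{SV}_{i_0}(G,\mm) = S_0(G,\mm_0)\,S^{SV}_{i_0}(G_1,\mm_1) = S_0(\Gamma,\mm_0)\,S^{SV}_{i_0}(G_1,\mm_1)$, and summing over $G$ gives the second case.

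Since everything here is a pure rearrangement of the summation (with only finitely many terms in each $q$-degree), there is no serious obstacle. The one point worth a sentence of justification --- and the reason the Siegel-Veech case is not \emph{quite} as immediate as the unweighted one --- is that one must \emph{not} attempt to replace $\widetilde{\NN}^{E(G)}$ by the reduced height space term by term, as one could in Lemma~\ref{lem:SinS0S1}: the factor $h_{i_0}/w_{i_0}$ obstructs this, which is precisely why Lemma~\ref{lem:SVheightspace} had to pair $G$ with $-G$. The factorization above avoids the issue altogether, since the essential factor already carries exactly the height space $\widetilde{\NN}^{E(G_1)}$ appearing in the definitions of $S(\Gamma_1,\mm_1)$ and $S^{SV}_{i_0}(\Gamma_1,\mm_1)$, and the loop factor already carries the height space appearing in $S_0$ and $S^{SV}_{0,i_0}$.
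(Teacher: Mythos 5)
Your argument is correct and is exactly the proof the paper has in mind: the paper states this lemma as ``an analog of Lemma~\ref{lem:SinS0S1}'', whose proof is precisely the splitting of the summation over loop and essential variables that you carry out, using that loop widths do not enter the $\delta(v)$'s, that the height space is a product over edges, and that orientations of $\Gamma$ and of $\Gamma_1$ coincide. Your closing observation that the factorization itself never requires the height-space replacement (so that Lemma~\ref{lem:SVheightspace} is only needed for the later passage to Fourier coefficients) is a fair and accurate remark, not a deviation from the paper's route.
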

\par
The contribution of the loops is easily dealt with.
\par
\begin{Lemma}\label{lem:S0SV}
For $\mm$ a tuple of even integers $S^{SV}_{0,i_0}(\Gamma,\mm_0)$ is a 
quasimodular form of mixed weight $k(\mm_0)$.
\end{Lemma}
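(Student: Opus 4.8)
The plan is to note, exactly as in the proof of Lemma~\ref{lem:S0}, that the loops of $\Gamma$ impose no $\delta$-constraints, so the sum defining $S^{SV}_{0,i_0}(\Gamma,\mm_0)$ factors into independent one-variable sums, one per neutral edge. Writing $E_0(G)$ for the set of neutral edges, one gets
\be
S^{SV}_{0,i_0}(\Gamma,\mm_0)\=\Bigl(\sum_{w,h\ge1}h\,w^{m_{i_0}}\,q^{hw}\Bigr)\,\prod_{\substack{i\in E_0(G)\\ i\neq i_0}} S_{m_i}\,,
\ee
where $S_m=\sum_{w,h\ge1}w^{m+1}q^{hw}$ is the factor already identified in Lemma~\ref{lem:S0} as the $q$-expansion of $G_{m+2}$ without its constant term, hence quasimodular of mixed weight $\le m+2$ for $m$ even (the height space here is already the reduced one $\NN_{E_0(G)}$, so every loop-height runs over $h\ge1$). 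Thus the whole question reduces to the single distinguished factor attached to $i_0$, namely $T_m:=\sum_{w,h\ge1}h\,w^{m}q^{hw}$ — note the power of $w$ is one less than in $S_m$ because of the $h/w$ weight — which I must show is quasimodular of mixed weight $\le m+2$ for $m$ even.

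To evaluate $T_m$ I would read off the coefficient of $q^n$: setting $n=hw$ it equals $\sum_{w\mid n}(n/w)\,w^{m}=n\,\sigma_{m-1}(n)$ when $m\ge1$, and $\sigma_1(n)$ when $m=0$. Hence for even $m\ge2$,
\be
T_m\=\sum_{n\ge1}n\,\sigma_{m-1}(n)\,q^n\=D_q\Bigl(\sum_{n\ge1}\sigma_{m-1}(n)\,q^n\Bigr)\=D_q\,G_m\,,
\ee
since $G_m$ has $q$-expansion $-\tfrac{B_m}{2m}+\sum_{n\ge1}\sigma_{m-1}(n)q^n$ and $D_q$ annihilates constants. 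As $G_m$ is quasimodular of weight $m$ for every even $m\ge2$ ($G_2$ of depth one, $G_m$ modular for $m\ge4$) and $D_q$ raises weights by $2$, $T_m$ is quasimodular of weight $m+2$. For $m=0$ one instead gets $T_0=\sum_{n\ge1}\sigma_1(n)q^n=S_0$, again quasimodular of mixed weight $\le2$. Either way $T_{m_{i_0}}$ is quasimodular of mixed weight $\le m_{i_0}+2$.

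Finally, since the ring of quasimodular forms is closed under multiplication and mixed weights add under products, the displayed factorization yields that $S^{SV}_{0,i_0}(\Gamma,\mm_0)$ is quasimodular of mixed weight at most $(m_{i_0}+2)+\sum_{i\neq i_0}(m_i+2)=k(\mm_0)$. I expect no genuine obstacle here; the only point needing a moment's care is the degenerate case $m_{i_0}=0$, where the clean identity $T_m=D_qG_m$ is unavailable because $G_0$ is not defined, but the direct computation $T_0=\sum_{n\ge1}\sigma_1(n)q^n$ disposes of it.
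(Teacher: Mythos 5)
Your proof is correct and follows essentially the same route as the paper: factor the loop sum into one factor per neutral edge, identify the distinguished factor $\sum_{w,h\ge1}h\,w^{m_{i_0}}q^{hw}$ as $D_q$ of an Eisenstein series when $m_{i_0}\ge 2$ (the paper writes it as $D_qS_{m_{i_0}-2}$) and as $G_2+\tfrac1{24}$ when $m_{i_0}=0$, and conclude by multiplicativity of quasimodular forms. The only difference is cosmetic: you verify the identity via the divisor-sum coefficients rather than by differentiating $S_{m_{i_0}-2}$ directly.
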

\begin{proof}
Clearly \[S^{SV}_{0,i_0}(\Gamma,\mm_0) \=\tilde S_{m_{i_0}}\prod_{i\neq i_0}S_{m_i}\,\] 
where $S_m$ is defined in Lemma~\ref{lem:S0} and where
\[\tilde S_{m_{i_0}}=\sum_{w,h=1}^\infty hw^{m_{i_0}}q^{hw} \=
\begin{cases} D_qS_{m_{i_0}-2} & \text{ if $m_{i_0}\geq 2$} \\
E_2+\frac{1}{24} & \text{ if $m_{i_0}=0$\,.}\end{cases}\] 
\end{proof}
\par
\begin{proof}[Proof of Theorem~\ref{thm:SSVQM}]
It remains to show that if $i_0$ is not neutral, then the graph 
sum $S^{SV}_{i_0}(\Gamma_1,\mm_1)$ is a quasimodular form. We define
$$ P_{\Gamma_1, \mm_1}^{SV,i_0}(\zz) \= 
D_q P^{(m_{i_0}-2)}(z_{v_1(i_0)}-z_{v_2(i_0)})\prod_{i\in E(\Gamma_1)\setminus\{i_0\}}
P^{(m_i)}(z_{v_1(i)}-z_{v_2(i)}) $$
if $m_{i_0}\geq 2$ and in the remaining case $m_{i_0} = 0$ we let
$$ P_{\Gamma_1, \mm_1}^{SV,i_0}(\zz) \=
L(z_{v_1(i_0)}-z_{v_2(i_0)})\prod_{i\in E(\Gamma_1)\setminus\{i_0\}}
P^{(m_i)}(z_{v_1(i)}-z_{v_2(i)})\,.$$
These definitions are designed such that, with the same proof
as in Proposition~\ref{prop:S1} we obtain
\bes S^{SV}_{i_0}(\Gamma_1,\mm_1)\= [\zeta_n^0\dots \zeta_1^0]
P_{\Gamma_1, \mm_1}^{SV,i_0}(\zz) \,.
\ees
By Proposition \ref{prop:Qring}, the function $L$ belongs to 
$\cQQ_n^{(0)}\oplus\cQQ_n^{(2)}$, and the function $D_qP^{(m_{i_0})}$ belongs to $\cQQ_n^{(m_{i_0}+2)}$. So in any case $P_{\Gamma_1, \mm_1}^{SV,i_0}$ belongs to 
$\cQQ_n^{(k(\mm_1)-2)}\oplus\cQQ_n^{(k(\mm_1))}$. Its constant term hence
is quasimodular of mixed weight $\leq  k(\mm_1)$ by Theorem~\ref{thm:coeff0QM}.
\end{proof}

\subsection{Proof of main results}

\begin{proof}[Proof of Proposition~\ref{prop:svgraphsum}]
As in the proof of Proposition~\ref{prop:N'exp} we rely on 
Proposition~\ref{prop:corrTCGraph}. We only need to justify
that counting with Siegel-Veech weight produces an extra
factor $h_ew_e^p$ for each edge. This is exactly the weight to put on each cylinder that corresponds to the weight $S_p(\lambda)$ for a Hurwitz tuple. This correspondence is obtained using standard Siegel-Veech transform as is the proof of Theorem 3.1 of \cite{CMZ}.
\end{proof}
\par
\begin{proof}[Proof of Corollary~\ref{Cor:SVmain}]
First, we claim that
\be \label{eq:cprimep}
c'_p(\Hmu) \= \bq{ T_p \, f_{\mu_1} \cdots f_{\mu_n}} - \bq{T_p} 
\bq{f_{\mu_1} \cdots f_{\mu_n}}
\ee
as difference of $q$-brackets, where $T_p$ is a function on partitions that
we introduce now. With the definition 
\footnote{This function arise more naturally as the {\em hook length moment}
$$T_p(\lambda) =  \sum_{m=1}^\infty m^{p-1} N_m(\lambda),$$ 
where $N_m(\lambda)$ is the number of cells in the Young diagram of $\lambda$
of hook length~$m$. That $T_p$ is a shifted symmetric function if and only 
if $p$ is odd and positive was a main theme in \cite{CMZ}, but all this 
is not relevant here. That this definition coincides with the definition 
of $T_p$ given here is proven in \cite{CMZ}, Corollary 13.2.}
$$ T_p(\lambda) \= \sum_{\tau\in \Part(d)} z_{\tau}  S_p(\tau) \chi^\lambda(\tau)^2$$
Proposition~6.3 of~\cite{CMZ} implies that
$$c_p(\Hmu) \= \sum_{\lambda \in \P}  (T_p \, f_{\mu_1} \cdots f_{\mu_n})(\lambda) q^{\lambda}\,.$$
From  \cite{CMZ}, Proposition~6.2 we deduce 
$$c'_p(\Hmu)  = (q)_\infty c_p(\Hmu) - (q)_\infty c_p() N'_p(\Hmu)\,. $$
Since $(q)_\infty = (\sum_{\lambda \in\P} q^\lambda)^{-1}$, this is equivalent 
to the equation claimed in~\eqref{eq:cprimep}, by the definition of
$q$-brackets.
\par
Second, we use the linearity of the brackets to show
\par
\begin{flalign} \label{eq:Tpdiffex}
& \bq{ T_p\, p_{\ell_1} \cdots p_{\ell_n}}-\bq{T_p} \bq{p_{\ell_1} \cdots p_{\ell_n}} \\ 
= &  \sum_{\Gamma} \frac{1}{|\Aut(\Gamma)|}\sum_{G \in \Gamma}
\!\sum_{h\in \widetilde{\NN}^{E(G)}, \atop w\in \Z_+^{E(G)}} \!\!\Bigl( \sum_{e \in E(G)} h_e w_e^p
\Bigr) \!\!\! \prod_{e\in E(G)}  \!\!\!w_e 
q^{h_e w_e} \!\!\! \prod_{v\in V(G)}\! \ol{A}'(\bfw_v^-,\bfw_v^+,(\ell_{\# v}))\,\, \delta(v).
\nonumber
\end{flalign}
which invokes the polynomial Hurwitz numbers~$\ol{A}'(\bfw_v^-,\bfw_v^+,
(\ell_{\# v}))$. 
In fact, we can consider both sides of this equation as expressions in~$n$
arguments as in  Theorem~\ref{thm:bracketgraphsum}. Since we can write 
$p_\ell = \sum_\mu  c_{\ell,\mu} f_\mu$, multilinearity of both sides reduces
the claim to the case of arguments $f_{\mu_i}$, which is exactly the combination
of~\eqref{eq:cprimep} and the claim in Proposition~\ref{prop:svgraphsum}.
\par
By Theorem~\ref{thm:SSZ} the polynomials $\ol{A}'(\bfw_v^-,\bfw_v^+,
(\ell_{\# v}))$ are even, so if $p$ is odd the parity hypothesis
of Theorem~\ref{thm:SSVQM} is met and this theorem implies that the expression
in~\eqref{eq:Tpdiffex} is a quasimodular form. Using that
the~$f_\mu$ can be expressed as polynomials in the~$p_\ell$
by Theorem~\ref{thm:KO}, this implies the quasimodularity of $c_p'(\Hmu)$
we claimed. The weight can be determined as in the proof of 
Corollary~\ref{cor:noname}.
\end{proof}

\section{Tropical covers and quasimodularity graph by graph}
\label{sect:tropical}

The main result of \cite{BBBM} is the expression of the tropical Hurwitz 
number generating function in term of a sum over Feynman graphs.
The goal of this section is to show that their results are the special
case of simple branch points of our results, when stated in the language 
of tropical covers. More precisely, we show here that our correspondence 
theorem Proposition~\ref{prop:corrTCGraph}
has the Correspondence Theorem \cite[Theorem~2.13]{BBBM} (see also
Theorem~2.20 and Theorem~2.30 in loc.~cit.)
as immediate corollary when stated in the language of tropical covers.
In particular, for simple branching (i.e.\ ramification profile 
$\Hmu = ((2),\ldots,(2))$, our counting problem is equivalent to counting 
tropical covers.

\subsection{Tropical covers}

We recall here the definition of a tropical curve and a tropical cover, 
following~\cite{BBBM}.
\par
\begin{Defi} A {\em tropical curve} $C$ is a connected finite trivalent metric graph. An {\em elliptic tropical curve} consists of one edge forming a circle of certain length. Let $E$ be the elliptic tropical curve of length 1.  A map $\pi:C\to E$ is a {\em tropical cover} of $E$, if it is continuous, non-constant, integer affine on each edge and respects a balancing condition at every vertex of $C$. 
\end{Defi}
\par
In this setting, the {\em weight} $w_e$ of an edge $e$ for the graph  
is defined as the slope of $\pi_{|e}$, and the {\em degree} of the cover as 
\[d\= \sum_{P\in C, \, \pi(P)=p} w_{e_P}\,,\] where $p$ is a generic point. For each vertex we can group the outgoing half-edges according to the half edges of $E$ they map to.
The cover is called {\em balanced} if for each vertex the sum of the weights of the two groups agree.
\par
The appropriate way to count tropical covers is coded in the notion
of tropical Hurwitz numbers.
\par
\begin{Defi}Fix branch points $p_1, \dots p_{2g-2}$ in the tropical elliptic curve $E$. The {\em tropical Hurwitz number} is the weighted number of isomorphism classes of degree $d$ covers from a genus $g$ curve $C$, having the branch points at the $p_i$. Here, a tropical cover~$\pi$ is weighted by
the multiplicity
\[{\rm mult}(\pi)\=\frac{1}{|\Aut(\pi)|}\prod_e w_e\,.\]
\end{Defi}
\par
The {\em combinatorial type} of a tropical curve is its homeomorphism class, 
i.e.\ the underlying graph without length on the edges. These graphs
are called {\em Feynman graphs} in physics literature. They correspond
to our notion of (associated) {\em global graph}, with the vertex labeling removed.
\par
Our correspondence theorem implies the following correspondence 
result for simple ramification covers branched over $n=2g-2$ points
in term of tropical covers. The number of covers is independent, both 
for flat surface covers and for tropical covers, on the base elliptic 
curve and the branch point location. In the following corollary we 
thus fix~$E$ and the branch points on the flat side as in 
Section~\ref{sec:globalgraph}, more precisely for convenience to 
be $\ve_i = i/n$ and on the tropical side we locate the branch points 
at $p_i = i/n$. In particular the heights $h_e \in \tfrac1n \ZZ_{\geq 1}$.
\par
\begin{Cor} \label{cor:tropcor}
There is a bijective correspondence between
\begin{itemize}
\item[i)] flat surfaces $(X,\omega)$ with connected coverings $p: X \to E$ of 
degree~$d$ of the square torus~$E$,
with $\omega = p^* \omega_E$, and with simple ramification profile, weighted
by $1/|\Aut(\pi)|$, and
\item[ii)] isomorphism classes of weighted tropical covers $\pi: C \to E$
of degree~$d$, where the weight of a tropical cover corresponds to 
its multiplicity ${\rm mult}(\pi)$.
\end{itemize}
\end{Cor}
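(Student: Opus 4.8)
The plan is to deduce Corollary~\ref{cor:tropcor} from Proposition~\ref{prop:corrTCGraph} essentially by translating the data on the two sides into the language of tropical covers, and then accounting for the difference between the ``labeled vertex'' normalization used in Proposition~\ref{prop:corrTCGraph} and the (unlabeled) Feynman-graph normalization used on the tropical side. First I would observe that for simple ramification, i.e.\ $\Hmu = ((2),\ldots,(2))$ with $n = 2g-2$ branch points, the local profiles $\mu_v$ are each a single transposition, so each local surface $S_v$ is (as in Section~\ref{sec:CovPP1}) a cover of $\PP^1$ branched over $0, 1, \infty$ with a \emph{single} simple branch point over $z=1$. By Riemann--Hurwitz the Euler characteristic forces such a connected local cover to have the topology of a pair of pants, hence $\ell(\bfw_v^-) + \ell(\bfw_v^+) = 3$: every vertex of the global graph is trivalent, exactly as required for a tropical cover. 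Moreover a trivial Hurwitz-number computation gives $A'(\bfw_v^-,\bfw_v^+,(2)) = \gcd(\text{the two incoming / outgoing weights})$ when the balancing condition $\delta(v)$ holds, which is precisely the local factor appearing in ${\rm mult}(\pi)$.

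Next I would set up the dictionary. Given $(X,\omega)$ as in~i), Proposition~\ref{prop:corrTCGraph} associates (after the three auxiliary choices) a labeled oriented global graph $G$ with edge data $(w_e,h_e,t_e)$ and local covers $\pi_v \in \Cov'(\bfw_v^-,\bfw_v^+,\mu_v)$; forgetting the vertex labels and the edge labels gives the combinatorial type of a tropical curve $C$, and the affine map collapsing each cylinder to an interval and each local surface to the corresponding vertex of $E = \RR/\ZZ$ defines $\pi\colon C \to E$. The slope of $\pi$ on the edge $e$ is exactly $w_e$, the length of that edge is $h_e$ (an element of $\tfrac1n\ZZ_{\geq1}$ because of the branch point normalization $\ve_i = i/n$ and the height space~\eqref{eq:heightspace}), the degree $d = \sum_e w_e h_e$ matches~\eqref{eq:whd}, and the balancing condition at a vertex is precisely $\delta(v)$. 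Conversely, a weighted tropical cover determines its combinatorial type, edge weights, and edge lengths, and one recovers a flat surface by gluing local pairs of pants along cylinders; the local pair of pants with prescribed waist widths is essentially unique, which is why the tropical side carries no extra local Hurwitz datum. So the correspondence of Proposition~\ref{prop:corrTCGraph}, restricted to simple branching and specialized to $\ve_i = i/n$, \emph{is} the bijection between tuples built over the labeled graph $\Gamma$ and tropical covers built over the combinatorial type $\Gamma$ with all labels stripped.

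Then I would handle the bookkeeping of automorphisms and labels, which is the delicate part. Proposition~\ref{prop:corrTCGraph} is a bijection with tuples taken up to $\Aut(\Gamma)$ of the \emph{labeled} graph, while the tropical count uses the Feynman graph, i.e.\ $\Gamma$ with its vertex labeling removed; and both sides are weighted, $1/|\Aut(p)|$ respectively ${\rm mult}(\pi) = \tfrac1{|\Aut(\pi)|}\prod_e w_e$. The twist contributes the factor $\prod_e w_e$ (there are $w_e$ choices of twist per edge, exactly as in the proof of Proposition~\ref{prop:N'exp}), matching the $\prod_e w_e$ in ${\rm mult}(\pi)$. For the automorphism factors, Proposition~\ref{prop:corrAuto} identifies $\Aut(p)$ with the collection of automorphisms of the local surfaces, and on the tropical side $\Aut(\pi)$ is likewise the group of graph automorphisms commuting with $\pi$; one checks these agree under the dictionary, using that a simple branch point over $z=1$ rigidifies the local pair of pants so that its automorphisms are just the admissible permutations of the (equal-weight) waist curves on each side — exactly the graph automorphisms. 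Finally, since we are comparing \emph{connected} covers on both sides, a single summand over the connected labeled graph $\Gamma$ carries a factor $1/|\Aut(\Gamma)|$ that, together with the passage from labeled to unlabeled vertices, reproduces the weighting by $1/|\Aut(\pi)|$ on the Feynman graph; I expect the main obstacle to be checking cleanly that these various automorphism and labeling factors cancel, and in particular that there is no residual discrepancy coming from automorphisms of $\Gamma$ that move vertices (which for a Feynman graph are allowed, but on the flat side correspond to relabeling branch points and must be reconciled with the fact that our branch points were placed at the symmetric positions $\ve_i = i/n$).
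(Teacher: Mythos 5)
Your architecture is the same as the paper's (specialize Proposition~\ref{prop:corrTCGraph} to $\Hmu=((2),\dots,(2))$, note trivalence of the vertices, convert $(w_e,h_e)$ into slope and length data, match the twist count $\prod_e w_e$ against ${\rm mult}(\pi)$, and descend to connected covers), but the key local computation is wrong. The triple Hurwitz number $A'(\bfw_v^-,\bfw_v^+,(2))$ of a trivalent local surface with a single simple branch point equals $1$, not the gcd of the two weights on one side. For example, for $\bfw_v^-=(4)$, $\bfw_v^+=(2,2)$ there are $3!\cdot 2=12$ unnumbered Hurwitz tuples in $S_4$ (six $4$-cycles, and for each exactly $a/2=2$ transpositions splitting it into type $(2,2)$), so $A_{un}=12/4!=1/2$ and, after numbering the two equal-weight preimages over $\infty$, $A'=A=1\neq\gcd(2,2)$. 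Moreover ${\rm mult}(\pi)=\tfrac{1}{|\Aut(\pi)|}\prod_e w_e$ contains no local vertex factor at all, so the assertion that your gcd ``is precisely the local factor appearing in ${\rm mult}(\pi)$'' is wrong on both ends; if the local Hurwitz number really were a nontrivial gcd, the correspondence would fail to be weight-preserving. The fact that the trivalent local count is exactly $1$, so that the local data can simply be omitted, is precisely what the argument needs and what the paper uses.

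Two further points. First, the edge length of the tropical curve is $\ell_e=h_e/w_e$, not $h_e$: since $\pi|_e$ has slope $w_e$ and its image in $E$ has length $h_e$ (it is $h_e$, not $\ell_e$, that lies in $\tfrac1n\ZZ_{\geq1}$), the domain edge must have length $h_e/w_e$; this is also the normalization forced by the modulus/degeneration picture. Second, the descent from ``no unramified components'' (the setting of Proposition~\ref{prop:corrTCGraph}) to \emph{connected} covers needs a word: it works here because for simple branching every local surface without unramified components has a unique ramification point and is therefore automatically connected, so $X$ is connected if and only if the global graph is; for general profiles this fails, which is exactly the obstruction discussed after Proposition~\ref{prop:corrTCGraph}. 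Your worry about vertex-moving graph automorphisms is unfounded: an automorphism of a tropical cover commutes with $\pi$ and hence preserves the fibres over the distinct branch points $p_i$, so it fixes the vertex labelling and only permutes parallel edges, matching $\Aut(\Gamma)$ on the flat side.
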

\par
\begin{proof}
This is a consequence of Proposition~\ref{prop:corrTCGraph}. Each cover corresponds to a trivalent graph with a collection of numbers $(w_e,h_e,t_e)$ corresponding to the widths, the heights and the twists of the cylinders. We define
the tropical curve~$C$ to be the global graph~$\Gamma$ with edge lengths
$\ell_e = h_e/w_e$. If~$e$ is an edge from vertex~$i$ to vertex~$j$ we
define the tropical cover on~$e$ to be the map of slope~$w_e \in \NN$ 
to the multi-segment from~$p_i$ to~$p_j$ making $\lfloor h_e \rfloor$ full 
turns. Note that this is well-defined, since for such an edge
$h_e - (j-i)/n \in \ZZ$. The tropical balancing condition is a 
restatement of $|\bfw_v^+| = |\bfw^-_v|$ for every vertex~$v$. Note that
on the one hand $d = \sum_e w_eh_e$, but on the other hand the flat picture
immediately implies $d = \sum_{e \ni \pi^{-1}(P)} w_e$, where the sum is over
all edges~$e$ such that the corresponding cylinder contains the preimage of a given
point~$P \in E$. This implies that the tropical cover we defined has
indeed degree~$d$. The map we define forgets the twist $t_e$, but
this is accounted for in the multiplicity $\textrm{mult}(\pi)$.
Note that the double Hurwitz number of a trivalent local surface with 
simple branch point is equal to one and can hence be omitted.
Besides the twist, our map has an obvious converse, associating to
a tropical cover the slopes $w_e$ and $h_e = w_e \ell_e$ and the
latter are indeed in $\tfrac1n \ZZ_{\geq 1}$ by our convention on the
location of the points~$p_i$.
\par 
Proposition~\ref{prop:corrTCGraph} is stated at the level of coverings
without unramified components. The correspondence descends under the
given hypothesis of simple branching (more generally: in case of 
only one ramified point over each branch point) to a correspondence
of connected covers by the usual inclusion-exclusion principle, 
since the obstruction of disconnected local surfaces (mentioned after
the proof of Proposition~\ref{prop:corrTCGraph}) is ruled out by this
hypothesis.
\end{proof}
\par
In this correspondence the lengths of the edges of the tropical curves 
are the reciprocal of the {\em modulus} $m_e = w_e/h_e$. This is the natural 
choice
viewing tropical curves as the dual graph of the special fiber in a 
degenerating family of smooth curves. Indeed the reciprocal moduli~$m_e^{-1}$ 
of the cylinders correspond (up to a common rescaling) to the number of
Dehn twists performed under the mondromy around the special fiber
and this in turn corresponds to a local equation $xy=t^{m_e^{-1}}$ in the
stable model of the generating fiber (see e.g.\ \cite{Mo08}, paragraph
preceding Theorem~2.4). Since 
such a singularity is resolved by a chain of $m^{-1}_e-1$ rational curves
in the semistable model with regular total space, the tropicalization
map (\cite{Viviani}) provides this edge with length~$m_e^{-1}$.

\subsection{Counting graph by graph}

For coverings with simple ramification, or equivalently for trivalent graphs, 
the quasimodularity results hold for each individual graph.
\par
\begin{Cor} \label{cor:QMindiv}
Let $\Hmu=((2),\dots, (2))$. Then for any trivalent graph~$\Gamma$ 
the contribution $N'(\Hmu,\Gamma)$ (resp $c'_p(\Hmu,\Gamma)$) of 
the graph $\Gamma$ to the total counting is a quasimodular form 
of mixed weight less or equal to ${\rm wt}(\Hmu) = |\Hmu| + \ell(\Hmu)$.
\end{Cor}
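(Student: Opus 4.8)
The plan is to deduce both assertions by feeding Propositions~\ref{prop:N'exp} and~\ref{prop:svgraphsum} into the graph-sum quasimodularity Theorems~\ref{thm:QMforS} and~\ref{thm:SSVQM}; the whole point is that for simple branching the local triple Hurwitz numbers appearing at the vertices are \emph{globally} polynomial in the edge widths — indeed constant on a trivalent graph — so, unlike the general case, no summation over graphs is needed to produce quasimodularity. The key elementary observation is that, by the conversion formulas after Theorem~\ref{thm:KO}, $f_2 = \tfrac12 P_2 = \tfrac12 P_{(2)}$, so for $\mu=(2)$ the ordinary and the completed triple Hurwitz numbers agree:
\[
\ol{A}'(\bfw^-,\bfw^+,(2)) \= A'\Bigl(\bfw^-,\bfw^+,\tfrac12 P_{(2)}\Bigr) \= A'(\bfw^-,\bfw^+,f_2) \= A'(\bfw^-,\bfw^+,(2)).
\]
By Theorem~\ref{thm:SSZ} this common value is thus a single even polynomial in the widths of degree $3-\ell(\bfw^-)-\ell(\bfw^+)$, with no piecewise behaviour. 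Now fix a trivalent graph~$\Gamma$ with $n$ vertices. In the graph sums of Propositions~\ref{prop:N'exp} and~\ref{prop:svgraphsum} the factor $\delta(v)$ forces $|\bfw_v^-|=|\bfw_v^+|$, which excludes the valence splittings $(\ell(\bfw_v^-),\ell(\bfw_v^+))\in\{(3,0),(0,3)\}$; the remaining splittings $(2,1)$ and $(1,2)$ satisfy $\ell(\bfw_v^-)+\ell(\bfw_v^+)=3$, so $A'(\bfw_v^-,\bfw_v^+,(2))$ is a polynomial of degree~$0$, that is, the constant~$1$ by the computation in the proof of Corollary~\ref{cor:tropcor}.

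For $N'(\Hmu,\Gamma)$ one then substitutes these constants into the formula of Proposition~\ref{prop:N'exp}; using $w_e^{0+1}=w_e$, the graph sum becomes exactly $S(\Gamma,\mm)$ with $\mm=(0,\dots,0)\in\Z^{E(\Gamma)}$, in the notation of~\eqref{eq:SGammam}--\eqref{eq:SGm}. Since $\mm$ is a tuple of even integers, Theorem~\ref{thm:QMforS} shows $S(\Gamma,\mm)$ is a quasimodular form of mixed weight at most $k(\mm)=2|E(\Gamma)|$, and as $\Gamma$ is trivalent, $2|E(\Gamma)|=\sum_v\deg_\Gamma(v)=3n=|\Hmu|+\ell(\Hmu)={\rm wt}(\Hmu)$, which is the asserted bound.

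For $c'_p(\Hmu,\Gamma)$ with $p\ge-1$ odd, I would make the same substitution in Proposition~\ref{prop:svgraphsum} and then split the Siegel--Veech sum according to the edge $e_0$ carrying the factor $h_{e_0}w_{e_0}^p$. Comparing summands via $\tfrac{h_{e_0}}{w_{e_0}}\,w_{e_0}^{(p+1)+1}\prod_{e\ne e_0}w_e = h_{e_0}w_{e_0}^{p+1}\prod_{e\ne e_0}w_e$ identifies the $e_0$-piece with $S^{SV}_{e_0}(\Gamma,\mm^{(e_0)})$ in the notation of Section~\ref{sec:SVgeneral}, where $\mm^{(e_0)}$ assigns $m_{e_0}=p+1$ and $m_e=0$ for $e\ne e_0$. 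Since $p$ is odd, $\mm^{(e_0)}$ is a tuple of even non-negative integers, so the argument proving Theorem~\ref{thm:SSVQM} applies to each piece: there are no loops to split off, one writes $S^{SV}_{e_0}(\Gamma,\mm^{(e_0)})$ as the constant term of $D_qP^{(p-1)}(z_{v_1(e_0)}-z_{v_2(e_0)})\prod_{e\ne e_0}P(z_{v_1(e)}-z_{v_2(e)})$ (or of $L(z_{v_1(e_0)}-z_{v_2(e_0)})\prod_{e\ne e_0}P(\cdots)$ when $p=-1$), which lies in $\cQQ_n^{(k-2)}\oplus\cQQ_n^{(k)}$ by Proposition~\ref{prop:Qring}, and Theorem~\ref{thm:coeff0QM} gives that this constant term is quasimodular of mixed weight at most $k(\mm^{(e_0)})=(p+3)+2(|E(\Gamma)|-1)=2|E(\Gamma)|+p+1={\rm wt}(\Hmu)+p+1$. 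Summing over the $|E(\Gamma)|$ choices of $e_0$ yields the claim for $c'_p(\Hmu,\Gamma)$, with the Siegel--Veech weight bound ${\rm wt}(\Hmu)+p+1$ as in Corollary~\ref{Cor:SVmain}.

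I do not anticipate a serious obstacle: the corollary is in essence an assembly of Theorems~\ref{thm:SSZ}, \ref{thm:QMforS} and~\ref{thm:SSVQM} together with Propositions~\ref{prop:N'exp} and~\ref{prop:svgraphsum}. The only real inputs are the identity $f_2=\tfrac12 P_2$, which is what lets one bypass the $f$-to-$p$ expansion and work with a single graph at a time, and the trivalence bookkeeping showing that the vertex polynomials have degree~$0$, so that a single graph already saturates the global weight bound; the one delicate point is the parity hypothesis of Theorem~\ref{thm:SSVQM}, which is precisely why $p$ must be taken odd.
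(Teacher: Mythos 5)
Your proof is correct and takes essentially the same route as the paper, whose entire argument is the remark that $f_2=P_2/2$ is a completed cycle so that Theorems~\ref{thm:QMforS} and~\ref{thm:SSVQM} apply graph by graph; you have simply made explicit the bookkeeping (constant vertex contributions on trivalent graphs, identification with $S(\Gamma,\mm)$ for $\mm=(0,\dots,0)$, resp.\ with the pieces $S^{SV}_{e_0}(\Gamma,\mm^{(e_0)})$, and the weight count $2|E(\Gamma)|=3n$, your Siegel--Veech bound ${\rm wt}(\Hmu)+p+1$ being the one of Corollary~\ref{Cor:SVmain}). The only minor caveat is that a trivalent graph may carry loops, where your valence-splitting discussion and the phrase ``no loops to split off'' do not literally apply; but at such a vertex the loop drops out of $\delta(v)$, which then vanishes because the width of the unique non-loop edge is positive, so $N'(\Hmu,\Gamma)$ and $c_p'(\Hmu,\Gamma)$ are identically zero and the statement is trivial for those graphs.
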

\par
These graph sums have a geometric interpretation: we count only surfaces 
with a fixed type. This result is a refinement of the quasimodularity 
results of~\cite{eo} and~\cite{CMZ} in the case of the principal strata.
\par
Note that the weight of these quasimodular form~$N'(\Hmu,\Gamma)$ is 
not necessary pure as shown by the example of $\Hmu=((2),(2),(2),(2))$ in 
Section \ref{sec:examples}. Note that in our convention, the vertices
of $\Gamma$ are labeled. Our examples show that for fixed underlying 
unlabeled trivalent graph $G$ the {\em sum over all labelings} is a 
quasimodular form of {\em pure} weight ${\rm wt}(\Hmu) = 
|\Hmu| + \ell(\Hmu)$ for $\Hmu=((2),(2))$ and $\Hmu=((2),(2),(2),(2))$. This 
purity result might hold in general.
\par
\begin{proof}
Since for simple ramification $f_2=P_2/2$ is a completed cycle,
the corollary is a straightforward consequence of Theorem~\ref{thm:QMforS} and 
Theorem~\ref{thm:SSVQM}.
\end{proof}
\par

\section{Examples}  \label{sec:examples}

The examples here have four objectives. First we show how to compare the
computations of Zorich in \cite{zoSQ} diagram by diagram  with
the counting by global graphs and local surfaces used here. Second, we
emphasize the main difficulty in the naive computation of the graph sums:
Working with non-completed cycles the graph sum is not quasimodular for
each graph separately, non even after summing over all the orientations. 
This sum belongs to the ring generated by all the ``Eisenstein series''~$G_k$, 
including odd~$k$ (compare section~\ref{sec:QMF}), and its derivatives. Only cancellations that become very
delicate as the complexity of the ramification datum grows ensure that
the total result is a quasimodular form. 
\par
Third, we illustrate the mechanism for proving quasimodularity of
the Siegel--Veech weighted counting. This is most transparent in the
case of the principal stratum in genus two,  the simple branching 
profile~$\Hmu = ((2),(2))$, where no difficulty stemming from completed 
cycles is present. 
\par
Finally, we compute the quasimodular forms individually for the trivalent
graphs corresponding to genus three covers and $\Hmu=((2),(2),(2),(2))$. 

\subsection{Branching profile $\Hmu = (3)$, the stratum $\cHH(2)$} 
\label{sec:H2stratum}

Counting geometrically as in Section~\ref{sec:globalgraph}, the global graphs
of a stratum with just one singularity have just one node, and the 
number of loops is at most two loops for genus two curves.
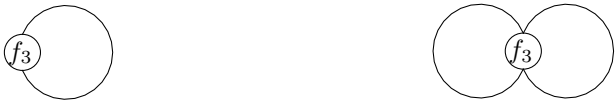
\begin{figure}[h]
\hspace{2cm}
 \begin{minipage}[t]{.40\linewidth}
\begin{tikzpicture}[line cap=round,line join=round,>=triangle 45,x=0.8cm,y=0.8cm]
\clip(-1.5,2.82) rectangle (0.84,5.22);
\draw(-1,4) circle (0.3);
\draw [shift={(-0.28,4)}] plot[domain=-2.75:2.75,variable=\t]({1*0.78*cos(\t r)+0*0.78*sin(\t r)},{0*0.78*cos(\t r)+1*0.78*sin(\t r)});
\draw (-1.4,4.34) node[anchor=north west] { $f_3$ };
\end{tikzpicture}
\end{minipage}
\hfill
\begin{minipage}[t]{.40\linewidth}
\begin{tikzpicture}[line cap=round,line join=round,>=triangle 45,x=0.8cm,y=0.8cm]
\clip(-2.98,2.8) rectangle (0.84,5.2);
\draw(-1,4) circle (0.3);
\draw [shift={(-0.28,4)}] plot[domain=-2.75:2.75,variable=\t]({1*0.78*cos(\t r)+0*0.78*sin(\t r)},{0*0.78*cos(\t r)+1*0.78*sin(\t r)});
\draw [shift={(-1.72,4)}] plot[domain=0.39:5.89,variable=\t]({1*0.78*cos(\t r)+0*0.78*sin(\t r)},{0*0.78*cos(\t r)+1*0.78*sin(\t r)});
\draw (-1.4,4.34) node[anchor=north west] {$ f_3 $};
\end{tikzpicture}
\end{minipage}
\caption{The global graphs for $\cHH(2)$: One loop $(\Gamma_1)$ or two loops ($\Gamma_2$).}
\end{figure}

\subsubsection{Computation by diagrams}

We review the computations of Zorich in~\cite{zoSQ} of torus
covers in this stratum, made with the aim of computing the Masur-Veech
volume of~$\cHH(2)$. 
We compute $N^\circ(G, \Hmu)=N'(G,\Hmu)$ for~$\Gamma_1$ and $\Gamma_2$ and all
their orientations for the ramification profile $\Hmu = (3)$ consisting 
of a three-cycle. 
\par
In terms of square-tiled surfaces in $\cHH(2)$, a first possible pattern 
is presented on the left picture of Figure~\ref{cap:graph1H2}. The picture 
on the right represents the ribbon graph made from a tubular neighborhood of the 
boundary of the horizontal cylinder. It is drawn on a torus since it can not be 
embedded in the plane. It corresponds to the only one ribbon graph with 
one vertex of valency~$6$ and~$2$ faces (boundary components).
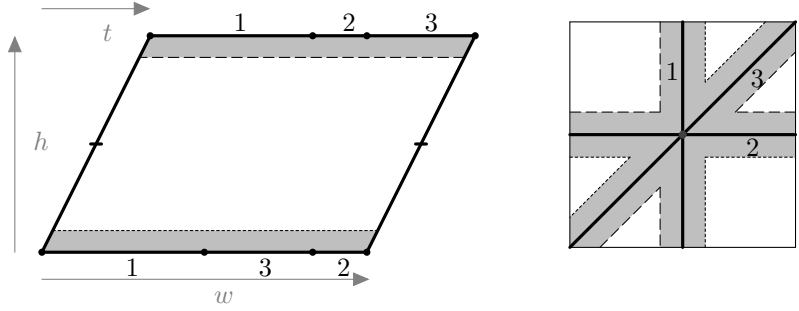
\begin{figure}[h]
\mbox{
\begin{tikzpicture}[line cap=round,line join=round,>=triangle 45,x=0.8cm,y=0.8cm, scale=0.9]
\clip(-3.61,-1.44) rectangle (5.78,5.55);
\fill[color=cqcqcq] (-1,4) -- (5,4) -- (4.8,3.6) -- (-1.2,3.6) -- cycle;
\fill[color=cqcqcq] (-3,0) -- (-2.8,0.4) -- (3.2,0.4) -- (3,0) -- cycle;
\draw [line width=1.2pt] (-3,0)-- (-1,4);
\draw [line width=1.2pt] (-1,4)-- (5,4);
\draw [line width=1.2pt] (5,4)-- (3,0);
\draw [line width=1.2pt] (3,0)-- (-3,0);
\draw [line width=1.2pt] (-2.1,2)-- (-1.9,2);
\draw [line width=1.2pt] (3.9,2)-- (4.1,2);
\draw [line width=0.4pt,dash pattern=on 4pt off 2pt] (4.8,3.6)-- (-1.2,3.6);
\draw [line width=0.4pt,dash pattern=on 1pt off 1pt] (-2.8,0.4)-- (3.2,0.4);
\draw (0.37,4.57) node[anchor=north west] {$1$};
\draw (-1.62,0.05) node[anchor=north west] {$1$};
\draw (2.38,4.57) node[anchor=north west] {$2$};
\draw (2.28,0.05) node[anchor=north west] {$2$};
\draw (3.9,4.57) node[anchor=north west] {$3$};
\draw (0.83,0.05) node[anchor=north west] {$3$};
\draw [->,color=gray] (-3,-0.5) -- (3.05,-0.5);
\draw [->,color=gray] (-3.5,0) -- (-3.5,4);
\draw [->,color=gray] (-3,4.5) -- (-1,4.5);
\draw [color=gray] (-2.05,4.38) node[anchor=north west] {$t$};
\draw [color=gray] (-3.33,2.4) node[anchor=north west] {$h$};
\draw [color=gray] (0.01,-0.56) node[anchor=north west] {$w$};
\begin{scriptsize}
\fill [color=black] (-3,0) circle (1.5pt);
\fill [color=black] (-1,4) circle (1.5pt);
\fill [color=black] (5,4) circle (1.5pt);
\fill [color=black] (3,0) circle (1.5pt);
\fill [color=black] (2,4) circle (1.5pt);
\fill [color=black] (3,4) circle (1.5pt);
\fill [color=black] (0,0) circle (1.5pt);
\fill [color=black] (2,0) circle (1.5pt);
\end{scriptsize}
\end{tikzpicture}}
\mbox{\begin{tikzpicture}[line cap=round,line join=round,>=triangle 45,x=1.0cm,y=1.0cm]
\clip(-1.58,-0.1) rectangle (2.56,4.54);
\fill[color=cqcqcq] (0.2,4) -- (0.2,2.8) -- (-1,2.8) -- (-1,2.2) -- (-0.2,2.2) -- (-1,1.4) -- (-1,1) -- (-0.6,1) -- (0.2,1.8) -- (0.2,1) -- (0.8,1) -- (0.8,2.2) -- (2,2.2) -- (2,2.8) -- (1.2,2.8) -- (2,3.6) -- (2,4) -- (1.6,4) -- (0.8,3.2) -- (0.8,4) -- cycle;
\draw  (-1,4)-- (-1,1);
\draw  (-1,1)-- (2,1);
\draw  (2,1)-- (2,4);
\draw  (2,4)-- (-1,4);
\draw [line width=1.2pt] (0.5,4)-- (0.5,1);
\draw [line width=1.2pt] (-1,2.5)-- (2,2.5);
\draw [line width=1.2pt] (-1,1)-- (2,4);
\draw [line width=0.4pt,dash pattern=on 4pt off 2pt] (0.2,4)-- (0.2,2.8);
\draw [line width=0.4pt,dash pattern=on 4pt off 2pt] (0.2,2.8)-- (-1,2.8);
\draw [line width=0.4pt,dash pattern=on 1pt off 1pt] (-1,2.2)-- (-0.2,2.2);
\draw [line width=0.4pt,dash pattern=on 1pt off 1pt] (-0.2,2.2)-- (-1,1.4);
\draw [line width=0.4pt,dash pattern=on 4pt off 2pt] (-0.6,1)-- (0.2,1.8);
\draw [line width=0.4pt,dash pattern=on 4pt off 2pt] (0.2,1.8)-- (0.2,1);
\draw [line width=0.4pt,dash pattern=on 1pt off 1pt] (0.8,1)-- (0.8,2.2);
\draw [line width=0.4pt,dash pattern=on 1pt off 1pt] (0.8,2.2)-- (2,2.2);
\draw [line width=0.4pt,dash pattern=on 4pt off 2pt] (2,2.8)-- (1.2,2.8);
\draw [line width=0.4pt,dash pattern=on 4pt off 2pt] (1.2,2.8)-- (2,3.6);
\draw [line width=0.4pt,dash pattern=on 1pt off 1pt] (1.6,4)-- (0.8,3.2);
\draw [line width=0.4pt,dash pattern=on 1pt off 1pt] (0.8,3.2)-- (0.8,4);
\draw (0.15,3.58) node[anchor=north west] {$1$};
\draw (1.28,3.48) node[anchor=north west] {$3$};
\draw (1.22,2.57) node[anchor=north west] {$2$};
\begin{scriptsize}
\fill [color=uququq] (0.5,2.5) circle (1.5pt);
\end{scriptsize}
\end{tikzpicture}}
\caption{Diagram with one cylinder} \label{cap:graph1H2}
\end{figure}
\par
As integer parameters for this square-tiled surface, we use the width of the 
cylinder~$w$, the height~$h$, the twist~$t$, and the lengths of the saddle 
connexions $\ell_1, \ell_2, \ell_3$. They are related by 
\be\label{eq:winl} w\=\ell_1+\ell_2+\ell_3, \mbox{  and  }t\in\{0,1, \dots, w-1\}.\ee
The generating function for square-tiled surfaces of this type is then 
\ba S(\mathcal C_1)&\,:=\,\sum_{h,\ell_1,\ell_2,\ell_3=1, \atop hw\geq 3}^{\infty}\sum_{t=0}^{w-1}q^{hw}\delta(w-\ell_1-\ell_2-\ell_3) \\
&\= \sum_{w,h=1 \atop hw \geq 3}^\infty w q^{hw} \left(\frac{1}{6}w^2-\frac{1}{2}w+\frac{1}{3}\right) \=\frac{1}{6}S_3-\frac{1}{2} S_2+\frac{1}{3} S_1\,,\ea
where $\frac{1}{6}w^2-\frac{1}{2}w+\frac{1}{3}$ is the number of 
solutions of~\eqref{eq:winl}, and where 
$$S_i\=\sum_{w,h=1}^\infty w^{i}q^{hw}\=G_{i+1}+\frac{B_{i+1}}{2(i+1)}\,.$$ 
From this formula we see that this generating function $N'(\Gamma_1,(3))$ 
is not a quasimodular form of weight~$\leq 6$, since the 
``Eisenstein series''~$G_3$ isn't.
\par
Figure~\ref{cap:graph2H2} represents a pattern for a square-tiled surface 
in $H(2)$ corresponding to the graph~$\Gamma_2$, i.e., with~$2$ horizontal cylinders, 
and its associated ribbon graph. 
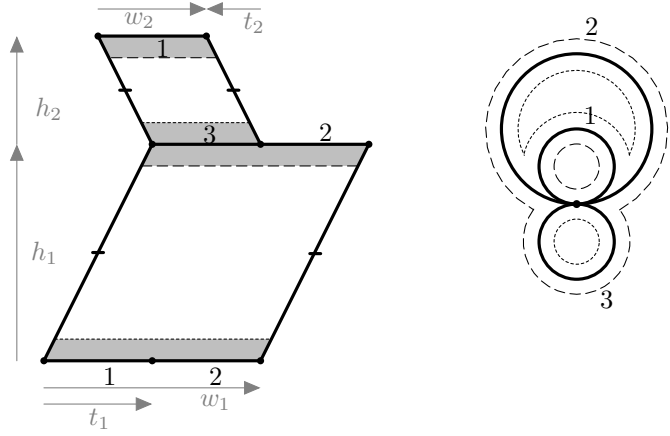
\begin{figure}[h]
\mbox{
\begin{tikzpicture}[line cap=round,line join=round,>=triangle 45,x=0.8cm,y=0.8cm, scale=0.9]
\clip(-3.8,-1.27) rectangle (3.58,6.69);
\fill[color=cqcqcq] (-1,4) -- (3,4) -- (2.8,3.6) -- (-1.2,3.6) -- cycle;
\fill[color=cqcqcq] (-3,0) -- (-2.8,0.4) -- (1.2,0.4) -- (1,0) -- cycle;
\fill[color=cqcqcq] (-1,4) -- (-1.2,4.4) -- (0.8,4.4) -- (1,4) -- cycle;
\fill[color=cqcqcq] (-2,6) -- (0,6) -- (0.2,5.6) -- (-1.8,5.6) -- cycle;
\draw [line width=1.2pt] (-3,0)-- (-1,4);
\draw [line width=1.2pt] (-1,4)-- (3,4);
\draw [line width=1.2pt] (3,4)-- (1,0);
\draw [line width=1.2pt] (1,0)-- (-3,0);
\draw [line width=1.2pt] (-2.1,2)-- (-1.9,2);
\draw [line width=1.2pt] (1.91,1.98)-- (2.11,1.98);
\draw [line width=0.4pt,dash pattern=on 4pt off 2pt] (2.8,3.6)-- (-1.2,3.6);
\draw [line width=0.4pt,dash pattern=on 1pt off 1pt] (-2.8,0.4)-- (1.2,0.4);
\draw (-1.1,6.1) node[anchor=north west] {$1$};
\draw (-2.08,0.04) node[anchor=north west] {$1$};
\draw (1.91,4.54) node[anchor=north west] {$2$};
\draw (-0.12,0.04) node[anchor=north west] {$2$};
\draw (-0.23,4.52) node[anchor=north west] {$3$};
\draw [line width=1.2pt] (-1,4)-- (-2,6);
\draw [line width=1.2pt] (-2,6)-- (0,6);
\draw [line width=1.2pt] (0,6)-- (1,4);
\draw [line width=1.2pt] (-1.6,5)-- (-1.4,5);
\draw [line width=1.2pt] (0.4,5)-- (0.6,5);
\draw [line width=0.4pt,dash pattern=on 1pt off 1pt] (-1.2,4.4)-- (0.8,4.4);
\draw [line width=0.4pt,dash pattern=on 4pt off 2pt] (0.2,5.6)-- (-1.8,5.6);
\draw [->,color=gray] (-3.5,0) -- (-3.5,4);
\draw [->,color=gray] (-3,-0.5) -- (1,-0.5);
\draw [->,color=gray] (1,6.5) -- (0,6.5);
\draw [->,color=gray] (-2,6.5) -- (0,6.5);
\draw [->,color=gray] (-3.5,4) -- (-3.5,6);
\draw [->,color=gray] (-3,-0.8) -- (-1,-0.8);
\draw [color=gray] (-1.69,6.6) node[anchor=north west] {$ w_2 $};
\draw [color=gray] (0.51,6.6) node[anchor=north west] {$ t_2 $};
\draw [color=gray] (-3.33,5.04) node[anchor=north west] {$ h_2 $};
\draw [color=gray] (-3.41,2.37) node[anchor=north west] {$ h_1 $};
\draw [color=gray] (-2.34,-0.72) node[anchor=north west] {$ t_1 $};
\draw [color=gray] (-0.29,-0.45) node[anchor=north west] {$ w_1 $};
\begin{scriptsize}
\fill [color=black] (-3,0) circle (1.5pt);
\fill [color=black] (-1,4) circle (1.5pt);
\fill [color=black] (3,4) circle (1.5pt);
\fill [color=black] (1,0) circle (1.5pt);
\fill [color=black] (-1,0) circle (1.5pt);
\fill [color=black] (0,6) circle (1.5pt);
\fill [color=black] (1,4) circle (1.5pt);
\fill [color=black] (-2,6) circle (1.5pt);
\end{scriptsize}
\end{tikzpicture}}
\mbox{
\begin{tikzpicture}[line cap=round,line join=round,>=triangle 45,x=1.0cm,y=1.0cm]
\clip(-3.11,-1) rectangle (0.75,4.57);
\draw [line width=1.2pt] (-1,3) circle (1cm);
\draw [line width=1.2pt] (-1,2.5) circle (0.5cm);
\draw [line width=1.2pt] (-1,1.5) circle (0.5cm);
\draw [line width=0.4pt,dash pattern=on 4pt off 2pt] (-1,2.5) circle (0.3cm);
\draw [line width=0.4pt,dash pattern=on 1pt off 1pt] (-1,1.5) circle (0.3cm);
\draw [shift={(-1,3)},line width=0.4pt,dash pattern=on 1pt off 1pt]  plot[domain=-0.46:3.61,variable=\t]({1*0.78*cos(\t r)+0*0.78*sin(\t r)},{0*0.78*cos(\t r)+1*0.78*sin(\t r)});
\draw [shift={(-1,2.5)},line width=0.4pt,dash pattern=on 1pt off 1pt]  plot[domain=0.21:2.93,variable=\t]({1*0.72*cos(\t r)+0*0.72*sin(\t r)},{0*0.72*cos(\t r)+1*0.72*sin(\t r)});
\draw [shift={(-1,2.99)},line width=0.4pt,dash pattern=on 4pt off 2pt]  plot[domain=-1.08:4.22,variable=\t]({1*1.21*cos(\t r)+0*1.21*sin(\t r)},{0*1.21*cos(\t r)+1*1.21*sin(\t r)});
\draw [shift={(-1,1.51)},line width=0.4pt,dash pattern=on 4pt off 2pt]  plot[domain=-3.78:0.64,variable=\t]({1*0.71*cos(\t r)+0*0.71*sin(\t r)},{0*0.71*cos(\t r)+1*0.71*sin(\t r)});
\draw (-1.01,4.6) node[anchor=north west] {2};
\draw (-0.81,0.97) node[anchor=north west] {3};
\draw (-1.03,3.41) node[anchor=north west] {1};
\begin{scriptsize}
\fill [color=black] (-1,2) circle (1.5pt);
\end{scriptsize}
\end{tikzpicture}}
\caption{Diagram for two cylinders} \label{cap:graph2H2}
\end{figure}
The integer parameters $w_1,w_2,h_1,h_2,t_1,t_2,\ell_1,\ell_2,\ell_3$ are related by 
\[w_2\=\ell_1\=\ell_3,\; w_1\=\ell_1+\ell_2,\; t_1\in\{0,\dots, w_1-1\},\; t_2\in\{0,\dots, w_2-1\}.\]
The generating function for this type of square-tiled surfaces is
\ba S(\mathcal C_2)&\,:=\,\sum_{w_1,h_1,w_2,h_2,\ell_1,\ell_2=1 \atop w.h\geq 3}^{\infty}\sum_{t_1=0}^{w_1-1}
\sum_{t_2=0}^{w_2-1}q^{h_1w_1+h_2w_2}\,
\delta(w_2-\ell_1)\,\delta(w_1-\ell_1-\ell_2) \\
&\= \sum_{w_1,w_2,h_1,h_2=1 \atop w.h \geq 3}^\infty w_1w_2 q^{h_1w_1+h_2w_2} \mathds 1_{\{w_1>w_2\}}\,.\ea
We now compute that
\[ \sum_{w_1,w_2,h_1,h_2=1}^\infty w_1w_2 q^{h_1w_1+h_2w_2} \mathds 1_{\{w_1>w_2\}} 
\= \frac{1}{2}(A-B)\,,\]
where 
\[A\=\sum_{w_1,w_2,h_1,h_2=1}^\infty w_1w_2 q^{h_1w_1+h_2w_2}=S_1^2\]
and 
\bes B \=\sum_{w_1,w_2,h_1,h_2=1}^\infty w_1w_2 q^{h_1w_1+h_2w_2}\mathds 1_{\{w_1=w_2\}}\,
\=D_qS_1-S_2
\ees
Here again, the non-quasimodularity comes from the factor $S_2=G_3$.
Summing over the two configurations we see that the term in $S_2$ cancel out, 
\[\left(\frac{1}{6}S_3-\frac{1}{2}S_2+\frac{1}{3}S_1\right)+\frac{1}{2}\left(S_1^2-DS_1+S_2\right)\=\frac{3}{2}G_2^2-\frac{1}{4}G_4+\frac{3}{8}G_2+cst\] 
resulting in a quasimodular form, as claimed.

\subsubsection{Computation by local graphs and global graphs}

In the formalism of this paper, the ribbon graph in the first case 
(i.e.\ the global graph $\Gamma_1$) corresponds to a cover of the 
cylinder~$\PP^1$, 
ramified with profile $(3,1,\dots, 1)$ at~$1$, and with~$(w)$ at~$0$ 
and~$\infty$. The corresponding triple Hurwitz number $A'(w,w,(3))$ is 
just the number of such ribbon graphs, so it is the number of 
solutions $(\ell_1, \ell_2, \ell_3)\in\N^3$ of~\eqref{eq:winl}. Consequently, 
\[S(\mathcal C_1)\=N'(\Gamma_1,(3))\=\sum_{w,h=1}^{\infty}wq^{wh}A'(w,w,(3))\,.\]
For the second graph $\Gamma_2$, the ribbon graph corresponds here to a 
cover of~$\PP^1$, ramified of profile $(3,1,\dots, 1)$ over~$1$, $(w_1,w_2)$ 
over~$0$ and~$\infty$. The number of such covers or such ribbon graphs is 
\[A'(\bfw, \bfw, (3))\=\begin{cases} 1\mbox{ if } w_1\neq w_2\\
0\mbox{ if } w_1=w_2\,, \end{cases} \quad (\bfw = (w_1, w_2))\,. \]
\par
Before passing to the computation using completed cycles, we tabulate 
the contribution of each polynomial in shifted symmetric functions appearing 
in the expression of
 \[f_3  \=\frac{1}{3} P_{3} -\frac{1}{2} (P_{1})^2
 + \frac{5}{12} P_{1}\]
to the local polynomials $A^\circ(\bfw^-, \bfw^+,(3))$ and the contribution $\bG\cdot$.
The table below shows that the contribution of each diagram individually is 
not quasimodular because the contribution of $P_1^2$ for graphs with only one 
vertex is not polynomial. It is however piecewise polynomial, showing also 
that all the hypothesis in Theorem~\ref{thm:SSZ} are needed in order to get
a globally polynomial contribution.
\renewcommand{\arraystretch}{1.5}
\[
 \begin{array}{|c|c|c|c|c|c|}
 \hline
 & A^\circ(w,w,(3)) & \bigl\langle \cdot \bigr\rangle_{q,G_1} &\multicolumn{2}{|c|}{ A^\circ(\bfw, \bfw, (3))}&\bigl\langle \cdot \bigr\rangle_{q,G_2}\\
 \hline
 &&&w_1\neq w_2& w_1=w_2 & \\
 \hline
 f_3& \frac{1}{6}w^2 - \frac{1}{2}w + \frac{1}{3}&\frac{1}{6}S_3-\frac{1}{2}S_2+\frac{1}{3}S_1 &1&0& S_1^2-DS_1+S_2\\
 \hline
 \tfrac{P_3}3 & \frac{1}{6}w^2 - \frac{1}{12} &\frac{1}{6}S_3-\frac{1}{12}S_1 & 2&2&2S_1^2\\
  \hline
 P_1^2 & w & S_2& 2&4&2(S_1^2 + DS_1-S_2)\\
  \hline
 P_1 & 1 &S_1 &0&0&0\\
  \hline
  \end{array}\]

\subsubsection{Computation using $q$-brackets of completed cycles}

Last, we compute $\sbq{f_3}$ using
\[\sbq{f_3}\=\frac{1}{3}\sbq{P_3}-\frac{1}{2}\sbq{P_1^2}+\frac{5}{12}\sbq{P_1}\,.\]
The main difference with the previous computation will be the term $\sbq{P_1^2}$ 
that we could interpret as a graph sum for a graph with two vertices.
 \renewcommand{\arraystretch}{1.5}
 \begin{center}
 \begin{tabular}{|c|c|c|}
 \hline
 & $A^\circ(\bfw^-,\bfw^+,\mu)$ & $\bG\cdot$ \\
 \hline
 \raisebox{-0.5\totalheight}{
\begin{tikzpicture}[line cap=round,line join=round,>=triangle 45,x=0.8cm,y=0.8cm]
\clip(-2.98,2.8) rectangle (0.84,5.2);
\draw(-1,4) circle (0.3);
\draw [shift={(-0.28,4)}] plot[domain=-2.75:2.75,variable=\t]({1*0.78*cos(\t r)+0*0.78*sin(\t r)},{0*0.78*cos(\t r)+1*0.78*sin(\t r)});
\draw [shift={(-1.72,4)}] plot[domain=0.39:5.89,variable=\t]({1*0.78*cos(\t r)+0*0.78*sin(\t r)},{0*0.78*cos(\t r)+1*0.78*sin(\t r)});
\draw (-1.4,4.34) node[anchor=north west] {$ P_3 $};
\end{tikzpicture}}
 & $6$ & $6S_1^2$\\
  \raisebox{-0.5\totalheight}{
\begin{tikzpicture}[line cap=round,line join=round,>=triangle 45,x=0.8cm,y=0.8cm]
\clip(-1.5,2.82) rectangle (0.84,5.22);
\draw(-1,4) circle (0.3);
\draw [shift={(-0.28,4)}] plot[domain=-2.75:2.75,variable=\t]({1*0.78*cos(\t r)+0*0.78*sin(\t r)},{0*0.78*cos(\t r)+1*0.78*sin(\t r)});
\draw (-1.4,4.34) node[anchor=north west] {$ P_3 $};
\end{tikzpicture} }
  & $\frac{1}{2}w^2 - \frac{1}{4}$ &$\frac{1}{2}S_3-\frac{1}{4}S_1$ \\
  \hline
   \raisebox{-0.5\totalheight}{
\begin{tikzpicture}[line cap=round,line join=round,>=triangle 45,x=0.8cm,y=0.8cm]
\clip(-1.96,2.32) rectangle (0.04,4.56);
\draw(-1,4) circle (0.3);
\draw (-1.35,4.3) node[anchor=north west] {$ P_1 $};
\draw(-1,2.8) circle (0.3);
\draw [shift={(-1.07,3.39)}] plot[domain=1.93:4.34,variable=\t]({1*0.43*cos(\t r)+0*0.43*sin(\t r)},{0*0.43*cos(\t r)+1*0.43*sin(\t r)});
\draw [shift={(-0.93,3.4)}] plot[domain=-1.2:1.22,variable=\t]({1*0.43*cos(\t r)+0*0.43*sin(\t r)},{0*0.43*cos(\t r)+1*0.43*sin(\t r)});
\draw (-1.35,3.1) node[anchor=north west] {$ P_1 $};
\end{tikzpicture}}

 & 1 and 1 &$S_1^2+DS_1$\\
  \hline
   \raisebox{-0.5\totalheight}{
\begin{tikzpicture}[line cap=round,line join=round,>=triangle 45,x=0.8cm,y=0.8cm]
\clip(-1.5,2.82) rectangle (0.84,5.22);
\draw(-1,4) circle (0.3);
\draw [shift={(-0.28,4)}] plot[domain=-2.75:2.75,variable=\t]({1*0.78*cos(\t r)+0*0.78*sin(\t r)},{0*0.78*cos(\t r)+1*0.78*sin(\t r)});
\draw (-1.4,4.34) node[anchor=north west] {$ P_1 $};
\end{tikzpicture}}
 & 1 &$S_1$ \\
  \hline
  \end{tabular}
  \vspace{0.5cm}
  \end{center}
In these cases, all the local contribution are globally polynomials and moreover
even functions, so the contribution of the completed cycles for each graph are 
quasimodular forms.

\subsection{Branching profile $\Hmu = ((2),(2))$, the stratum $\cHH(1,1)$}
For this stratum, as for all principal strata, all the contributions of the 
individual graphs to the counting function $N^\circ((12), (12))$ for the 
ramification profile consisting of two transpositions are quasimodular forms, 
since $f_2  \= \frac{1}{2} P_{2}$ is equal to a completed cycle.  
For $\cHH(1,1)$ there is only one possible global graph up to relabeling
the vertices.
 \renewcommand{\arraystretch}{1.5}
 \begin{center}
 \begin{tabular}{|c|c|c|}
 \hline
 & $A^\circ((w_1),(w_2,w_3),(2))$ & $\bG{P_2}$ \\
 \hline
 \raisebox{-0.5\totalheight}{
\begin{tikzpicture}[line cap=round,line join=round,>=triangle 45,x=0.9cm,y=0.8cm]
\clip(-1.96,1.38) rectangle (0.04,4.56);
\draw(-1,4) circle (0.3);
\draw (-1.3,4.3) node[anchor=north west] {$ P_2 $};
\draw(-1,2) circle (0.3);
\draw [shift={(-0.88,2.99)}] plot[domain=1.97:4.3,variable=\t]({1*0.87*cos(\t r)+0*0.87*sin(\t r)},{0*0.87*cos(\t r)+1*0.87*sin(\t r)});
\draw [shift={(-1.11,3)}] plot[domain=-1.2:1.18,variable=\t]({1*0.86*cos(\t r)+0*0.86*sin(\t r)},{0*0.86*cos(\t r)+1*0.86*sin(\t r)});
\draw (-1.3,2.3) node[anchor=north west] {$ P_2 $};
\draw (-1,3.7)-- (-1,2.3);
\end{tikzpicture}}
& 2 and 2 & $2S_{1,1}$\\
\hline
\end{tabular}
\vspace{0.5cm}
\end{center}
Among the 8 orientations of $\Gamma$ the two with all arrows ending at 
the same vertex do not contribute to the total sum~\eqref{eq:NpGamma1}.
By symmetry considerations the remaing 6 orientations have the same
contribution, and contribute with the factor $1/6$ to~\eqref{eq:NpGamma1}.
because of the automorphism group permuting the edges. Consequently, 
\bes
N^\circ((12),(12)) \=\frac{1}{6}S(\Gamma)\,, \quad \text{where}
\quad S(\Gamma) \=[\zeta_1^ 0\zeta_2^0]P^3(z_1-z_2)=[\zeta^ 0]P^3(z)
\ees
and on the other hand (expressed in the height space convention of
Lemma~\ref{lem:heightspace})
\bes
S(\Gamma) \= 6 S_{1,1} \quad \text{where}
\quad S_{1,1} \= \!\!\!\sum_{w_2,w_3,h_1=1,h_2,h_3=0}^{\infty}
\!\!\! w_2w_3(w_2+w_3)q^{w_2(h_1+h_2)+ w_3(h_1+ h_3)}
\ees
We can compute the constant coefficient using the algorithm provided
by Theorem~\ref{thm:coeff0QM} and the decomposition
\[P(z)^3\= \frac{1}{120}P^{(4)}(z) + G_2P''(z) +(12G_2^2 + 3G_4)P(z) 
-16G_2^3 + 4G_4G_2 + \frac{7}{30}G_6.\]
to obtain finally that
\bes N^\circ((12),(12))\= -\frac{8}{3} G_{2}^3 + \frac{2}{3} G_{4} G_{2} + \frac{7}{180} G_{6}.
\ees
As a cross-check, since we know that the graph sum is quasimodular, 
we can determine this quasimodular form, by computing the first terms of the
generating series 
\bes N^\circ((12), (12)) 
\= 2q^2 + 16q^3 + 60q^4 + 160q^5 + 360q^6 + 672q^7 + 1240q^8+O(q^9)
\ees
\par

\subsubsection{Siegel--Veech generating function}

Here we illustrate the method of proof of Theorem \ref{thm:SSVQM} by evaluating
the first interesting contribution $S^{SV}(\cHH(1,1))$. We use the branch point
normalization of heights $0$ and $1/2$ (to exploit the symmetry). 
The generating function is 
\bes 
S^{SV}_{1,1}\=\sum_{(h_1,h_2,h_3)\in(\N+1/2)^3 \atop (w_1,w_2,w_3)\in(\N^*)^3}\left(\frac{h_1}{w_1}+\frac{h_2}{w_2}+\frac{h_3}{w_3}\right)w_1w_2w_3\delta(w_3-w_1-w_2)q^{h.w}\,.
\ees
We will show that
\[S^{SV}_{1,1}\= -\frac{10}{3}G_2^3+\frac{5}{6}G_2G_4+\frac{7}{144}G_6\,.\]
With our convention, since the polynomial contribution of $P_2$ is 1, we get
\[c^\circ((12),(12))=\frac{1}{6}(S^{SV}_1(\Gamma,0)+S^{SV}_2(\Gamma,0)+S^{SV}_3(\Gamma,0))\]
where
\[ S^{SV}_1(\Gamma,0)\=\sum_{G\in\Gamma}\sum_{\underset{h\in\Z_{E(G)}}{w\in\N^*}}h_1w_2w_3\delta\left(\sum_{i\in e^+(v)}w_i-\sum_{i\in e^-(v)}w_i\right)q^{h\cdot w}.\]
One can check directly on this example that 
\[S^{SV}_1(\Gamma,0)\=[\zeta^0]L(z)P^2(z)=S^{SV}_2(\Gamma,0)=S^{SV}_3(\Gamma,0)\]
(the integration with respect to the second variable $z_2$ is not necessary),
 and that \[S^{SV}_{1,1}=\frac{1}{2}S^{SV}_1(\Gamma,0)=c^\circ((12),(12))\,.\] 
\par
The proof of Theorem \ref{thm:SSVQM} also provides
an algorithm for computation. The decomposition of~$L$ in the standard
generators has been given in~\eqref{eq:LZ2}. Using 
\[P^2(z) \=\frac{1}{6}P''(z) + 4G_2P(z) -4G_2^2 +\frac{ 5}{3}G_4\]
and the decomposition of $P^3(z)$ given previously, we obtain
\bes 
[\zeta^0]Z^2(z)P^2(z) \= \frac{16}{3}G_2^3 - \frac{2}{3}G_2^2 - \frac{8}{3}G_4G_2 + \frac{5}{18}G_4 + \frac{7}{180}G_6
\ees
and 
\[[\zeta^0]\left(\frac{1}{2}P(z)-G_2+\frac{1}{12}\right)P^2(z)=-4G_2^3 - \frac{1}{3}G_2^2 + \frac{1}{3}G_4G_2 + \frac{5}{36}G_4 + \frac{7}{60}G_6\,.\] 
Taken together, we conclude that
\begin{align*}
c^\circ((12),(12)) \=\frac{1}{2}[\zeta^0]LP^2\=-\frac{10}{3}G_2^3+\frac{5}{6}G_2G_4+\frac{7}{144}G_6 \=\frac{5}{4}S(\cHH(1,1))\,.
\end{align*}
The Siegel--Veech constant for the stratum $\cHH(1,1)$ is $5/4$ and this
proportionality of generating series is expected by the ``non-varying property'' 
of $\cHH(1,1)$, see the discussion in \cite[Section 17]{CMZ}.
\par
\subsection{Branching profile $\Hmu = ((2),(2),(2),(2))$, the stratum 
$\cHH(1^4)$}

We end this range of examples with the stratum $\cHH(1^4)$, first to show 
the effect of the labeling of the zeros and to compute the quasimodular
forms for individual graphs.
%
%
%
There are only two trivalent connected (multi)graphs with 4 vertices, 
depicted in Figure \ref{fig:H1111global}. The graphs $\Gamma$ are 
obtained by labeling the vertices of these graphs.
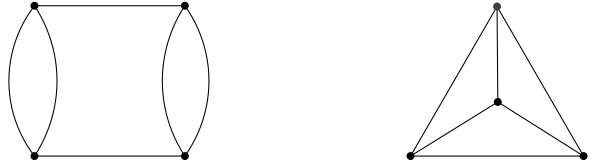
\begin{figure}[h]
\begin{tikzpicture}[line cap=round,line join=round,>=triangle 45,x=1.0cm,y=1.0cm]
\clip(-4.56,0.18) rectangle (5.38,3.8);
\draw (-3,3)-- (-1,3);
\draw (-3,1)-- (-1,1);
\draw (2,1)-- (4.3,1);
\draw (4.3,1)-- (3.15,2.99);
\draw (3.15,2.99)-- (2,1);
\draw [shift={(-1.7,2)}] plot[domain=2.49:3.8,variable=\t]({1*1.64*cos(\t r)+0*1.64*sin(\t r)},{0*1.64*cos(\t r)+1*1.64*sin(\t r)});
\draw [shift={(-4.52,2)}] plot[domain=-0.58:0.58,variable=\t]({1*1.82*cos(\t r)+0*1.82*sin(\t r)},{0*1.82*cos(\t r)+1*1.82*sin(\t r)});
\draw [shift={(0.52,2)}] plot[domain=2.56:3.72,variable=\t]({1*1.82*cos(\t r)+0*1.82*sin(\t r)},{0*1.82*cos(\t r)+1*1.82*sin(\t r)});
\draw [shift={(-2.4,2)}] plot[domain=-0.62:0.62,variable=\t]({1*1.72*cos(\t r)+0*1.72*sin(\t r)},{0*1.72*cos(\t r)+1*1.72*sin(\t r)});
\draw (3.15,2.99)-- (3.16,1.72);
\draw (2,1)-- (3.16,1.72);
\draw (3.16,1.72)-- (4.3,1);
\begin{scriptsize}
\fill [color=black] (-3,3) circle (1.5pt);
\fill [color=black] (-1,3) circle (1.5pt);
\fill [color=black] (-3,1) circle (1.5pt);
\fill [color=black] (-1,1) circle (1.5pt);
\fill [color=black] (2,1) circle (1.5pt);
\fill [color=black] (4.3,1) circle (1.5pt);
\fill [color=uququq] (3.15,2.99) circle (1.5pt);
\fill [color=black] (3.16,1.72) circle (1.5pt);
\end{scriptsize}
\end{tikzpicture}
\caption{Global graphs for $\cHH(1^4)$: Type 1 (left) and type 2 (right)}
\label{fig:H1111global}
\end{figure}

For the first graph, we can use the horizontal and vertical flip 
and assume that the bottom left vertex is labeled by one. Among the~$6$
ways to label the remaining vertices, our normalization of the integration
paths (or equivalently height spaces, see \ref{sec:intEF}), results in
two essentially different quasimodular forms. 
\begin{figure}[h]
\hspace{1cm}
 \begin{minipage}[t]{.40\linewidth}
\begin{tikzpicture}[line cap=round,line join=round,>=triangle 45,x=1.0cm,y=1.0cm]
\clip(-4.56,0.18) rectangle (0.48,3.8);
\draw (-3,3)-- (-1,3);
\draw (-3,1)-- (-1,1);
\draw (2,1)-- (4.3,1);
\draw (4.3,1)-- (3.15,2.99);
\draw (3.15,2.99)-- (2,1);
\draw [shift={(-1.7,2)}] plot[domain=2.49:3.8,variable=\t]({1*1.64*cos(\t r)+0*1.64*sin(\t r)},{0*1.64*cos(\t r)+1*1.64*sin(\t r)});
\draw [shift={(-4.52,2)}] plot[domain=-0.58:0.58,variable=\t]({1*1.82*cos(\t r)+0*1.82*sin(\t r)},{0*1.82*cos(\t r)+1*1.82*sin(\t r)});
\draw [shift={(0.52,2)}] plot[domain=2.56:3.72,variable=\t]({1*1.82*cos(\t r)+0*1.82*sin(\t r)},{0*1.82*cos(\t r)+1*1.82*sin(\t r)});
\draw [shift={(-2.4,2)}] plot[domain=-0.62:0.62,variable=\t]({1*1.72*cos(\t r)+0*1.72*sin(\t r)},{0*1.72*cos(\t r)+1*1.72*sin(\t r)});
\draw (3.15,2.99)-- (3.16,1.72);
\draw (2,1)-- (3.16,1.72);
\draw (3.16,1.72)-- (4.3,1);
\draw (-3.32,1) node[anchor=north west] {$v_1$};
\draw (-3.32,3.45) node[anchor=north west] {$ v_2 $};
\draw (-1.2,3.45) node[anchor=north west] {$ v_3 $};
\draw (-1.2,1.0) node[anchor=north west] {$ v_4 $};
\draw (-3.85,2.3) node[anchor=north west] {$ e_1 $};
\draw (-2.7,2.3) node[anchor=north west] {$ e_2 $};
\draw (-1.8,2.3) node[anchor=north west] {$ e_3 $};
\draw (-0.64,2.3) node[anchor=north west] {$ e_4 $};
\draw (-2.24,1) node[anchor=north west] {$ e_5 $};
\draw (-2.16,3.45) node[anchor=north west] {$ e_6 $};
\begin{scriptsize}
\fill [color=black] (-3,3) circle (1.5pt);
\fill [color=black] (-1,3) circle (1.5pt);
\fill [color=black] (-3,1) circle (1.5pt);
\fill [color=black] (-1,1) circle (1.5pt);
\fill [color=black] (2,1) circle (1.5pt);
\fill [color=black] (4.3,1) circle (1.5pt);
\fill [color=uququq] (3.15,2.99) circle (1.5pt);
\fill [color=black] (3.16,1.72) circle (1.5pt);
\end{scriptsize}
\end{tikzpicture}
\end{minipage}
\hfill
 \begin{minipage}[t]{.40\linewidth}
 \begin{tikzpicture}[line cap=round,line join=round,>=triangle 45,x=1.0cm,y=1.0cm]
\clip(-4.56,0.18) rectangle (0.48,3.8);
\draw (-3,3)-- (-1,3);
\draw (-3,1)-- (-1,1);
\draw (2,1)-- (4.3,1);
\draw (4.3,1)-- (3.15,2.99);
\draw (3.15,2.99)-- (2,1);
\draw [shift={(-1.7,2)}] plot[domain=2.49:3.8,variable=\t]({1*1.64*cos(\t r)+0*1.64*sin(\t r)},{0*1.64*cos(\t r)+1*1.64*sin(\t r)});
\draw [shift={(-4.52,2)}] plot[domain=-0.58:0.58,variable=\t]({1*1.82*cos(\t r)+0*1.82*sin(\t r)},{0*1.82*cos(\t r)+1*1.82*sin(\t r)});
\draw [shift={(0.52,2)}] plot[domain=2.56:3.72,variable=\t]({1*1.82*cos(\t r)+0*1.82*sin(\t r)},{0*1.82*cos(\t r)+1*1.82*sin(\t r)});
\draw [shift={(-2.4,2)}] plot[domain=-0.62:0.62,variable=\t]({1*1.72*cos(\t r)+0*1.72*sin(\t r)},{0*1.72*cos(\t r)+1*1.72*sin(\t r)});
\draw (3.15,2.99)-- (3.16,1.72);
\draw (2,1)-- (3.16,1.72);
\draw (3.16,1.72)-- (4.3,1);
\draw (-3.32,1) node[anchor=north west] {$v_1$};
\draw (-3.32,3.45) node[anchor=north west] {$ v_3 $};
\draw (-1.2,3.45) node[anchor=north west] {$ v_4 $};
\draw (-1.2,1.0) node[anchor=north west] {$ v_2 $};
\draw (-3.85,2.3) node[anchor=north west] {$ e_1 $};
\draw (-2.7,2.3) node[anchor=north west] {$ e_2 $};
\draw (-1.8,2.3) node[anchor=north west] {$ e_3 $};
\draw (-0.64,2.3) node[anchor=north west] {$ e_4 $};
\draw (-2.24,1) node[anchor=north west] {$ e_5 $};
\draw (-2.16,3.45) node[anchor=north west] {$ e_6 $};
\begin{scriptsize}
\fill [color=black] (-3,3) circle (1.5pt);
\fill [color=black] (-1,3) circle (1.5pt);
\fill [color=black] (-3,1) circle (1.5pt);
\fill [color=black] (-1,1) circle (1.5pt);
\fill [color=black] (2,1) circle (1.5pt);
\fill [color=black] (4.3,1) circle (1.5pt);
\fill [color=uququq] (3.15,2.99) circle (1.5pt);
\fill [color=black] (3.16,1.72) circle (1.5pt);
\end{scriptsize}
\end{tikzpicture}
 \end{minipage}
 \hspace{1cm}
\caption{Labeled graph of type 1 for $\cHH(1^4)$} \label{cap:numbering}
\end{figure}

The numbering on the left gives the quasimodular form 
\begin{flalign}
A &\= [\zeta^0]P(z_1-z_2)^2P(z_1-z_4)P(z_2-z_3)P(z_3-z_4)^2 \label{eq:Hdomain1}
\\
&=\, 4q^2 + 224q^3 + 3088q^4 + 21888q^5 + 105136q^6 + 388288q^7 + 1197280q^8
+O(q^9) \nonumber \\
&= \left( -256G_2^6 + \frac{640}{3}G_4G_2^4 +  \frac{112}{9}G_6G_2^3 
-\frac{400}{9}G_4^2 G_2^2   - \frac{140}{9}G_6G_4G_2 + \frac{2000}{81}G_4^3 
\right.  
 \nonumber\\
&\phantom{\=}  \+ \left. \frac{49}{108}G_6^2 \right) \+\ \left(-\frac{256}{3}G_4G_2^3-\frac{16}{5}G_6G_2^2+\frac{320}{21}G_4^2G_2
+ \frac{28}{9}G_6G_4\right)\,, \nonumber
\end{flalign}
that we computed in Section~\ref{sec:Ex1111Pcomp} and there is a second
labeling that results in the same quasimodular form, in fact 
\[ A=[\zeta^0]P(z_1-z_4)^2P(z_1-z_2)P(z_2-z_3)^2P(z_3-z_4)\,.\]
The numbering on the
right of Figure~\ref{cap:numbering} and three other numberings produce
the quasimodular form
\begin{flalign}
B &\=[\zeta^0]P(z_1-z_3)^2P(z_1-z_2)P(z_2-z_4)^2P(z_3-z_4) 
\label{eq:Hdomain2} \\
&\= 40q^4 + 448q^5 + 2848q^6 + 11776q^7 + 41744q^8+O(q^9)  \nonumber\\
&\ = \left( -256G_2^6 + \frac{640}{3}G_4G_2^4 + \frac{112}{9}G_6G_2^3 -\frac{400}{9}G_4^2G_2^2   - \frac{140}{9}G_6G_4G_2 + \frac{2000}{81}G_4^3 \right. \nonumber\\
& \phantom{\=}  + \left. \frac{49}{108}G_6^2 \right) \+ \left(\frac{128}{3}G_4G_2^3+ \frac{8}{5}G_6G_2^2-\frac{160}{21}G_4^2G_2- \frac{14}{9}G_6G_4\right)\,.  \nonumber
\end{flalign}
This difference illustrates a wall-crossing phenomenon: fixing the heights of the zeros (the domain of integration) and changing the labeling of the zeros is the same as fixing the labeling of the zeros and changing the integration domain. Equations \eqref{eq:Hdomain1} and \eqref{eq:Hdomain2} are the contour integral of the same function on two different domains.
Note also that these contributions are of weight 12 and 10, and their weight 12 part coincide, whereas their weight 10 part coincide up to a factor $-2$.
\par
The total contribution of this graph is then 
\bas
2A+4B& =6\,\Bigl(-256G_2^6 + \frac{640}{3}G_4G_2^4 +  \frac{112}{9}G_6G_2^3 -\frac{400}{9}G_4^2 G_2^2   - \frac{140}{9}G_6G_4G_2 \\
&\phantom{\=\,\,} + \frac{2000}{81}G_4^3  + \frac{49}{108}G_6^2 \Bigr)\,.
\eas
The second graph on Figure \ref{fig:H1111global} is totally symmetric: 
there is only one way to label the vertices. Its contribution is
\begin{align*}
C&=[\zeta^0]P(z_1-z_2)P(z_1-z_3)P(z_1-z_4)P(z_2-z_3)P(z_2-z_4)P(z_3-z_4)
\\&=-384G_2^6 + 480G_4G_2^4 - 200G_4^2G_2^2 + \frac{250}{9}G_4^3
\end{align*}
The total connected generating function for the graph of type 1 is then
\bas
&\phantom{=} \,\,\frac{1}{4}(2A+4B)+C \\
&=-768G_2^6 + 800G_4G_2^4 + \frac{56}{3}G_6G_2^3 - \frac{800}{3}G_4^2G_2^2 - \frac{70}{3}G_6G_4G_2 + \frac{1750}{27}G_4^3 + \frac{49}{72}G_6^2
\\&=   2q^2 + 160q^3 + 2448q^4 + 18304q^5 + 90552q^6 + 341568q^7 + 1068928q^8+O(q^9)
\eas
The factor $4$ is due to the automorphism group of each labeled graph 
of the type~$1$.

\bibliography{my}
\end{document}